\newcommand{\st}{{\rm st}}
\newcommand{\bmrho}{{\bm\rho}}
\renewcommand{\AA}{\mathbb{A}}
\newcommand{\pexp}{\wp}
\renewcommand{\kl}{\mathfrak{l}}
\newcommand{\ssigma}{{\sigma\sigma}}
\definecolor{grey}{rgb}{0.55, 0.55, 0.55}
\definecolor{gray}{rgb}{0.55, 0.55, 0.55}
\definecolor{darkmagenta}{rgb}{0.55, 0.0, 0.55}
\definecolor{magenta}{rgb}{0.85, 0.0, 0.55}
\newcommand{\Loc}{\operatorname{Loc}}  
\newcommand{\Cov}{\operatorname{Cov}}  
\newcommand{\rg}{{\operatorname{RG}}} 
\newcommand{\pt}{{\operatorname{pt}}}
\newcommand{\bulk}{\varnothing}
\newcommand{\stable}{{(\bs)}}
\newcommand{\WN}{\operatorname{\bf WN}} 
\newcommand{\NGd}{\operatorname{\bf NG}} 
\newcommand{\GF}{\operatorname{\bf GF}} 
\newcommand{\f}{{\sf f}}
\newcommand{\g}{{\sf g}}
\newcommand{\sh}{{\sf h}}
\newcommand{\sm}{{\sf m}}
\renewcommand{\o}{{\sf o}}
\newcommand{\x}{{\sf x}}
\newcommand{\sa}{{\sf a}}
\renewcommand{\sb}{{\sf b}}
\renewcommand{\sc}{{\sf c}}
\newcommand{\ox}{{\sf ox}}
\newcommand{\ba}{\textbf{a}}
\newcommand{\bs}{\textbf{s}}
\newcommand{\rd}{{\rm d}}
\newcommand{\bfb}{\textbf{b}}
\newcommand{\kae}{{\ka}}
\newcommand{\kbe}{{\kb}}
\newcommand{\kpe}{{\kp}}
\newcommand{\Eplus}{\E_+}
\newcommand{\scale}{r}
\newcommand{\thetaz}{\theta_{\zeta}}
\newcommand{\II}{\mathbb{I}}
\newcommand{\nnabla}{{\nabla\nabla}}
\newcommand{\Phipt}{\Phi^{\pt}}
\newcommand{\HB}{\operatorname{HB}}
\newcommand{\xyz}{\mathbb{A}}
\newcommand{\proj}{\operatorname{proj}}
\renewcommand{\hat}[1]{\widehat{#1}}
\renewcommand{\bar}[1]{\overline{#1}}
\title{Torus scaling limits and the plateau of the critical weakly coupled $|\varphi|^4$ model in $d \ge 4$}
\author{
  Jiwoon Park
  \footnote{E-mail: {\tt jp711@cantab.ac.uk},  orcid: {\tt 0000-0002-1159-2676}}  
}
\newcommand{\subjclass}[2][1991]{
  \let\@oldtitle\@title
  \gdef\@title{\@oldtitle\footnotetext{#1 \emph{Mathematics subject classification.} #2}}
}
\date{\vspace*{-2em}}
\def\coppa{{\fontencoding{LGR}\fontfamily{cmr}\selectfont\textqoppa}}
\def\q{\hbox{\foreignlanguage{greek}{\coppa}}}
\def\qq{{\hbox{\foreignlanguage{greek}{\footnotesize\coppa}}}}
\begin{document}
\maketitle

\begin{abstract}
The $n$-component weakly coupled $|\varphi|^4$ model on the $\Z^d$ lattice ($d\ge 4$) exhibits a critical two-point correlation function with an exact polynomial decay in infinite volume, regardless of whether the interaction is short- or long-range. This paper presents a rigorous analysis of the system in both $\Z^d$ and a finite-volume torus. In a torus, we prove the existence of a plateau effect, where the correlation function undergoes a crossover from the polynomial decay to a uniform constant state.

We then establish the precise scaling limit picture that provides a complementary description of this crossover. As immediate consequences, we verify the finite-size scaling limit predicted by Zinn-Justin, the finite-size scaling exponents (qoppas) suggested by Kenna and Berche and the role of the Fourier modes in finite-size scaling suggested by Flores-Sola, Berche, Kenna and Weigel. The proofs use the renormalisation group map constructed in the author's previous work. 
\end{abstract}

\setcounter{tocdepth}{1}
\tableofcontents

\section{Introduction and main results}

This paper is dedicated to the study of finite-size scaling (FSS) of the short- and long-range $|\varphi|^4$ model in Definition~\ref{def:phi4}.
FSS is of critical importance both practically,  in the analysis of laboratory experiments and numerical simulations,  and theoretically,  as it elucidates the role of the underlying geometry in statistical physics systems. 
We investigate two fundamental aspects of FSS in this work: the torus scaling limit and the plateau effect,  explained in detail below.

In a general statistical physics problem,  the complexity of systems means one cannot discuss every physical detail with arbitrary accuracy.  However, the phenomenon of universality,  the emergence of a common structure in a wide range of physical systems under scaling,  makes this a mathematically tractable field of study.  
One of the most well-understood mechanisms of universality is the presence of the Gaussian fixed point under the renormalisation group (RG),  a multi-scale argument addressing fluctuations in different lengths scales independently.  It is often predicted that, when the dimension is sufficiently high and the temperature is at or above the critical point,  high-order cumulants vanish under the flow generated by RG.  
This concept is confirmed in fundamental models, ranging from the Central Limit Theorem \cite{ott2023note} and the Curie-Weiss model \cite{P89gaussian} to the $|\varphi|^4$ model \cite{MR790736,MR882810,BBS5,MR892925}, Bernoulli percolation \cite{H25critical}, and the Self-avoiding walk \cite{MR2827969}.  The smallest dimension where the Gaussian fixed point is present is called the upper critical dimension ($d_{c,u}$).

Among these models, the $|\varphi|^4$ model at the critical point in dimensions at and above $d_{c,u}$ is particularly well-understood. In the physics literature, it serves as a classical pedagogical example for RG computations \cite{PhysRevB.4.3174, ZZ21Q}. Rigorously, many anticipated properties have been verified through seminal mathematical works \cite{MR857063, MR3269689, MR3459163, MR880526, MR643591, MR1219313, MR678000, AD21, HHS08M, CS19critical,  MR892925}.

However, these well-known results often overlook a tension regarding the role of the boundary condition in the scaling limit.  In the scaling limits established by Bauerschdmit, Brydges, and Slade \cite{MR3269689}, the Gaussian scaling limit appears slightly above the critical point under periodic boundary conditions (PBC).  In contrast,  in the work of Aizenman and Duminil-Copin \cite{AD21}, the Gaussian scaling limit appears at the critical point under free boundary condition (FBC).  
The scaling of the field used in \cite{AD21} would not yield a well-defined limit in a finite-volume torus under PBC,  as the moment generating function would blow up. 

A related discrepancy is seen in the susceptibility at the critical point under FBC versus PBC.  While Camia, Jiang, and Newman \cite{CJN21} proved that the susceptibility of the Ising model in dimensions $d \ge 5$ has an asymptotic of $|\Lambda|^{2/d}$ (where $|\Lambda|$ is the volume of the system) as $|\Lambda| \rightarrow \infty$ under FBC,  recent results by Liu, Panis, and Slade \cite{LPS25T} show a much stronger divergence rate of $|\Lambda|^{1/2}$ under PBC.

Conceptually,  this tension can be understood as a competition between complete-graph behaviour (like the model on a complete graph) and Gaussian behaviour near the critical point.  Although the critical exponents at the Gaussian fixed point are described by the complete-graph theory,  the scaling limit itself is described in terms of Gaussian fields.  However, a classical result \cite{MR428998} indicates that non-Gaussian scaling limits appear at the critical point of the Curie-Weiss model (the Ising model on the complete graph). 
Thus, depending on the choice of scaling, either the Gaussian scaling limit or the complete-graph description must fail to fully capture the system at criticality.

Resolutions to many of the issues raised in this introduction are discussed for the case of the hierarchical model \cite{MPS23,park2025boundary} using the RG method.
In this paper,  by adapting the RG method to the Euclidean (usual) $|\varphi|^4$,
we give a partial resolution to the first problem by describing the scaling limit under PBC,  with a suggested plausible full solution referencing \cite{MPS23}.  We provide a complete description of the second problem using a representation of the correlation function that captures both regimes simultaneously (see \eqref{eq:plateauIntro}).  These two results constitute our study of the torus scaling limit and the plateau.

\subsection{Definition of the model}
\label{sec:defimodel}

We consider the $n$-component $|\varphi|^4$ model on a high-dimensional discrete torus. 
For integers $L,N\ge 2$, let $\Lambda_N$ be the $d$-dimensional discrete torus defined by $\Lambda_N =[ - \lfloor \frac{L^N - 1}{2} \rfloor , \lfloor \frac{L^N }{2} \rfloor ]^d$,  equipped with a periodic graph structure. 
We denote $\Delta$ as the Laplacian with periodic boundary condition (PBC). 
Its fractional power,  $(-\Delta)^{1-\eta/2}$ ($\eta >0$),  is defined in detail in Section~\ref{sec:fracLap}.
We will always restrict  $d \ge 4$ and $\eta \in [0,1/2)$ so that the model stays at or above the upper critical dimension.

\begin{definition} \label{def:phi4}
Given $\nu \in \R$ and $g>0$,
the $|\varphi|^4$ model on $\Lambda_N$ (with periodic boundary condition) is the probability measure
\begin{align}
	\P_{g, \nu, N} (d \varphi) = \frac{1}{Z_{g,\nu, N}} e^{-H_N (\varphi)} d \varphi
	,  \qquad \varphi \in (\R^n)^{\Lambda_N}
\end{align}
where $Z_{g,\nu, N}$ is a normalisation constant and $H_{g,\nu,N}$ is the Hamiltonian given by
\begin{align}
	H_{g,\nu,N} (\varphi) &= \frac{1}{2} \big( \varphi,   (-\Delta)^{1-\eta/2}  \varphi \big) + V_{g,\nu,N} (\varphi)  ,   \label{eq:HgnuN} \\
	V_{g,\nu,N} (\varphi) &= \sum_{x \in \Lambda_N}  \frac{1}{2} \nu |\varphi(x)|^2  + \frac{1}{4} g |\varphi (x)|^4 .
	 \label{eq:VgnuN}
\end{align}
Expectation is denoted either $\E_{g,\nu,N}$ or $\langle \cdot \rangle_{g,\nu,N}$.
\end{definition}

The $\eta=0$ case corresponds to the short-range interaction model,  while for $\eta >0$,  we have $(-\Delta)^{1-\eta/2}_{x,y} \asymp |x-y|^{-(d+2-\eta)}$,  corresponding to the long-range interaction model.  
It is also common to consider the general interaction $\sum_{x,y} J_{x,y} \varphi_x \varphi_y$ with $J_{x,y} \asymp |x-y|^{-(d+\tilde{\alpha})}$,  which can be translated as $\tilde{\alpha} = 2-\eta$ under the setting above.

The upper critical dimension for the $|\varphi|^4$ model is $d_{c,u} = 4 - 2(\eta \vee 0) = 2\tilde{\alpha} \wedge 4$ \cite{FMN72C}.
Thus by assuming $d\ge 4$ and $\eta \in [0,1/2)$,  we restrict our analysis to the upper critical dimension and above,  and we anticipate mean-field critical exponents at the critical point in the thermodynamic limit,  as predicted by Fisher,  Ma and Nickel \cite{FMN72C}.
Some more recent accounts are given in \cite{BRRZ17scaling,BPR14crossover}.
In particular,  the correlation function is expected to attain algebraic decay $r^{-(d-2+\eta)} = r^{-(d-\tilde{\alpha})}$ and have a Gaussian process as a scaling limit.  These predictions led to a development of rigorous theories  in the $|\varphi|^4$-model and related probabilistic models,  such as the self-avoiding walk and percolation models \cite{HHS08M,  CS15critical, CS19critical,  panis2023triviality,  H25critical}.
FSS introduces corrections to these pictures in a finite volume.

First,  the torus scaling limit focuses on the regime  $N\rightarrow \infty$ 
while we consider observables in the scale of the $d$-dimensional torus $\T^d = [0,1)^d$.  
We define the scaling by considering a natural injection $\hat{i} : \Lambda_N \rightarrow \T^d$ with $d_{\T^d} ( \hat{i} (x) , \hat{i} (y) ) = L^{-N} d_{\Lambda_N}  (x,y)$ for any $x,y \in \Lambda_N$ and given a function $f :\T^d \rightarrow \R^n$,  let
\begin{align} \label{eq:fN}
	f_N (x) = \frac{1}{|\Lambda_N|} f (  \hat{i} (x) ) .  
\end{align}
Our goal is to study the exact limiting behaviour of
$\sum_{x \in \Lambda_N}  \varphi(x) \cdot f_N (x)$.
As shown in Theorem \ref{thm:NGlimit},  this limiting random variable is proven to follow a non-trivial distribution,  providing a precise description of the fluctuations on the torus.

A second, closely related phenomenon we investigate is the plateau effect. 
This effect describes a crossover in the two-point correlation function,  $G^{\Lambda_N} (x,y)$ on the finite torus $\Lambda_N$.
It arises from the competition between two terms: a polynomially decaying function characteristic of the infinite-volume critical point,  and a constant function reflecting the finite volume.
Specifically, general theory \cite{L24universal} predicts that the correlation function is asymptotically described by
\begin{align}
	G^{\Lambda_N} (x,y) \asymp \frac{1}{ (\dist_2 (x,y) )^{d-2+\eta}} + \frac{1}{|\Lambda_N|^{1-2/d_c}}
	\label{eq:plateauIntro}
\end{align}
where $d_c$ is the upper critical dimension of the short-range model, i.e.
when $\eta=0$.  
A surprising fact is that the exponent determined by $d_c$ appears exactly the same for long-range models ($\eta \neq 0$).
We rigorously prove this result in Theorem \ref{thm:plateauGen} with precise coefficients. 
Also see the paragraphs below the theorem for related results in distinct probabilistic models.

For the short-range model ($\eta=0$), the focus of the probability and mathematical physics communities has often been on the macroscopic scaling.  In this regime, observables are studied on the scale of $\R^d$, and the resulting limiting behaviour is insensitive to the system volume and the choice of boundary conditions.  This research area includes extensive work on: the computation of critical exponents \cite{MR857063,  MR3269689,  MR892925} including the analysis of correlation functions \cite{MR3459163,  MR880526,  DP25N,  BHH19,  Saka14}; and the study of scaling limits \cite{MR643591,  MR1219313,  MR678000,  AD21},  verifying the mean-field predictions.  
In contrast, the present work focuses on the FSS regime,  where the system's finite volume and geometry are crucial,  particularly as they lead to phenomena like the plateau effect and the torus scaling limit.

The literature concerning the long-range model has followed a closely related, yet distinct, line of development,  originating with the conjectures of \cite{FMN72C} built on a non-rigorous RG argument,  which also observed the lowering of the upper critical dimension $d_{c,u} = 4-2 (\eta \vee 0) = 2\tilde{\alpha} \wedge 4$,  thus the appearance of mean-field exponents at the critical point in $d\ge d_{c,u}$.
Many of the predicted critical phenomena have been rigorously resolved using a variety of powerful techniques, including the lace expansion \cite{HHS08M,  CS15critical, CS19critical},  and the random current representation \cite{MR857063,  MR958462,  panis2023triviality}.
The study of long-range interactions has also been extended to various other models in probability theory,  such as the self-avoiding walk \cite{CS19critical} and percolation \cite{H24P, H25critical, H24critical}, establishing it as a field of rich,  ongoing development.
There are also developments of the RG method for long-range model in \cite{MR3723429,MR3772040},  where the critical exponents are computed below the critical dimension,  
but these developments lack a systematic application of the RG method at or above the upper critical dimension.  

Our work specifically addresses this gap by employing a rigorous RG map to analyse the long-range $|\varphi|^4$ model.

We also make remarks about the restrictions $g \ll 1$ and $\eta \in [0,1/2)$.  
While the $|\varphi|^4$ model is structurally similar to the $O(n)$-spin model with a smooth potential,  certain robust non-perturbative methods, such as correlation inequalities and the random current representation,  have limitations. Specifically,  they often lack the precision required for quantitative analysis and are less effective when applied to multi-component models (where $n > 2$).
The weak coupling condition is essential when employing powerful quantitative analytic techniques like the RG or the lace expansion.  In the context of this paper, the assumption $g \ll 1$ is not merely a convenience,  but it is critical for our rigorous approach.  It is required both for the construction of the RG map itself \cite{FSmap} and for the subsequent construction of the stable manifold presented in Section \ref{sec:cotcpfdf}, which is necessary to define the critical point.
Also, we predict all our results will hold for $\eta \in [0,2)$, but we do not pursue this generality because it would require significant modifications to the RG map construction \cite{FSmap}.

\subsection{Limiting scales and measures}
\label{sec:limscalesandmeas}

There are three distinct scales for the field fluctuation in different scaling regimes.  We define them as
\begin{align} \label{eq:bNcN}
	\sa_N = L^{-dN/2} , \quad\; \sb_N = \begin{cases}
		N^{1/4} L^{-N} & (d=d_{c,u}) \\
		g^{-1/4} L^{-dN/4} & (d > d_{c,u}) ,
	\end{cases}
	\quad\;
	\sc_N = L^{- \frac{d-2 + \eta}{2} N} .
\end{align}

Also,  we define measures that appear as the scaling limit of the spin field on the torus. 
Averaged fields of $\varphi \in (\R^n)^{\Lambda_N}$ and $\psi \in L^1 (\T^d ; \R^n)$ are
\begin{align}
	\Phi_N (\varphi) = \frac{1}{|\Lambda_N|} \sum_{x \in \Lambda_N} \varphi_x
	,
	\qquad \Phi (\psi) = \int_{\T^d} \psi (x) \rd x .
\end{align}
We consider the White noise(WN) measure with mass $s >0$,
the massless (fractional) Gaussian field and a non-Gaussian measure on the field $\psi$ given by moment generating functions
\begin{align}
	\WN_s ( e^{(\psi, f)} ) &= \exp \big( (f, f) /2s \big) \label{eq:WNs}  \\
	\GF_{\eta} ( e^{(\psi, f)} ) &= \exp \big( (f,  (-\Delta)^{-1+\eta/2} f) / 2 \big) \label{eq:GFFba} \\	
	\NGd (e^{(\psi, f)}) &= \frac{\int_{\R^n}  e^{- \frac{1}{4} |x|^4 } e^{\Phi(f) \cdot x } \rd x }{\int_{\R^n}  e^{- \frac{1}{4} |x|^4 } \rd x } \label{eq:NGdg}
\end{align}
for $f \in \cS ( \T^d  ; \R^n )$.
One can check that \eqref{eq:WNs} and \eqref{eq:NGdg} give probability measures on Schwartz distributions and \eqref{eq:GFFba} gives a probability measure on Schwartz distributions quotiented by constant functions.
These measure appear in the scaling limits of Section~\ref{sec:eslatcp}.

Next lemma clarifies the nature of $\NGd$.  It shows that $\psi \sim \NGd$ is essentially a $Y$-multiple of the unit function,  where $Y$ is an $\R^n$-valued random variable given by 
\begin{align}
	\E [ e^{t \cdot Y} ] = \int_{\R^n}  e^{- \frac{1}{4} |x|^4 } e^{t \cdot x} \rd x \Big/ \int_{\R^d}  e^{- \frac{1}{4} |x|^4 } \rd x  \label{eq:Ydistn} .
\end{align}
Moments of $Y$ appear universally,  as it characterises the magnitude of the plateau,  see Theorem~\ref{thm:plateauGen}.

\begin{lemma} \label{lemma:NGcnst}
Let $Y$ be a $\R^n$-valued random variable given by \eqref{eq:Ydistn}.
Then $Y \one =^d \NGd$.
\end{lemma}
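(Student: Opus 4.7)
The plan is to show that the two laws have identical moment generating functions when tested against Schwartz test functions $f \in \cS(\T^d; \R^n)$, which by the Bochner--Minlos theorem (or a direct argument via Laplace transforms of tempered distributions) uniquely determines a probability law on $\cS'(\T^d; \R^n)$.

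First, I would interpret $Y\one$ as the random element of $\cS'(\T^d;\R^n)$ obtained by multiplying the constant function $\one \in L^1(\T^d;\R^n)$ by the random vector $Y$. For any test function $f$, the canonical pairing reads
\begin{align}
	(Y\one, f) = \int_{\T^d} Y \cdot f(x) \, \rd x = Y \cdot \Phi(f),
\end{align}
so that $(Y\one, f)$ is a real random variable given explicitly in terms of $Y$.

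Next, I would compute the moment generating functional of $Y\one$ using \eqref{eq:Ydistn} applied at $t = \Phi(f) \in \R^n$:
\begin{align}
	\E\big[ e^{(Y\one, f)} \big] = \E\big[ e^{\Phi(f)\cdot Y} \big] = \frac{\int_{\R^n} e^{-\frac{1}{4}|x|^4} e^{\Phi(f) \cdot x}\, \rd x}{\int_{\R^n} e^{-\frac{1}{4}|x|^4}\, \rd x}.
\end{align}
The right-hand side is exactly the defining functional of $\NGd$ in \eqref{eq:NGdg}, so the two laws have identical moment generating functionals on $\cS(\T^d;\R^n)$.

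Finally, I would note that both integrals converge absolutely for every $f$ (thanks to the Gaussian-type tail provided by the $|x|^4$ weight), so the functional is entire on $\cS$ and hence determines the law. The only mildly non-trivial point is justifying the uniqueness statement on $\cS'(\T^d;\R^n)$; this follows from the fact that finite-dimensional marginals $((\psi, f_1), \dots, (\psi, f_k))$ determine the law of a Schwartz-distribution-valued random variable, and a joint MGF of any finite-dimensional real random vector is recovered by testing at $\sum_j t_j f_j$ with $t_j \in \R$. Since this is essentially a bookkeeping verification rather than a substantial obstacle, I expect no real technical difficulties in this proof; the identification is forced once the pairing $(Y\one, f) = Y\cdot \Phi(f)$ is written down.
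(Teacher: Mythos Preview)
Your proposal is correct and follows essentially the same approach as the paper: compute $(Y\one,f)=Y\cdot\Phi(f)$ and match moment generating functionals against \eqref{eq:NGdg}. The paper's proof is the same two-line computation, omitting your additional remarks on convergence and uniqueness of the law.
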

\begin{proof}
For $f \in \cS (\T^d ; \R^n)$,
\begin{align}
	\E [ e^{(Y\one,  f)} ] = \E [ e^{Y \cdot \Phi(f)} ] = \NGd (e^{(\psi, f)}) .
\end{align}
\end{proof}

\subsection{Two-point function and plateau}
\label{sec:tpfap}

We first start with an exact asymptotic on the decay of the two-point function at the critical point.
Lattice points $\o,\x$ are considered,  where $\o$ is the origin and $|\x|= \dist_2 (\o,\x)$,  where $\dist_p$ is the $\ell^p$-distance. 
We introduce a few notations.  Let $\gamma(d,\eta) = 2^{-2+\eta} \pi^{-d/2}  \frac{\Gamma(\frac{d + \eta}{2} - 1)}{\Gamma (1-\eta/2)}$ and $\varphi_x^{(i)}$ be the $i^{\rm th}$ component of $\varphi_x \in \R^n$. 
Throughout the article,  we use $f(x) \sim g(x)$ to denote $\lim_{x} f(x) / g(x) =1$.

\begin{theorem}  \label{thm:infvol2pt}
Let $d \ge 4$,  $\eta \in [0,1/2)$,  $L$ be sufficiently large and $g>0$ be sufficiently small.
Then there exists $\nu_c  \equiv \nu_c (g) = O(g)$ such that
\begin{align}
	\lim_{N\rightarrow \infty} \langle \varphi_\o^{(1)} \varphi_\x^{(1)} \rangle_{g,\nu_c,  N} =: \langle \varphi_\o^{(1)} \varphi_\x^{(1)} \rangle_{g,\nu_c,  \Z^d} =: \C_\x
\end{align}
exists and satisfies 
\begin{align}
	\C_\x \sim c_1 |\x|^{-(d-2+\eta)} \quad \text{as} \quad |\x| \rightarrow \infty \label{eq:infvol2pt}
\end{align}
for some constant $c_1$ such that $c_1 = \gamma(d,\eta) + O(g)$ when $\eta = 0$ and $c_1 = \gamma(d,\eta)$ when $\eta \neq 0$.
\end{theorem}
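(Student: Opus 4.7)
The plan is to deploy the RG map developed in the author's previous work together with the stable manifold construction outlined in Section~\ref{sec:cotcpfdf}. The underlying principle is that at the critical value $\nu_c(g)$, the effective RG trajectory tends to the Gaussian fixed point, so the interacting two-point function inherits the leading asymptotics of the Gaussian fractional free field on $\Z^d$, corrected only through the flow of a single observable coupling constant.

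The first step is to construct $\nu_c$ on $\Z^d$ as the limit of the finite-volume critical values $\nu_{c,N}$. For each $N$, the stable manifold associated with the RG flow yields a unique $\nu_{c,N}$ such that the flow of effective potentials $(V_j)_{0\le j\le N}$ remains in the RG domain and converges to the trivial fixed point. The contraction estimates of the RG map imply $|\nu_{c,N+1}-\nu_{c,N}|$ is summable in $N$, producing a well-defined limit $\nu_c = O(g)$. The same uniform-in-$N$ bounds from the RG control the two-point function locally, giving the existence of $\C_\x$ as a termwise limit.

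The second step is to extract the decay. The RG output furnishes a representation of the form
\begin{equation}
  \C_\x \;=\; \lambda_\infty^2\, C_{\Z^d,\eta}(\o,\x) \;+\; R(\o,\x),
\end{equation}
where $C_{\Z^d,\eta}(\o,\x)=((-\Delta)^{-1+\eta/2})_{\o,\x}$ is the fractional Gaussian covariance on $\Z^d$, $\lambda_\infty=\lim_{j\to\infty}\lambda_j$ is the limiting observable coupling attached to the external legs by the RG, and $R$ collects non-Gaussian contributions summed over scales. A direct Fourier computation, based on the distributional identity for $|k|^{-(2-\eta)}$ on $\R^d$ and a lattice-to-continuum comparison, yields the classical asymptotic $C_{\Z^d,\eta}(\o,\x)\sim \gamma(d,\eta)|\x|^{-(d-2+\eta)}$. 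Scale-by-scale estimates from the RG then show $R(\o,\x)$ decays strictly faster than $|\x|^{-(d-2+\eta)}$, so the first term gives the leading behaviour, with $c_1=\lambda_\infty^{2}\,\gamma(d,\eta)$.

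The final step is to evaluate $\lambda_\infty$. In the long-range case $\eta\in(0,1/2)$ the nonlocal kernel $(-\Delta)^{1-\eta/2}$ prevents the observable coupling from mixing with the bulk $|\varphi|^4$ interaction at any order of the RG perturbation: the field-strength renormalization vanishes, so $\lambda_\infty=1$ exactly and $c_1=\gamma(d,\eta)$. In the short-range case $\eta=0$ the observable coupling picks up a nontrivial multiplicative correction $\lambda_\infty=1+O(g)$, driven by the local interaction of the external legs with $\varphi^4$ at each RG scale, and hence $c_1=\gamma(d,\eta)+O(g)$. The main obstacle is the bookkeeping needed to bound $R(\o,\x)$ with a decay rate strictly better than $|\x|^{-(d-2+\eta)}$ uniformly across the infinitely many scales, together with the precise verification that the observable flow is exactly trivial (not merely $1+O(g)$) in the long-range regime; both tasks exploit the quantitative precision of the author's RG map rather than more robust but less sharp methods such as correlation inequalities or random currents.
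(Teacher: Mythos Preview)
Your overall architecture is right, but the identification of where $c_1$ comes from is wrong, and this matters for the argument.

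In the paper the leading term is not $\lambda_\infty^2\,((-\Delta)^{-1+\eta/2})_{\o,\x}$ but rather $\lambda_{\o,\infty}\lambda_{\x,\infty}\,C^{(\vec{\ba})}_{\Z^d}(\x)$, where $C^{(\vec{\ba})}$ is the \emph{counterterm-modified} covariance (the inverse of $(-\Delta)^{1-\eta/2}+\bar{\ba}_\Delta(-\Delta)+\cdots$). The paper then proves that $\lambda_{\o,\infty}=\lambda_{\x,\infty}=1$ \emph{exactly}, in both the short- and long-range cases, by a non-perturbative argument: summing the two-point representation over $\x$ and comparing with the independently derived finite-size susceptibility asymptotic from Corollary~\ref{cor:NGlimit} forces $\bar\lambda=1$ (see the end of the proof of Proposition~\ref{prop:plateauInt} and the remark following it). So your claim that $\lambda_\infty=1+O(g)$ for $\eta=0$ and your mechanism ``observable coupling picks up a nontrivial multiplicative correction'' are both incorrect. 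The $O(g)$ in $c_1$ for $\eta=0$ comes instead from the field-strength counterterm $\bar{\ba}_{c,\Delta}=O(g)$ sitting inside the modified covariance: Lemma~\ref{lemma:Greenasymp} gives $C^{(\vec{\ba})}_{\Z^d}(\x)\sim(1+\bar{\ba}_{c,\Delta})^{-1}\gamma|\x|^{-(d-2)}$. For $\eta>0$ the same counterterm is present but contributes $|p|^2$ to the Fourier symbol, which is subleading to $|p|^{2-\eta}$, so the asymptotic constant is exactly $\gamma$; it is not that ``the field-strength renormalization vanishes''.

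A secondary point: $\nu_c$ is not built as a limit of finite-volume critical values $\nu_{c,N}$. The stable manifold is constructed directly for the infinite-volume RG map $\Phi^{\Z^d}_{j+1}$ (Theorem~\ref{thm:theStableManifold}), and the finite-volume flow is obtained by projection (Theorem~\ref{thm:infvolRGmap}); there is no sequence $\nu_{c,N}$ to be shown Cauchy.
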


\begin{remark}
\textbf{(i)} Although the theorem is known for the long-range Ising model  \cite{CS15critical},  our results extend this result to the general $n$-component $|\varphi|^4$ models.
Thus we extend the parameter regime where \cite{FMN72C} is verified.
For the short-range model,  the decay was proved in \cite{MR3459163,  BHH19} when $(d,n) \in \{4\} \times \mathbb{Z}_{\ge 1}$ and $(d,n) \in \Z_{\ge 5} \times \{1,2\}$,  we extend the result to general $d\ge 4$ and $n \in \Z_{\ge 1}$.

By Lemma~\ref{lemma:fracLapGreen},  $\gamma$ is a constant such that $(-\Delta)^{-1+\eta/2}(\o,\x) \sim \gamma |\x|^{-(d-2+\eta)}$.  Consequently, at the critical point when $\eta\neq 0$,  the $|\varphi|^4$ potential is asymptotically invisible in the infinite-volume two-point correlation function.  This finding is consistent with the established result for the Ising model \cite[Theorem 1.2]{CS15critical}.
\smallskip

\noindent\textbf{(ii)} By the theorem,  we have the infinite-volume susceptibility $\sum_{\x \in \Z^d} \langle \varphi_\o^{(1)} \varphi_\x^{(1)} \rangle_{g,\nu_c, \Z^d} = \infty$. 
For $\eta=0$,  reflecting on the $n=1,2$ cases,  it is natural to expect that the polynomial decay rate of the correlation function uniquely characterises the critical point (due to Simon-Lieb inequality \cite{MR589427},  also see \cite{DP25N}).
However,  since the Griffiths inequality is not available when $n\ge 3$,  the theorem does not imply the uniqueness of the point satisfying \eqref{eq:infvol2pt}.
It will become more apparent in Corollary~\ref{cor:WNlimit} that $\nu_c$ is the critical point,  also see Remark~\ref{remark:nuc}.
\end{remark}

Next,  we prove the the plateau effect in FSS,  where $Y$ is as in Lemma~\ref{lemma:NGcnst}.
In the theorem,  $\o, \x$ are now points in the discrete torus $\Lambda_N$,  but we still consider the same $\mathbb{C}_\x$ given in Theorem~\ref{thm:infvol2pt},  that we define via the natural imbedding $\Lambda_N \rightarrow \Z^d$. 

\begin{theorem} \label{thm:plateauGen}
Under the assumptions of Theorem~\ref{thm:infvol2pt},
there exists $c_2 > 0$ such that
\begin{align}
	\langle \varphi_\o^{(1)} \varphi_\x^{(1)} \rangle_{g,\nu_c, N} = \big( \mathbb{C}_\x + c_2 \frac{\E[|Y|^2]}{n} \sb_N^2 \big) (1 +o(1)) .
\end{align}
where $o(1)$ is a function that tends to 0 as $|\x| \rightarrow \infty$,  uniformly in $N$.
Moreover,  $c_2 = \sqrt{n+8} /4\pi$ when $d= d_{c,u}$ and $c_2 = 1+ O(g)$ when $d \neq d_{c,u}$.
\end{theorem}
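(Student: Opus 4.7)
The plan is to split the field into its zero Fourier mode and an orthogonal fluctuation, and to analyse the two contributions separately via the RG map. Write $\varphi^0 := |\Lambda_N|^{-1}\sum_{y\in\Lambda_N}\varphi_y$ and $\tilde\varphi_x := \varphi_x - \varphi^0$, so that $\sum_y\tilde\varphi_y = 0$ by construction. Torus translation invariance of $\P_{g,\nu_c,N}$ immediately forces the cross term to vanish, $\E[\varphi^{0,(1)}\tilde\varphi_\x^{(1)}] = \E[\varphi^{0,(1)}\,|\Lambda_N|^{-1}\sum_y \tilde\varphi_y^{(1)}] = 0$, so
\begin{align}
\langle \varphi_\o^{(1)}\varphi_\x^{(1)}\rangle_{g,\nu_c,N} = \langle (\varphi^{0,(1)})^2\rangle_{g,\nu_c,N} + \langle \tilde\varphi_\o^{(1)}\tilde\varphi_\x^{(1)}\rangle_{g,\nu_c,N}.
\end{align}
The goal is then to identify the first term with $c_2\, \E[|Y|^2]\sb_N^2/n$ and the second with $\mathbb{C}_\x$, absorbing the combined error into the $(1+o(1))$ factor as $|\x|\to\infty$ uniformly in $N$.

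For the zero-mode variance, I would integrate out $\tilde\varphi$ with the RG map from the author's earlier construction. Since the kinetic term $\tfrac12(\varphi,(-\Delta)^{1-\eta/2}\varphi)$ annihilates $\varphi^0$, the only coupling between $\varphi^0$ and $\tilde\varphi$ is through the quartic piece of $V_{g,\nu,N}$. Running the RG through all $N$ scales produces, at the final step, an effective density for $\varphi^0$ which — because $\nu_c$ is chosen on the stable manifold of Section~\ref{sec:cotcpfdf} — is proportional, up to small perturbative errors, to $\exp(-\tfrac14 g_N^{\rm eff}|\Lambda_N|\,|\varphi^0|^4)$. Rescaling $\varphi^0 = \sb_N Y_N$ then shows that $Y_N$ converges in law to $Y$ in \eqref{eq:Ydistn}, matching Theorem~\ref{thm:NGlimit} and Lemma~\ref{lemma:NGcnst}, and the variance is $\langle(\varphi^{0,(1)})^2\rangle = (g_N^{\rm eff}|\Lambda_N|)^{-1/2}\E[|Y|^2]/n\,(1+o(1))$. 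When $d>d_{c,u}$ the RG flow of $g$ is nearly trivial, so $g_N^{\rm eff} = g(1+O(g))$ and a direct check gives $c_2 = 1+O(g)$. When $d=d_{c,u}$, the one-loop beta-function $g_{j+1} = g_j - \tfrac{n+8}{16\pi^2}g_j^2 + O(g_j^3)$ produces $g_N^{\rm eff} \sim \tfrac{16\pi^2}{(n+8)N}$, which together with $|\Lambda_N|=L^{4N}$ and $\sb_N = N^{1/4}L^{-N}$ gives $c_2 = \sqrt{n+8}/(4\pi)$.

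For the fluctuation two-point function, the non-zero Fourier modes of the torus covariance are eigenfunctions of $(-\Delta)^{1-\eta/2}$ with smallest eigenvalue of order $L^{-(2-\eta)N}$, so the RG map on $\tilde\varphi$ runs through all $N$ scales without degeneration. At each scale $j\le N$ the torus RG flow agrees with the $\Z^d$ flow used to prove Theorem~\ref{thm:infvol2pt}, up to discrepancies between the torus and $\Z^d$ scale-$j$ covariances. These discrepancies are negligible while $L^j\ll L^N$ and, at the coarsest scales, couple only to the zero-mode sector already absorbed into the first term. Consequently $\langle\tilde\varphi_\o^{(1)}\tilde\varphi_\x^{(1)}\rangle = \mathbb{C}_\x(1+o(1))$ as $|\x|\to\infty$ uniformly in $N$.

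The main obstacle is the uniform-in-$N$ comparison of the torus RG with the infinite-volume RG: one must check that the finite-range covariance decomposition required by the RG map is compatible with the torus propagator, and that all torus-versus-$\Z^d$ boundary corrections can be cleanly attributed to the zero-mode sector without contaminating the non-zero-mode two-point function. Once this is in place, combining the non-Gaussian scaling limit of $\varphi^0$ with the uniform convergence of the fluctuation two-point function to $\mathbb{C}_\x$ yields the stated additive expansion with the asserted constants $c_2$.
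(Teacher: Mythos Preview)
Your orthogonal decomposition $\varphi = \varphi^0 + \tilde\varphi$ and the vanishing of the cross term are correct, and the identification of $\langle(\varphi^{0,(1)})^2\rangle$ with the plateau term is essentially Corollary~\ref{cor:NGlimit}, which the paper proves independently via Theorem~\ref{thm:NGlimit}(i). So that half of your argument is sound.

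The gap is in the fluctuation half. Your claim that $\langle\tilde\varphi_\o^{(1)}\tilde\varphi_\x^{(1)}\rangle = \mathbb{C}_\x(1+o(1))$ is not established by ``running the RG on $\tilde\varphi$'': an RG map by itself computes partition-function ratios, not pointwise correlation functions. To extract a two-point function one needs either source terms to differentiate or, as the paper does, an \emph{observable field} formalism. The paper introduces nilpotent symbols $\sigma_\o,\sigma_\x$ (Section~\ref{sec:effpotes}) and augments the potential by $\sigma_\o\lambda_\o\varphi_\o^{(1)} + \sigma_\x\lambda_\x\varphi_\x^{(1)}$, so that the two-point function is the $\sigma_\ox$-coefficient of the RG output (Proposition~\ref{prop:2ptfncrestt}). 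The RG flow then carries three observable coordinates --- $u_{j,\ox}$, $\lambda_{\hash,j}$, and $K_{j,\ox}$ --- through all scales. The decaying piece $\mathbb{C}_\x$ emerges from $u_{N,\ox}(\Lambda)$, which accumulates the covariance contributions $\Gamma_j(\x)$ weighted by the running observable coupling $\lambda_{\hash,j}$ (Section~\ref{sec:uNox}); the plateau arises from the final zero-mode integral of the surviving observable terms (Section~\ref{sec:INox}). Both pieces come out of the \emph{same} RG run, and the fact that the two contributions add cleanly (with $\lambda_{\hash,\infty}=1$, proved at the very end of Proposition~\ref{prop:plateauInt} by comparing against Corollary~\ref{cor:NGlimit}) is not obvious a priori.

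Your proposal has no analogue of this machinery. Saying ``the torus RG flow agrees with the $\Z^d$ flow'' is true for the \emph{bulk} coordinates, but it does not tell you what $\langle\tilde\varphi_\o\tilde\varphi_\x\rangle$ is --- you would still need to insert observables into that $\Z^d$ flow, which is exactly the content of Section~\ref{sec:stabobsflow}. Moreover, the uniform-in-$N$ control of the error (your $o(1)$) requires explicit bounds on $K_{N,\ox}$ via the norm $h_{N,\ssigma}$ tuned to the observable scale $j_\ox$ (Lemma~\ref{lemma:ANox}); the ``boundary corrections attributed to the zero-mode sector'' heuristic does not supply these.
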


The plateau effect is a robust phenomenon observed across numerous distinct statistical physics models. For nearest-neighbor interactions, it has been rigorously established in: the Ising and one-component $\varphi^4$ models for dimensions $d>4$ \cite{LPS25T}; the self-avoiding walk for $d>4$ \cite{S23N, L25G}; branched polymers for $d>8$ \cite{L25N}; and percolation for $d>6$ \cite{HMS23}.  In contrast, the analysis of the long-range models with respect to the plateau effect is relatively limited,  with some examples including \cite{Liu25, H24P}.

Crucially,  the plateau effect remains largely unproven exactly at the upper critical dimension $d_{c,u}$,  with the hierarchical $|\varphi|^4$ model \cite{park2025boundary} being an exception.  Furthermore,  many existing results only prove the effect in a massive,  asymptotic regime slightly above the critical point ($0<\nu - \nu_c \ll 1$).
Also,  while it is believed that the correlation function should exhibit a plateau effect within a full critical window around the critical point $\nu = \nu_c$,  this rigorous proof has only been achieved for a few models \cite{MPS23, park2025boundary}.  For related open problems, see Section \ref{sec:openprobls}.

\subsection{Near-critical point limits}
\label{sec:nearcplim}

We first state a scaling limit on a sequence of $\nu$ that converges to $\nu_c$.  This result is a generalisation of \cite[Theorem~1.3.(i)]{MR3269689} to $(d\ge 4,  \; \eta >0)$ and $(d\ge 5, \; \eta \ge 0)$.

\begin{theorem}
\label{thm:WNlimit}
Let $\eta \in [0,1/2)$,  $d \ge 4$,  $L$ sufficiently large and $g>0$ be sufficiently small.
There exist some sequences $(\epsilon_k ,  \epsilon'_k )_{k \ge 0}$ such that $\epsilon_k \rightarrow 0$,  $\epsilon'_k \downarrow 0$ as $k\rightarrow \infty$ and 
for any $f \in \cS (\T^d ; \R^n)$,
\begin{align} 
	\label{eq:WNlimit}
	\lim_{N\rightarrow \infty} \big\langle  e^{(\varphi, \,  f_N) /\sa_N} \big\rangle_{g,\nu_c + \epsilon_k,N}  
		=
		\WN_{\epsilon'_k} ( e^{(\psi,f)}  )  .
\end{align}

\end{theorem}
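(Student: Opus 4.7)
The plan is to compute the moment generating function directly by introducing the source $J = f_N/\sa_N$ into the Hamiltonian and performing a Gaussian field shift, then showing that the residual interaction terms are asymptotically negligible. The white-noise mass $\epsilon_k'$ will be identified with the inverse susceptibility $\chi(\nu_c + \epsilon_k)^{-1}$, which tends to $0$ as $\epsilon_k \downarrow 0$ by the divergence of $\chi$ at the critical point.

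First I would write the left-hand side of \eqref{eq:WNlimit} as a ratio of partition functions $Z_N(J)/Z_N(0)$, where the source $J(x) = L^{-dN/2} f(\hat{i}(x))$ satisfies $\|J\|_{\ell^2(\Lambda_N)}^2 \to \|f\|_{L^2(\T^d)}^2$. I then shift $\varphi \mapsto \varphi + \phi_J$ with $\phi_J = C_k J$ and $C_k = ((-\Delta)^{1-\eta/2} + \nu_c + \epsilon_k)^{-1}$ to cancel the linear source in the Gaussian sector. This produces an explicit prefactor $\exp(\frac{1}{2} (J, C_k J))$ and replaces $V_{g,\nu_c+\epsilon_k, N}(\varphi)$ by $V_{g, \nu_c + \epsilon_k, N}(\varphi+\phi_J)$.

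The exponent $(J, C_k J) = \sum_{x, y \in \Lambda_N} J(x) C_k(x,y) J(y)$ converges to $\chi(\nu_c + \epsilon_k)\, \|f\|^2_{L^2(\T^d)}$ as $N \to \infty$: indeed, $J$ varies slowly on scale $L^N$, the interacting covariance $C_k$ has an exponentially decaying kernel uniformly in $N$ for each fixed $\epsilon_k > 0$, and $\sum_y C_k(x,y)$ agrees with the susceptibility $\chi(\nu_c+\epsilon_k)$ in the bulk. This gives the moment generating function of $\WN_{\epsilon_k'}$ with $\epsilon_k' = \chi(\nu_c+\epsilon_k)^{-1}$. The divergence of $\chi$ as $\nu \downarrow \nu_c$ --- a consequence of Theorem~\ref{thm:infvol2pt} (infinite integrated two-point function) together with monotonicity of $\chi$ in $\nu$ --- then ensures that $\epsilon_k' \downarrow 0$ along any sequence $\epsilon_k \downarrow 0$.

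The main obstacle is controlling the ratio $\exp(-V_{g,\nu_c+\epsilon_k, N}(\varphi+\phi_J) + V_{g, \nu_c+\epsilon_k, N}(\varphi))$ so that it contributes only $1 + o(1)$ to the partition function ratio. The shift has uniformly small amplitude $\|\phi_J\|_\infty = O(L^{-dN/2} \chi(\nu_c+\epsilon_k))$, which vanishes as $N \to \infty$ for each fixed $k$; hence the cross terms $\varphi^3 \phi_J$, $\varphi^2 \phi_J^2$, $\varphi \phi_J^3$ and the pure shift $\phi_J^4$ are formally subleading. Making this rigorous uniformly in $N$ requires the RG map construction from the author's earlier work applied at the off-critical coupling $\nu_c + \epsilon_k$: in this regime the RG flow should terminate in a massive Gaussian regime, and the perturbative shift insertions can be absorbed into the flow as lower-order observables without disturbing the leading Gaussian exponent identified above.
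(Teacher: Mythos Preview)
Your overall strategy --- shift the field by a massive covariance, extract a Gaussian prefactor, and show the residual interaction is negligible via RG --- is the paper's approach in outline, but the execution has two genuine gaps.

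First, you conflate free and interacting quantities. You define $C_k = ((-\Delta)^{1-\eta/2} + \nu_c + \epsilon_k)^{-1}$ as the free massive resolvent, yet later call it ``the interacting covariance'' and claim $\sum_y C_k(x,y) = \chi(\nu_c+\epsilon_k)$. That sum is the \emph{free} zero-mode $(\nu_c+\epsilon_k)^{-1}$, not the interacting susceptibility; worse, $\nu_c = O(g)$ may be negative, so $C_k$ need not even be positive. The paper instead shifts by a covariance $C^{(\vec\ba)}$ carrying a separate mass $\ba^{(\emptyset)} = \epsilon'_k > 0$ together with gradient counterterms $\vec\ba_{c,k}$ (Section~\ref{sec:modotcov}); the prefactor then converges to $\exp(\|f\|^2_{L^2}/2\ba^{(\emptyset)})$ by Lemma~\ref{lemma:fNtNlmt}, and the identity $\chi = n/\epsilon'_k$ is a \emph{consequence} (Corollary~\ref{cor:WNlimit}), not an input.

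Second, and more fundamentally, you treat $\epsilon_k$ as an arbitrary positive sequence tending to zero and plan to ``run the RG at the off-critical coupling''. But the RG flow is only controlled when the initial data lie on the stable manifold, which forces $(\epsilon_k, \epsilon'_k, \vec\ba_{c,k})$ to be constructed \emph{simultaneously} by the fixed-point argument of Theorem~\ref{thm:theStableManifold} and Proposition~\ref{prop:theRGthm}; for a generic $\epsilon_k$ the flow escapes the domain $\D_j$ and your residual estimates have no footing. Your appeal to monotonicity of $\chi$ to deduce $\epsilon'_k \downarrow 0$ is likewise unavailable for $n \ge 3$ (Remark~\ref{remark:nuc}(i)); the paper does not even assert $\epsilon_k > 0$, only that the particular sequence produced by the stable manifold has $\epsilon'_k = \ba^{(\emptyset)}_{c,k} \downarrow 0$ by construction.
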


The theorem is a strong indication that $\nu_c$ is the critical point of the $|\varphi|^4$ model.  To see this more clearly,  we can compute the susceptibility.

\begin{corollary}
\label{cor:WNlimit}

Under the conditions of Theorem~\ref{thm:WNlimit},
for any $p \in \Z_{\ge 1}$ and $\lambda \in \R^n$
\begin{align}
	\lim_{N\rightarrow \infty} \frac{1}{\sa_N^{2p}} \left\langle | \lambda \cdot \Phi_N (\varphi)|^{2p} \right\rangle_{g,\nu_c + \epsilon_k , N} 
		= \frac{(2p) !}{p !} \frac{1}{(2 \epsilon'_k)^p}  .
\end{align}
\end{corollary}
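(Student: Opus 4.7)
The plan is to apply Theorem~\ref{thm:WNlimit} with a constant-valued test function and then read off the $2p$-th moment from the resulting Gaussian. Fix $\lambda \in \R^n$ and $t \in \R$, and take $f \equiv t\lambda$, which lies in $\cS(\T^d;\R^n)$ because $\T^d$ is compact. The discretisation \eqref{eq:fN} gives $f_N(x) = t\lambda/|\Lambda_N|$, so $(\varphi, f_N) = t\,\lambda \cdot \Phi_N(\varphi)$. Setting $X_N := \lambda \cdot \Phi_N(\varphi)/\sa_N$, Theorem~\ref{thm:WNlimit} yields, for each $t \in \R$,
\begin{align}
  \lim_{N\rightarrow\infty} \big\langle e^{tX_N} \big\rangle_{g, \nu_c + \epsilon_k, N}
   = \WN_{\epsilon'_k}\big( e^{t (\psi, \lambda)} \big)
   = \exp\Big( \frac{t^2 |\lambda|^2}{2\epsilon'_k} \Big),
\end{align}
using $(\lambda,\lambda) = |\lambda|^2$ since $\T^d$ has unit volume. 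The limiting moment generating function is that of a centred real Gaussian with variance $|\lambda|^2/\epsilon'_k$, whose $2p$-th moment equals $\frac{(2p)!}{p!}\,|\lambda|^{2p}/(2\epsilon'_k)^p$, matching the claimed right-hand side.

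To finish, I would upgrade pointwise-in-$t$ convergence of the MGFs to convergence of the individual moment $\langle X_N^{2p} \rangle$. I expect to do this by Cauchy's integral formula on a small complex disk around $t=0$: the RG construction behind Theorem~\ref{thm:WNlimit} should furnish uniform-in-$N$ analyticity and boundedness of the finite-$N$ MGF in a complex neighbourhood of the origin, after which Vitali's convergence theorem delivers term-by-term convergence of the Taylor coefficients at $0$. Extracting the coefficient of $t^{2p}$ and using $X_N^{2p}=|X_N|^{2p}$ (since $2p$ is even) then concludes.

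The main obstacle is precisely this last upgrade, since pointwise MGF convergence does not in general imply moment convergence; a uniform analytic (or higher-moment) bound in $N$ is required. At weak coupling this should not be conceptually hard --- stability of $H_{g,\nu_c+\epsilon_k,N}$ together with the RG output ought to produce such control --- but it has to be recorded explicitly. An alternative that bypasses complex variables is to combine a single real-$t$ application of Theorem~\ref{thm:WNlimit} with a uniform bound $\sup_N \langle X_N^{2p+2} \rangle < \infty$ and dominated convergence on the truncated Taylor expansion of $e^{tX_N}$.
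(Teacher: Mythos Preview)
Your approach matches the paper's exactly: substitute the constant $f=\lambda\one$, obtain the Gaussian MGF limit from Theorem~\ref{thm:WNlimit}, then read off moments --- the paper's proof simply says ``expand both sides and compare coefficients of $\lambda$,'' eliding the very term-by-term justification you flag. Your proposed remedies (Vitali via uniform-in-$N$ analyticity on a complex disk, or a uniform higher-moment bound) are the standard way to close this and are indeed available from the RG machinery behind Theorem~\ref{thm:WNlimit}; note also that your computed limit $\tfrac{(2p)!}{p!}\,|\lambda|^{2p}/(2\epsilon'_k)^p$ reveals a missing factor $|\lambda|^{2p}$ in the stated right-hand side of the corollary, not an error on your part.
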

\begin{proof}
For $\lambda \in \R^n$,  we can take $f = \lambda \one$ in \eqref{eq:WNlimit}, 
use \eqref{eq:WNs} and expand both sides to get
\begin{align}
	\sum_{n=0}^{\infty} \frac{1}{n !} \langle (\lambda \cdot \Phi_N (\varphi))^n \sa_N^{-n}  \rangle_{g,\nu_c + \epsilon_k,N} 
		\sim \exp\left( \frac{|\lambda|^2}{2 \epsilon'_{k}} (\one, \one) \right) = \sum_{p=0}^{\infty} \frac{|\lambda|^{2p}}{p! (2 \epsilon'_k)^{p}} 
\end{align}
as $N\rightarrow \infty$.
We get the desired conclusion by comparing the coefficients of $\lambda$.
\end{proof}

If we define the susceptibility as
\begin{align}
	\chi_{g, \nu,N} = L^{dN} \left\langle |\Phi_N (\varphi)|^2 \right\rangle_{g, \nu,N}
	,
	\qquad
	\chi_{g, \nu,\infty} = \limsup_{N\rightarrow \infty} \chi_{g, \nu,N} 
	, \label{eq:chidefn}
\end{align}
it follows immediately that
\begin{align}
	\chi_{g,\nu_c + \epsilon_k ,\infty } = n ( \epsilon'_k )^{-1} \rightarrow \infty \quad \text{as} \quad \epsilon_k \rightarrow 0 
\end{align}
so $\nu_c$ is a cluster point of a sequence along which the susceptibility diverges.  This matches with the usual sense of the critical point.
That $\chi_{g,\nu_c,\infty} = \infty$ can be deduced from the torus scaling limit,  see Corollary~\ref{cor:NGlimit}.

\begin{remark} \label{remark:nuc}
\textbf{(i)} These statements do not guarantee the uniqueness of $\nu_c$ with the same property,  or even the sign of $\epsilon_k$.
But for $n =1,2$,  the Ginibre inequality \cite{MR0269252} implies that $\chi_{g,\nu,N}$ is increasing in $\nu$,  so the theorem implies
\begin{align}
	\nu_c = \inf\{ \nu \in \R : \chi_{g,\nu,\infty} < \infty \}
\end{align}
and we should have $\epsilon_k > 0$.  In that case,  we can also replace the sequence $(\epsilon_k )_{k\ge 0}$ by any sequence approaching 0 from above. 

For the general case,  we expect that the same would hold and can be proved using the method of \cite{MPS23}.
However,  we leave it open in this paper because it requires careful analysis of the dynamical system generated by the RG flow. 

\smallskip\noindent\textbf{(ii)}
When $(d,\eta)=(4,0)$,  a statement similar to Theorem~\ref{thm:WNlimit} was proved in \cite[Theorem~1.3(i)]{BBSphi4}.  The main difference is that $\epsilon_k \downarrow 0$ and the asymptotic of $\epsilon'_k$ was computed as a function of $\epsilon_k$ there.  This is not in the scope of this article,  but we expect that a similar analysis can be applied to our setting.

\smallskip\noindent\textbf{(iii)} 
The construction of the critical point via the RG presents distinct difficulties depending on the dimension.  For $(d,\eta)=(4,0)$  \cite{BBSphi4}, 
the difficulty stems from the hyperbolic nature of the quartic term coefficient in the RG flow.
In contrast,  for dimensions above the upper critical dimension ($d>d_{c,u}$), the quartic term coefficient is elliptic,  which is conceptually simpler for construction.  However, this ease is counterbalanced by the need to tune many more initial variables.

A reader familiar with RG theory might wonder why tuning these additional variables is difficult,  given that most of them are irrelevant when $d>d_{c,u}$.  The difficulty arises because the quartic coupling constant $g$ is a `dangerous irrelevant' in the context of FSS.  This means that despite its irrelevance in the infinite-volume limit,  its flow cannot be neglected when analysing finite systems.  Consequently,  our critical point construction demands a stronger stability property.  
This requirement makes the RG analysis of the torus scaling limit non-trivial in $d>d_{c,u}$,  differentiating our approach from previous rigorous RG formulations of the $|\varphi|^4$ model, such as those established at $d= d_{c,u}$ \cite{MR790736,  MR3269689,  MR882810}.
\end{remark}

\subsection{Torus scaling limit at the critical point}
\label{sec:eslatcp}

Torus scaling limit is obtained when the macroscopic system is fixed as a torus with finite size.  
Recall $\sb_N$ and $\sc_N$ from \eqref{eq:bNcN}.

\begin{theorem} 
\label{thm:NGlimit}
Under the setting of Theorem~\ref{thm:infvol2pt},  let $f \in \cS (\T^d ; \R^n)$ and $c_1$ and $c_2$ be as in Theorem~\ref{thm:infvol2pt} and \ref{thm:plateauGen}.
\begin{enumerate}
\item For $c_3 (d,\eta) = c_2 (d,\eta)^{-2}$,
\begin{align}
	\lim_{N\rightarrow \infty} \langle  e^{(\varphi,f_N) / \sb_N} \rangle_{g,\nu_c ,N}  
		= \NGd \left( e^{(\psi, f) / c_3^{1/4} }  \right) .
\end{align}

\item For $c_4 (d,\eta) = \gamma(d,\eta) /  c_1 (d,\eta)$,
\begin{align}
	\lim_{N\rightarrow \infty} \langle  e^{(\varphi,f_N - \Phi_N (f_N)) / \sc_N} \rangle_{g,\nu_c ,N}  
		=	
		\GF_{\eta} \big( e^{(\psi,  f - \Phi(f) ) / c_4^{1/2} }  \big) .
\end{align}
\end{enumerate}
\end{theorem}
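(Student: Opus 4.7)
The plan is to decompose $\varphi = \zeta + \tilde\varphi$, where $\zeta = \Phi_N(\varphi) \in \R^n$ is the spatially constant zero mode and $\tilde\varphi$ has vanishing spatial mean. Since $(-\Delta)^{1-\eta/2}$ annihilates constants, $\zeta$ couples to $\tilde\varphi$ only through the quartic potential $V_{g,\nu_c,N}$. The two statements of the theorem correspond naturally to this decomposition: in (1) the test function $f_N$ has non-trivial average $\Phi(f)$, so the expectation is dominated by fluctuations of the zero mode $\zeta$ at its natural scale $\sb_N$, while in (2) one has $\Phi_N\big(f_N - \Phi_N(f_N)\big) = 0$, so $\zeta$ decouples from the exponential and only the Gaussian-like fluctuations of $\tilde\varphi$ at the finer scale $\sc_N$ contribute.

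For part (1), I would apply the RG map from the author's previous work iteratively to integrate out the non-zero-mode fluctuations of $\tilde\varphi$ while keeping $\zeta$ frozen. At the critical point $\nu_c$ constructed in Section~\ref{sec:cotcpfdf}, this produces at the final scale $N$ an effective potential on $\zeta$ of the form $\tfrac{1}{4} g_N |\Lambda_N| |\zeta|^4$, plus subleading polynomial and non-polynomial remainders controlled by the stability estimates of the RG flow. Substituting $\zeta = \sb_N Z$, the scale $\sb_N$ is designed precisely so that $g_N |\Lambda_N| \sb_N^4 \to c_3$: at $d = d_{c,u} = 4$, the one-loop flow gives $g_N \sim (\beta N)^{-1}$ with $\beta = (n+8)/(16\pi^2)$, and $\sb_N^4 = N L^{-4N}$ yields the limit $\beta^{-1} = c_3$; for $d > d_{c,u}$, the dangerously irrelevant coupling flows to $g_N = g(1+o(1))$ and $|\Lambda_N|\sb_N^4 = g^{-1}$ produces the limit $1+O(g) = c_3$, consistent with $c_2 = 1+O(g)$. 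The coupling $(\varphi, f_N)/\sb_N$ splits into $\Phi(f)\cdot Z$ plus an $\tilde\varphi$-part of size $O(\sc_N/\sb_N) \to 0$, and the remaining finite-dimensional $Z$-integral reproduces $\NGd(e^{(\psi, f)/c_3^{1/4}})$ by \eqref{eq:NGdg} after the final rescaling $c_3^{1/4}Z \mapsto X$.

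For part (2), the vanishing of $\Phi(f - \Phi(f))$ removes the zero-mode contribution entirely, and the expectation reduces to one over $\tilde\varphi$. The same RG iteration yields an effective quadratic form for $\tilde\varphi$ that, after the scaling $\tilde\varphi = \sc_N \tilde\psi$, converges to $\tfrac{1}{2} c_4 (\tilde\psi, (-\Delta)^{1-\eta/2}\tilde\psi)$ with $c_4 = \gamma(d,\eta)/c_1(d,\eta)$; this renormalisation constant is precisely the one forced by Theorem~\ref{thm:infvol2pt} together with Lemma~\ref{lemma:fracLapGreen}. Quartic and higher-order terms are suppressed by $g_N |\Lambda_N| \sc_N^4 \to 0$ (which follows from $\sc_N \ll \sb_N$ combined with the bound on $g_N |\Lambda_N| \sb_N^4$), and the discrete periodic fractional Laplacian converges on zero-mean test functions to its continuum counterpart on $\T^d$. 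This yields the Gaussian moment generating function $\GF_\eta(e^{(\psi, f-\Phi(f))/c_4^{1/2}})$.

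The main obstacle is tracking the RG flow along the stable manifold at $\nu_c$ with enough precision to pin down the constants $c_3$ and $c_4$ exactly. For $d > d_{c,u}$, the formal irrelevance of $g$ is misleading here: because $\sb_N$ carries a factor of $g^{-1/4}$, the limit $g_N |\Lambda_N| \sb_N^4 \to c_3$ depends on the entire trajectory of $g_N$ rather than on a fixed-point value, so sharp estimates must be propagated through all $N$ RG steps. At $d = d_{c,u}$, the same quantity is only logarithmically small in $N$, so every non-perturbative remainder produced by the RG map has to be controlled to order $o(N^{-1})$, which demands precisely the stability properties built into the construction of Section~\ref{sec:cotcpfdf}. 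A secondary technical issue is showing that the zero-mode/non-zero-mode splitting is compatible with the polymer block structure used by the RG, so that the two distinct scales $\sb_N$ and $\sc_N$ can indeed be decoupled rigorously in the two parts of the statement.
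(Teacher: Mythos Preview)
Your high-level strategy and identification of the constants are correct, but the technical route differs from the paper's in one way that matters for part~(ii).

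The paper does not freeze the zero mode during the RG. Instead, Proposition~\ref{prop:mgfref} rewrites the moment generating function as a Gaussian prefactor $e^{\frac{1}{2}(f,\,w_N f)}$ times a ratio of $n$-dimensional integrals $\int_{\R^n} Z_{N,\bulk}(y\one + w_N f)\,e^{(f,y\one)}\,\rd y$, where $Z_{N,\bulk}$ is produced by running the full RG over the finite-range covariances $\Gamma_1,\dots,\Gamma_N^\Lambda$ (not over a non-zero-mode projection); the zero-mode integral appears only at the end as the residual $t_N Q_N$ piece of the covariance decomposition. This bypasses the compatibility issue you flag between a hard zero-mode split and the polymer structure, and for part~(i) your picture and the paper's then agree: after substituting $y = (g_N^{(\emptyset)})^{-1/4}L^{-dN/4}z$ one has $I_N^\Lambda \to e^{-|z|^4/4}$ (Lemma~\ref{lemma:Vtozfour}), the $K_N$ contribution vanishes by the decay estimate of Lemma~\ref{lemma:KNbnd}, and the linear coupling to $z$ carries the factor $c_3^{-1/4}\Phi(f)$ exactly as you compute.

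For part~(ii), however, your sketch mislocates the source of the Gaussian limit. The RG does \emph{not} produce an effective quadratic form for the non-zero modes: after the flow, $Z_{N,\bulk}$ is still dominated by the same quartic $\frac{1}{4}g_N^{(\emptyset)}|\Lambda_N|\,|y|^4$ in the constant direction, exactly as in~(i). The entire $\GF_\eta$ limit comes from the explicit prefactor $e^{\frac{1}{2}(\sh_N,\,w_N\sh_N)}$ in Proposition~\ref{prop:mgfref}, whose convergence to the fractional free-field covariance is a Fourier computation (Lemma~\ref{lemma:hNtNlmt}). The remaining ratio of $y$-integrals converges to~$1$: since $(\sh_N,\one)=0$ the linear coupling to the zero mode is absent, and the shift $w_N\sh_N$ is too small (by the bounds of Lemma~\ref{lemma:hNtNlmt}(ii)) to perturb either the quartic or the $K_N$ estimate. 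If you pursued an effective quadratic action you would be looking for something that is not there; the Gaussianity is already manifest in the covariance representation before any RG step, and the role of the RG is only to show that the residual integral ratio is trivial.
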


The presence of the non-Gaussian limit in (i) can be made contrast with the macroscopic scaling limit,  where the critical $|\varphi|^4$ model attains a Gaussian limit,  as proved in \cite{AD21, panis2023triviality} for $n=1$.

Under the scaling of (i),  based on Lemma 1.2,  reflects a complete coherence of the field $\varphi$ with a constant amplitude.  This same constant-amplitude limit is also attained by the complete-graph model, and it is directly related to the classical problem of determining the distribution of the Curie-Weiss model at the critical point \cite{MR428998, CS11nonnormal, DM23FMFIM} (see \cite{CGK14anomalous} for a physics perspective).  The Gaussian scaling of (ii) is also found in the complete-graph model \cite{P89gaussian},  but because the Gaussian field is spatially uncorrelated, it fails to capture the crucial geometry of the torus observed in the (fractional) Gaussian field inside (ii).

Since $\sb_N \gg \sc_N$,  we can observe a scale hierarchy by comparing (i) and (ii).  By Lemma~\ref{lemma:NGcnst},  part (i) indicates that $\varphi / \sb_N$ converges to a constant-valued function,  while part (ii) indicates that $(\varphi - \Phi_N (\varphi) ) / \sc_N$ tends to a (fractional) Gaussian field.  Informally,  we have an expansion
\begin{align}
	\varphi \,  \text{``}\sim\text{''} \,   \sb_N Y \one / c_3^{1/4} +  \sc_N \varphi_{\GF_{\eta}} / c_4^{1/2}  \quad \text{as} \quad N\rightarrow \infty 
	\label{eq:FSSexpansion}
\end{align}
where $Y$ follows distribution \eqref{eq:Ydistn} (with $\gamma$ as appropriate).
Although $\sb_N \gg \sc_N$,  the massless $(2-\eta)$-stable process is highly irregular,  so by zooming into a point on the torus sufficiently faster than the rate of $N\rightarrow \infty$,  we can expect that the dominant fluctuation is reversed.  This is precisely what is expected to be observed for the macroscopic scaling limit,  giving results equivalent to \cite{AD21}.
The phenomenon is also reflected in the plateau effect described in Theorem~\ref{thm:plateauGen}.
In these contexts,  the FSS should be understood as an interpolation of the complete-graph model and the Gaussian process.

\subsubsection{Role of Fourier modes}

In physics literature,  the distinction between the two scaling regimes is expressed in terms of a dichotomy of the $\vec{k}$-susceptibilities.  Namely,  if we let
\begin{align}
	\mathbb{K} = \big\{ \vec{k} = (k_1, \cdots, k_d) \in \big( 2\pi \Z_{\ge 0} \big)^d \big\} , \qquad 
	\textbf{s} (x_i ; k_i) = \begin{cases}
		\sqrt{2} \sin ( k_i x_i ) & (k_i \neq 0) \\
		1 & (k_i = 0)
	\end{cases}
\end{align}
for $x_i \in \T^1$ and let
\begin{align}	
	E^{( \vec{k})} (x) = \prod_{i=1}^d \textbf{s} (x_i ; k_i), \qquad 
		x = (x_1, \cdots, x_d) \in \T^d
\end{align}
so that they are orthonormal eigenvectors of the Laplacian,  we can define the $\vec{k}$-susceptibility as
\begin{align}
	\chi_{g,\nu, N}^{(\vec{k})} = |\Lambda_N | \big\langle (\varphi ,  E^{(\vec{k})}_N )^2 \big\rangle_{g, \nu, N}
\end{align}
where $E^{(\vec{k})}_N$ is the discretisation of $E^{(\vec{k})}$ given by \eqref{eq:fN}.  
In the terminology of \cite{WY14,F-SBKW16},
the $0$-susceptibility follows a ``non-standard FSS'' while for $\vec{k} \neq 0$,  $\vec{k}$-susceptibility follows a ``standard FSS'',  as the following result says.
This verifies the susceptibility rows of PBC part of \cite[Table I]{F-SBKW16} (where we translate $\sigma  =2-\eta$ and we also obtain the logarithmic correction).

\begin{corollary} \label{cor:roleofFouriermodes}
Under the conditions of Theorem~\ref{thm:NGlimit},  for $\vec{k} \in \mathbb{K}$,  as $N\rightarrow \infty$,
\begin{align}
	(\vec{k}=0) \qquad 
		& \chi_{g,\nu_c, N}^{(0)} \sim c_3^{-1/2} \E \big[ |Y|^{2} \big] \times \begin{cases}
		N^{1/2} L^{2 N} & (d = d_{c,u}) \\
		L^{\frac{d}{2} N} & (d > d_{c,u}) 
	\end{cases} \\
	(\vec{k} \neq 0) \qquad 
		& \chi_{g,\nu_c, N}^{(\vec{k})} \sim n c_4^{-1} |\vec{k}|^{-2+\eta} L^{(2-\eta) N} .
\end{align}
\end{corollary}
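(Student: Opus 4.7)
The plan is to derive both asymptotics from Theorem~\ref{thm:NGlimit} by specialising the test function to $f = \lambda E^{(\vec{k})}$ with $\lambda \in \R^n$, and then reading off $\langle |(\varphi, E^{(\vec{k})}_N)|^2 \rangle$ from the $|\lambda|^2$-coefficient of the resulting scalar MGF convergence, in the same spirit as the proof of Corollary~\ref{cor:WNlimit}.

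For $\vec{k}=0$, since $E^{(0)} \equiv 1$ one has $E^{(0)}_N(x) = 1/|\Lambda_N|$, $(\varphi, E^{(0)}_N) = \Phi_N(\varphi)$ and $\Phi(\lambda E^{(0)}) = \lambda$. Combining Theorem~\ref{thm:NGlimit}(i) with Lemma~\ref{lemma:NGcnst} gives
\begin{align}
    \lim_{N \to \infty} \big\langle e^{\lambda \cdot \Phi_N(\varphi)/\sb_N} \big\rangle_{g,\nu_c,N} = \E\big[e^{\lambda \cdot Y/c_3^{1/4}}\big].
\end{align}
Extracting the quadratic term in $\lambda$ and summing over $\lambda = e_1,\dots,e_n$ yields $\sb_N^{-2} \langle |\Phi_N(\varphi)|^2 \rangle \to c_3^{-1/2} \E[|Y|^2]$. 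Multiplying by $|\Lambda_N|$ and evaluating $|\Lambda_N| \sb_N^2$ via \eqref{eq:bNcN} then finishes this case.

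For $\vec{k}\neq 0$, the condition $k_i \in 2\pi\Z_{\ge 0}$ forces $\Phi(E^{(\vec{k})}) = 0$, and the discrete mean $\Phi_N(E^{(\vec{k})}_N)$ likewise vanishes since the lattice samples a trigonometric polynomial of matching frequencies. The mean subtractions in Theorem~\ref{thm:NGlimit}(ii) are therefore trivial, and setting $f = \lambda E^{(\vec{k})}$ gives
\begin{align}
    \lim_{N \to \infty} \big\langle e^{\lambda \cdot (\varphi, E^{(\vec{k})}_N)/\sc_N} \big\rangle_{g,\nu_c,N} = \GF_\eta\big(e^{(\psi, \lambda E^{(\vec{k})})/c_4^{1/2}}\big).
\end{align}
Since $E^{(\vec{k})}$ is an $L^2(\T^d)$-unit eigenvector of $-\Delta$ with eigenvalue $|\vec{k}|^2$, the Gaussian variance on the right equals $|\lambda|^2 |\vec{k}|^{-2+\eta}/c_4$. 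Matching the $|\lambda|^2$-coefficients, summing over components, and using $|\Lambda_N|\sc_N^2 = L^{(2-\eta)N}$ from \eqref{eq:bNcN} then yields the second case.

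The only delicate point is the passage from pointwise MGF convergence to convergence of second moments. I would handle this through the standard fact that convergence of a sequence of moment-generating functions on a neighbourhood of $0$ automatically upgrades to convergence of all moments; this applies here because the MGFs along the sequence are finite (the $|\varphi|^4$ suppression kills any exponential in $\varphi$) and the limit MGF is entire. With this in place, the identification of all numerical prefactors is entirely mechanical.
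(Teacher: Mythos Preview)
Your proposal is correct and follows essentially the same route as the paper: for $\vec{k}=0$ the paper takes $f=\lambda\one$ in Theorem~\ref{thm:NGlimit}(i) and expands in $\lambda$ (exactly as in Corollary~\ref{cor:WNlimit}), and for $\vec{k}\neq 0$ it simply plugs $f=E^{(\vec{k})}$ into Theorem~\ref{thm:NGlimit}(ii). Your additional remarks---that the mean-subtraction is trivially zero for $E^{(\vec{k})}$ with $\vec{k}\neq 0$, and the justification for passing from MGF convergence to moment convergence---are correct and make explicit steps the paper leaves implicit.
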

\begin{proof}
The proof of the first statement is exactly as in Corollary~\ref{cor:WNlimit}: we take $f = \lambda \one$ in Theorem~\ref{thm:NGlimit},  expand both sides in $\lambda$ and use
\begin{align}
	\E \big[ | \lambda \cdot  Y|^{2p} \big] = \int_{\R^n} | \lambda \cdot x |^{2p}  e^{- \frac{1}{4} |x|^4 } \rd x \Big/ \int_{\R^d}  e^{- \frac{1}{4} |x|^4 } \rd x .
	\label{eq:roleofFouriermodes}
\end{align}
The second follows directly from Theorem~\ref{thm:NGlimit}(ii) by plugging in $f = E^{(\vec{k})}$.
\end{proof}

Since the 0-mode susceptibility grows much faster than the other $\vec{k}$-modes in \eqref{eq:FSSexpansion},  FSS is said to violate the hyperscaling relation in certain contexts \cite{K12universal, berche2012hyperscaling,  LB96finite}.  A parallel line of (non-rigorous) theories have been developed from the perspective of scaling relations,  introducing $\q$ and $\hat{\qq}$ exponents.  We do not look into the details of this theory,  but a comparison with their results in Corollary~\ref{cor:NGlimit} would be valuable.

\subsubsection{$\q$ and $\hat\qq$ exponents}

The finite-size susceptibility can also be obtained.  
The next result verifies the FSS (qoppa-)exponents $\q = \frac{d}{d_{c,u}} = \frac{d}{4-2\eta}$ (for $\eta \in [0,1/2)$) and $\hat\qq=\frac{1}{4}$ (for $\eta=0$) in the context of \cite{K12universal, K14fisher}.
(In \cite{K14fisher},  it is suggested for the $|\varphi|^4$ model,  that $\chi_{g,\nu_c, N} \asymp |\Lambda|^{\gamma\qq / d\nu}$ when $d > d_{c,u}$ with $\gamma = 1$ and $\nu= \frac{1}{2-\eta} $ and $\chi_{g,\nu_c, N} \asymp (\log |\Lambda|)^{\hat{\gamma}\hat\qq / \hat{\nu} } |  \Lambda|^{\gamma\qq / d\nu}$ when $d=d_{c,u}$ with $\hat{\gamma}=\frac{n+2}{n+8}$ and $\hat{\nu}=\frac{n+2}{(2-\eta)(n+8)}$.)

\begin{corollary} \label{cor:NGlimit}
Under the conditions of Theorem~\ref{thm:NGlimit},  for any $p \in \Z_{\ge 1}$ and $\lambda \in \R^n$ 
\begin{align}
	\langle | \lambda \cdot \Phi_N (\varphi)|^{2p} \rangle_{g,\nu_c, N}
		\sim c_3^{-\frac{p}{2}} \,  \sb_N^{2p}  \,  \E \big[ | \lambda \cdot  Y|^{2p} \big]
		\qquad \text{as } N\rightarrow \infty
		\label{eq:corNGlimit1}
\end{align}
for $Y$ defined as in \eqref{eq:Ydistn}.  In particular,  as $N\rightarrow \infty$,
\begin{align}
	\chi_{g,\nu_c,N} \sim c_3^{-1/2} \E \big[ |Y|^{2} \big] \times \begin{cases}
		N^{1/2} L^{2 N}  \propto ( \log |\Lambda| )^{1/2} |\Lambda|^{1/2} & (d = d_{c,u}) \\
		L^{\frac{d}{2} N}  \propto |\Lambda |^{1/2} & (d > d_{c,u})  .
	\end{cases}
		\label{eq:corNGlimit2}
\end{align}
\end{corollary}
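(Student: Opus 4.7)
The plan is to follow the template of the proof of Corollary~\ref{cor:WNlimit}: specialise Theorem~\ref{thm:NGlimit}(i) to a constant test function and then match Taylor coefficients in a scalar parameter. Fix $\lambda\in\R^n$ and a real variable $t$, and set $f = t\lambda\one$ in Theorem~\ref{thm:NGlimit}(i). The scaling \eqref{eq:fN} collapses the left-hand exponent to $t\lambda\cdot\Phi_N(\varphi)/\sb_N$, and Lemma~\ref{lemma:NGcnst} together with $\int_{\T^d}1\,\rd x = 1$ identifies the right-hand limit as $\E[\exp(t\lambda\cdot Y / c_3^{1/4})]$. Thus Theorem~\ref{thm:NGlimit}(i) gives $\langle e^{t\lambda\cdot\Phi_N(\varphi)/\sb_N}\rangle_{g,\nu_c,N} \to \E[e^{t\lambda\cdot Y / c_3^{1/4}}]$ as $N\to\infty$.

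Next, I would upgrade this moment generating function convergence to convergence of each Taylor coefficient in $t$. The $|\varphi|^4$ potential yields a uniform-in-$N$ exponential moment bound for $\Phi_N(\varphi)/\sb_N$ in a neighbourhood of $t=0$, so the left-hand MGF is analytic there with uniformly bounded coefficients; the right-hand MGF is entire because $Y$ has a super-Gaussian tail coming from the $e^{-|x|^4/4}$ density in \eqref{eq:Ydistn}. Cauchy's integral formula then converts the pointwise MGF convergence into convergence of each Taylor coefficient. Odd coefficients in $t$ vanish on both sides by the $\varphi\mapsto-\varphi$ and $Y\mapsto-Y$ symmetries, and matching the coefficient of $t^{2p}$ yields \eqref{eq:corNGlimit1}.

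For \eqref{eq:corNGlimit2}, I would apply \eqref{eq:corNGlimit1} with $p=1$ and $\lambda = e_i$, then sum over $i=1,\dots,n$ to obtain $\langle|\Phi_N(\varphi)|^2\rangle_{g,\nu_c,N} \sim c_3^{-1/2}\,\sb_N^2\,\E[|Y|^2]$. Multiplying by $|\Lambda_N| = L^{dN}$ and substituting the two branches of $\sb_N$ from \eqref{eq:bNcN} produces the claimed asymptotics, with the $(\log|\Lambda|)^{1/2}$ enhancement at $d = d_{c,u}$ coming directly from the $N^{1/2}$ factor inside $\sb_N^2$.

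The main subtlety — already implicit in the proof of Corollary~\ref{cor:WNlimit} — is the uniform-in-$N$ analyticity of the left-hand MGF near $t=0$, which is what legitimises the coefficient-matching step. I expect this to follow from the same polymer-gas and RG estimates that feed into Theorem~\ref{thm:NGlimit}, since that theorem already provides control of $\langle e^{(\varphi,f_N)/\sb_N}\rangle_{g,\nu_c,N}$ for an arbitrary Schwartz test function $f$ in $\T^d$; once that uniformity is in hand, everything else is a routine rearrangement.
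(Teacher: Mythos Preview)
Your approach is correct and essentially identical to the paper's: specialise Theorem~\ref{thm:NGlimit}(i) to $f=\lambda\one$ (the paper does not introduce the auxiliary scalar $t$, but this is cosmetic), expand both sides, and match coefficients, exactly as in the proof of Corollary~\ref{cor:WNlimit}. Your discussion of the uniform-in-$N$ analyticity needed to legitimise the coefficient matching is in fact more careful than the paper, which simply refers back to the expansion~\eqref{eq:roleofFouriermodes} without further comment.
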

\begin{proof}
The proof follows directly from the expansion \eqref{eq:roleofFouriermodes}.
\end{proof}

\subsection{Open problems and future directions}
\label{sec:openprobls}

While the present article establishes a precise and rigorous picture of the FSS limits,  our results also confirm and contextualise a broader set of predictions from the physics literature \cite{F-SBKW16,  WY14,  LB97}. 
These broader theoretical predictions, which offer a more complete understanding of FSS above the critical dimension,  have been partially verified in specific rigorous settings,  notably for the hierarchical $|\varphi|^4$ model (with $\eta=0$) \cite{MPS23, park2025boundary}. 
For a comprehensive overview of related rigorous results and predictions, we refer the reader to the summary in \cite{L24universal}.  We collect several conjectures that we anticipate will be meaningful for a complete understanding of the FSS behaviour above the upper critical dimension.

\medskip\noindent\textbf{(1) Critical window:}
The critical window is the $\Lambda_N$-dependent range of $\nu$ where the finite-size susceptibility stays in the scale of $\chi_{g, \nu_c, N}$.
More precisely,  in \cite{MPS23} (in the hierarchical setting with $\eta =0$),  it was defined
$w_N \propto N^{-\hat{\theta}} L^{-2N}$ ($\hat{\theta} = \frac{4-n}{2(n+8)}$) for $d=4$ and $w_N = L^{-Nd/2}$ for $d >4$.  Then we have the susceptiblity \emph{scaling profile}
\begin{align}
	\chi_{g,\nu_c + s w_N,  N} \sim \chi_{g,\nu_c, N} \frac{\int_{\R^n} |y|^2 e^{-\frac{1}{4} |y|^4 -  c s |y|^2 }  \rd y}{\int_{\R^n} |y|^2 e^{-\frac{1}{4} |y|^4}  \rd y}  ,  \qquad  s \in \R
	\label{eq:critwindowprof}
\end{align}
for some constant $c >0$.  The same profile was predicted by \cite[Section~32]{ZZ21Q}.
The same would happen in our setting,  but we expect a stronger statement. 

\begin{conjecture} \label{conj:1}
Let $\NGd_{s}$ be a probability measure given by moment generating function
\begin{align}
	\NGd_s [e^{(\psi, f)} ] \propto \int_{\R} e^{(\psi,  \Phi(f) )} e^{-\frac{1}{4} |y|^4 -  s |y|^2} \rd y .
\end{align}
Under the assumptions of Theorem~\ref{thm:NGlimit},  there exists $c>0$ such that
\begin{align}
	\lim_{N\rightarrow \infty}  \langle e^{(\varphi,f_N) / \sb_N} \rangle_{g,\nu_c + sw_N, N} = \NGd_{cs} ( e^{(\psi, f) / c_3^{1/4}}  ) .
\end{align}
\end{conjecture}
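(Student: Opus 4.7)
The strategy is to adapt the RG analysis that establishes Theorem~\ref{thm:NGlimit}(i), perturbing the initial $\nu$-coupling by $sw_N$ and tracking how this perturbation propagates through the RG flow. The scale $w_N$ is calibrated so that after $N$ renormalisation steps, the $\nu$-direction (whose linearised eigenvalue is $L^{2-\eta}$ per step) has amplified the initial perturbation $sw_N$ into an $O(1)$ effective mass term at the last scale. Meanwhile, the quartic potential produces the $e^{-|y|^4/4}$ factor already present at criticality, so integrating over the constant (zero) mode yields exactly the measure $\NGd_{cs}$.

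The first step is to extend the stable-manifold construction that defines $\nu_c$ (Section~\ref{sec:cotcpfdf}) to a one-parameter family of initial conditions $\nu_c + sw_N$ with $s \in \R$. Concretely, one would decompose each such initial condition into a point on the stable manifold plus a transverse perturbation of size $w_N$ in the $\nu$-direction. The key analytic input is a linearised stability estimate showing that, to leading order, the transverse component evolves according to the tangent of the RG map throughout all $N$ scales. This is strictly more than what is needed to construct $\nu_c$: one requires \emph{two-sided} control, so that the perturbation neither decays nor blows up faster than the linear rate $L^{(2-\eta)j}$ at each scale $j$.

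The second step is the analysis at the last scale. In the proof of Theorem~\ref{thm:NGlimit}(i), after rescaling by $\sb_N$, the final-scale effective action on the zero mode collapses to $\exp(-\tfrac14|y|^4)$. With the $sw_N$ perturbation, one expects instead
\begin{align}
  V_N^{\mathrm{eff}}(y) \;\approx\; \tfrac{1}{4}|y|^4 + c\,s\,|y|^2,
\end{align}
where $c>0$ is the accumulated amplification factor of the $\nu$-direction over $N$ RG steps. For $d > d_{c,u}$ this constant reduces to a product of linearised eigenvalues (times the dependence on the $|\varphi|^2$-insertion), while for $d=d_{c,u}$ the marginally irrelevant flow of $g$ feeds into the effective eigenvalue and produces the exponent $\hat\theta$ in $w_N$. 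Integrating the observable $e^{(\varphi,f_N)/\sb_N}$ against this weight and using the same zero-mode argument as in Lemma~\ref{lemma:NGcnst} then yields $\NGd_{cs}(e^{(\psi,f)/c_3^{1/4}})$.

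The principal obstacle is the stability analysis in the first step. Because $\nu$ is the most relevant direction, a small error at scale $j$ is amplified by $L^{(2-\eta)(N-j)}$, so one needs quantitative control of the nonlinear RG map, uniformly in $N$, for initial conditions that lie \emph{off} the stable manifold at a precisely calibrated distance $w_N$. When $d>d_{c,u}$, the coupling $g$ is ``dangerous irrelevant'' in the FSS sense (Remark~\ref{remark:nuc}(iii)), so the transverse $\nu$-flow cannot be decoupled from the flow of $g$ and the other irrelevant directions that must be tuned to define $\nu_c$. When $d=d_{c,u}$, one must additionally contend with the marginally irrelevant flow $g_j \asymp 1/j$, which renormalises the effective $\nu$-eigenvalue and is what produces the subleading logarithmic factor $N^{-\hat\theta}$ in $w_N$. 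This is the Euclidean analog of the detailed hierarchical critical-window analysis of \cite{MPS23}, and I expect that their strategy, transplanted into the present RG framework, is the natural route to a full proof.
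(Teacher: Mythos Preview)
The statement you are addressing is Conjecture~\ref{conj:1}, which the paper explicitly presents as an \emph{open problem} in Section~\ref{sec:openprobls}; there is no proof in the paper for you to compare against. Immediately after the conjecture the authors only remark that ``the solution would amount to differentiating the RG dynamics constructed in Section~\ref{sec:cotcpfdf} and a simpler version of this computation is shown in \cite{MPS23}.''

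Your proposal is therefore not a proof to be checked against the paper's, but rather a strategic outline for attacking an open problem. On that level it is reasonable and in fact aligns with the paper's own one-line suggestion: you correctly identify that the task is to control the RG flow launched from an initial condition $\nu_c + sw_N$ lying a calibrated distance off the stable manifold, that the linearised $\nu$-eigenvalue $L^{2-\eta}$ governs the amplification, and that the hierarchical analysis of \cite{MPS23} is the template. You also correctly flag the main technical obstacle, namely that the dangerous-irrelevant nature of $g$ (Remark~\ref{remark:nuc}(iii)) prevents a clean decoupling of the transverse $\nu$-flow from the rest of the dynamics, and that at $d=d_{c,u}$ the marginal flow of $g$ feeds back into the effective $\nu$-rate to produce the $N^{-\hat\theta}$ correction in $w_N$.

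What your outline does not supply --- and what would be the substance of an actual proof --- is the two-sided nonlinear stability estimate you allude to in your first step. The stable-manifold construction in Section~\ref{sec:cotcpfdf} only shows that the critical trajectory stays in the domain $\D_j$; it does not control trajectories that deliberately leave the stable manifold and whose $\nu$-component grows like $L^{(2-\eta)j}$ up to scale $N$. Establishing that such a trajectory remains in a domain where the RG map of \cite{FSmap} is defined, and that the nonlinear corrections to the $\nu$-flow are uniformly subdominant over all $N$ scales, is precisely the content the paper leaves open.
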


In the RG framework,  the solution would amount to differentiating the RG dynamics constructed in Section~\ref{sec:cotcpfdf} and a simpler version of this computation is shown in \cite{MPS23}.

\medskip\noindent\textbf{(2) Boundary condition:}
The FSS picture under free boundary conditions (FBC) differs notably from the periodic setting we analyse. 
For instance, \cite{CJN21} demonstrates that a completely distinct FSS applies at the critical point under FBC.
However,  \cite{MPS23} shows that (again under hierarchical setting with $\eta=0$) the standard scaling profile can be recovered at a shifted, volume-dependent pseudocritical point.
Specifically,  if we let $v_N = L^{-2N}$ for $d > 4$ and $v_N = N^{\frac{n+2}{n+8}} L^{-2N}$ for $d = d_{c,u}$,  there exists $c > 0$ such that
\begin{align}
	\chi_{g, \nu_c - cv_N + sw_N , N}^{\rm F} \sim \chi_{g, \nu_c + sw_N, N}
	\label{eq:pseudocritical}
\end{align}
where superscript ${\rm F}$ indicates FBC. 
As predicted in \cite{F-SBKW16},  the same should happen under the Euclidean setting,  but now with $v_N = L^{-(2-\eta) N}$ for $d > d_{c,u}$ and different constants.

\begin{conjecture}
For the $|\varphi|^4$ model with FBC,  there exist $C, c,  c' >0$ such that
\begin{align}
	\chi_{g, \nu_c - cv_N + sw_N , N}^{\rm F} \sim  C \chi_{g, \nu_c + c' sw_N, N} .
\end{align}
\end{conjecture}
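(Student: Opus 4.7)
The plan is to adapt the RG framework of Section~\ref{sec:cotcpfdf} to the free boundary condition setting, with the new ingredient being the identification of a finite-volume shift of the effective critical mass. The whole statement is then reduced to Conjecture~\ref{conj:1} on the PBC side plus a quantitative comparison of the two RG flows.

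First, I would construct the RG map for the $|\varphi|^4$ model on $\Lambda_N$ with FBC. The map of \cite{FSmap} is built around Fourier diagonalisation and translation invariance, so the FBC version requires new ingredients: boundary polymer blocks with their own norms, and a split of each RG coordinate into a bulk part (which matches the PBC flow up to negligible error) and a boundary part. Crucially, at each scale $j$ the boundary contribution to the relevant coordinate $\nu_j$ should accumulate a correction of size $O(L^{-(2-\eta)(N-j)})$, summing to order $v_N = L^{-(2-\eta)N}$ at the terminal scale when $d>d_{c,u}$, with the predicted logarithmic correction when $d=d_{c,u}$ inherited from the logarithmic corrections already present in the bulk RG flow for that dimension.

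Second, I would identify the pseudo-critical point $\nu_c^{\rm F}(N) := \nu_c - c v_N$ by choosing $c>0$ precisely to cancel the accumulated boundary drift, so that the FBC system at $\nu_c - c v_N + s w_N$ and the PBC system at $\nu_c + c' s w_N$ arrive at the last RG scale with effective masses that agree up to a multiplicative constant. The constant $c'$ absorbs the mismatch between the FBC and PBC linearisations of the RG map near the critical fixed point, and $C$ arises from the normalisation difference of the $\Phi_N$-mode in the two boundary conditions: under PBC the zero mode is exactly constant, whereas under FBC the corresponding near-zero eigenfunction has boundary-sensitive amplitude that must be absorbed into a constant factor in front of the non-Gaussian measure of Conjecture~\ref{conj:1}. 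Once the scales are matched, applying the PBC scaling profile then yields the claimed asymptotic for $\chi^{\rm F}$.

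The main obstacle is the absence of an off-the-shelf RG construction for FBC in the polymer regime of \cite{FSmap}: translation invariance is essential to the current polymer norms, and replacing it requires weighted norms that separately control boundary and bulk contributions, together with a new extraction procedure for relevant monomials near the boundary. A secondary obstacle is that Conjecture~\ref{conj:1} is not proved in this paper and would need to be established first by differentiating the RG dynamics in $\nu$ uniformly in $N$, in the spirit of \cite{MPS23}. Finally, for $n\ge 3$ the absence of Ginibre monotonicity noted in Remark~\ref{remark:nuc} means that both the uniqueness of $\nu_c^{\rm F}(N)$ and the sign of the shift $-c v_N$ have to be obtained directly from the RG dynamical system rather than from correlation inequalities.
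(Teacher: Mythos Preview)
The statement is a \emph{conjecture} in the paper, not a theorem; the paper offers no proof. Immediately after stating it, the authors write that ``running a rigorous RG under FBC would require significant modification to the method,'' and leave the problem open. So there is nothing to compare your proposal against on the paper's side.

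Your proposal is not a proof but an honest research outline, and you are explicit about this. The obstacles you identify are precisely the ones the paper flags: the RG map of \cite{FSmap} is built on translation invariance and a periodic covariance decomposition, neither of which survives under FBC, so a new polymer framework with boundary blocks and weighted norms would be needed from scratch. You also correctly note that your argument is conditional on Conjecture~\ref{conj:1}, which the paper likewise leaves open (the paper says its solution ``would amount to differentiating the RG dynamics constructed in Section~\ref{sec:cotcpfdf}''). Your heuristic for the size of the shift $v_N$ --- accumulating boundary corrections of order $L^{-(2-\eta)(N-j)}$ across scales --- matches the hierarchical picture in \cite{MPS23,park2025boundary} that the paper cites as motivation, and your identification of the constants $c$, $c'$, $C$ with, respectively, the cancellation of boundary drift, the linearisation mismatch, and the zero-mode normalisation is plausible but of course unverified.

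In short: your plan is a reasonable sketch of what a proof might look like and is aligned with the paper's own remarks, but since the paper contains no proof of this conjecture, there is no comparison to make. The genuine gap is exactly the one you name --- the FBC RG construction does not yet exist.
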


We expect the same shift of critical point should be observed for the profile of Conjecture~\ref{conj:1}.
Unlike the scaling window,  running a rigorous RG under FBC would require significant modification to the method.

\subsection{Outline of the method}

The proof relies on two steps: the construction of the critical point for the $|\varphi|^4$ model's RG flow (based on the map from \cite{FSmap}) and the derivation of asymptotic estimates for the 0-mode fluctuation integrals. Stability is a prerequisite for the latter,  where we also employ functional inequalities and Fourier analysis.  By combining the RG flow with these estimates,  we rigorously determine the moment generating function and the two-point function.
In below,  we explain this paragraph in more detail.

The initial formulation of the $|\varphi|^4$  measure naturally splits the Hamiltonian $H_{g,\nu,N}$ \eqref{eq:HgnuN} into a quadratic interaction part and the potential $V_{g,\nu,N}$
This allows the measure to be viewed as a Gaussian integral with covariance $(-\Delta)^{-1+\eta/2}$,  modified by the non-Gaussian potential. 
However, simply using this natural splitting does not automatically generate a stable RG flow,  as the critical parameters are not initially tuned. 
To achieve stability and rigorously define the critical point, we must add and subtract quadratic counterterms to the potential function $V_{g,\nu,N}$ and the quartic interaction.
These counterterms ensure that the flow remains controlled.  This essential reformulation, including the most general form of the required counterterms,  is detailed in Section \ref{sec:gssintsundeffpots}.

We now decompose the modified Gaussian integral into two distinct fluctuation integrals. The first integral governs the local fluctuations, which are asymptotically described by a $(2-\eta)$-stable Gaussian process. The second integral captures the global fluctuations across the scale of the finite torus.  We apply the RG method to rigorously control the first (local) fluctuation.  Effectively,  the second (global) fluctuation is approximated by the 0-mode component (in Fourier space) of the Gaussian field on the torus. This fluctuation decomposition is formally introduced in Section~\ref{sec:FRDdefi} and the rigorous proof of its properties is isolated in Section \ref{sec:covdcmp}.  The subsequent analysis follows this structure. The control of the local fluctuation is detailed in Sections~\ref{sec:polacts}--\ref{sec:cotcpfdf},  while 0-mode fluctuation is treated in Section \ref{sec:mainresults}.  This approach is a standard technique in the physics literature,  as outlined in \cite[Section 32]{ZZ21Q}.

The RG analysis for the locally $(2-\eta)$-stable Gaussian fluctuation field relies on the RG map constructed in \cite{FSmap}. To adapt this framework,  we first introduce the RG notations in Sections \ref{sec:polacts} and \ref{sec:RGmap}.  Since \cite{FSmap} only demonstrated the existence of the RG map for a single step,  a major component of our work is to prove stability of the dynamical system generated by the RG maps in Section~\ref{sec:cotcpfdf} under a specific initial condition.
We simultaneously construct the critical point for the $|\varphi|^4$ model.

As mentioned,  Section \ref{sec:mainresults} is dedicated to the analysis of the 0-mode fluctuation integral,  which completes the proof of Theorem~\ref{thm:WNlimit} and \ref{thm:NGlimit}. This integral simplifies to an $n$-dimensional integral,  and a simple asymptotic analysis along with some functional bounds forms the bulk of this section. 
This relies on Fourier analysis built in Appendix~\ref{sec:fa} and \ref{sec:covcomp}.

Sections \ref{sec:stabobsflow} and \ref{sec:plateauproof} revisit the concepts from the preceding analysis,   but with a focus on extending the RG flow to include the relevant observables. 
These sections utilise the established stability and asymptotic analysis to  prove Theorem \ref{thm:infvol2pt} and \ref{thm:plateauGen}.
Since the two-point function is a more singular observable compared to the  scaling limits,  it necessitates a more detailed analysis.

Aside from extending the parameter regimes of $d$ and $\eta$, the main technical difference between the method of this paper and \cite{MR3269689} is the utilization of the decay estimate Lemma~\ref{lemma:KNbnd}. This idea was already displayed in \cite{MPS23} to obtain the FSS profile in the hierarchical model. 
Compared to the hierarchical model,  both the perturabative and the full RG map \cite{FSmap} are considerably more intricate,  thus we leave some of the questions resolved for the hierarchical case open for later research.

\subsection{Notation}
\label{sec:notation}

Let $\hat{e}_+ = \{e_1, \cdots, e_d\}$ for the standard basis $e_1, \cdots, e_d$ of $\Z^d$ and $\hat{e} = \{\pm e_1, \cdots,  \pm e_d\}$.
For $f,g : \Lambda \rightarrow \R$,
\begin{align}
\begin{cases}
	(p \in [1,\infty)) & \norm{f}_{\ell^p} = \Big( \sum_{x \in \Lambda} |f(x)|^p \Big)^{1/p} , \\
	(p = \infty) & \norm{f}_{\ell^{\infty}} = \sup_{x \in \Lambda} |f(x)| 
\end{cases}	
\end{align}
and $(f,g)$ be the $\ell^2$-product.

\begin{itemize}
\item Covariance matrices $(\Gamma_j,  \Gamma_N^{\Lambda} )_{j\ge 0}$ on $t_N > 0$ are given by Proposition~\ref{prop:theFRD}.  
Due to translation and reflection invariance,  we also write $\Gamma_j (x,y) = \Gamma_j (x-y)$ and $\Gamma_N^{\Lambda_N} (x,y)= \Gamma_N^{\Lambda_N} (x-y)$.
We also define
\begin{align}
	w_j = \sum_{k=0}^j \Gamma_j, \qquad \beta_j = (8+n) \sum_{x \in \Lambda} \big( w_{j+1}^2 (x) - w_j^2 (x) \big) .
	\label{eq:wjdefi}
\end{align}

\item Given $g >0$,  let $\tilde{g}_0 = g$ and define for each $j\ge 0$
\begin{align}
	\tilde{g}_{j+1} = \begin{cases}
		\tilde{g}_j - \beta_j \tilde{g}_j^2 & (d=d_{c,u})  \\
		g & (d > d_{c,u}) .
	\end{cases}
\end{align}
If $g$ is sufficiently small,  it satisfies $\tilde{g}_{j+1} \le \tilde{g}_{j} \le 2\tilde{g}_{j+1}$,  and it is sufficient for applying \cite{FSmap},  comparing with \cite[(1.45)]{FSmap}.

\item Let $\kt$,  $\epsilon$,  $\epsilon_p >0$ be sufficiently small constants that may depend on $n,d,\eta$,  but not on any other parameter.
Also,  let $C_{\cD}, \cM$,  $p_\Phi$ be sufficiently large constants that may depend on $n,d,\eta$,  but not on any other parameter.
$\kt$ and $C_{\cD}$ are chosen in Proposition~\ref{prop:nuptcctty}, \ref{prop:nupcctty} and \ref{prop:obsRG} and $\epsilon_p$ in Proposition~\ref{prop:theFRD}.
For the other constants,  specific choices matter in the construction of the RG map,  but we will pay less attention to them here.  See \cite[Section~1.8]{FSmap} for specific choices.

\item A natural parameter for the decay of effective potentials under the RG flow is
\begin{align}
	\scale_j = L^{-(d-4+2\eta) j}  .
\end{align}
It appears with various exponents,  such as
\begin{align} \label{eq:kaekbekpedefi}
	\kae = \begin{cases}
		3 & (d=4) \\
		\frac{2d-7+2\eta}{d-4+2\eta} (1-\epsilon) & (d > 4) ,
	\end{cases}
	\qquad \kbe = \frac{2(1+\kae)}{3} , \qquad \kpe = \kae  -\kbe .
\end{align}
They satisfy $\kae \in (2,3]$ and $\kpe \in (0,1/2)$,  which will be used in Lemma~\ref{lemma:EVWox},  and specific choices are motivated by \cite{FSmap}.

\item The observable scale is
\begin{align}
	j_{\ox} = \min \big\{ j \ge 0 : 3 \cdot 2^d L^j > \dist_{\infty} (\o,\x) \big\} .
\end{align}
Define the fluctuation field scale $\ell_j = (\ell_{j,\bulk},  \ell_{j,\sigma} ,  \ell_{j, \ssigma}) \in \R^3$ and large field scale  $h_j = (h_{j,\bulk},  h_{j,\sigma} ,  h_{j, \ssigma})$ respectively,  by 
\begin{align} \label{eq:ellhdefi}
\begin{split}
	&
	\ell_{j, \bulk} = \ell_0 L^{-\frac{d-2+\eta}{2}j} ,  \quad\;
	\ell_{j,\sigma} = \begin{cases} 
		\tilde{g}_j L^{(1-\frac{3}{2}\eta) j \wedge j_\ox} 2^{(j-j_\ox)_+} & (d=4) \\
		\tilde{g}_j L^{-(d-5+ \eta) j} & (d \ge 5)
	\end{cases} ,\quad\;
		\ell_{j,\ssigma} = \ell_{j,\sigma}^2  ,\\
	&
	h_{j,\bulk} = \tilde{g}_j^{-\frac{1}{4}} L^{-\frac{d}{4}j}, \quad\;
	h_{j,\sigma} = \tilde{g}_j^{1/4} L^{\frac{d}{4} j} , \quad  \;
	h_{j,\ssigma} = \tilde{g}_j^{1/2} L^{\frac{d}{2} j \wedge j_{\ox}} \big( L^{\frac{d}{2} (1-\epsilon') - (d-4+2\eta) \kpe} \big)^{(j-j_\ox)_+} 
\end{split}
\end{align}
where $\ell_0 = L^{(d+p_\Phi)/2}$ and $\epsilon'>0$ is chosen sufficiently small in Lemma~\ref{lemma:EVWox} and \ref{lemma:ANox}.

\item Given $\tilde{m}^2 \ge 0$,  we consider the domain of $\ba^{(\emptyset)}$ given by
\begin{align}
	\II_j (\tilde{m}^2) :=
		\begin{cases}
		[0, L^{- (2-\eta) j}] & (\tilde{m}^2=0) \\
		[ \tilde{m}^2 / 2 ,  2 \tilde{m}^2  ] & (\tilde{m}^2 >0)  .
	\end{cases} \label{eq:IIdefi}
\end{align}
$\tilde{m}$ plays the role of the mass,  and the associated decay rate is encoded inside
\begin{align}
	j_{\tilde{m}^2} = \min\{ j\ge 0 : L^{(2-\eta)j} \tilde{m}^2 \ge 1 \}, \qquad \tilde{\chi}_j (\tilde{m}^2) = 2^{-(j-j_{\tilde{m}^2})_+}
\end{align}
where $(x)_+ = \max\{x,0\}$.
Similarly,  if we are given a collection $\vec{\ba} = (\ba^{(\emptyset)}, \cdots)$,  then we define $j_{\vec{\ba}}$ and $\chi_j (\vec{\ba})$ exactly the same,  but just with $\tilde{m}^2$ replaced by $\ba^{(\emptyset)}$.
The scales $j_{\tilde{m}^2}$ and $j_{\vec{\ba}}$ are called the mass scales.

\item For a normed space $S$ and $r >0$,  let $B_r (S)$ be the open ball in $S$ centred at the origin. 

\item $f\lesssim g$ means $f \le C$ for an $L$-independent constant $C$,  and $f \asymp g$ means $f\lesssim g \lesssim f$.
When $f \le C_L g$ for an $L$-dependent constant $C_L$,  we also denote $f \le O_L (g)$.

\item If $\zeta$ is a Gaussian random variable with mean 0 and covariance $C$,  expectation with respect to $\zeta$ is denoted $\E_C^{\zeta}$.
\end{itemize}

\section{Gaussian integrals and effective potential functions}
\label{sec:gssintsundeffpots}

In this section,  we aim to restate the $|\varphi|^4$ measure in terms of Gaussian integral of an effective potential function. 
Due to counterterms arising from the RG process,  both the covariance of the Gaussian integral and the effective potentials need to be modified from the original form of the Hamiltonian \eqref{eq:HgnuN}. 
These are introduced in Section~\ref{sec:modotcov} and \ref{sec:effpotes},  respectively.  
Then the Gaussian integral in the modified covariance is rephrased in terms of progressive integrals,  given by Section~\ref{sec:FRDdefi}.  They generate a flow of a pair $(V_j, K_j)$ of coordinates representing the RG flow,  where $V_j$ will be an effective potential and $K_j$ is an error term that will be dealt in Section~\ref{sec:polacts}.
Then in the final integral of Section~\ref{sec:finintgrl},  the moment generating functions and the correlation functions can be expressed in terms of these RG coordinates.  
These expressions will be used as key inputs to the proofs of the main theorems of this article.

\subsection{Fractional Laplacian} \label{sec:fracLap}

On a finite torus,  we define the fractional Laplacian using its Fourier representation.
Let $\lambda (p)$ ($p\in \Lambda^*$) be the Fourier symbol of $-\Delta$ (see Appendix~\ref{sec:fa} for the conventions for the Fourier transforms).

Explicitly,  $\lambda (p) = 2 \sum_{i=1}^d ( 1 - \cos ( p_i ))$,  where $p_i$ is the $i^{\rm th}$ component of $p \in \Lambda^*$ (well-defined modulo $2\pi$).  Then for any $\alpha \in \R$,  we define
\begin{align}
	(\varphi,  (-\Delta)^{\alpha} \varphi) = \frac{1}{|\Lambda|} \sum_{p \in \Lambda^*} e^{i x \cdot p} \lambda^{\alpha} (p) |\hat{\varphi} (p) |^2  .
\end{align}
The same formula holds on $\Z^d$ via
\begin{align}
	(\varphi,  (-\Delta )^{\alpha} \varphi) = \frac{1}{(2\pi)^d} \int_{[0,2\pi)^d} \lambda^{\alpha} (p) | \hat{\varphi} (p) |^2 \rd p .
\end{align}

\subsection{Modifications to the covariance}
\label{sec:modotcov}

In the RG analysis,  we need quadratic counterterms added on $(-\Delta)^{1-\eta/2}$.  They are labelled by (local) derivative indices,  as in \cite[Section~2.2]{FSmap},  which is a collection $\km = (m_k, \alpha_k)_{k=1}^{p (\km)}$ where $m_k = (\mu_{k,1}, \cdots, \mu_{k, i_k}) \in ( \hat{e} )^{i_k}$ for some $k, i_k\ge 0$ and $\alpha_i \in [n]$.  It defines a lcoal field monomial
\begin{align}
	M_x^{(\km)} (f) = \prod_{k=1}^{p (\km)} \nabla^{(m_k)}  f_x^{(\alpha_k)} = \prod_{k=1}^{p (\km)} \nabla^{\mu_{k,1} } \cdots \nabla^{ \mu_{k,i_k}}  f_x^{(\alpha_k)}
		\label{eq:fieldpolys}
\end{align}
for $f : \Lambda \rightarrow \R^n$,  where $f_x^{(\alpha)}$ is the $\alpha^{\rm th}$ component of $f_x \in \R^n$.  The degree of total derivative is denoted $q (\km) = \sum_{k=1}^{p (\km)} i_k$. 
A positive derivative index is $\km = (m_i, \alpha_i)_{i=1}^{p (\km)}$ with each $m_i \in \hat{e}_+$,  and the collection of positive derivative index is denoted $\ko_+$.  For each $\km \in \ko_+$,  we let $\kl (\km)$ be the derivative index with each occurrence of $\nabla^{\mu} \nabla^{\mu} \varphi_x$ replaced by $\nabla^{-\mu} \nabla^{\mu} \varphi_x$.  The symmetrise field monomial with index $\km \in \ko_+$ is
\begin{align}
	S^{(\km)}_x = \frac{1}{|\Sigma_{\rm axes}|} \sum_{\Theta \in \Sigma_{\rm axes}} {\lambda} (\Theta ,  \km) \Theta M^{(\kl(\km))}   \label{eq:sympolys}
\end{align}
where $\Sigma_{\rm axes}$ is the set of permutations of $\hat{e}$ generated by flips $e_i \leftrightarrow -e_i$ and 
\begin{align}
	\lambda(\Theta, \km) = \begin{cases}
		+1 \text{ if } \Theta_{\operatorname{axes}} \text{ flips even number of indices in } (\mu_{k,i})_{k \le p(\km), \,  i \le i_k} \\
		-1 \text{ if } \Theta_{\operatorname{axes}} \text{ flips odd number of indices in } (\mu_{k,i})_{k \le p(\km), \,  i \le i_k} .
	\end{cases}
\end{align}
We only use some particular subsets of $\ko_+$.  At the moment,  we just define
\begin{align}
	\ko_2 = \kA_0 , \qquad \ko_{2,\nabla} = \kA_1 \cup \kA_2 \cup \kA_3 \label{eq:ko2ko2nabla}
\end{align}
where
\begin{align}
	\kA_0 &= \big\{ \km \in \ko_+ \; : \; p(\km) = 2,  \; q(\km) = 0  \big\} ,
	\\
	\kA_1 &= \big\{ \km \in \ko_+ \; : \; p(\km) = 2,  \;  q(\km) \in (0, d-2+\eta) \cap 2\Z \big\} ,
	\\
	\kA_2 &= \big\{ \km \in \ko_+ \; : \; p(\km) = 2,  \;  q( \km ) = d-2+\eta \in 2\Z \big\} ,	
	\\
	\kA_3 &= \big\{ \km \in \ko_+ \; : \; p(\km) = 2,  \;  q(\km) \in (d-2+\eta,2d-6 ] \cap 2\Z \big\}  .
\end{align}
and
\begin{align}
\begin{split}
	\kB_0 = \ko_{4} 
		&= \big\{ \km \in \ko_\bulk : p(\km) = 4 ,  \; q(\km) = 0 \big\}  , \\
	\kB_1 = \ko_{4,\nabla} 
		&= \big\{ \km \in \ko_\bulk : p(\km) = 4 ,  \; q(\km) \in (0,  d-3+\eta ) \cap 2\Z  \}  \big\}   .
\end{split}		\label{eq:kotfnabla}
\end{align}
We also consider a symmetry on the collection of these indices given by
\begin{align}
\begin{split}
	&  \{ \vec{\ba} \in \R^{\kA_0 \cup \kA_1} : F_x (\varphi) = \sum_{\kA_0 \cup \kA_1} \ba^{(\km)} S_x^{(\km)} (\varphi) \;\; \text{is invariant under} \\
	& \qquad \qquad\qquad  \text{lattice symmetries of $\Lambda$ and $O(n)$-symmetries of $\varphi$}  \} .
\end{split}
	\label{eq:kAsymmetries}
\end{align}

\begin{remark} \label{remark:kAsymmetries}
When $\vec{\ba}$ satisfies \eqref{eq:kAsymmetries},  then there exists $\ba^{(\emptyset)}$ such that
\begin{align}
	\sum_{\km \in \kA_0} \ba^{(\km)} S_x^{(\km)} (\varphi) &= \ba^{(\emptyset)} |\varphi_x|^2 .
\end{align}
Thus we will pretend that $\kA_0$ only has a single element and denote it by $\emptyset$.

Similarly,  there exist $\ba^{(\Delta)}$ and $\ba^{(\nnabla)}$ such that
\begin{align}
	F'_{x} (\varphi) := \sum_{\km \in \kA_1}^{q(\km)=2} \ba^{(\km)} S_x^{(\km)} (\varphi) &= \ba^{(\nnabla)} \nabla \varphi \cdot \nabla \varphi + \ba^{(\Delta)} \varphi \cdot \Delta \varphi .
\end{align}
By summation by parts, 
\begin{align}
	\sum_x F'_{x} (\varphi) := \big( - \ba^{(\Delta)} + \ba^{(\nnabla)} \big) \varphi \cdot ( - \Delta ) \varphi ,
\end{align}
and we will denote $\bar{\ba}_{\Delta} = -\ba^{(\Delta)} + \ba^{(\nnabla)}$.
\end{remark}

Given $\vec{\ba} = (\ba^{(\km)} \in \R : \km \in  \kA_0 \cup \kA_1 )$ satisfying  \eqref{eq:kAsymmetries},  we define
\begin{align}
	(\varphi ,  \L_\eta^{(\vec{\ba})} \varphi ) 
		&= \big( \varphi,  ( (-\Delta)^{1-\eta/2} + \ba^{(\emptyset)} ) \varphi \big)
		+ \sum_{x \in \Lambda} \sum_{\km \in \kA_1} \ba^{(\km)} S^{(\km)}_x (\varphi ) 
		\label{eq:Lapkadefi} \\
	C^{(\vec{\ba})} 
		&= (\L_{\eta}^{(\vec{\ba})})^{-1} .   \label{eq:Cvecba}
\end{align}
To guarantee the positivity of $C^{(\vec{\ba})}$,  we restrict the domain of $\vec{\ba}$ by 
\begin{align}
	\vec{\ba} \in \HB_{\epsilon_p} 
		= \big\{ ( \ba^{(\km)})_{\km \in \kA_0 \cup \kA_1} \text{ satisfies \eqref{eq:kAsymmetries}}
		: \ba^{(\emptyset)} \in [0,\epsilon_p] ,  \;  \max_{\km \in \kA_1 } |\ba^{(\km)}| \le \epsilon_p
		&   \big\} .
\end{align}
Then by Lemma~\ref{lemma:Lapptv-res},  we have $C^{(\vec{\ba})} \ge 0$,  and it can be considered as a covariance matrix on $\Lambda$.

\subsection{Covariance decomposition}
\label{sec:FRDdefi}

When we apply the RG map constructed in \cite{FSmap},  each RG map involves a convolution integral in a Gaussian measure with a covariance $\Gamma_j$ given at scale $j$.  Namely,  we consider decomposition
\begin{align}
	C^{(\vec{\ba})}
		= \begin{cases}
	 	\sum_{j=1}^{\infty} \Gamma_j & (\Lambda = \Z^d) \\
	 	\sum_{j=1}^{N-1} \Gamma_j + \Gamma_j^{\Lambda_N} + t_N Q_N & (\Lambda = \Lambda_N) 
	 \end{cases}
	 \label{eq:LetaFRD}
\end{align}
where $t_N  \in \R_{>0}$ and $Q_N : \Lambda_N \times \Lambda_N \rightarrow \R$ is given by $Q_N (x,y) = L^{-dN}$ with the properties as in the next proposition.
The results were assumed in \cite{FSmap},  but we prove them in Section~\ref{sec:covdcmp}.

\begin{proposition} \label{prop:theFRD}
Let $d \ge 3$,  $\eta \in [0,2)$ and $\vec{\ba} \in \HB_{\epsilon_p}$ for sufficiently small $\epsilon_p$.  Then there exist covariance matrices $\Gamma_j$ on $\Z^d$ such that \eqref{eq:LetaFRD} and the following hold.

\begin{enumerate}
	\item (Symmetries) $\Gamma_j : \Z^d \times \Z^d \rightarrow \R$ is a covariance matrix invariant under isometries,
	i.e., $\Gamma_t \ge 0$ and $\Gamma_t (E (x) , E(y)) = \Gamma_j (x,y)$ for any isometry $E : \Z^d \rightarrow \Z^d$.
	\item (Finite range property) $\Gamma_j$ has range $< L^j$ in the $\ell^1$-metric,  i.e.,  $\Gamma_j (x,y) =0$ whenever $\norm{x-y}_{\ell^{1}} \ge L^j$. 
	\item (Upper bound) For each $k, k_x,k_y \ge 0$ with $k_x + k_y = k$,  
	\begin{align}
		\big\| \nabla_x^{k_x} \nabla_y^{k_y} \Gamma_{j+1} (x,y) \big\|_{\ell^\infty (\Z^d \times \Z^d)} \le C_k \frac{L^{-(d-2+\eta)j}}{1 + \ba^{(\emptyset)} L^{(2-\eta)j}}	 .  \label{eq:Gammajbounds1}
	\end{align}
	uniformly in $j,  L$ for some constant $C_k$.	
\end{enumerate}
When $\ba^{(\emptyset)} >0$ and $\Lambda = \Lambda_N$,  then there exist $t_N >0$ and a covariance $\Gamma_N^{\Lambda}$ on $\Lambda_N$,  that satisfy \eqref{eq:LetaFRD} with $\Gamma_j$ the projections of those of $\Z^d$ and satisfy the following.
\begin{enumerate}
	\item $\Gamma_N^{\Lambda}$ satisfies the same symmetries and the upper bounds as $\Gamma_N$.  

	\item $t_N \in (0, (\ba^{(\emptyset)})^{-1})$ and there exists $C >0$ such that $t_N >  (\ba^{(\emptyset)})^{-1} - CL^{(2-\eta) N}$.
\end{enumerate}
Moreover,  $\Gamma_j$ is continuous in $\vec{\ba} \in \HB_{\epsilon_p}$ and $\Gamma_N^{\Lambda}$ and $t_N$ are continuous in $\vec{\ba} \in \HB_{\epsilon_p} \backslash \{ \ba^{(\emptyset)} = 0 \}$.
\end{proposition}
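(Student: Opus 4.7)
The plan is to construct $\Gamma_j$ on $\Z^d$ first and then pass to the torus by projection, handling the zero mode separately.

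On $\Z^d$ I would start from the resolvent integral representation
\begin{align*}
C^{(\vec{\ba})} = \int_0^\infty e^{-t \L_\eta^{(\vec{\ba})}} \, dt
\end{align*}
(valid because $\L_\eta^{(\vec{\ba})}>0$ in the massive case, with the massless limit on $\Z^d$ taken at the end), and produce the scale decomposition by partitioning the $t$-integral into intervals $[L^{(2-\eta)(j-1)},L^{(2-\eta)j}]$ so that each piece has a natural correlation length of order $L^j$. Finite range would then be enforced by convolving with a smooth, isometry-invariant cutoff at scale $L^j$, in the spirit of the Bauerschmidt--Brydges--Slade finite-range decomposition for the Laplacian and its adaptation to fractional operators by Mitter and Slade. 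The bound \eqref{eq:Gammajbounds1} follows by matching the correlation length $L^j$ with the free amplitude $L^{-(d-2+\eta)j}$ of the fractional Green's function, while the denominator $1+\ba^{(\emptyset)} L^{(2-\eta)j}$ encodes the exponential damping $e^{-t\ba^{(\emptyset)}}$ beyond the mass scale. The $\kA_1$ corrections $\ba^{(\km)}$ carry strictly fewer derivatives than $(-\Delta)^{1-\eta/2}$, so with $\epsilon_p$ small they act as a subleading perturbation of the free operator $(-\Delta)^{1-\eta/2}+\ba^{(\emptyset)}$; a Neumann series in this perturbation converges and, after re-gathering terms scale-by-scale and re-localising, yields the $\Gamma_j$ in the fully perturbed case with the same bounds. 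Lattice and reflection symmetries are preserved by symmetrising the cutoff, and continuity in $\vec{\ba}$ follows from uniform convergence of the series.

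For the torus, since $\Gamma_j$ has $\ell^1$-range $<L^j <L^N$ for $j<N$, its projection onto $\Lambda_N$ is free of wrap-around and the restriction inherits every property. The remainder
\begin{align*}
R_N := C^{(\vec{\ba})}_{\Lambda_N} - \sum_{j=1}^{N-1} \Gamma_j\big|_{\Lambda_N}
\end{align*}
I would analyse in Fourier space on $\Lambda_N$. Because every $S^{(\km)}$ with $\km\in\kA_1$ has $q(\km)\geq 2$ and therefore vanishes on constants, the zero mode of $\L_\eta^{(\vec{\ba})}$ has eigenvalue exactly $\ba^{(\emptyset)}$, contributing $(\ba^{(\emptyset)})^{-1} Q_N$ to $C^{(\vec{\ba})}_{\Lambda_N}$. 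I would pick $t_N$ strictly smaller than $(\ba^{(\emptyset)})^{-1}$ by an amount of order $L^{(2-\eta)N}$, chosen so that $\Gamma_N^{\Lambda_N}:=R_N - t_N Q_N$ remains a non-negative Fourier-diagonal kernel satisfying the scale-$N$ form of \eqref{eq:Gammajbounds1}; explicit Fourier formulas then yield continuity in $\vec{\ba}\in \HB_{\epsilon_p}\setminus\{\ba^{(\emptyset)}=0\}$.

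The main obstacle is obtaining the finite-range decomposition for the fractional Laplacian on $\Z^d$ uniformly in the full perturbation $\vec{\ba}$. While the short-range construction ($\eta=0$) is classical, the fractional case needs either Bochner subordination or a direct spectral construction preserving positivity and finite range simultaneously, and the $\kA_1$ perturbations---subleading in momentum but nonlocal once inverted---must be absorbed without breaking finite range at any scale, demanding careful re-localisation at each order of the Neumann expansion.
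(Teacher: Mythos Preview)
Your overall architecture---construct on $\Z^d$, project to the torus, handle the zero mode separately, and use a Neumann series for the $\kA_1$ perturbations---matches the paper's. The torus part and the Neumann expansion are essentially what the paper does in Section~\ref{sec:covdcmp}.

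The substantive gap is the mechanism for finite range. Writing $C^{(\vec{\ba})}=\int_0^\infty e^{-t\L_\eta^{(\vec{\ba})}}\,dt$ and then ``convolving with a smooth cutoff at scale $L^j$'' does not work: for $\eta>0$ the semigroup $e^{-t(-\Delta)^{1-\eta/2}}$ has only polynomial spatial decay, and truncating it by a cutoff destroys positive-definiteness. Even for $\eta=0$, a post-hoc spatial cutoff of the heat kernel is not positivity-preserving. The paper avoids this by never using the heat kernel. Instead it invokes the polynomial decomposition
\[
\frac{1}{x}=\int_0^\infty t\,P_{t}(x)\,\frac{dt}{t},\qquad \deg P_t\le t,\quad P_t\ge 0\ \text{on }(0,3],
\]
(Lemma~\ref{lemma:Ptdef}). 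Applying this to $\L_0^{(\vec\ba)}/4d$ gives $\dot\Gamma_t$ as a \emph{polynomial} in the (local) operator, so finite range is automatic from the degree bound and positivity is automatic from $P_t\ge 0$. For $\eta>0$ the paper does not subordinate or cut off; it uses the resolvent identity (Proposition~\ref{prop:spectralfracpw})
\[
\frac{1}{w^\beta+z}=\frac{\sin(\pi\beta)}{\pi}\int_0^\infty \frac{ds}{s^\beta(w+s)\,\sigma_\beta(s,z)}
\]
to write $(\lambda^\beta+\cdots)^{-1}$ as a superposition of ordinary resolvents $(\lambda+s)^{-1}$, each of which is then decomposed by the polynomial lemma. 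This reduction to the $\eta=0$ machinery is the step your ``Bochner subordination or direct spectral construction'' alludes to but does not supply, and it is precisely what makes finite range and positivity compatible in the fractional case. Your Neumann series for the $\kA_1$ terms is then layered on top of this (the paper does the same, see~\eqref{eq:lambdaNeumann}), with finite range of the convolution handled by the degree-additivity of products of polynomials in local operators rather than by any ``re-localisation''.
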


We allow $\vec{\ba}$ to vary in an RG map,  so we fix $\tilde{m}^2 \ge 0$ in each RG map and take $\vec{\ba} \in \II_j (\tilde{m}^2)$ (recall \eqref{eq:IIdefi}) and define
\begin{align}
	\AA_j (\tilde{m}^2) = \{ \vec{\ba} \in \HB_{\epsilon_p} 
		:  \ba^{(\emptyset)} \in \II_j (\tilde{m}^2)  \}  .
		\label{eq:Ajdefi}
\end{align}
Letting
\begin{align}
	\kc_{j} = \tilde\chi_{j-1}^{1/2} L^{-\frac{1}{2} (d-2+\eta) (j-1)} ,
	\label{eq:kcdefi}
\end{align}
\eqref{eq:Gammajbounds1} can be restated as
\begin{align}
	\norm{\nabla^{k_x}_x \nabla^{k_y}_y \Gamma_{j+1}}_{\ell^{\infty}} \lesssim \kc_{j+1}^2 L^{-(|k_x| + |k_y|) j} ,\quad \; \vec{\ba} \in \AA_j (\tilde{m}^2)
	\label{eq:Gammajbounds2}
\end{align}
when $|k_x| + |k_y| \le  2 p_{\Phi} + 2d$.

Now,  recall from \cite[Corollary~1.2]{FSmap} that,  for covariance matrices $C_1, C_2$ and independent centred Gaussian random variables $\varphi_1 \sim \cN (0, C_1)$,  $\varphi_2 \sim \cN(0, C_2)$ and $\varphi \sim \cN(0,C_1 + C_2)$,
\begin{align}
	\E^{\varphi}_{C_1 + C_2} [F(\varphi)] = \E^{\varphi_1}_{C_2} \E^{\varphi_2}_{C_2} [F(\varphi_1 + \varphi_2)] \label{eq:C1C2F}
\end{align}
(recall the notation from Section~\ref{sec:notation})
whenever both sides are integrable. 

We can apply this identity and the covariance decomposition to decompose integrals in $C^{(\vec{\ba})}$:
if $\Lambda = \Lambda_N$ and $\ba^{(\emptyset)} >0$,  by \eqref{eq:C1C2F} and \eqref{eq:LetaFRD},
\begin{align}
	\E_{C^{(\vec{\ba})}} [F (\varphi)] = \E_{t_N Q_N} \E_{\Gamma_N^\Lambda}  \cdots \E_{\Gamma_1} [ F(\zeta_1 + \cdots + \zeta_N + \zeta_{\hat{N}})]
	\label{eq:Cdcmpcnvl}
\end{align}
for independent Gaussian random variables $\zeta_j \sim \cN (0, \Gamma_j)$ ($j < N$),  $\zeta_N \sim \cN(0, \Gamma_N^\Lambda)$ and $\zeta_{\hat{N}} \sim \cN(0,  t_N Q_N)$.
We also abbreviate
\begin{align}
	\E_{C^{(\vec{\ba})}} = \E_{\hat{N}} \E_N \cdots \E_1
	,\qquad  \E_j \equiv \E_{\Gamma_j} ,  \quad\; \E_{\hat{N}} = \E_{t_N Q_N}
	\label{eq:EhatNN1}
\end{align}

\subsection{Effective potential}
\label{sec:effpotes}

Quadratic counterterms added to the covariance should be subtracted from the potential function to correctly account for the $|\varphi|^4$-measure.  Thus given the coefficients $\vec{\ba} \in \R^{\kA_0 \cup \kA_1}$ of $\L_\eta^{(\vec{\ba})}$,  we consider potential function satisfying
\begin{align}
	\tilde{V} (\Lambda , \varphi) := \sum_{x \in \Lambda} \tilde{V}_{0,x} (\varphi) = V_{g,\nu, N} (\varphi) - \sum_{x \in \Lambda} \sum_{\km \in \kA_0 \cup \kA_1} \ba^{(\km)} S_x^{(\km)} (\varphi) .  \label{eq:Vtildedefi}
\end{align}

\begin{lemma} \label{lemma:Vtilde}
Consider $C^{(\vec{\ba})}$ given by \eqref{eq:Cvecba} and $V_0$ satisfying \eqref{eq:Vtildedefi}.  Then for any $F(\varphi)$ with appropriate integrability condition,
\begin{align}
	\langle F(\varphi) \rangle_{g,\nu, N} = \frac{\E_{C^{(\vec{\ba})}} [ F(\varphi) \exp (-\tilde{V} (\Lambda, \varphi))]  }{\E_{C^{(\vec{\ba})}} [ \exp (-\tilde{V} (\Lambda, \varphi))]} .
\end{align}
\end{lemma}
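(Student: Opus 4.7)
The proof is essentially a repackaging of the integral representation of $\langle F\rangle_{g,\nu,N}$, so my plan has no deep obstacles: the work is all in checking that two explicit quadratic expressions agree up to a $\varphi$-independent constant. First, I would recall that because $\vec{\ba}\in\HB_{\epsilon_p}$, the operator $\L_\eta^{(\vec{\ba})}$ is positive-definite (by Lemma \ref{lemma:Lapptv-res} invoked after \eqref{eq:Cvecba}), so $C^{(\vec{\ba})}=(\L_\eta^{(\vec{\ba})})^{-1}$ is a genuine covariance and the Gaussian measure $\E_{C^{(\vec{\ba})}}$ has an unambiguous density with respect to Lebesgue measure on $(\R^n)^{\Lambda_N}$,
\begin{equation*}
  \E_{C^{(\vec{\ba})}}[G(\varphi)]
  = \frac{1}{Z_{\rm G}(\vec{\ba})} \int_{(\R^n)^{\Lambda_N}} G(\varphi)\, \exp\!\Bigl(-\tfrac{1}{2}\bigl(\varphi,\L_\eta^{(\vec{\ba})}\varphi\bigr)\Bigr) \,d\varphi,
\end{equation*}
where the normalization $Z_{\rm G}(\vec{\ba})$ depends only on $\det\L_\eta^{(\vec{\ba})}$, not on $\varphi$.

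Second, I would apply this with $G=Fe^{-\tilde V(\Lambda,\cdot)}$ to the numerator and $G=e^{-\tilde V(\Lambda,\cdot)}$ to the denominator of the right-hand side of the lemma. The normalizations $Z_{\rm G}(\vec{\ba})$ cancel in the ratio, reducing the claim to the identity
\begin{equation*}
  \tfrac{1}{2}\bigl(\varphi,\L_\eta^{(\vec{\ba})}\varphi\bigr) + \tilde V(\Lambda,\varphi) = H_{g,\nu,N}(\varphi) + \text{const},
\end{equation*}
which I would then verify by substituting \eqref{eq:Lapkadefi} and \eqref{eq:Vtildedefi} and collecting terms: the quadratic counterterms $\ba^{(\emptyset)}$ and $\ba^{(\km)}$ ($\km\in\kA_1$) added to the fractional Laplacian via $\L_\eta^{(\vec{\ba})}$ are cancelled precisely by the terms subtracted from $V_{g,\nu,N}$ in the definition of $\tilde V$, leaving $\frac{1}{2}(\varphi,(-\Delta)^{1-\eta/2}\varphi)+V_{g,\nu,N}(\varphi)=H_{g,\nu,N}$ as required. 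Here I would use Remark \ref{remark:kAsymmetries} to handle the sum over $\kA_0$ site-by-site and integration by parts (also recorded in Remark \ref{remark:kAsymmetries}) to match the $\kA_1$ contributions in the quadratic form with their summed versions inside $\tilde V$.

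Finally, I would note that the "appropriate integrability condition" on $F$ in the statement is exactly what is needed for the unnormalized integrals $\int Fe^{-H_{g,\nu,N}}\,d\varphi$ and $\int e^{-H_{g,\nu,N}}\,d\varphi$ to be finite; since $H_{g,\nu,N}$ coincides with $\tilde V+\frac{1}{2}(\varphi,\L_\eta^{(\vec{\ba})}\varphi)$ up to a constant, this is equivalent to $Fe^{-\tilde V}$ and $e^{-\tilde V}$ being $\E_{C^{(\vec{\ba})}}$-integrable. The only point that requires any attention at all is the algebraic matching in Step~2, which is tedious only in bookkeeping; there is no analytic obstacle and no need to invoke anything beyond the already-established positivity of $\L_\eta^{(\vec{\ba})}$ on $\HB_{\epsilon_p}$.
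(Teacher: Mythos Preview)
Your proposal is correct and follows exactly the same approach as the paper: both arguments amount to observing that the quadratic counterterms added to $(-\Delta)^{1-\eta/2}$ in the definition of $\L_\eta^{(\vec{\ba})}$ are precisely cancelled by those subtracted from $V_{g,\nu,N}$ in the definition of $\tilde V$, so that the two integrands agree up to a $\varphi$-independent normalization that drops out of the ratio. The paper's proof is a single sentence stating this cancellation; your version simply spells out the density representation and the algebraic bookkeeping more explicitly.
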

\begin{proof}
The identity is immediate if one realises that the quadratic function in \eqref{eq:Vtildedefi} subtracted from $V_{g,\nu,N}$ exactly cancels $\L^{(\vec{\ba})}_\eta - (-\Delta)^{1-\eta/2}$.
\end{proof}

We study the evolution of the modified potential function $\tilde{V}$ upon convolutions with decomposed  Gaussian measures given by Section~\ref{sec:FRDdefi}.
The intermediate integrals are expressed in terms of effective potential functions,  and some new polynomial terms arise that can not expressed by indices $\km \in \kA_0 \cup \kA_1$ upon integrals of \eqref{eq:Cdcmpcnvl}.
Thus there is a need to extend the space of effective potentials beyond \eqref{eq:Vtildedefi},  by adding observable fields and higher order terms.

\subsubsection{Observable fields}

We extend the effective potential using observable fields to express correlation functions concisely,  see Proposition~\ref{prop:2ptfncrestt}.
Observable fields are simply distinct elements $\sigma_\o$ and $\sigma_\x$ that generate a commutative ring $R$ via
\begin{align}
	1 =: \sigma_{\bulk}, \qquad \sigma_\o^2 = \sigma_\x^2 = 0, \qquad \sigma_\o \sigma_\x = \sigma_\x \sigma_\o =: \sigma_{\ox} \neq 0 .
\end{align}
For any Abelian group $M_\bulk$,  we can consider a graded $R$-module given by
\begin{align}
	M = M_\bulk \oplus M_\o \oplus M_\x \oplus M_\ox , \qquad M_* = \sigma_* M_\bulk
\end{align}
for $* \in \{\bulk , \o,\x,\ox\}$.
We let $\pi_{*}$ for the projection on each respective space, 
and each $m \in M$ can be denoted
\begin{align}
	m = m_\bulk + \sigma_\o m_\o  + \sigma_\x m_\x + \sigma_\ox m_\ox , \qquad m_\bulk,m_\o,m_\x,m_\ox \in M_\bulk .
\end{align}
We make this extension for a number of algebraic structures that appear throughout this article without particular mentioning.

\subsubsection{Indices for higher order terms}
\label{sec:indfhot}

Indices \eqref{eq:ko2ko2nabla}--\eqref{eq:kotfnabla} define the bulk effective potential
\begin{align}
	V_{\bulk} = V_{2} + V_{2,\nabla} +  V_{4} + V_{4,\nabla} \in  \cV_2 + \cV_{2,\nabla} + \cV_4 + \cV_{4,\nabla} = \cV_{\bulk}
\end{align}
by
\begin{align}
	& V_2 = \sum_{\km \in \ko_2} \nu^{(\km)} S^{(\km)}  ,  \qquad 
	V_{2,\nabla} = \sum_{\km \in \ko_{2,\nabla}} \nu^{(\km)} S^{(\km)} , \\
	& V_4 = \sum_{\km \in \ko_4 } g^{(\km)} S^{(\km)} ,\qquad
	V_{4,\nabla} = \sum_{\km \in \ko_{4,\nabla}} g^{(\km)} S^{(\km)}
\end{align}
and require additional symmetry that $V_{\bulk,x} (\varphi)$ is invariant under lattice symmetries of $x \in \Lambda$ and $O(n)$-symmetries of $\varphi$.
Projections on each subspace is denoted $\pi_*$ for $* \in \{ 2,(2,\nabla),4, (4,\nabla) \}$.

The observable effective potential is expressed as $V_{\sigma} = \sigma_\o V_\o + \sigma_\x V_\x$ where for each $\hash \in \{ \o,\x\}$,  $V_\hash$ has form $V_{1} + V_{1,\nabla} \in \cV_1 + \cV_{1,\nabla}$ where
\begin{align}
	V_{1,x} = \sum_{\km \in \ko_1} \lambda_\hash^{(\km)} S^{(\km)}_x \one_{x = \hash}  , \qquad
	V_{1,\nabla,x} = \sum_{\km \in \ko_{1,\nabla}} \lambda_\hash^{(\km)} S^{(\km)}_x \one_{x = \hash}
\end{align}
with labels
\begin{align}
	&\ko_1 = \{ \km \in \ko_+ \; : \; p(\km) =1,  \; q(\km) =0 \},  \label{eq:ko1defi} \\
	& \ko_{1,\nabla} = \{ \km \in \ko_+ \; : \; p(\km) =1,  \; q(\km) \in (0, (d-2+\eta)/2 ) \}  .  \label{eq:ko1nabladefi}
\end{align}
We require $V_{\hash}$ to satisfy certain symmetries: $V_{\hash} (- \varphi) = -V_{\hash} (\varphi)$ and $V_{\hash} (R\varphi) = V_{\hash} (\varphi)$ when $R \in O(n)$ acts via $(R\varphi)_x = \varphi_x$ and fixes each $\varphi^{(1)}_x$.
Projections on each space is denoted $\pi_*$ for $* \in \{ (1, (1,\nabla) \}$.
The set of effective potentials $V = V_\bulk + V_{1} + V_{1,\nabla}$ is denoted $\cV$.
For $V \in \cV$ and $X \subset \Lambda$,  we denote $V(X) = \sum_x V_x$.  
We also extend $\cV$ by adding constant terms
\begin{align}
	U = u + V \in  \cU = (\R + \sigma_\ox \R)^{\Lambda} + \cV
\end{align}
where $u = u_\bulk + \sigma_\ox u_\ox$ has form
\begin{align}
	u_{\bulk} (\Lambda) = u_{\bulk} |\Lambda| , \qquad u_{\ox,x} (\varphi) = q_\o \one_{x=\o} + q_\x \one_{x=\x}
\end{align}
for some $q_\o, q_\x \in \R$.
We denote $\pi_0 : U \mapsto u_\bulk$ and $\pi_{\ox} : U \mapsto \sigma_\ox u_\ox$.

\begin{remark} \label{remark:Vsymmetries}
Just as in Remark~\ref{remark:kAsymmetries},
although $\ko_1$,  $\ko_2$ and $\ko_4$ are not composed of a single element,  but by the symmetries mentioned above,  we can express
\begin{align}
	V_1 (\varphi)_x = \lambda_{\hash}^{(\emptyset)} \varphi_x^{(1)} \one_{x=\hash} , \quad 
	V_2 (\varphi)_x = \frac{\nu^{(\emptyset)}}{2}  |\varphi_x|^2 , \quad 
	V_4 (\varphi)_x = \frac{g^{(\emptyset)}}{4} |\varphi_x|^4 
\end{align}
for some $\nu^{(\emptyset)}$,  $g^{(\emptyset)}$ and $\lambda_{\hash}^{(\emptyset)}$.  
Thus we will pretend that $\ko_1$,  $\ko_2$ and $\ko_4$ are composed of a single element and denote them by $\emptyset$ when there is no source of confusion.

Again as in Remark~\ref{remark:kAsymmetries},  the part of $V_{2,\nabla}$ with two derivatives ($q(\km)=2$) can be expressed as
\begin{align}
	\nu^{(\nnabla)} \nabla \varphi \cdot \nabla \varphi + \nu^{(\Delta)} \varphi \cdot \Delta \varphi .
\end{align}
Since they can be equated,  we consider operation $\mathbb{V}^{(0)} : \cV_2 \rightarrow \cV_2$ that maps $(\nu^{(\Delta)},  \nu^{(\nnabla)}) \mapsto (\bar{\nu}^{(\Delta)},0) = (\nu^{(\Delta)}- \nu^{(\nnabla)}, 0 )$.
This operation also extends to $\mathbb{V}^{(0)} : \cV \rightarrow \cV$ naturally.
\end{remark}

We also restrict the size of the coefficients as the following. 
For a parameter $C_{\cD} > 0$ and $\alpha \ge 1/2$,
\begin{align}
\label{eq:cDbulk}
		\begin{split}
	& \cD_{j,\bulk}  (\alpha)
		= \Big\{ (\nu_{j}^{(\km_1)},  g_{j}^{(\km_2)})  : 
			|\nu_{j}^{(\km_1)}| \le \alpha  C_{\cD} L^{(q (\km_1) - 2 + \eta) j} \scale_j \tilde{g}_j		\text{ if } \km_1 \in \kA_0 \cup \kA_1 \cup \kA_2 , \\
			& \qquad\qquad\qquad\qquad |\nu_{j}^{(\km_1)}| \le \alpha  C_{\cD} \scale_j^{-\kt} \tilde{g}_j 
			\text{ if } \km_1 \in \kA_3  ,  \;\;\;
			 g^{(\emptyset)}_j / \tilde{g}_j \in ( \alpha C_{\cD})^{-1},    \alpha C_{\cD}),  \\
			& \qquad\qquad\qquad\qquad 
			| g_{j}^{(\km_2)}| \le \alpha  C_{\cD} \scale_j^{-\kt} \tilde{g}_j^{3/2}  \text{ if } \km_2 \in \ko_{4,\nabla}
			\Big\}
	\end{split} \\
	\begin{split} \label{eq:Dstsigmadefi}
	& \cD_{j, \sigma} (\alpha)
		= \Big\{ (\lambda_{j, \o}^{(\km)} , \lambda_{j, \x}^{(\km)} )_{\km \in \ko_1 \cup \ko_{1,\nabla}}   \;  : \;  
		|\lambda^{(\km)}_{j, \hash}| <  \alpha C_{\cD} L^{q(\km) j}  \text{ if } q (\km) \le 1 , \\
		& \qquad\qquad\qquad\qquad\qquad\qquad\qquad   |\lambda^{(\km)}_{j, \hash}| < \alpha C_{\cD} \scale_j^\kt L^{(2-\eta) j}  \text{ if } q (\km) \ge 2
		\Big\} 
\end{split}		
\end{align}
and define
\begin{align}
	\cD_j = \cD_{j, \bulk} (\alpha) \times \cD_{j, \sigma} (\alpha), \qquad \cD_j^{(0)} (\alpha)= \mathbb{V}^{(0)} \cD_j (\alpha) .
	\label{eq:cDzerodefi}
\end{align}
When $\alpha$ is omitted,  it means $\alpha =1$.

\subsection{Final integral} \label{sec:finintgrl}

Motivated by the successive integrals \eqref{eq:Cdcmpcnvl}, we define
$Z_0 (\varphi) = \exp ( -V_0 (\Lambda, \varphi) )$ and
\begin{align} \label{eq:Zjdfn}
	Z_{j} (\varphi) = \E_{j} \cdots \E_{1} \big[ \exp\left( - V_0 (\Lambda, \zeta_1 + \cdots + \zeta_j + \varphi ) \right) \big] , 
	\qquad \varphi \in (\R^n)^{\Lambda}
\end{align}
for $j= 1, \cdots, N$. They satisfy inductive relations
\begin{align}
	Z_{j+1} (\varphi) = \E_{j+1} \thetaz Z_j (\varphi),  \qquad \zeta \sim \cN(0,\Gamma_{j+1})
	\label{eq:Zjinductive}
\end{align}
where we denoted $\thetaz F (\varphi) = F(\varphi + \zeta)$.
Similarly, we also inductively define
\begin{align}
	Z_{0,\bulk} (\varphi) = e^{-V_{0,\bulk} (\Lambda, \varphi)}, 
	\qquad
	Z_{j+1 ,  \bulk} (\varphi) = \E_{j+1} \thetaz Z_{j,\bulk} (\varphi)
\end{align}
for $j+1 \le N$. 
Given $Z_N$,  the final integral $\E_{\hat{N}}$ (recall \eqref{eq:EhatNN1}) with covariance $t_N Q_N$ can be expressed as an one-dimensional integral,  as the we can see in the next result.

\begin{proposition}
\label{prop:mgfref}

Let $V_{0,\bulk} \in \cV$,  $\vec\ba \in \HB_{\epsilon_p}$,  $g^{(\emptyset)} >0$ and $g^{(\km)} =0$ for $\km \in \ko_{4,\nabla}$.
Then for $f \in (\R^n )^{\Lambda}$ and $\ba^{(\emptyset)} >0$,
\begin{align}
	\frac{\E_{C^{(\vec{\ba})}} [ e^{-V_{0,\bulk} (\Lambda, \varphi) + (f,\varphi) }  ] } {\E_{C^{(\vec{\ba})}} [ e^{-V_{0,\bulk} (\Lambda, \varphi) }  ]}
		= e^{\frac{1}{2} (f,  C^{(\vec{\ba})} f)}  \label{eq:mgfref1}
		\frac{\int_{\R^n} Z_{N,\bulk} ( y \one +  C^{(\vec{\ba})} f ) e^{-\frac{1}{2} t_N^{-1} L^{dN} |y|^2  } \rd y }{\int_{\R^n} Z_{N,\bulk}  ( y \one ) e^{-\frac{1}{2} t_N^{-1} L^{dN} |y|^2  } \rd y } \\
		= e^{\frac{1}{2} (f,  w_N  f)}  \label{eq:mgfref2}
		\frac{\int_{\R^n} Z_{N,\bulk} ( y \one +  w_N f ) e^{(f, y \one)} e^{-\frac{1}{2} t_N^{-1} L^{dN} |y|^2  } \rd y }{\int_{\R^n} Z_{N,\bulk}  ( y \one ) e^{-\frac{1}{2} t_N^{-1} L^{dN} |y|^2  } \rd y }
\end{align}
where $\one$ is the unit-valued function on $\Lambda$.
If $\ba^{(\emptyset)} = 0$,  then \eqref{eq:mgfref2} holds with $t_N^{-1} = 0$.
\end{proposition}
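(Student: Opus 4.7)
The identity combines two elementary ingredients: the covariance decomposition of Proposition~\ref{prop:theFRD}, which isolates the constant-field 0-mode $t_N Q_N$ as a rank-one summand of $C^{(\vec{\ba})}$, and the Cameron--Martin shift formula $\E_C[e^{(f,\varphi)}F(\varphi)] = e^{\frac12(f,Cf)} \E_C[F(\varphi + Cf)]$ used to absorb the linear source $(f,\varphi)$. A centred Gaussian with covariance $t_N Q_N$, where $Q_N(x,y)=L^{-dN}$, is almost surely of the form $Y\one$ with $Y \sim \cN(0, t_N L^{-dN} I_n)$ on $\R^n$; its density in $y$ is proportional to $e^{-\frac12 t_N^{-1} L^{dN}|y|^2}$, and the $(f, V_{0,\bulk})$-independent Gaussian normalisation cancels in the ratios of interest. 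The two displays \eqref{eq:mgfref1} and \eqref{eq:mgfref2} correspond to applying the shift formula to the full covariance $C^{(\vec{\ba})}$ or only to the non-zero-mode part $w_N = \sum_{j=1}^{N-1} \Gamma_j + \Gamma_N^{\Lambda_N}$, respectively.

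\textbf{Execution.} For \eqref{eq:mgfref1}, apply the shift formula directly to the numerator $\E_{C^{(\vec{\ba})}}[e^{(f,\varphi)}e^{-V_{0,\bulk}(\Lambda,\varphi)}]$, producing the prefactor $e^{\frac12(f,C^{(\vec{\ba})}f)}$ and replacing $\varphi$ by $\varphi + C^{(\vec{\ba})}f$ inside the potential. Decomposing $\varphi = \zeta_1 + \cdots + \zeta_N + Y\one$ via \eqref{eq:Cdcmpcnvl} and recognising the successive integrations $\E_N\cdots\E_1$ as the definition \eqref{eq:Zjdfn} of $Z_{N,\bulk}$, the inner integrals produce $Z_{N,\bulk}(Y\one + C^{(\vec{\ba})}f)$, and the outer $\E_{\hat N}$ yields the $y$-integral in \eqref{eq:mgfref1}; dividing by the $f=0$ analogue gives the stated ratio. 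For \eqref{eq:mgfref2}, decompose first and shift only the $\zeta := \zeta_1+\cdots+\zeta_N$ part, whose covariance is $w_N$: splitting $(f,\varphi) = (f,\zeta) + (f,y\one)$, the shift produces $e^{\frac12(f,w_N f)}$ and yields $Z_{N,\bulk}(w_N f + y\one)$ as the inner integral, while $e^{(f,y\one)}$ remains under the $y$-integral. Consistency of \eqref{eq:mgfref1} with \eqref{eq:mgfref2} is immediate by completing the square in $y$ using $C^{(\vec{\ba})} - w_N = t_N Q_N$.

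\textbf{The $\ba^{(\emptyset)}=0$ case and main obstacle.} The degenerate case $t_N^{-1}=0$ is obtained by taking $\ba^{(\emptyset)} \downarrow 0$: Proposition~\ref{prop:theFRD} gives $t_N \uparrow \infty$ in this limit, so the Gaussian weight in $y$ collapses to Lebesgue measure. Integrability of both $y$-integrals is preserved because the quartic term $\tfrac{g^{(\emptyset)}}{4}|\varphi|^4$ in $V_{0,\bulk}$ forces the bound $Z_{N,\bulk}(y\one) \lesssim \exp(-c L^{dN} g^{(\emptyset)}|y|^4)$ for some $c>0$, uniformly in the shift argument. There is no conceptual obstacle in the proof --- the whole argument is algebraic bookkeeping --- and the only points requiring care are (a) matching the orientation of the successive convolutions $\E_N\cdots\E_1$ with the conventions of \eqref{eq:Zjdfn} and \eqref{eq:EhatNN1}, and (b) verifying that the rank-one 0-mode Gaussian is correctly identified with an $n$-dimensional integral so that the Gaussian normalisations cancel exactly in the ratios.
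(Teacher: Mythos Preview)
Your proposal is correct and follows essentially the same route as the paper: the Cameron--Martin shift (applied either to the full covariance $C^{(\vec{\ba})}$ for \eqref{eq:mgfref1} or to the non-zero-mode part $w_N$ for \eqref{eq:mgfref2}), identification of the rank-one 0-mode Gaussian $\zeta_{\hat N}$ with $Y\one$ for $Y\sim\cN(0,t_NL^{-dN}I_n)$, and the limiting argument $\ba^{(\emptyset)}\downarrow 0$ with integrability supplied by the quartic growth of $V_{0,\bulk}$. Your ordering (shift before decomposition for \eqref{eq:mgfref1}, decompose then shift for \eqref{eq:mgfref2}) is in fact slightly cleaner than the paper's presentation, and the consistency check via completing the square in $y$ is a nice touch not made explicit there.
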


\begin{proof}
We drop $\bulk$ in the proof for brevity. 
We first consider $\ba^{(\emptyset)} > 0$ so that $t_N (\vec{\ba}) \in (0, \ba_\bulk^{-1})$
and denote $\E_N \cdots \E_1 = \E_{\le N}$.
Then by \eqref{eq:EhatNN1},
\begin{align}
	\E_{C^{(\vec{\ba})}} \left[  e^{-V_{0} (\Lambda, \varphi) + (f,\varphi) } \right]
		= \E_{\hat{N}} \E_{\le N} \left[  e^{-V_{0} (\Lambda, \zeta_{\le N} + \zeta_{\hat{N}} ) + (f, \zeta_{\le N} + \zeta_{\hat{N}} ) } \right]  .
		\label{eq:mgfref3}
\end{align}

Both sides are integrable because $V_{0}$ contains a term that grows with quartic order in $\varphi$.
By Gaussian change of variable $\zeta_{\le N} \mapsto \zeta_{\le N} + C^{(\vec{\ba})} f$,
\begin{align}
		&= e^{\frac{1}{2} (f,C^{(\vec{\ba})} f)} \E_{\hat{N}} \,  \E_{\le N} \left[  e^{-V_{0} (\Lambda, \zeta_{\le N} + \zeta_{\hat{N}} + C^{(\vec{\ba})} f )  } \right]  \nnb
		&= e^{\frac{1}{2} (f,C^{(\vec{\ba})} f)} \E_{\hat{N}} \,  Z_{N} \big( \zeta_{\hat{N}} + C^{(\vec{\ba})} f \big)  .
\end{align}
Now, notice that $\zeta_{\hat{N}} =^d \tilde{Y} \one$ for some $\R^n$-valued Gaussian random variable $\tilde{Y} \in \cN(0,  L^{-dN} t_N)$,
so we can rewrite the expectation as
\begin{align}
	\propto e^{\frac{1}{2} (f, C^{(\vec{\ba})} f)} \int_{\R^n}  Z_{N} \big( y \one + C^{(\vec{\ba})} f \big)  e^{-\frac{1}{2} t_N^{-1} L^{dN} |y|^2} \rd y
	,
\end{align}
so we obtain \eqref{eq:mgfref1} for $\ba^{(\emptyset)} >0$.

For \eqref{eq:mgfref2},  we apply Gaussian change of variable $\zeta_{\le N} \mapsto \zeta_{\le N} + w_N f$ on \eqref{eq:mgfref3} to obtain
\begin{align}
	&= e^{\frac{1}{2} (f, w_N f)} \E_{\hat{N}} \left[ e^{(f, \zeta_{\hat{N}}} \E_{\le N} \left[  e^{-V_{0,\bulk} (\Lambda, \zeta_{\le N} + \zeta_{\hat{N}} + w_N f ) ) } \right] \right] \nnb
	&= e^{\frac{1}{2} (f,\Gamma_{\le N} f)} \E_{\hat{N}} \, e^{(f, \zeta_{\hat{N}} ) } Z_{N,\bulk} \big( \zeta_{\hat{N}} + w_N f \big)  
\end{align}
and we obtain \eqref{eq:mgfref2} for $\ba^{(\emptyset)} >0$ by substitution $\zeta_{\hat{N}} = \tilde{Y} \one$.

For $\ba^{(\emptyset)} = 0$, 
observe that $\Gamma_j$ is continuous in $\vec{\ba}$ due to Proposition~\ref{prop:theFRD} and because $t_N > (\ba^{(\emptyset)} )^{-1} - C L^{-2N}$, 
we have $t_N^{-1} \rightarrow 0$ as $\ba^{(\emptyset)} \downarrow 0$.
Thus we can take the limit $\ba^{(\emptyset)} \downarrow 0$ and apply the Dominated convergence theorem, 
whose uniform integrability is again guaranteed by the quartic growth of $V_0$.
\end{proof}

A similar statement can be used for the two-point function.

\begin{proposition} \label{prop:2ptfncrestt}
Under the assumptions of Proposition~\ref{prop:mgfref},  also let $\pi_{\hash} V_0 = \sigma_\hash \lambda^{(\emptyset)}_{0, \hash} \varphi^{(1)}_\hash$ for both $\hash \in \{\o,\x\}$.  Then
\begin{align}
	\lambda^{(\emptyset)}_{0, \o} \lambda^{(\emptyset)}_{0, \x}	\frac{\E_{C^{(\vec{\ba})}} [ \varphi_\o^{(1)} \varphi_\x^{(1)} e^{-V_{0,\bulk} (\Lambda, \varphi)  }  ] } {\E_{C^{(\vec{\ba})}} [ e^{-V_{0,\bulk} (\Lambda, \varphi) }  ]}
		& = 
		\frac{\int_{\R^n} Z_{N,\ox} ( y \one ) e^{-\frac{1}{2}  t_N^{-1} L^{dN}  |y|^2  } \rd y }{\int_{\R^n} Z_{N,\bulk}  ( y \one ) e^{-\frac{1}{2} t_N^{-1} L^{dN} |y|^2  } \rd y }  .
\end{align}
If $\ba^{(\emptyset)}=0$,  then the identity holds with $t_N^{-1} = 0$.
\end{proposition}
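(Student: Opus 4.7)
The strategy is to follow the proof of Proposition~\ref{prop:mgfref} closely, inserting one preliminary algebraic step that uses the nilpotency of the observable ring to identify the $\sigma_{\ox}$-component of $Z_N$. The point is that the two-field insertion $\varphi_\o^{(1)} \varphi_\x^{(1)}$ is encoded as the $\ox$-graded piece of $Z_0$, propagates linearly under the Gaussian convolutions $\E_j$, and can be read off at scale $N$ exactly as the bulk partition function is read off in Proposition~\ref{prop:mgfref}.

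First, I would expand $Z_0 = e^{-V_0(\Lambda,\varphi)}$ using $\sigma_\o^2 = \sigma_\x^2 = 0$ together with $\sigma_\o \sigma_\x = \sigma_\ox$ and the hypothesis $\pi_\hash V_0 = \sigma_\hash \lambda^{(\emptyset)}_{0,\hash} \varphi^{(1)}_\hash$ for $\hash \in \{\o,\x\}$, giving
\begin{align*}
Z_0(\varphi) = e^{-V_{0,\bulk}(\Lambda,\varphi)} \bigl(1 - \sigma_\o \lambda^{(\emptyset)}_{0,\o} \varphi^{(1)}_\o\bigr)\bigl(1 - \sigma_\x \lambda^{(\emptyset)}_{0,\x} \varphi^{(1)}_\x\bigr),
\end{align*}
whose $\ox$-component is $Z_{0,\ox}(\varphi) = \lambda^{(\emptyset)}_{0,\o} \lambda^{(\emptyset)}_{0,\x} \varphi^{(1)}_\o \varphi^{(1)}_\x e^{-V_{0,\bulk}(\Lambda,\varphi)}$. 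Since each $\E_j$ acts only on $\zeta_j$, it is $R$-linear and commutes with the projection onto the $\ox$-component. Iterating \eqref{eq:Zjinductive} through all $N$ scales then yields
\begin{align*}
Z_{N,\ox}(\varphi) = \lambda^{(\emptyset)}_{0,\o} \lambda^{(\emptyset)}_{0,\x}\,\E_N \cdots \E_1\bigl[ (\varphi+\zeta_{\le N})^{(1)}_\o\, (\varphi+\zeta_{\le N})^{(1)}_\x\, e^{-V_{0,\bulk}(\Lambda,\varphi+\zeta_{\le N})}\bigr],
\end{align*}
where $\zeta_{\le N} = \zeta_1+\cdots+\zeta_N$.

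Next, for $\ba^{(\emptyset)} > 0$ I would invoke the full decomposition \eqref{eq:Cdcmpcnvl}, writing $\E_{C^{(\vec\ba)}} = \E_{\hat N}\E_N \cdots \E_1$, and use $\zeta_{\hat N} =^d \tilde Y\one$ with $\tilde Y \sim \cN(0, L^{-dN} t_N)$ exactly as in Proposition~\ref{prop:mgfref}. Because $\zeta_{\hat N}$ is constant in space, $\zeta_{\hat N,\o}^{(1)} = \zeta_{\hat N,\x}^{(1)} = \tilde Y^{(1)}$, and the iterated identity above rewrites the numerator as
\begin{align*}
\E_{C^{(\vec\ba)}}\bigl[\varphi^{(1)}_\o \varphi^{(1)}_\x e^{-V_{0,\bulk}(\Lambda,\varphi)}\bigr] = \bigl(\lambda^{(\emptyset)}_{0,\o} \lambda^{(\emptyset)}_{0,\x}\bigr)^{-1} \int_{\R^n} Z_{N,\ox}(y\one)\, \rho(y)\,\rd y,
\end{align*}
with $\rho$ the Gaussian density of $\tilde Y$. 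The parallel calculation for $\E_{C^{(\vec\ba)}}[e^{-V_{0,\bulk}}]$ produces $\int_{\R^n} Z_{N,\bulk}(y\one)\,\rho(y)\,\rd y$; upon dividing, the Gaussian normalisation cancels and $\lambda^{(\emptyset)}_{0,\o} \lambda^{(\emptyset)}_{0,\x}$ migrates to the left-hand side, giving the stated identity. The case $\ba^{(\emptyset)} = 0$ is obtained by letting $\ba^{(\emptyset)} \downarrow 0$ and invoking continuity of $\Gamma_j$ in $\vec{\ba}$ from Proposition~\ref{prop:theFRD} together with $t_N^{-1} \to 0$; Dominated convergence is justified by the quartic growth of $V_{0,\bulk}$, exactly as in Proposition~\ref{prop:mgfref}.

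No step is substantially harder than in Proposition~\ref{prop:mgfref}. The only mildly delicate point is the commutation of each $\E_j$ with $\pi_\ox$, which is essentially automatic from the Grassmann-type grading since $\E_j$ touches only the Gaussian variable $\zeta_j$ and is $R$-linear; and a minor bookkeeping check that the insertion of $\varphi_\o^{(1)} \varphi_\x^{(1)}$ does not alter the quartic growth needed for the integrability and Dominated convergence arguments inherited from Proposition~\ref{prop:mgfref}.
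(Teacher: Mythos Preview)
Your proposal is correct and follows essentially the same approach as the paper: identify $\lambda^{(\emptyset)}_{0,\o}\lambda^{(\emptyset)}_{0,\x}\,\varphi_\o^{(1)}\varphi_\x^{(1)} e^{-V_{0,\bulk}} = \pi_\ox e^{-V_0} = Z_{0,\ox}$, use that the Gaussian convolutions commute with $\pi_\ox$, and then repeat the steps of Proposition~\ref{prop:mgfref}. The paper's proof is simply a more compressed version of yours.
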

\begin{proof}
Observe that
$\lambda^{(\emptyset)}_{0, \o} \lambda^{(\emptyset)}_{0, \x}
	\varphi_\o^{(1)} \varphi_\x^{(1)} e^{-V_{0,\bulk} (\Lambda,\varphi)}  = \pi_\ox e^{-V_0 (\Lambda,\varphi)}$.
Thus by the definition of $Z_0$,  we have
\begin{align}
	\lambda^{(\emptyset)}_{0, \o} \lambda^{(\emptyset)}_{0, \x}	\frac{\E_{C^{(\vec{\ba})}} [ \varphi_\o^{(1)} \varphi_\x^{(1)} e^{-V_{0,\bulk} (\Lambda, \varphi)  }  ] } {\E_{C^{(\vec{\ba})}} [ e^{-V_{0,\bulk} (\Lambda, \varphi) }  ]} = \frac{\E_{C^{(\vec{\ba})}} [Z_{0, \ox} (\varphi)] }{\E_{C^{(\vec{\ba})}} [Z_{0, \bulk} (\varphi)] }
\end{align}
and the conclusion follows from the same steps as Proposition~\ref{prop:mgfref}. 
\end{proof}

By combining Lemma~\ref{lemma:Vtilde} and Proposition~\ref{prop:mgfref} or \ref{prop:2ptfncrestt},  
we can express the moment generating function $\langle e^{(f,\varphi)} \rangle_{g, \nu, N}$ and the two-point function $\langle \varphi_\o \cdot \varphi_\x \rangle_{g,\nu, N}$ in terms of integrals of $Z_N$.

\section{Polymer activities}
\label{sec:polacts}

After integrating out fluctuations below scale $j$ \eqref{eq:Zjdfn},
$Z_j$ describes the spin system at scale $j$.  We will approximate $Z_j$ with an effective potential $V_j \in \cV_j$ with error $K_j$,  expanded via a cluster-type expansion \eqref{eq:circleproduct}.
Error coordinate $K_j$ will be defined as a polymer activity,  residing in space $K_j$.
We will explain these terminologies in this section,  mostly restating notations defined in \cite{FSmap}.

For each $j \in \{ 0, \cdots, N \}$,  we let $B_{0,j} = [ - \lfloor \frac{L^j - 1}{2} \rfloor , \lfloor \frac{L^j }{2} \rfloor ]^d \subset \Lambda$ and let $\cB_j$,  the set of $j$-blocks,  be the set of $(L^j \Z)^d$-translations of $B_{0,j}$.
Any union of $\cB_j$ is called a $j$-polymer,  and the set of $j$-polymer is denoted $\cP_j$.
For $X \subset \Lambda$,  $\cB_{j} (X) = \{ B \in \cB_j : B \subset X \}$ and $\cP_{j} (X) = \{ Y \in \cP_j : Y \subset X \}$.

We use a collections of functions $(K_j (X,\varphi) : X \in \cP_j)$ and $(I_j (b,\varphi) : b \in \cB_j )$ of smooth functions in $\varphi$.  The polymer expansion of $(I_j, K_j)$ is the operation
\begin{align}
	(I_j \circ K_j) (\Lambda,\varphi) = \sum_{X \in \cP_j} \prod_{b \in \cB_j (\Lambda \backslash X)} I_j (b,\varphi) K_j (X,\varphi) .
	\label{eq:circleproduct}
\end{align}
In \cite{FSmap},  for each $j\ge 0$,  we consider a pair $(I_j, K_j)$ such that
\begin{align}
	Z_j = e^{-u_j (\Lambda)} (I_j \circ K_j ) (\Lambda,\varphi) .
	\label{eq:Zjpolyexp}
\end{align}
We will defined $I_j = \cI_j (V_j)$ as a function of $V_j \in \cV$ in Section~\ref{sec:WIcoords} and it can be thought of as the effective Boltzmann factor corresponding to the effective potential $V_j$,  while $K_j$ can be considered to be an error term.

\subsection{Norms on function spaces}

Let $\Lambda_1, \cdots, \Lambda_n$ be copies of $\Lambda$ and let $\Lambda_{\rm b} = \Lambda_1 \sqcup \cdots \sqcup \Lambda_n$.
A lattice polynomial is a function $g = (g^{(r)})_{r \ge 0} \in \prod_{r=0}^{\infty} \Phi^{(r)} =: \Phi$ where $\Phi^{(r)}$ is the set of functions $g^{(r)} : ( \Lambda_{\rm b} ) ^{r} \rightarrow R$.  
Given $\kh \in \{\ell, h\}$ (recall \eqref{eq:ellhdefi}),  let
\begin{align}
	\norm{g^{(r)}}_{\kh, \Phi_j^{(r)}}
		&= \max_{n \le p_{\Phi}} L^{nj} \norm{\nabla^{n} g^{(r)}}_{\ell^{\infty} (\Lambda)} ,  \\
	\norm{g}_{\kh,  \Phi_j} 
		&= \sup_{r \ge 0} \kh_{\bulk}^{-r} \norm{g^{(r)}}_{\kh, \Phi_j^{(r)}} 
\end{align}
(see \cite[Section~2.2]{FSmap} for detailed explanation).
Let $K(X,\varphi)$ be an $\R$-valued smooth function of $\varphi \in (\R)^{\Lambda_N}$ for each $X \in \cP_j$.  Then we define
\begin{align}
	\norm{K (X,\varphi)}_{\kh,  T_j (X,\varphi)} 
		&= \sup\Big\{ 
			\sum_{n=0}^{\infty} \frac{1}{n!} D^n_{\varphi} K (X, \varphi ; g^{(r)})
			: g \in \Phi ,  \; \norm{g}_{\kh, \Phi_j } \le 1 \Big\}  .
\end{align}
If $K = K_\bulk + \sigma_\o K_\o + \sigma_\x K_\x + \sigma_\ox K_\ox$ is $\R + \sigma_\o \R + \sigma_\x \R+ \sigma_\ox \R$-valued,  then we extend this via
\begin{align}
	\norm{K (X,\varphi)}_{\kh,  T_j (X,\varphi)}  
		&= \norm{K_\bulk (X,\varphi)}_{\kh,  T_j (X,\varphi)} + \kh_{\sigma}  \norm{K_\o (X,\varphi)}_{\kh,  T_j (X,\varphi)} \nnb
		& \qquad + \kh_\sigma \norm{K_\x (X,\varphi)}_{\kh,  T_j (X,\varphi)} + \kh_\ssigma \norm{K_\ox (X,\varphi)}_{\kh,  T_j (X,\varphi)} .
\end{align}

\subsection{Space of error terms $K$}

We equip norm $\norm{\cdot}_{\cW_j}$ on $K_j$ as in \cite[Section~2.6.1]{FSmap} and let $\cK_j$ be the space of $K_j$ defined as in \cite[Section~2.7]{FSmap}.
We do not write the specific definitions,  but we just use the following results. 
In the next lemma,  we use $\R \cK_j = \{ r K : K \in \cK_j,  \; r \in \R \}$.

\begin{lemma} \label{lemma:KNbnd}
There is a ($L$-dependent) constant $C_{\rg} > 0$ such that for $K \in \R\cK_j$ and any $j \le N$,  
\begin{align}
	 K \in \cK_{j} \quad \Leftrightarrow \quad \norm{K}_{\cW_j} \le C_{\rg} \tilde{\chi}_j^{3/2} \tilde{g}_j^3 \scale_j^{\kae}
\end{align}
and whenever $\norm{K_N}_{\cW_N} < \infty$,
\begin{align}
	| K_N (\Lambda, 0)  | & \le O_L (1) \norm{K_N}_{\cW_N}
	\\
	| K_N (\Lambda, \varphi) | & \le O_L (1) \tilde{g}_N^{-\frac{9}{4}} \scale_N^{ - \kae + \kpe} \tilde{G}_N (\Lambda,  \varphi) e^{-\kappa L^{-dN} \norm{\varphi / h_{N,\bulk}}_{\ell^2}^2 } \norm{K_N}_{\cW_N}  \label{eq:KNbnd2}
\end{align}
where $\tilde{G}_N (\Lambda, \varphi)$ is as in \cite[(2.22)]{FSmap} and $\kappa >0$ is a constant (independent of any other parameter).
\end{lemma}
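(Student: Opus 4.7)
The plan is to unwind the definitions of $\cW_j$ and $\cK_j$ from \cite[Sections~2.6--2.7]{FSmap} and to exploit the fact that $K_N(\Lambda,\varphi)$, being a single evaluation on the full polymer $X=\Lambda$, is controlled by one particular $T_j(\Lambda,\varphi)$-seminorm that appears as a summand of $\norm{K}_{\cW_j}$. The first equivalence is essentially a restatement of the definition: $\cK_j$ is constructed in \cite[Section~2.7]{FSmap} as a closed ball in the Banach space $(\cW_j,\norm{\cdot}_{\cW_j})$ of radius proportional to $\tilde{\chi}_j^{3/2}\tilde{g}_j^{3}\scale_j^{\kae}$, so one merely absorbs the proportionality constant into $C_{\rg}$. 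The factor $\R$ in $\R\cK_j$ reflects that $\norm{\cdot}_{\cW_j}$ is a linear norm, so the equivalence persists under arbitrary positive rescaling. For the zero-field bound, the constant test function $g^{(0)}\equiv 1$ has unit $\Phi_N$-norm, whence the definition of the $T_N(\Lambda,0)$-seminorm gives $|K_N(\Lambda,0)|\le\norm{K_N(\Lambda,0)}_{\ell,T_N(\Lambda,0)}$; the $\cW_N$-norm contains a weighted version of this single-polymer seminorm, so inverting the weight yields the claimed estimate with an $L$-dependent constant.

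The third bound is the most delicate. Begin again from $|K_N(\Lambda,\varphi)|\le\norm{K_N(\Lambda,\varphi)}_{h,T_N(\Lambda,\varphi)}$, now using the $h$-weighted $T_N$-seminorm. By construction, $\norm{\cdot}_{\cW_N}$ controls the $(h,T_N(X,\varphi))$-seminorms divided by the large-field regulator $\tilde{G}_N(X,\varphi)$ of \cite[(2.22)]{FSmap} and weighted by the observable-sector scales $h_{N,\bulk},h_{N,\sigma},h_{N,\ssigma}$ from \eqref{eq:ellhdefi}. Specializing to $X=\Lambda$ and inverting this control yields
\[
|K_N(\Lambda,\varphi)| \lesssim W \cdot \tilde{G}_N(\Lambda,\varphi) \cdot \norm{K_N}_{\cW_N}
\]
for an explicit weight $W$; tracing $W$ through the $\cW_N$-definition with the exponents $\kae,\kbe,\kpe$ from \eqref{eq:kaekbekpedefi} and the scales \eqref{eq:ellhdefi} gives exactly $W = O_L(1)\,\tilde{g}_N^{-9/4}\scale_N^{-\kae+\kpe}$. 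Finally, the Gaussian factor $e^{-\kappa L^{-dN}\norm{\varphi/h_{N,\bulk}}_{\ell^2}^2}$ is extracted from $\tilde{G}_N$ itself: the regulator of \cite[(2.22)]{FSmap} contains a Gaussian envelope in $\varphi/h_{N,\bulk}$, and peeling off a sufficiently small fraction of its exponent (any $\kappa$ strictly less than half the built-in constant suffices) leaves a slightly weaker but still genuine $\tilde{G}_N$ on the right-hand side.

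The main obstacle is this bookkeeping step: the specific exponents $-9/4$ and $-\kae+\kpe$ depend on how the $\cW_N$-norm balances the bulk $h$-weights against the larger observable weights $h_{N,\sigma}$ and $h_{N,\ssigma}$, and extracting both requires matching the definitions of \cite[Section~2.6.1]{FSmap} line-by-line against \eqref{eq:kaekbekpedefi} and \eqref{eq:ellhdefi}. The only substantive choice is deciding which portion of the Gaussian contained in $\tilde{G}_N$ to peel off in order to produce the stated exponential while retaining a bona fide regulator on the right; everything else is a routine evaluation of the $\norm{\cdot}_{\cW_N}$ definition on the single polymer $\Lambda$.
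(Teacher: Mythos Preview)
Your overall approach---unwinding the definition of $\norm{\cdot}_{\cW_N}$ from \cite[(2.38)]{FSmap}---matches the paper's one-line proof, and your treatment of the first equivalence and the zero-field bound is fine.

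However, your explanation of where the Gaussian factor $e^{-\kappa L^{-dN}\norm{\varphi/h_{N,\bulk}}_{\ell^2}^2}$ comes from is wrong. You claim it is extracted from the regulator $\tilde{G}_N$ by ``peeling off a sufficiently small fraction of its exponent.'' But $\tilde{G}_N$ is a \emph{growing} regulator, not a decaying one: Lemma~\ref{lemma:logtildeGNbnd} shows $\tilde{G}_N(\Lambda,\varphi+c\one)=\tilde{G}_N(\Lambda,\varphi)$ for any constant $c\in\R^n$, so $\tilde{G}_N$ is invariant under constant shifts of the field and in particular cannot contain any decay in $\norm{\varphi}_{\ell^2}$. (Indeed, $\tilde{G}_N\le G_N$ with $\log G_N(\Lambda,\varphi)\lesssim L\norm{\varphi}_{\ell_N,\Phi_N}^2$, which is an \emph{upper} bound that grows in $\varphi$.) In the bound \eqref{eq:KNbnd2} the factors $\tilde{G}_N(\Lambda,\varphi)$ and $e^{-\kappa L^{-dN}\norm{\varphi/h_{N,\bulk}}_{\ell^2}^2}$ appear side by side precisely because they are distinct ingredients: the Gaussian $\ell^2$-decay is a \emph{separate} component of the $\cW_N$-norm in \cite[(2.38)]{FSmap}, and as the paper remarks immediately after the lemma, this $\ell^2$-decay estimate is the novelty of \cite{FSmap} over \cite{BBS5}. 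So the fix is simply to read off the Gaussian as its own piece of the norm definition rather than attempting to carve it out of $\tilde{G}_N$.
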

\begin{proof}
They follow directly from the definition \cite[(2.38)]{FSmap} of the $\norm{\cdot}_{\cW_N}$-norm.
\end{proof}

The $\ell^2$-decay estimate in \eqref{eq:KNbnd2} is the revolution of \cite{FSmap} compared to that of \cite{BBS5}.
We also clarify some properties of the `regulator' $\tilde{G}_N$.

\begin{lemma} \label{lemma:logtildeGNbnd}
For any $c \in \R^n$,  we have $\tilde{G}_N (\Lambda, \varphi + c\one) = \tilde{G}_N (\Lambda, \varphi) \le G_N (\Lambda, \varphi)$ and
\begin{align}
	\log G_N (\Lambda, \varphi)
		\lesssim L \norm{\varphi}^2_{\ell_N, \Phi_N}	
		 \lesssim L \ell_{N,\bulk}^{-2} L^{-dN} \sum_{n \le d + p_{\Phi}} L^{2n N} \norm{\nabla^n \varphi}^2_{\ell^2 (\Lambda)} .
	\label{eq:logtildeGNbnd}
\end{align}
\end{lemma}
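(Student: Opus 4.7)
The plan rests on three observations about the regulators $G_N$ and $\tilde{G}_N$ from \cite[(2.22)]{FSmap}. Schematically, $G_N(\Lambda,\varphi)$ has the form $\exp\bigl(c\,\norm{\varphi}^2_{\ell_N,\Phi_N}\bigr)$ (or a modest product of such factors over the few top-scale blocks), where the $\Phi_N$-norm is controlled by $L^{kN}\norm{\nabla^k\varphi}_{\ell^\infty}$ for $k = 0,1,\ldots,p_\Phi$, and $\tilde{G}_N$ is the variant in which only derivative components with $k \ge 1$ appear. Unpacking the definition is the only input needed for the first two claims.

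First I would establish the two pointwise assertions. The inequality $\tilde{G}_N \le G_N$ is immediate because $\tilde{G}_N$ corresponds to dropping a nonnegative component of the quadratic form in the exponent defining $G_N$. Translation invariance $\tilde{G}_N(\Lambda,\varphi+c\one) = \tilde{G}_N(\Lambda,\varphi)$ follows because every seminorm entering $\tilde{G}_N$ has the form $L^{kN}\norm{\nabla^k\varphi}_{\ell^\infty}$ with $k \ge 1$, and finite differences kill constants.

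For the $\log G_N$ bound, the first inequality $\log G_N(\Lambda,\varphi) \lesssim L\,\norm{\varphi}^2_{\ell_N,\Phi_N}$ is immediate from the definition (the factor $L$ absorbs combinatorics from the few top-scale blocks covering $\Lambda$). The second inequality is the substantive step and reduces to a discrete Sobolev-type embedding on the torus of volume $L^{dN}$: for any $f:\Lambda \to \R^n$,
\begin{equation*}
\norm{f}^2_{\ell^\infty(\Lambda)} \lesssim L^{-dN} \sum_{k=0}^{\lfloor d/2 \rfloor + 1} L^{2kN}\,\norm{\nabla^k f}^2_{\ell^2(\Lambda)},
\end{equation*}
which follows from Fourier inversion, Cauchy--Schwarz on $\Lambda^*$, and the bound $\sum_{p\in\Lambda^*}(1+\sum_{k=1}^{K}\lambda(p)^k)^{-1} = O(L^{dN})$ once $K \ge \lfloor d/2\rfloor + 1$; the required Fourier machinery is already available in Appendix~\ref{sec:fa}. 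Applying this with $f = \nabla^n \varphi$, multiplying by $L^{2nN}$, reindexing $m = n+k$, and observing that $n + \lfloor d/2\rfloor + 1 \le p_\Phi + d$ for every $n \le p_\Phi$ (since $d \ge 2$), taking the maximum over $n$ produces the right-hand side of \eqref{eq:logtildeGNbnd}.

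The only place where care is needed is confirming the precise shape of the regulator in \cite{FSmap}, in particular that the prefactor in front of $\norm{\varphi}^2_{\ell_N,\Phi_N}$ is really $O(L)$ rather than a power of $L$ depending on $\#\cB_N(\Lambda)$ or on the auxiliary weight parameters $h_N$. Once the definition is written out, both the $\one$-invariance and the embedding step are essentially routine Fourier analysis, so I expect no hidden obstacle beyond bookkeeping.
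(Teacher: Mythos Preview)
Your proposal is correct and follows essentially the same approach as the paper: the translation invariance and the inequality $\tilde{G}_N \le G_N$ are read off from the definitions in \cite{FSmap}, and the two inequalities in \eqref{eq:logtildeGNbnd} come from the definition of $G_N$ together with a discrete Sobolev embedding. The only difference is that the paper cites \cite[Lemma~B.2]{FSmap} for the Sobolev step, whereas you sketch its Fourier-analytic proof directly; your caveat about verifying the $O(L)$ prefactor against the exact form of \cite[(2.21)]{FSmap} is the right thing to check.
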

\begin{proof}
The first relation is due to \cite[(2.15),  (2.21),  (2.22)]{FSmap}.  For \eqref{eq:logtildeGNbnd},  $G_N$ is bounded in terms of $\norm{\varphi}_{\ell_N, \Phi_N}$ due to \cite[(2.21)]{FSmap} and \cite[Lemma~B.2]{FSmap} bounds $\norm{\varphi}_{\ell_N, \Phi_N}$ in terms of $\norm{\cdot}_{\ell^2}$-norms as desired. 
\end{proof}

\begin{corollary} \label{cor:logtildeGNbndcor}
For any $c \in \R^n$,  we have $\tilde{G}_N (\Lambda, c\one)  \le 1$.
\end{corollary}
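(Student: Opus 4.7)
The plan is to chain together the three statements in Lemma~\ref{lemma:logtildeGNbnd} and evaluate them at the zero field. First I would use the translation invariance $\tilde{G}_N(\Lambda, \varphi + c\one) = \tilde{G}_N(\Lambda, \varphi)$, applied with $\varphi = 0$, to reduce the corollary to the statement $\tilde{G}_N(\Lambda, 0) \le 1$. This step uses precisely the fact that $c\one$ is a constant-valued function, which is the content of the invariance asserted in the lemma.

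Next I would apply the domination $\tilde{G}_N(\Lambda, \varphi) \le G_N(\Lambda, \varphi)$ at $\varphi = 0$, so that it suffices to bound $G_N(\Lambda, 0)$. The final inequality
\begin{align}
\log G_N(\Lambda, \varphi) \lesssim L \ell_{N,\bulk}^{-2} L^{-dN} \sum_{n \le d + p_\Phi} L^{2nN} \norm{\nabla^n \varphi}_{\ell^2(\Lambda)}^2
\end{align}
evaluated at $\varphi = 0$ has a vanishing right-hand side, because every lattice derivative of the zero field is zero. Hence $\log G_N(\Lambda, 0) \le 0$, which gives $G_N(\Lambda, 0) \le 1$ and closes the chain.

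There is essentially no obstacle here; the only thing worth noting is that the implicit constant in the $\lesssim$ notation is irrelevant because it multiplies a quantity that is identically zero. The corollary is therefore a direct consequence of the preceding lemma, recorded separately only because the constant-field case is the one needed later (for instance when inserting $y \one$ into the regulator inside the final integrals of Proposition~\ref{prop:mgfref} and Proposition~\ref{prop:2ptfncrestt}).
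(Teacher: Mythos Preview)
Your proof is correct and takes essentially the same approach as the paper: the paper's one-line argument records $\tilde{G}_N(\Lambda, c\one) = \tilde{G}_N(\Lambda, 0) \le 1$ as a direct consequence of Lemma~\ref{lemma:logtildeGNbnd}, and your proposal simply unpacks why $\tilde{G}_N(\Lambda, 0) \le 1$ via the chain $\tilde{G}_N(\Lambda,0) \le G_N(\Lambda,0)$ and the vanishing of the Sobolev bound at the zero field.
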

\begin{proof}
This is direct from Lemma~\ref{lemma:logtildeGNbnd},  since $\tilde{G}_N (\Lambda, c\one) = \tilde{G}_N (\Lambda,0) \le 1$.
\end{proof}

We also need a topological property on the space of $K_j$.

\begin{lemma} \label{lemma:RcKisBanach}
$\R \cK_j$ is a Banach space equipped with norm $\norm{\cdot}_{\cW_j}$ and $\cK_j$ is a closed ball in $\R \cK_j$.
\end{lemma}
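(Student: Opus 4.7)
The plan is to verify the four standard ingredients in order: $\R\cK_j$ is a vector space, $\norm{\cdot}_{\cW_j}$ is a norm on it, this normed space is complete, and $\cK_j$ sits inside as a closed ball.

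First I would observe that the defining conditions of $\cK_j$ from \cite[Section 2.7]{FSmap} (smoothness in $\varphi$, the prescribed lattice and $O(n)$-symmetries, the graded decomposition $K = K_\bulk + \sigma_\o K_\o + \sigma_\x K_\x + \sigma_\ox K_\ox$, and any support constraints relating $K_\o, K_\x, K_\ox$ to $\o, \x$) are all \emph{linear} constraints. They therefore pass to arbitrary $\R$-linear combinations, so $\R\cK_j = \{rK : r \in \R,\, K \in \cK_j\}$ is closed under addition and scalar multiplication. The norm axioms for $\norm{\cdot}_{\cW_j}$ follow directly from the pointwise $\norm{\cdot}_{T_j(X,\varphi)}$ seminorms and the weighted sum/supremum structure in \cite[(2.38)]{FSmap}: positive homogeneity and the triangle inequality are immediate, and definiteness follows since $\norm{K(X,\varphi)}_{T_j(X,\varphi)}$ dominates $|K(X,\varphi)|$ (the $n=0$ term in the test-function expansion, with test function the scalar $1$), so $\norm{K}_{\cW_j} = 0$ forces $K(X,\varphi) = 0$ for every $X \in \cP_j$ and $\varphi$.

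Next, for completeness, I would take a Cauchy sequence $(K^{(m)})_{m \ge 1}$ in $\R\cK_j$. Because $\norm{\cdot}_{T_j(X,\varphi)}$ controls all test-function derivatives $D^n_\varphi K^{(m)}(X,\varphi)$ up to order $p_\Phi$, weighted with the scales $\ell_j$ or $h_j$, the sequence $\bigl(K^{(m)}(X,\cdot)\bigr)_m$ is Cauchy in a weighted $C^{p_\Phi}$-topology on each fixed polymer $X \in \cP_j$ and converges to a smooth limit $K(X,\cdot)$. The structural conditions defining $\cK_j$ (symmetries, $\sigma$-grading, support properties) are each closed under pointwise convergence of smooth functions together with their derivatives, so $K$ inherits them. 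Lower semi-continuity of $\norm{\cdot}_{\cW_j}$ with respect to this convergence then gives $\norm{K}_{\cW_j} \le \liminf_m \norm{K^{(m)}}_{\cW_j} < \infty$ and $\norm{K - K^{(m)}}_{\cW_j} \to 0$, so $K \in \R\cK_j$ and $K^{(m)} \to K$ in norm. Finally, the closed-ball statement for $\cK_j$ is exactly the equivalence in Lemma~\ref{lemma:KNbnd}: an element of $\R\cK_j$ belongs to $\cK_j$ iff $\norm{K}_{\cW_j} \le C_{\rg} \tilde{\chi}_j^{3/2} \tilde{g}_j^3 \scale_j^{\kae}$, which identifies $\cK_j$ with the closed ball of that radius in the Banach space $\R\cK_j$.

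The main obstacle is the completeness step: one must verify that \emph{every} closed condition imposed in the definition of $\cK_j$ in \cite[Section~2.7]{FSmap} genuinely survives passage to the limit under the convergence controlled by $\norm{\cdot}_{\cW_j}$, and that $\norm{\cdot}_{\cW_j}$ is itself lower semi-continuous with respect to this convergence. This is bookkeeping once the stratified definitions of $\norm{\cdot}_{T_j}$ and $\norm{\cdot}_{\cW_j}$ are unpacked, and no ideas beyond a standard Banach-space completion argument for weighted smooth-function spaces are required.
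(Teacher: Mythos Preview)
Your approach is correct and is essentially the same argument the paper invokes: the paper simply cites \cite[Proposition~1.8 and Proposition~A.1]{BBS5}, noting that the completeness proof there goes through for any regulator that is positive and continuous in $\varphi$, while you have spelled out that same standard weighted-$C^\infty$ completion argument. One small inaccuracy: you write that $\norm{\cdot}_{T_j(X,\varphi)}$ controls $\varphi$-derivatives ``up to order $p_\Phi$'', but in fact the $T_j$-norm sums over \emph{all} orders $n \ge 0$ of $\varphi$-derivatives (the parameter $p_\Phi$ bounds the number of \emph{spatial} lattice derivatives in the test-function norm $\norm{\cdot}_{\Phi_j}$); this actually strengthens your argument, since it is precisely this control of all $\varphi$-derivatives that ensures the limit is genuinely smooth in $\varphi$.
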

\begin{proof}
This statement is basically \cite[Proposition~1.8]{BBS5},  but just with a slightly modified norm.  It follows from \cite[Proposition~A.1]{BBS5},  which holds for any regulator that is positive and continuous in $\varphi$,   so we have the same conclusion for our $\norm{\cdot}_{\cW_j}$.
\end{proof}

Finally,  we can define the RG domain as
\begin{align}
	\D_j (\alpha) = \cD^{(0)}_j (\alpha) \times \cK_j (\alpha) :=  \cD^{(0)}_j (\alpha)  \times \alpha \cK_j , \qquad \D_{\bulk,j} (\alpha) = \pi_\bulk \D_j (\alpha)
	\label{eq:RGdomain}
\end{align}
for $\alpha \le 1$,  where $\cD^{(0)}_j$ was defined in \eqref{eq:cDzerodefi}. 
As before,  we omit $\alpha$ when $\alpha =1$.

\subsection{Norms on effective potentials}
\label{sec:effpots}

In \cite[Section~4]{FSmap},  two distinct norms $\norm{\cdot}_{\cL_j (\ell)}$ and $\norm{\cdot}_{\cV_j (\ell)}$ are defined on $\cV$ at scale $j$.  They satisfy the following bounds. 

\begin{definition} \label{defi:cLnorm}
We equip spaces $\cV_\bulk, \cV_\o,\cV_\x$ and $\cU_\ox$ with norm
\begin{align}
	\norm{V_\bulk}_{\cL_j (\ell)} 
		& = L^{jd} \max \big\{ \ell_{j, \bulk}^2 L^{- q (\km_1 ) j} | \nu^{(\km_1)} | ,  \;
		\ell_{j, \bulk}^4 L^{- q (\km_2) j} | g^{(\km_2)}|  \nnb
		& \qquad\qquad\qquad\quad :\,  \km_1 \in \ko_2 \cup \ko_{2,\nabla} ,  \,  \km_2 \in \ko_4 \cup \ko_{4,\nabla}  \big\} ,
		\\
	\norm{\sigma_\hash V_\hash}_{\cL_j (\ell)} 
		& = \ell_{j, \bulk} \ell_{j, \sigma} \max\{ L^{-q(\km) j} |\lambda_\#^{(\km)} | : \km \in \ko_1 \cup \ko_{1,\nabla} \}  ,   \qquad \# \in \{ \o, \x \} \\
	\norm{\sigma_\ox U_\ox}_{\cL_j (\ell)} 
		& = \ell_{j, \ssigma} \big( |q_\o | + |q_\x | \big)
\end{align}
and for $U = u_\bulk + \sum_{* \in \{\bulk,\o,\x\}} \sigma_* V_* + \sigma_\ox U_\ox \in \cU$,  
\begin{align}
	\norm{U}_{\cL_j (\ell)} = L^{jd} |u_\bulk| + \sum_{* \in \{ \bulk,\o,\x \}}  \norm{\sigma_* V_*}_{\cL_j (\ell)} + \norm{\sigma_\ox U_\ox}_{\cL_j (\ell)} .
	\label{eq:cLnorm}
\end{align}
\end{definition}

\begin{definition} \label{defi:cVnormdefi}
Define
\begin{align}
	\label{eq:cVnormdefi}
	\norm{V_\bulk}_{\cV_j (\ell)} 
		& = \ell_{j,\bulk}^2  \max\big\{ L^{(d - q (\km_1)) j}  |\nu^{(\km_1)}|  \,  :\,  \km_1 \in \kA_0 \cup \kA_1 \cup \kA_2 \big\} 
		\nnb &\quad +
		\ell_{j,\bulk}^2 \scale_j^{\kt} L^{(d - 2 [\varphi] ) j} \max\big\{  |\nu^{(\km_1)}|  \,  :\,  \km_1 \in \kA_3 \big\} 
		\nnb &\quad +
		\ell_{j,\bulk}^4  L^{dj }  \max\big\{ 
		|g^{(\km_2)}|  \,  :\,   \,  \km_2 \in \ko_4  \big\} 
		\nnb &\quad +
		\ell_{j,\bulk}^4 \scale_j^{\kt} L^{ dj} \max\big\{ 
		 |g^{(\km_2)}|  \,  :\,   \,  \km_2 \in \ko_{4,\nabla}  \big\}
\end{align}
and for $\hash \in \{ \o ,  \x\}$,
\begin{align}
	\norm{\sigma_\hash V_{\hash}}_{\cV_j (\ell)} &= 
		\ell_{j,\bulk} \ell_{j,\sigma}  \max\{ L^{-q (\km) j} |\lambda^{(\km)}_\hash  |  \, : \,  q (\km) \le 1 \}  \\
		& \quad + 
		\ell_{j,\bulk} \ell_{j,\sigma} \scale_j^\kt \max\{ L^{(-2+\eta) j } |\lambda^{(\km)}_\hash  |  \, : \,  q (\km) \ge 2 \} .
\end{align}
For generic $U = u_\bulk +  \sum_{* \in \{\bulk,\o,\x\}} \sigma_* V_* + \sigma_\ox U_\ox \in \cU$,
\begin{align}
	\norm{U}_{\cV_j (\ell)} = L^{jd} |u_\bulk| + \sum_{* \in \{ \bulk,\o,\x \}}  \norm{\sigma_* V_*}_{\cV_j (\ell)} +  \norm{\sigma_\ox U_\ox}_{\cL_j (\ell)}   .
\end{align}
\end{definition}

\begin{lemma} \cite[Lemma~4.5 and 7.1]{FSmap} \label{lemma:VincDsize}
Following relations hold.  
\begin{itemize}
\item If $V \in \cV$,  then $\norm{V}_{\cL_j (\ell)} \asymp \max_{b \in \cB_j} \norm{V(b)}_{\ell_j, T_j (0)}$.
\item If instead $V \in \cD_j (\alpha)$ for $\alpha \le 1$,  we have $\norm{V_j}_{\cV_j (\ell)} \lesssim \ell_0^4 \tilde{g}_j \scale_j$.
\end{itemize}
\end{lemma}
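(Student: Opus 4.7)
The plan is to verify both parts directly from the definitions; there is no conceptual step beyond exponent bookkeeping.

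\textbf{Part (i).} I would compute $\norm{V(b)}_{\ell_j, T_j(0)}$ coefficient by coefficient. Each monomial $c^{(\km)} S^{(\km)}_x$ is polynomial in $\varphi$ of total degree $p(\km) \in \{1,2,4\}$, so Taylor expansion at $\varphi = 0$ leaves only the term of order exactly $p(\km)$. The test-tensor duality in the $T_j(0)$ norm reduces the bound to estimating $|D^{p(\km)}_\varphi V_x(0; g^{(p(\km))})|$ for $g$ with $\norm{g}_{\ell, \Phi_j} \le 1$. Unfolding the $\Phi_j$ definition forces $\norm{\nabla^n g^{(p(\km))}}_{\ell^\infty} \le \ell_{j,\bulk}^{p(\km)} L^{-nj}$, and the $L^{nj}$ weights absorb the $q(\km)$ lattice derivatives sitting inside $S^{(\km)}_x$. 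Summing over $x \in b$ yields the volume factor $|b| = L^{dj}$, and the combination $L^{dj} \ell_{j,\bulk}^{p(\km)} L^{-q(\km) j}$ matches exactly the weight of $c^{(\km)}$ in $\norm{V}_{\cL_j(\ell)}$ up to an $L$-independent combinatorial constant depending on $p_\Phi$, $n$, and $d$. The reverse inequality comes from choosing explicit test tensors $g^{(p(\km))}$ that pick out each coefficient individually. For the observable monomials the same scheme applies with $\ell_{j,\bulk}$ replaced by $\ell_{j,\sigma}$ or $\ell_{j,\ssigma}$ according to the $\sigma_\o, \sigma_\x, \sigma_\ox$ grading built into the extended $T_j$ norm.

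\textbf{Part (ii).} My plan is a direct substitution of the $\cD_j(\alpha)$ bounds into the $\cV_j(\ell)$ norm. Using $\ell_{j,\bulk}^2 = \ell_0^2 L^{-(d-2+\eta)j}$ and $\scale_j = L^{-(d-4+2\eta)j}$, the first line of Definition~\ref{defi:cVnormdefi} telescopes to
\begin{align}
\ell_{j,\bulk}^2 L^{(d-q(\km_1))j} |\nu^{(\km_1)}| \lesssim \ell_0^2 L^{-(d-2+\eta)j + (d - q(\km_1))j + (q(\km_1) - 2 + \eta)j} \scale_j \tilde{g}_j = \ell_0^2 \scale_j \tilde{g}_j
\end{align}
for $\km_1 \in \kA_0 \cup \kA_1 \cup \kA_2$. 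For $\km_1 \in \kA_3$ the $\scale_j^{\kt}$ weight in $\cV_j(\ell)$ exactly cancels the $\scale_j^{-\kt}$ in $\cD_j(\alpha)$; the quartic weight $\ell_{j,\bulk}^4 L^{dj}$ telescopes to $\ell_0^4 \scale_j$; and the $\tilde{g}_j^{3/2}$ factor for $\ko_{4,\nabla}$ is dominated by $\tilde{g}_j$ in the small-coupling regime. For the observable contributions the estimate reduces to $\ell_{j,\bulk} \ell_{j,\sigma} \lesssim \ell_0^4 \tilde{g}_j \scale_j$.

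The main obstacle I anticipate is this observable-part check: because $\ell_{j,\sigma}$ carries an explicit $\tilde{g}_j$ together with dimension-dependent $L$-powers and the $2^{(j - j_\ox)_+}$ factor in $d = 4$, verifying $\ell_{j,\bulk} \ell_{j,\sigma} \lesssim \ell_0^4 \tilde{g}_j \scale_j$ requires a short case analysis over $d = 4$ vs.\ $d \ge 5$ and $j \le j_\ox$ vs.\ $j > j_\ox$, using \eqref{eq:ellhdefi} directly. The bulk portion is by design a mechanical exponent match --- the weights in \eqref{eq:cDbulk} are set up precisely so that the weights in \eqref{eq:cVnormdefi} saturate at scale $\ell_0^4 \scale_j \tilde{g}_j$ --- so I do not expect any conceptual difficulty beyond this case split.
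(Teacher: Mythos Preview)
Your proposal is correct and follows the natural route: the paper does not give its own proof of this lemma but simply cites \cite[Lemma~4.5 and 7.1]{FSmap}, and what you sketch---Taylor expansion at $\varphi=0$ together with coefficient-by-coefficient exponent matching---is precisely the kind of direct verification one expects to find in the cited reference. Your identification of the observable case split as the only nontrivial point is accurate; the bulk exponents are rigged to cancel exactly as you describe.
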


\begin{lemma} \cite[Lemma~4.7]{FSmap} 
\label{lemma:cVnrmvslnrm}
For $V \in \cV_j$,
\begin{align}
	\norm{V}_{\cL_j (\ell)}
		\lesssim \norm{V}_{\cV_j (\ell) }
		& \lesssim \scale_j^{-1+\kt} \norm{V}_{\cL_j (\ell)}
		\label{eq:cVnrmvslnrm1} , \\
	\norm{V - \E_{j+1} \theta V}_{\cV_{j} (\ell)}
		& \lesssim \ell_0^{-2}  \tilde\chi_j \norm{V}_{\cV_j (\ell)}
		\label{eq:cVnrmvslnrm2} 
\end{align}
and
\begin{align}
	\norm{V}_{\cL_{j+1} (\ell) } \le L^2 \norm{V}_{\cL_j (\ell) } , \qquad 	
	\norm{V}_{\cV_{j+1} (\ell)} \le L^2 \norm{V}_{\cV_j (\ell)}  .
		\label{eq:cVnrmvslnrm3} 
\end{align}
\end{lemma}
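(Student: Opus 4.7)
All three inequalities reduce to direct, class-by-class comparisons of the explicit scaling weights that define $\norm{\cdot}_{\cL_j(\ell)}$ and $\norm{\cdot}_{\cV_j(\ell)}$. My plan is to work through the bulk classes $\kA_0, \kA_1, \kA_2, \kA_3, \ko_4, \ko_{4,\nabla}$ and the observable classes $\ko_1, \ko_{1,\nabla}$ one at a time, verify a short list of exponent inequalities in each, and then combine using the elementary fact that a single maximum is bounded above by a sum of partial maxima (and vice versa up to a $O(1)$ factor, since the number of classes is finite and independent of $j$).

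For \eqref{eq:cVnrmvslnrm1}, the $\cV_j$- and $\cL_j$-weights agree up to a universal constant on the ``flat'' classes $\kA_0 \cup \kA_1 \cup \kA_2 \cup \ko_4$. On the ``ceiling'' classes $\kA_3$ and $\ko_{4,\nabla}$ the $\cV_j$-weight replaces the $\cL_j$-factor $L^{-q(\km) j}$ by $\scale_j^{\kt} L^{-(d-2+\eta) j}$ (respectively by $\scale_j^{\kt}$). This makes the $\cV_j$-weight at least as large as the $\cL_j$-weight, which yields $\norm{V}_{\cL_j(\ell)} \lesssim \norm{V}_{\cV_j(\ell)}$. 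For the upper bound one observes that the ratio is at most $\scale_j^{-1 + \kt}$, using the defining bounds $q(\km) - (d-2+\eta) \le d - 4 + 2\eta$ on $\kA_3$ (from $q(\km) \le 2d-6$) and $q(\km) \le d-4+2\eta$ on $\ko_{4,\nabla}$ (from $q(\km) < d-3+\eta$), together with $\scale_j = L^{-(d-4+2\eta) j}$. The observable classes $\ko_1, \ko_{1,\nabla}$ are treated in the same way using the entries for $\sigma_\hash V_\hash$ in Definitions~\ref{defi:cLnorm} and \ref{defi:cVnormdefi}.

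For \eqref{eq:cVnrmvslnrm2}, since $V \in \cV$ is a polynomial in $\varphi$ of total degree at most $4$, Wick's theorem gives an explicit finite expansion of $\E_{j+1}\theta V - V$ as a sum of strictly lower-degree monomials, each weighted by a product of derivatives $\nabla^a \nabla^b \Gamma_{j+1}(x,y)$ with $|a|+|b|$ at most the total derivative order of the original monomial. By \eqref{eq:Gammajbounds2} each such derivative is bounded by $\tilde\chi_j L^{-(d-2+\eta) j} L^{-(|a|+|b|) j}$. A single Wick contraction therefore costs, in the $\cV_j$-norm, a multiplicative factor $\tilde\chi_j L^{-(d-2+\eta) j} / \ell_{j,\bulk}^2 = \tilde\chi_j / \ell_0^2$, where the $\ell_{j,\bulk}^{-2}$ comes from the two fewer $\varphi$'s in the contracted monomial. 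Multiple contractions only produce additional small factors, and the same accounting extends verbatim to the $\scale_j^{\kt}$-corrected weights on $\kA_3 \cup \ko_{4,\nabla}$.

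For \eqref{eq:cVnrmvslnrm3}, the dominant one-step ratio comes from $\kA_0$, where $L^{jd} \ell_{j,\bulk}^2 = \ell_0^2 L^{(2-\eta) j}$ grows by exactly $L^{2-\eta} \le L^2$. All other bulk classes scale strictly more slowly: higher derivatives contribute extra negative powers of $L$, $\ell_{j,\bulk}^4 L^{dj} = \ell_0^4 L^{(4-d-2\eta) j}$ is nonincreasing under the standing hypotheses $d \ge 4$ and $\eta \ge 0$, and $\scale_j^{\kt}$ decreases with $j$. The main technical obstacle is the observable part, where $\ell_{j,\sigma}$ and $\ell_{j,\ssigma}$ in \eqref{eq:ellhdefi} are defined piecewise across the observable scale $j_\ox$ and further split between $d = d_{c,u}$ and $d > d_{c,u}$. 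In each of these four or five regimes a short direct computation from \eqref{eq:ellhdefi} (using also $\tilde g_{j+1} \asymp \tilde g_j$) verifies that the one-step growth of every relevant weight stays below $L^2$; once the regimes are separated the proof is conceptually routine but computationally careful bookkeeping.
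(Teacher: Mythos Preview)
The paper does not supply a proof of this lemma; it is imported verbatim from \cite[Lemma~4.7]{FSmap}, so there is nothing in the present text to compare against. Your class-by-class weight comparison is the natural approach and is almost certainly what the proof in \cite{FSmap} does.

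That said, there is a gap in your argument for the upper bound in \eqref{eq:cVnrmvslnrm1} on the class $\ko_{4,\nabla}$. You claim $q(\km) \le d-4+2\eta$ follows from $q(\km) < d-3+\eta$, but this implication fails whenever $\eta < 1$. Concretely, for odd $d$ and $\eta \in (0,1/2)$ the largest admissible even $q$ in $\ko_{4,\nabla}$ is $q = d-3$, whereas $d-4+2\eta < d-3$; so the ratio $\scale_j^{\kt} L^{qj}$ exceeds $\scale_j^{-1+\kt}$ in that case. Either the bound in \cite{FSmap} carries a slightly larger exponent than $-1+\kt$ on this class, or the definition of the $\cV_j$-weight on $\ko_{4,\nabla}$ in \cite{FSmap} differs from the one transcribed here (note that $[\varphi]$ in Definition~\ref{defi:cVnormdefi} is used but never defined in the present paper, which suggests some notational drift). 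You should check the source directly rather than relying on the transcription. The remaining parts of your sketch---the Wick expansion for \eqref{eq:cVnrmvslnrm2} and the dominant $L^{2-\eta}$ growth from $\kA_0$ for \eqref{eq:cVnrmvslnrm3}---are correct.
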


\begin{lemma} \cite[Lemma~7.18]{FSmap} \label{lemma:EthV12}
If $V$ is a local monomial of degree $\le k$, 
then for $\kh \ge \kc_{j+1}$,
\begin{align}
\label{eq:EthV1}
		\norm{\E_{j+1} (\theta V - V ) (b)}_{\kh,  T_j(0)}
		& \lesssim \Big( \frac{\kc_+}{\mathfrak{h}} \Big) \norm{V (b)}_{\kh,  T_j (0)}
		, \\
\label{eq:EthV13}	
		\norm{\Cov_{j+1} [ V (b) ; V(b') ] }_{\kh,  T_j(0)}
		& \lesssim  \Big( \frac{\kc_+}{\mathfrak{h}} \Big)^{2} \norm{V (b)}_{\kh,  T_j (0)}^2
\end{align}
with the constants only depending on $k$.
\end{lemma}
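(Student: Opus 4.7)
The plan is to Taylor-expand $\theta V - V$ in $\zeta$, apply Wick's formula to the Gaussian expectation $\E_{j+1}$, and convert the resulting contractions of $\Gamma_{j+1}$ against field derivatives $D^n V$ into $T_j(0)$-norm bounds via the covariance decay estimate \eqref{eq:Gammajbounds2}. Since $V(b)$ is a polynomial of degree at most $k$ in $\varphi$ and its lattice derivatives (in the sense of \eqref{eq:fieldpolys}), Taylor expansion gives
\begin{equation}
\theta V(\varphi) - V(\varphi) = \sum_{n=1}^{k} \frac{1}{n!} D^n V(\varphi;\, \zeta^{\otimes n}),
\end{equation}
and averaging against $\zeta \sim \cN(0, \Gamma_{j+1})$ kills odd moments, so Wick's formula reduces $\E_{j+1}[\theta V - V]$ to a $k$-dependent sum of contractions of $D^{2n}V(\varphi)$ with $\Gamma_{j+1}^{\otimes n}$ for $1 \le n \le \lfloor k/2 \rfloor$.

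The next step is to translate each contracted term into a $T_j(0)$-norm bound. The pointwise estimate \eqref{eq:Gammajbounds2} says $\Gamma_{j+1}$ belongs to $\Phi_j^{(2)}$ with norm at most $C\kc_{j+1}^2$, after matching the $L^{-nj}$ derivative decay of $\Gamma_{j+1}$ against the $L^{nj}$ weights defining the $\Phi_j^{(2)}$-norm. Combined with the defining multilinear bound $|D^{m} V(\varphi; g)| \le \kh^{-m} \norm{g}_{\Phi_j^{(m)}} \norm{V}_{\kh, T_j(\varphi)}$ for the $T_j$-norm, each Wick contraction converts two test-function slots into one factor of $(\kc_{j+1}/\kh)^2$. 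Summing over $1 \le n \le \lfloor k/2 \rfloor$ with $k$-dependent constants yields the desired mean bound (the stated exponent $(\kc_+/\kh)^1$ is weaker than what the argument produces, but it is the form actually used downstream and follows immediately from $\kh \ge \kc_{j+1}$).

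For the covariance estimate, I would expand $\theta V(b)\,\theta V(b')$ in $\zeta$ and apply Wick to the joint expectation. The subtraction of $\E_{j+1}[\theta V(b)]\,\E_{j+1}[\theta V(b')]$ precisely eliminates the Wick diagrams in which every contraction is internal to a single block, so only diagrams with at least one cross-contraction $\Gamma_{j+1}(x,y)$ linking a slot in $b$ to a slot in $b'$ survive. The minimal such diagram has exactly one cross-contraction and no internal pairings, giving a contribution of the form $\Gamma_{j+1}(\nabla V(b) \otimes \nabla V(b'))$, which, by the same norm-conversion step, is bounded by $(\kc_+/\kh)^2 \norm{V(b)}_{\kh, T_j(0)} \norm{V(b')}_{\kh, T_j(0)}$; higher diagrams carry additional factors of $(\kc_+/\kh)^2 \le 1$ and are absorbed into the implicit constant.

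The main obstacle is the dimensional bookkeeping for multi-index monomials: $V$ may contain arbitrary products $\prod_k \nabla^{(m_k)} \varphi^{(\alpha_k)}$ with $q(\km) = \sum_k i_k$ total lattice derivatives, so $D^{2n}V$ is a high-rank tensor in both the field-derivative slots (carrying Taylor-expansion variables) and the lattice-derivative slots (carrying intrinsic $\nabla^{(m_k)}$). One must verify slot-by-slot that the $L^{nj}$ weights hidden in the $\Phi_j$-norm exactly absorb the $L^{-nj}$ decay of $\Gamma_{j+1}$-derivatives given by \eqref{eq:Gammajbounds2}, so that the dimensionless ratio $(\kc_{j+1}/\kh)^2$ emerges cleanly with only the combinatorial (Wick pairing) multiplicities as remaining $k$-dependent constants. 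Once this matching is checked for a single Wick pairing, the multi-pairing case follows by iteration.
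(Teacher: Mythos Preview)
The paper does not prove this lemma; it is cited from \cite[Lemma~7.18]{FSmap}, so there is no in-paper proof to compare against. Your approach---Taylor-expand $\theta V$ in $\zeta$, apply Wick's formula, and convert each $\Gamma_{j+1}$-contraction into a factor $(\kc_{j+1}/\kh)^2$ using the $\Phi_j^{(2)}$-norm bound on $\Gamma_{j+1}$ from \eqref{eq:Gammajbounds2}---is the standard one in the Brydges--Slade framework and is essentially what the cited proof in \cite{FSmap} does. Your identification of the main technical point (matching the $L^{nj}$ weights in the $\Phi_j$-norm against the $L^{-nj}$ decay of $\nabla^n\Gamma_{j+1}$) is accurate, and your observation that the argument actually produces $(\kc_+/\kh)^2$ for the first bound (stronger than the stated $(\kc_+/\kh)^1$) is correct.

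One small point worth making explicit: for the covariance bound you need the submultiplicativity $\norm{FG}_{\kh,T_j(0)} \le \norm{F}_{\kh,T_j(0)}\norm{G}_{\kh,T_j(0)}$ to pass from a Wick diagram with a cross-contraction to the product $\norm{V(b)}\,\norm{V(b')}$; this is a standard property of the $T_j$-seminorms (see \cite{BBS1}) but should be invoked. Also note the stated right-hand side $\norm{V(b)}^2$ tacitly uses translation invariance to identify $\norm{V(b)}$ with $\norm{V(b')}$.
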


For polynomials of $(\varphi_x)_{x \in \Lambda}$,  we can bound $\norm{\cdot}_{\kh, T_j (\varphi)}$ just from $\norm{\cdot}_{\kh, T_j (0)}$.

\begin{lemma} \cite[Proposition~3.10]{BBS1} 
\label{lemma:polynorm}
If $F (\varphi)$ is a polynomial of degree $A \ge 0$ and order of derivatives $\le p_\Phi$, then
\begin{align}
	\norm{F}_{\kh,  T_j (\varphi) } \le \norm{F}_{\kh,  T_j (0) } \big( 1 + \norm{\varphi}_{\kh , \Phi_j } \big)^A   .
\end{align}
\end{lemma}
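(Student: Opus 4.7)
The plan is to reduce the bound to a combinatorial identity via Taylor's theorem at the origin, exploiting that $F$ being a polynomial of degree $A$ makes all relevant sums finite and lets us re-express derivatives of $F$ at $\varphi$ in terms of derivatives at $0$. Differentiating the polynomial identity
\begin{equation*}
F(\varphi + \psi) = \sum_{k=0}^{A} \frac{1}{k!} D^k F\bigl(0;\, (\varphi + \psi)^{\otimes k}\bigr)
\end{equation*}
$n$ times in $\psi$ at $\psi = 0$ yields
\begin{equation*}
\frac{1}{n!} D^n F(\varphi; g^{(n)}) = \sum_{k=0}^{A-n} \frac{1}{n!\,k!} D^{n+k} F\bigl(0;\, \varphi^{\otimes k} \otimes g^{(n)}\bigr).
\end{equation*}

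Summing over $n \le A$ and re-indexing by $m = n + k$, the double sum reorganises as $\sum_{m=0}^A \frac{1}{m!} D^m F\bigl(0;\, \tilde{g}^{(m)}\bigr)$, where
\begin{equation*}
\tilde{g}^{(m)} := \sum_{n=0}^m \binom{m}{n} \operatorname{Sym}\bigl(\varphi^{\otimes(m-n)} \otimes g^{(n)}\bigr).
\end{equation*}
Next I would estimate $\norm{\tilde g}_{\kh, \Phi_j}$. Since $\norm{\cdot}_{\kh, \Phi_j^{(m)}}$ is submultiplicative under tensor products, combining $\norm{\varphi^{\otimes(m-n)} \otimes g^{(n)}}_{\kh, \Phi_j^{(m)}} \le \norm{\varphi}_{\kh, \Phi_j^{(1)}}^{m-n} \norm{g^{(n)}}_{\kh, \Phi_j^{(n)}}$ with the binomial formula gives
\begin{equation*}
\kh_\bulk^{-m} \norm{\tilde{g}^{(m)}}_{\kh, \Phi_j^{(m)}} \le \bigl(\norm{\varphi}_{\kh, \Phi_j} + \norm{g}_{\kh, \Phi_j}\bigr)^m \le \bigl(1 + \norm{\varphi}_{\kh, \Phi_j}\bigr)^A
\end{equation*}
for $m \le A$ whenever $\norm{g}_{\kh, \Phi_j} \le 1$, using the convention $\norm{\varphi}_{\kh, \Phi_j} = \kh_\bulk^{-1} \norm{\varphi}_{\kh, \Phi_j^{(1)}}$ that identifies $\varphi$ with its rank-one embedding in $\Phi$.

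Setting $C := (1 + \norm{\varphi}_{\kh, \Phi_j})^A$ and $h^{(m)} := \tilde g^{(m)} / C$, the previous step gives $\norm{h}_{\kh, \Phi_j} \le 1$. By linearity of $D^m F(0; \cdot)$ in its tensor argument and the definition of the $T$-norm at $0$,
\begin{equation*}
\sum_{m=0}^A \frac{1}{m!} D^m F(0; \tilde g^{(m)}) = C \sum_{m=0}^A \frac{1}{m!} D^m F(0; h^{(m)}) \le C \, \norm{F}_{\kh, T_j(0)},
\end{equation*}
and taking the supremum over admissible $g$ on the left produces the stated inequality. The main obstacle, though purely algebraic, is the tensor-norm bookkeeping for $\tilde g^{(m)}$ and its symmetrisation, together with the verification that the embedding $\varphi \hookrightarrow \Phi$ is compatible with the weighting convention $\kh_\bulk^{-r}$; no analytic ingredient beyond the polynomial Taylor expansion is required, which is why the lemma is cited verbatim from \cite{BBS1}.
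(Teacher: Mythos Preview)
The paper does not prove this lemma; it simply cites \cite[Proposition~3.10]{BBS1} and states the result. Your argument is correct and is essentially the proof given in that reference: Taylor-expand the polynomial $F$ at the origin, re-express derivatives at $\varphi$ as binomial combinations of derivatives at $0$ tensored with powers of $\varphi$, and then use submultiplicativity of the $\Phi_j$-norm together with the binomial identity to absorb the $\varphi$-dependence into the factor $(1+\|\varphi\|_{\kh,\Phi_j})^A$.
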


\subsection{Stabilisation and $W_{j,V}$}
\label{sec:WIcoords}

Given $\cM >0$,  we define $\pexp (x) = \sum_{k=0}^\cM x^k / k!$.
For $V \in \cV$ and $b \in \cB_j$,  let
\begin{align}
	V^{(1)} = (1-\pi_{4,\nabla}) V , \qquad V^{(2)} = \pi_{4,\nabla} V 
\end{align}
(recall $\pi_{4,\nabla}$ from Section~\ref{sec:indfhot})
and for $X \in \cP_j$,  define the stabilisation of $e^{-V}$ as
\begin{align}
	e^{-V^{(\bs_j)} (X)} = e^{-V^{(1)} (X)} \prod_{b \in \cB_j (X)} \pexp (-V^{(2)} (b)) .
\end{align}
This enables the quartic terms except $|\varphi|^4$ stay away from the exponential,  and this guarantees integrability.
We also define $W_{j,V} (X) = \sum_{x \in X} W_{j,V,x}$ as in \cite[Definition~4.10]{FSmap}.  It is given as an explicit quadratic function of $V$ and scale $j$,  and $W_{j,Vx} (\varphi)$ is a polynomial of degree $\le 6$ in $\varphi$ that only depends on $\{ \varphi_y : |x-y| \le L^j \}$. 
Its motivation is explained in \cite[Section~4.4]{FSmap}.
They define the coordinate $I_j$ via
\begin{align}
	I_j (b) = \cI_j (V_j) (b) = e^{-V^{(\bs_j)} (b)} (1 + W_{j,V} (b)) ,
	\qquad
	I_j^X = \prod_{b \in \cB_j (X)} I_j (b)
	\label{eq:Ijdefi}
\end{align}
for $b \in \cB_j$.
The specific definition of $W$ will not be needed,  but we will only need a bound Lemma~\ref{lemma:FWPbounds3obs} and its projection $\pi_\ox W$ stated later in Definition~\ref{defi:pioxWP}.

\section{RG map}
\label{sec:RGmap}

To control the sequence $(Z_j)_{j}$ generated by progressive integrals \eqref{eq:Zjinductive},  we expanded $Z_j$ via \eqref{eq:Zjpolyexp},  and we will now consider the RG map $\Phi_{j+1}$ that relates the polymer expansions at scale $j$ and $j+1$.  
We summarise the main results on the RG map constructed in \cite[Section~5--10]{FSmap}.  

Generally put,  an RG map at scale $j$ is a map
\begin{align}
	\Phi_{j+1} = (\Phi_{j+1}^U, \Phi_{j+1}^K) : (V_j,K_j) \mapsto (\delta u_{j+1},  V_{j+1},  K_{j+1}) 
\end{align}
for $(V_j, V_{j+1} ) \in \cD_j^{(0)} \times \cD_{j+1}^{(0)}$ (recall \eqref{eq:cDzerodefi}),  $(K_j , K_{j+1} ) \in \cK_j \times \cK_{j+1}$ and $\delta u_{j+1} \in (\R + \sigma_\ox \R)^{\cB_{j+1}}$ such that
\begin{align}
	\E_{j+1} \theta [  ( I_j \circ_j K_j  ) (\Lambda)] = e^{-\delta u_{j+1} (\Lambda)} (I_{j+1} \circ_{j+1} K_{j+1}) (\Lambda)
	\label{eq:contrlldRGalg}
\end{align}
when $I_j = \cI_j (V_j)$ and $I_{j+1} = \cI_{j+1} (V_{j+1})$.  
A specific construction was given in \cite[Section~5]{FSmap} and its leading contribution is summarised by the perturbative map $\Phi_{j+1}^{\pt} (V)$,  Definition~\ref{def:WP}. 
We will not state the specific definition of the RG map,  but the bounds on the error terms $\Phi_{j+1}^K$ and
\begin{align}
	R_{j+1}^U (V,K) = \Phi_{j+1}^U (V,K) - \mathbb{V}^{(0)} \Phi_{j+1}^\pt (V) 
	\label{eq:RjUdefi}
\end{align}
will be stated in Section~\ref{sec:controlledRGmap}.

\subsection{Definition of $\Phi^U_{j+1}$}
\label{sec:bbonPhipt}

We define the RG map on the effective potential is defined in terms of the perturbative RG map. 
The perturbative RG map is a quadratic function that is defined on the whole space of effective potentials (not just $\cD_j$). 

\begin{definition} \label{def:WP} 
\cite[Definition~4.13,  (4.68)]{FSmap}
The \emph{pertubative RG map} for $V_j \in \cV$ is
\begin{align}
	\Phi_{j+1}^\pt : V_j \mapsto \E_{j+1} \theta V_{j} - P_{j,V}  ,
	\label{eq:PhiptUdf}
\end{align}
where $P_{j,V}$ is a polymer function of degree $\le 6$ that is a quadratic form of $V$ as defined in \cite[(4.63)]{FSmap} and satisfies bound Lemma~\ref{lemma:FWPbounds3obs}.  We also denote
\begin{align}
	U_{j+1}^{\pt} = \Phi_{j+1}^\pt (V_j)  ,\qquad V_{j+1}^{\pt} = (1- \pi_0 - \pi_\ox) U_{j+1}^{\pt} 
\end{align}
(recall Section~\ref{sec:effpotes} for $\pi_0$ and $\pi_{\ox}$).

For $Q_j \in \cV$ defined as in \cite[(7.80)]{FSmap} (as a function of $(V_j, K_j)$),  let $\hat{V}_j = V_j - Q_j$ and
\begin{align}
	\Phi_{j+1}^U (V_j, K_j) = \mathbb{V}^{(0)} \Phi_{j+1}^\pt (\hat{V}_j ) .
\end{align}
\end{definition}

As for the case of $W_j$,  we do not need specific definitions of $P_{j,V}$ and $Q_j$,  but we need some their properties. 
Bound on $P_{j,V}$ and $Q_j$ are stated in Lemma~\ref{lemma:FWPbounds3obs} and \ref{lemma:Qjbound},  respectively,  and the explicit form of $\pi_\ox P_{j,V}$ is given in Definition~\ref{defi:pioxWP}.

\begin{lemma}
\label{lemma:FWPbounds3obs}
For $V, V' \in \cV$,  there are bilinear functions $W_j (V_x,  V'_y)$ and $P_j (V_x,  V'_y)$ that are polynomials of $\varphi$ and satisfy
\begin{align}
	W_{j,V,x} = \sum_{y \in \Lambda} W_j (V_x,  V_y),  \qquad P_{j,V,x} = \sum_{y \in \Lambda} P_j (V_x,  V_y)
\end{align}

Also,  for $b \in \cB_j$,   $\kh \in \{ \ell, h \}$ and sufficiently large $L$,
\begin{align}
	\max\left\{
		\begin{array}{r}
		\sum_{x \in b,  \,  y \in \Lambda} \norm{W_j (V_x ; V'_{y})}_{\kh_j ,  T_j (0)}  \\
		\sum_{x \in b,  \,  y \in \Lambda} \norm{P_j (V_x ; V'_{y})}_{\kh_j ,  T_j (0)}
		\end{array}
		\right\}
		\le O_L (1) \tilde{\chi}_j \Big( \frac{\ell_{\bulk,j}}{\kh_{\bulk,j}} \Big)^6 \norm{V}_{\cV_j (\ell)} \norm{V'}_{\cV_j (\ell)}
		\nonumber
\end{align}
\end{lemma}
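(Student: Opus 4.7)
The plan is to exploit the bilinear structure of $W_{j,V,x}$ and $P_{j,V,x}$ in $V$. By construction in \cite{FSmap}, both are quadratic polynomials in $V$, so polarization canonically produces bilinear kernels $W_j(V_x,V'_y)$ and $P_j(V_x,V'_y)$ whose diagonal sums recover the quantities $W_{j,V,x}$ and $P_{j,V,x}$. By bilinearity it suffices to bound the contributions pair-by-pair after decomposing $V,V'$ into their monomial components $\nu^{(\km)} S^{(\km)}$ (resp.\ $g^{(\km)} S^{(\km)}$, $\lambda_\hash^{(\km)} S^{(\km)} \one_{x=\hash}$) indexed by $\km \in \ko_2 \cup \ko_{2,\nabla} \cup \ko_4 \cup \ko_{4,\nabla}$ and $\ko_1 \cup \ko_{1,\nabla}$. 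The sum over $y\in \Lambda$ is effectively finite: since $W_j$ and $P_j$ are built from $\E_{j+1}\theta$ and $\Cov_{j+1}$ against $\Gamma_{j+1}$, the finite-range property in Proposition~\ref{prop:theFRD}(2) restricts $y$ to lie within $\ell^1$-distance $L^j$ of $x$, a set of cardinality $O_L(L^{dj})$.

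For each monomial pair $(\km,\km')$, the kernel $W_j(S^{(\km)}_x, S^{(\km')}_y)$ or $P_j(S^{(\km)}_x, S^{(\km')}_y)$ is a polynomial in $\varphi$ of degree at most $6$ (since the two $V$-inputs contribute degree at most $4+4=8$, but the covariance against $\Gamma_{j+1}$ contracts at least two legs). I would then invoke Lemma~\ref{lemma:EthV12}: each such covariance factor contributes $(\kc_{j+1}/\kh_{\bulk,j})^2$, which by \eqref{eq:kcdefi} equals $\tilde\chi_j L^{-(d-2+\eta) j}/\kh_{\bulk,j}^2$, precisely producing the $\tilde\chi_j$ prefactor after combining with the volume $O_L(L^{dj})$ of the $y$-sum and the dimensional scaling in Definition~\ref{defi:cVnormdefi}. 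The degree-$\le 6$ polynomial nature of the kernel, together with the identity $\norm{F}_{\kh,T_j(0)} \le (\ell_{\bulk,j}/\kh_{\bulk,j})^{\deg F}\norm{F}_{\ell,T_j(0)}$ that follows from homogeneity of the $T_j(0)$-norm, accounts for the $(\ell_{\bulk,j}/\kh_{\bulk,j})^6$ factor in the stated bound.

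Finally, I would convert the pair-wise estimate $\norm{S^{(\km)}_x}_{\ell_j,T_j(0)} \cdot \norm{S^{(\km')}_y}_{\ell_j,T_j(0)}$ into a bound in terms of $\norm{V}_{\cV_j(\ell)}\norm{V'}_{\cV_j(\ell)}$ by using the equivalence $\norm{V(b)}_{\ell_j,T_j(0)} \asymp \norm{V}_{\cL_j(\ell)}$ from Lemma~\ref{lemma:VincDsize} and the comparison $\norm{V}_{\cL_j(\ell)} \lesssim \norm{V}_{\cV_j(\ell)}$ from Lemma~\ref{lemma:cVnrmvslnrm}. Summing the contributions of each monomial pair, weighted by the coefficient bounds encoded in $\norm{\cdot}_{\cV_j(\ell)}$, yields the advertised estimate. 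The observable-field contributions ($\pi_\o, \pi_\x, \pi_\ox$) are handled identically after inserting the appropriate $\sigma$-weights carried by the $T_j$-norm, noting that the indicator $\one_{x=\hash}$ trivially restricts the sum.

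The main obstacle is bookkeeping: the $\cV_j(\ell)$-norm weights monomials unevenly, giving extra $\scale_j^{\kt}$ factors for indices in $\kA_3$ and $\ko_{4,\nabla}$ and using $L^{(-2+\eta)j}$ in place of $L^{-q(\km) j}$ for high-derivative observable monomials. One must check that for every such enhanced-weight pair, the corresponding dimensional gain from the $\Gamma_{j+1}$-derivatives and from the restriction of the $y$-sum exactly compensates the weaker denominator in the norm. This amounts to verifying a finite list of scaling inequalities of the form $L^{(q(\km)+q(\km')-d)j}\scale_j^{-2\kt} \le 1$, which all hold under the parameter choices \eqref{eq:kaekbekpedefi} for sufficiently large $L$; this reduction is essentially the same mechanism underlying the controlled-flow estimates of \cite{FSmap}.
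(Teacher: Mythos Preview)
Your approach is essentially a direct reconstruction of the estimate from first principles, and in outline it is the correct mechanism behind the bound. However, the paper's proof is much shorter and proceeds differently. For $W_j$ it simply cites \cite[Lemma~7.5]{FSmap}, where the monomial-by-monomial bookkeeping you describe has already been carried out. For $P_j$ the paper does \emph{not} repeat that analysis; instead it uses the structural formula \cite[(4.63)]{FSmap} expressing $P_{j,V}$ as a linear combination of $W_j$-terms and $\Cov_{\pi,+}(V_x,V_y)$ passed through $\Loc_x$. The $W_j$ contribution is then controlled by the cited lemma, the $\Cov_{\pi,+}$ contribution by \eqref{eq:EthV13} of Lemma~\ref{lemma:EthV12}, and $\Loc_x$ is handled as a bounded operator on the $T_j$-norm via \cite[Proposition~3.4]{FSmap}. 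This last ingredient is absent from your sketch and is what allows the paper to avoid redoing the entire monomial calculus for $P_j$.

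Two smaller points. First, your homogeneity identity is written in the wrong direction: for a polynomial $F$ of degree $k$ one has $\norm{F}_{\kh,T_j(0)} = (\kh_{\bulk}/\ell_{\bulk})^{k}\norm{F}_{\ell,T_j(0)}$, not the inverse; the factor $(\ell_{\bulk,j}/\kh_{\bulk,j})^{6}$ in the statement arises because the two covariance contractions each cost $\kc_{j+1}^2 \asymp \ell_{\bulk,j}^2/\ell_0^2$ while the remaining $\le 6$ legs carry $\kh_{\bulk}$-weights, and the $\norm{V}_{\cV_j(\ell)}$ factors are defined in the $\ell$-scale. Second, $W_j$ is built from $\F_{w_j}$ (i.e.\ the cumulative covariance $w_j$), not from $\Gamma_{j+1}$ alone, so your appeal to Lemma~\ref{lemma:EthV12} applies directly only to the $\Cov_{\pi,+}$ part of $P_j$; for $W_j$ one really does need the more detailed estimate in \cite[Lemma~7.5]{FSmap}. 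Your closing paragraph about verifying scaling inequalities for the $\kA_3$ and $\ko_{4,\nabla}$ weights is exactly the content of that lemma, so citing it is the efficient route.
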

\begin{proof}
The bound on $W_{j}$ is \cite[Lemma~7.5]{FSmap}. 
For the bound on $P_{j}$,  notice that $P_{j}$ is a linear combination of $W_{j}$ and $\Cov_{\pi,+}$ (defined in \cite[(4.56)]{FSmap}) in \cite[(4.63)]{FSmap},  and $\Loc_x$ in the reference is a bounded linear operator due to \cite[Proposition~3.4]{FSmap}.
Now,  $\Cov_{\pi,+} (V_x, V_y)$ is bounded by \eqref{eq:EthV13},  so we have the desired bound.
\end{proof}

When we assume $V, V' \in \cD_j$,  then Lemma~\ref{lemma:VincDsize} and \ref{lemma:FWPbounds3obs} yield
\begin{align}
	& \sum_{x \in b,  \,  y \in \Lambda} \norm{W_{j,V} (V_x ; V'_{y})}_{\kh_j ,  T_j (0)} \le O_L (1) \tilde{\chi}_j \times \begin{cases}
	 	\tilde{g}_j^{2} \scale_j^2 & (\kh = \ell) \\
		\tilde{g}_j^{1/2} \scale_j^{1/2} & (\kh = h) .
	 \end{cases}
	 \label{eq:FWPbounds3obsapplied}
\end{align}
This immediately implies an estimate on the deviation of $\Phi^\pt_{j+1}$ from $\mathbb{E}_{j+1} \theta$.

\begin{lemma}  \label{lemma:VptmthV}

If $V \in \cV$,  then
\begin{align}
	\norm{ \Phipt_{j+1} (V) - \E_{j+1} \theta V}_{\cL_{j+1} (\ell)}
		& \le O_L (1) \tilde\chi_{j+1}  \norm{V}^2_{\cV_{j} (\ell)}
		\label{eq:VptmthV1}
		\\
	\norm{\Phipt_{j+1} (V) - \E_{j+1} \theta V}_{\cV_{j+1} (\ell)}
		& \le O_L (1) \tilde\chi_{j+1} \scale_{j+1}^{-1 + \kt} \norm{V}^2_{\cV_{j} (\ell)} .  \label{eq:VptmthV2}
\end{align}
and for $\norm{D_V F} = \sup\{ \norm{D_V F(\dot{V})} : \norm{\dot{V}}_{\cV_j (\ell)} \le 1 \}$,
\begin{align}
	\norm{D_V ( \Phipt_{j+1} (V) - \E_{j+1} \theta V ) }_{\cL_{j+1} (\ell)}
		& \le O_L (1) \tilde\chi_{j+1}  \norm{V}_{\cV_{j} (\ell)}
		\label{eq:VptmthV3}
		\\
	\norm{D_V ( \Phipt_{j+1} (V) - \E_{j+1} \theta V  ) }_{\cV_{j+1} (\ell)}
		& \le O_L (1) \tilde\chi_{j+1} \scale_{j+1}^{-1 + \kt} \norm{V}_{\cV_{j} (\ell)} .
		\label{eq:VptmthV4}
\end{align}
\end{lemma}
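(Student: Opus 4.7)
The plan is to observe from Definition~\ref{def:WP} that $\Phipt_{j+1}(V) - \E_{j+1}\theta V = -P_{j,V}$, so the lemma reduces to bounding the two norms of $P_{j,V}$ at scale $j+1$. First, I would apply Lemma~\ref{lemma:FWPbounds3obs} with $\kh = \ell$ and $V' = V$: because the multiplicative factor $(\ell_{\bulk,j}/\kh_{\bulk,j})^6$ equals $1$ in this case, and since $P_{j,V}(b) = \sum_{x \in b, y \in \Lambda} P_j(V_x; V_y)$, I obtain
\begin{align}
  \norm{P_{j,V}(b)}_{\ell_j,\, T_j(0)} \le O_L(1)\, \tilde\chi_j\, \norm{V}_{\cV_j(\ell)}^2
\end{align}
for each $b \in \cB_j$. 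Summing over the $L^d$ scale-$j$ blocks comprising any $B \in \cB_{j+1}$ and upgrading the $(\ell_j, T_j(0))$ norm to an $(\ell_{j+1}, T_{j+1}(0))$ norm, then invoking the equivalence $\norm{P_{j,V}}_{\cL_{j+1}(\ell)} \asymp \max_B \norm{P_{j,V}(B)}_{\ell_{j+1},T_{j+1}(0)}$ from Lemma~\ref{lemma:VincDsize}, yields \eqref{eq:VptmthV1}; here I absorb $\tilde\chi_j \le 2\tilde\chi_{j+1}$ (which is immediate from the definition of $\tilde\chi_j$). The bound \eqref{eq:VptmthV2} then follows by applying \eqref{eq:cVnrmvslnrm1} to $P_{j,V}$, which costs an additional factor $\scale_{j+1}^{-1+\kt}$.

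For the derivative estimates \eqref{eq:VptmthV3}--\eqref{eq:VptmthV4}, I would exploit the fact that $P_j(V_x; V'_y)$ is bilinear in $(V, V')$, so that $D_V P_{j,V}(\dot V) = P_j(\dot V; V) + P_j(V; \dot V)$. Applying Lemma~\ref{lemma:FWPbounds3obs} with one argument equal to $\dot V$ with $\norm{\dot V}_{\cV_j(\ell)} \le 1$ and the other equal to $V$ produces a bound of $O_L(1)\tilde\chi_j \norm{V}_{\cV_j(\ell)}$ for each block, after which the same scale-translation argument and the $\cL \to \cV$ comparison \eqref{eq:cVnrmvslnrm1} deliver \eqref{eq:VptmthV3}--\eqref{eq:VptmthV4}.

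The principal technical obstacle is the scale transition from $(\ell_j, T_j(0))$ norms at scale $j$ to $(\ell_{j+1}, T_{j+1}(0))$ norms at scale $j+1$. Since $P_{j,V}$ can contain monomials of degree up to $6$ in $\varphi$ (higher than the degree-$4$ terms in $\cV$), and since $\ell_{j+1,\bulk}/\ell_{j,\bulk} = L^{-(d-2+\eta)/2}$, the weight of each monomial must be tracked according to its degree in $\varphi$ and its total number of derivatives. The factor $L^d$ accumulated from summing over the $L^d$ sub-blocks should be comfortably absorbed by the favourable scaling, provided one uses that $P_{j,V}$ lies (after the $\Loc$-localisation built into the construction in \cite[(4.63)]{FSmap}) in a natural extension of $\cV$ at the next scale. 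Once this accounting is done uniformly across monomial types, all four bounds in the lemma follow in a unified manner from the single estimate of Lemma~\ref{lemma:FWPbounds3obs}.
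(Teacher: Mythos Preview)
Your proposal is correct and follows the same route as the paper: identify $\Phipt_{j+1}(V)-\E_{j+1}\theta V=-P_{j,V}$, bound $P_{j,V}$ blockwise via Lemma~\ref{lemma:FWPbounds3obs} with $\kh=\ell$, pass to the $\cL_{j+1}(\ell)$ norm through Lemma~\ref{lemma:VincDsize}, and obtain the $\cV$-norm bounds from \eqref{eq:cVnrmvslnrm1}; the derivative bounds come from bilinearity of $P_j(\cdot\,;\cdot)$. The paper's proof is simply a terse version of this, writing the $T_{j+1}(0)$ bound for $B\in\cB_{j+1}$ directly and absorbing every scale-$j$ to scale-$(j+1)$ factor into the $O_L(1)$.

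One small clarification: the ``principal technical obstacle'' you flag is less serious than you suggest. After the $\Loc$ built into the definition of $P_{j,V}$ (as you yourself note), the result lies in $\cU$, so only monomials of degree $\le 4$ appear and the $\cL_{j+1}(\ell)$ norm is well-defined without any ``natural extension''. The degree-$6$ objects are the unlocalised kernels $P_j(V_x;V_y)$, for which only the $T_j(0)$-norm bound of Lemma~\ref{lemma:FWPbounds3obs} is needed. The sum over $L^d$ sub-blocks and the change $\ell_j\to\ell_{j+1}$, $L^j\to L^{j+1}$ in the weights are all $j$-independent powers of $L$, hence harmlessly $O_L(1)$.
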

\begin{proof}
We will only show \eqref{eq:VptmthV1} and \eqref{eq:VptmthV3}
as the other two follow from these by \eqref{eq:cVnrmvslnrm1}.

Recalling Definition~\ref{def:WP},  $\Phipt_{j+1} (V)_x - \E_{j+1} \theta V_x = -P_{j,V,x}$, we just need to bound $P_{j,V}$.
But by Lemma~\ref{lemma:FWPbounds3obs},  since $P_{j,V}$ is a quadratic function of $V$,
\begin{align}
	\norm{P_{j,V} (B) }_{\ell_{j+1} , T_{j+1} (0)}
		& \le 
		O_L (1) \chi_{j+1} \norm{V}_{\cV_{j} (\ell)}^2 \\
	\norm{D_V P_{j,V} (B)}_{\ell_{j+1} , T_{j+1} (0)}
		& \le 
		O_L (1) \chi_{j+1} \norm{V}_{\cV_{j} (\ell)} 
\end{align}
for $B \in \cB_{j+1}$,  as desired.
\end{proof}

Finally,  we need an estimate on $Q_j$ to control the full RG map on the effective potential.

\begin{lemma} \cite[(7.87)]{FSmap} 
\label{lemma:Qjbound}
For $(V_j, K_j) \in \mathbb{D}_j (\alpha)$ ($\alpha \le 1$),  we have
\begin{align}
	\norm{Q_j }_{\cL_j (\ell)} \le O_L (1) \tilde{\chi}_j^{3/2} \tilde{g}_j^3 \scale_j^{\kae}
\end{align}
\end{lemma}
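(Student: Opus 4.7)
The plan is to unfold the definition of $Q_j$ from \cite[(7.80)]{FSmap}. From the context of its usage in Definition~\ref{def:WP} (it is subtracted from $V_j$ so that $\hat V_j = V_j - Q_j$ replaces $V_j$ in the perturbative map $\Phi_{j+1}^\pt$), $Q_j$ is the local polynomial that must be reabsorbed from $K_j$ into the effective potential in order to preserve the polymer representation \eqref{eq:Zjpolyexp} under one step of the RG. Symbolically, $Q_j(b)$ should be expressible as $\Loc_b$ applied to a small-polymer-weighted combination of $K_j(X)$ (possibly divided by $I_j^X$), summed over $X \in \cP_j$ containing $b$ up to a connected range $\lesssim L^j$. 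With this structural description in hand, the strategy is to estimate $Q_j$ in $\cL_j(\ell)$ purely in terms of $\norm{K_j}_{\cW_j}$ and the size of $V_j$.

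Concretely, I would proceed in three steps. First, invoke the boundedness of the localization operator \cite[Proposition~3.4]{FSmap} to show that extracting the local polynomial piece from a polymer activity at scale $j$ costs only an $O_L(1)$ factor when passing from a $T_j(0)$-type norm on $K_j(X)$ to the $\cL_j(\ell)$ norm on the resulting monomial. Second, use the assumption $(V_j,K_j) \in \mathbb{D}_j(\alpha)$ together with Lemma~\ref{lemma:KNbnd} to obtain $\norm{K_j}_{\cW_j} \le C_\rg \tilde\chi_j^{3/2} \tilde g_j^3 \scale_j^{\kae}$. Third, control the factors of $I_j^X$ that arise from the quotient: since $I_j = e^{-V_j^{(\bs_j)}}(1+W_{j,V})$ with $V_j \in \cD_j^{(0)}$, the $W_{j,V}$ factor is bounded by Lemma~\ref{lemma:FWPbounds3obs} via \eqref{eq:FWPbounds3obsapplied} and the exponential is handled by the stabilization mechanism introduced in Section~\ref{sec:WIcoords}. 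Summing over the finite collection of small polymers relevant to a single block $b$ (which is bounded by an $L$-dependent constant by the finite-range property of Proposition~\ref{prop:theFRD}) produces the stated bound.

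The main obstacle will be tracking the sharp marginal exponent $\scale_j^{\kae}$ without loss. One must verify that $Q_j$ is defined through $\Loc$ applied at scale $j$ (not at $j+1$), so that no extra interpolation factor $L^{2}$ or $\scale_{j+1}/\scale_j$ slips in. One must also check that the $K_j$-channel dominates the $V_j$-channel in $Q_j$: by Lemma~\ref{lemma:VincDsize} a generic $V_j$ has $\norm{V_j}_{\cV_j(\ell)} \lesssim \ell_0^4 \tilde g_j \scale_j$, which is much larger than $\tilde g_j^3 \scale_j^{\kae}$, so any cross-term must appear multiplied by $\norm{K_j}_{\cW_j}$ itself to retain the cubic $\tilde g_j^3$ size. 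This matching of sizes is the point where a careful unfolding of the construction in \cite[Section~7]{FSmap} is unavoidable, but beyond that the argument is a direct combination of the localization bound with Lemma~\ref{lemma:KNbnd}.
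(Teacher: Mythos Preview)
The paper does not give a proof of this lemma; it is stated as a direct citation to \cite[(7.87)]{FSmap} and used as a black box throughout. Your proposal goes further by attempting to reconstruct the underlying argument, and the outline you give is structurally sound: $Q_j$ is indeed built by applying $\Loc$ to combinations of $K_j(X)$ over small polymers containing a fixed block, the localization operator is bounded by \cite[Proposition~3.4]{FSmap} when passing to the $\cL_j(\ell)$ norm, and the domain hypothesis $K_j\in\cK_j(\alpha)$ together with Lemma~\ref{lemma:KNbnd} supplies exactly the size $\tilde\chi_j^{3/2}\tilde g_j^3\scale_j^{\kae}$. Your caution about the $I_j^X$ quotients and possible $V_j$ cross-terms is well placed, but in the construction of \cite{FSmap} the $Q_j$ is essentially first order in $K_j$ and the $I_j$ factors are $O(1)$ in the regulator-weighted norm, so the exponent $\kae$ survives without loss. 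In short, there is nothing in the paper to compare against---it outsources the proof entirely---and your sketch is a faithful summary of how the bound is obtained in the cited source.
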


\subsection{Controlled RG map}
\label{sec:controlledRGmap}

We assume the following estimate on the RG map.
We make the base lattice $\Lambda$ explicit in the following definition by denoting $\Phi_{j+1}^{\Lambda}$ and $\cK^{\Lambda}_j$.

\begin{definition} \cite[Definition~1.6]{FSmap}
 \label{defi:contrlldRG}
Let $j < N < \infty$ and $\Lambda = \Lambda_N$.  \emph{Controlled RG map} at scale $j$ is a function
\begin{align}
\begin{split}
	\Phi^{\Lambda}_{j+1} = (\Phi^U_{j+1} , \Phi^K_{j+1}) : \cD^{(0)}_j \times \cK_j^{\Lambda} \times \AA_j (\tilde{m}^2) 
		& \rightarrow \left( (\R + \sigma_\ox \R)^{\Lambda} \times \cV^{(0)} \right) \times \R\cK^{\Lambda}_{j+1} ,  \\
	(V_j,K_j) 
		& \mapsto \big( (\delta u_{j+1} , V_{j+1} ), K_{j+1} \big) ,
\end{split}	
\end{align}
such that \eqref{eq:contrlldRGalg} holds
when $I_{j'} = \cI_{j'} (V_{j'})$ for each $j' \in \{ j,  j+1 \}$,
and bounds \eqref{eq:controlledRG22}--\eqref{eq:controlledRG24} hold
for some $j,N$-independent,  $L$-dependent constants $(M_{p,q} )_{p,q\ge 0}$:
if $R_{j+1}^U$ is defined by \eqref{eq:RjUdefi},
\begin{align}
	\norm{D^p_{V_\bulk} D^q_K R_{j+1}^{U} }_{\ell_{j+1},  T_{j+1} (0)}
		& \le M_{p,q} \times \begin{cases}
			\tilde\chi_{j+1}^{3/2} \tilde{g}_{j+1}^{3} \scale_{j+1}^{\kae-(1-\kt) p} & (p\ge 0,  \; q =0) \\
			\scale_{j+1}^{-(1-\kt) p}  & (p \ge 0,  \; q=1)  	\\
			\scale_{j+1}^{-2(1-\kt)}  & (p \ge 0,  \; q=2) \\
			0   &  (p \ge 0,  \; q \ge 3) 
			,
		\end{cases} \label{eq:controlledRG22} \\
	\norm{D_{V_\bulk}^p D_K^q \Phi_{j+1}^{K}}_{\cW_{j+1}}
		& \le M_{p,q} \times 
			\begin{cases}
			\tilde\chi_{j+1}^{3/2} \tilde{g}_{j+1}^{3-p} \scale_{j+1}^{\kae - p}  & (p \ge 0,  \; q= 0)  	\\
			\tilde{g}_{j+1}^{-p - \frac{9}{4} (q-1)} \scale_{j+1}^{-p - \kbe (q-1)} & (p \ge 0,  \; q \ge 1) 
		\end{cases} \label{eq:controlledRG23}
\end{align}
and when $j+1 < N$,  with the same $C_{\rg}$ as in Lemma~\ref{lemma:KNbnd},
\begin{align}
	\norm{D^q_K \Phi_{j+1}^{K}}_{\cW_{j+1}}
		\le 
		\begin{cases}		
			C_{\rg} \tilde\chi_{j+1}^{3/2} \tilde{g}_{j+1}^{3} \scale_{j+1}^{\kae} & (q = 0) \\
			\frac{1}{32} L^{-\max\{ 1/2 ,  (d-4 +2\eta) \kae \}}  & (q = 1)  .
		\end{cases} 
		\label{eq:controlledRG24}
\end{align}
Moreover,  $D^p_{V_\bulk} D^q_K R_{j+1}^{U}$ and $D_{V_\bulk}^p D_K^q \Phi_{j+1}^{K}$ are continuous in $(\ba_\emptyset, \ba) \in \AA_j (\tilde{m}^2)$.
\end{definition}

\begin{definition} \label{defi:contrlldRG2}
Let $\Phi^{\Lambda}_{j+1}$ be a controlled RG map at scale $j$.  It is said to \emph{respect the graded structure} if
$\tilde{\pi} \circ \Phi^{\Lambda}_{j+1} = \Phi^{\Lambda}_{j+1} \circ \tilde{\pi}$ for each $\tilde{\pi} \in \{ \pi_\bulk,  \pi_\bulk + \pi_\o ,  \pi_\bulk + \pi_\x \}$.
\end{definition}

\begin{theorem} \cite[Theorem~1.4]{FSmap}
\label{thm:contrlldRG}
Assume $\eta \in [0,1/2)$,  $d \ge d_{c,u} = 4-2\eta$,  let $L$ be sufficiently large and $\Gamma_{j+1}$ be as in Proposition~\ref{prop:theFRD}.
Then a controlled RG map exists at any scale $j < N < \infty$.
Also,  the RG map respects the graded structure.
\end{theorem}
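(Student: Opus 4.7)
The plan is to reduce the statement essentially entirely to \cite[Theorem~1.4]{FSmap}, and then to verify separately the assertion about the graded structure, which is not an explicit output of that construction but follows immediately from the way it is built.

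First, I would check that the covariance decomposition produced by Proposition~\ref{prop:theFRD} meets the hypotheses imposed on $\Gamma_{j+1}$ in the abstract setting of \cite{FSmap}. Specifically, one needs the lattice symmetries, the finite range property $\Gamma_{j+1}(x,y) = 0$ for $\|x-y\|_{\ell^1} \ge L^{j+1}$, and the derivative bound \eqref{eq:Gammajbounds2}, all available in Proposition~\ref{prop:theFRD}. One also needs the parameter restriction $\eta\in[0,1/2)$ and $d\ge d_{c,u}$, which in \cite{FSmap} enters through the admissible exponent ranges in \eqref{eq:kaekbekpedefi} and through the definition of $\kc_j$ in \eqref{eq:kcdefi}. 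Continuity of $\Gamma_{j+1}$ and of the auxiliary data $\Gamma_N^\Lambda$, $t_N$ in $\vec\ba \in \AA_j(\tilde{m}^2)$ (outside $\ba^{(\emptyset)}=0$ for the torus objects) is supplied by the final statement of Proposition~\ref{prop:theFRD}, which yields the required continuity of $D^p_{V_\bulk} D^q_K R^U_{j+1}$ and $D^p_{V_\bulk} D^q_K \Phi^K_{j+1}$.

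Once this input is aligned, I would simply invoke \cite[Theorem~1.4]{FSmap} to obtain a map $\Phi_{j+1} = (\Phi^U_{j+1}, \Phi^K_{j+1})$ satisfying \eqref{eq:contrlldRGalg}, together with the derivative bounds \eqref{eq:controlledRG22}--\eqref{eq:controlledRG24}. The bounds are formulated there in precisely the normalisations $\|\cdot\|_{\cL_j(\ell)}, \|\cdot\|_{\cV_j(\ell)}, \|\cdot\|_{\cW_j}$ used here, so no translation is required beyond matching the exponents $\kae, \kbe, \kpe$ and the scale $\scale_j$.

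For the graded structure claim, the task is to verify that each projector $\tilde{\pi} \in \{\pi_\bulk, \pi_\bulk + \pi_\o, \pi_\bulk + \pi_\x\}$ commutes with $\Phi^{\Lambda}_{j+1}$. The underlying reason is that both the Gaussian expectation $\E_{j+1}$ and all the polymer and localisation operations used to build $\Phi^\pt_{j+1}$, $W_{j,V}$, $Q_j$ act only on the field $\varphi$ and are $R$-linear in the observable ring generated by $\sigma_\o, \sigma_\x$. Since each $\tilde\pi$ corresponds to setting one of $\sigma_\o, \sigma_\x$ (and thus $\sigma_\ox$) to zero in the graded module, it commutes with any $R$-linear, $\varphi$-only operation. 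I would verify this block by block: it is immediate for $\E_{j+1}\theta$, direct from Definition~\ref{def:WP} for $\Phi^\pt_{j+1}$, and for $\Phi^K_{j+1}$ it can be read off from the construction in \cite[Section~5]{FSmap} once one notes that every step (stabilisation, $W$-augmentation, $\Loc$, reblocking) is built from $R$-linear operations.

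The only potentially non-routine point is confirming that the graded-structure compatibility is preserved through the specific identifications made in \cite[Section~7]{FSmap} that define $Q_j$ and the combined coordinate change $V_j \mapsto \hat V_j = V_j - Q_j$, since $Q_j$ depends on $(V_j, K_j)$; but as $Q_j$ is itself constructed from $R$-linear operations applied to $(V_j, K_j)$, the compatibility propagates. I expect no conceptual obstacle, only a short verification of this $R$-linearity in the construction of \cite{FSmap}.
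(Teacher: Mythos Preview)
Your proposal is correct and matches the paper's treatment: the paper provides no proof of this theorem at all, simply citing it as \cite[Theorem~1.4]{FSmap} and moving on. Your plan to verify that Proposition~\ref{prop:theFRD} supplies the covariance hypotheses required by \cite{FSmap}, then invoke the cited result directly, and finally check the graded-structure claim via $R$-linearity of the construction, is exactly the right way to fill in what the paper leaves implicit.
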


The RG map can also be extended to the infinite lattice in the following sense.
To compare polymer activities on $\Z^d$ and $\Lambda_N$,  we let $c_N : \Z^d \rightarrow \Lambda_N$ be a local isometry such that $c_N (\o) = \o$ and $c_N (\x) = \x$.  We say that $K_j \in \cK_j^{\Z^d}$ projects to $K'_j \in \cK_j^{\Lambda_N}$ if 
\begin{align}
	K_j (X,  \varphi \circ c_N ) = K'_j ( c_N (X),  \varphi)
\end{align}
whenever $X$ is contained in a hypercube of sidelength $L^{N-1}$.

\begin{theorem} \cite[Theorem~A.4]{FSmap}
\label{thm:infvolRGmap}
Under the assumptions of Theorem~\ref{thm:contrlldRG},  for any $j\ge 0$,  there exists a map
\begin{align}
	\Phi_{j+1}^{\Z^d} = (\Phi^U_{j+1} , \Phi^K_{j+1}) : \cD^{(0)}_j \times \cK^{\Z^d}_j \times \AA_j (\tilde{m}^2) 
		\rightarrow \left( (\R + \sigma_\ox \R)^{\Lambda} \times \cV^{(0)} \right) \times \R\cK^{\Z^d}_{j+1}
\end{align}
that satisfies \eqref{eq:controlledRG22}--\eqref{eq:controlledRG24} and its restriction to finite volume is identical to $\Phi_{j+1}^{\Lambda_N}$ in the following sense: 
if $j +1 < N$ and $K_j \in \cK^{\Z^d}_j$ projects to $K'_j  \in \cK^{\Lambda_N}_j$, 
and let $(U_{j+1},K_{j+1}) = \Phi^{\Z^d}_{j+1} (U_j, K_j)$ and $(U'_{j+1},K'_{j+1}) = \Phi^{\Lambda_N}_{j+1} (U_j, K'_j)$,  then $K_{j+1}$ also projects to $K'_{j+1}$ and $U_{j+1} = U'_{j+1}$.
\end{theorem}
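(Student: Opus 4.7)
The plan is to use the finite range of the fluctuation covariance to localise the construction of $\Phi_{j+1}^{\Lambda_N}$, and then define $\Phi_{j+1}^{\Z^d}$ as the common local value obtained from $\Phi_{j+1}^{\Lambda_N}$ for all sufficiently large $N$. Since the bounds \eqref{eq:controlledRG22}--\eqref{eq:controlledRG24} are uniform in $N$, passing them to $\Z^d$ will be essentially automatic once the definition is in place. The projection compatibility will fall out of the construction.

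First I would examine the construction of $\Phi_{j+1}^{\Lambda_N}$ from \cite{FSmap} and extract its locality: because $\Gamma_{j+1}$ has range strictly less than $L^{j+1}$ in $\ell^1$ (Proposition~\ref{prop:theFRD}(2)) and the building blocks ($I_j$, $W_j$, $P_{j,V}$) depend only on fields within distance $L^j$, the output $\Phi_{j+1}^{\Lambda_N}(V_j, K_j)(X')$ on a polymer $X' \in \cP_{j+1}(\Lambda_N)$ only depends on $K_j(Y)$ and the field $\varphi$ on the blocks $Y \subset X'^{+}$, where $X'^{+}$ is the small-scale neighbourhood of $X'$ (which fits inside a bounded number of $(j+1)$-blocks around $X'$). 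This is essentially the content of the reblocking and coarse-graining steps used to obtain $K_{j+1}$.

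Next, for $K_j \in \cK_j^{\Z^d}$ and a polymer $X' \in \cP_{j+1}^{\Z^d}$, I would choose $N$ large enough so that $X'^+ \subset \Lambda_N$ under the natural embedding, and define
\begin{equation}
\Phi_{j+1}^{\Z^d}(V_j, K_j)(X',\varphi) \;:=\; \Phi_{j+1}^{\Lambda_N}\bigl(V_j, \pi_N K_j\bigr)\bigl(c_N(X'),\varphi \circ c_N^{-1}\bigr),
\end{equation}
where $\pi_N K_j$ is any projection of $K_j$ that agrees with it on $X'^+$. By the locality property, the right-hand side is independent of the choice of $N$ and of $\pi_N$, so this is well-defined. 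The same recipe defines $\Phi_{j+1}^U$, and the factorisation identity \eqref{eq:contrlldRGalg} holds locally on each polymer and hence globally in the $\Z^d$ sense.

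The verification of the bounds \eqref{eq:controlledRG22}--\eqref{eq:controlledRG24} is then routine: the $\cW_{j+1}$ and $T_{j+1}(0)$ norms are defined polymer-wise via a supremum, so each polymer contribution inherits the uniform-in-$N$ bound from the corresponding $\Lambda_N$ estimate. Differentiability in $(V_j,K_j)$ and continuity in $\vec\ba \in \AA_j(\tilde m^2)$ transfer in the same way. The projection statement (if $K_j$ projects to $K'_j$ and $j+1<N$, then $K_{j+1}$ projects to $K'_{j+1}$ and $U_{j+1}=U'_{j+1}$) is immediate from the construction, because on each polymer $X'$ with $X'^+$ contained in a hypercube of sidelength $L^{N-1}$, both definitions reduce to the same local finite-volume computation.

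The main obstacle is the locality extraction in step one: the RG map of \cite{FSmap} involves several cluster-type manipulations (reblocking, the map $\cI_{j+1}$, the $\Loc$ operator, etc.), and one must check that each of these has a finite range of dependence bounded by a universal multiple of $L^{j+1}$. This is where one must be careful, especially for the $R_{j+1}^U$ contribution and for the $Q_j$ correction in Definition~\ref{def:WP}, which are slightly non-local objects. Once one tracks through these ingredients and confirms that the dependence on $K_j$ for a single output polymer is contained in a finitely many $j$-blocks around it, everything else follows from the uniform bounds of Theorem~\ref{thm:contrlldRG}.
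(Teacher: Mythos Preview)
The paper does not prove this theorem: it is stated as a black-box citation to \cite[Theorem~A.4]{FSmap}, so there is no in-paper proof to compare against. Your sketch follows the standard route for constructing infinite-volume RG maps from finite-volume ones via the finite-range property of the covariance, and the outline (locality extraction, definition via large $N$, transfer of uniform bounds, projection compatibility) is the expected shape of such an argument. Whether your handling of the non-local pieces ($Q_j$, $\Loc$, reblocking) matches the actual proof in \cite{FSmap} cannot be judged from this paper alone; you would need to consult that reference directly.
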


We will need the infinite volume RG map $\Phi_{j+1}^{\Z^d}$ in Section~\ref{sec:cotcpfdf} and \ref{sec:stabobsflow} to construct the critical point and prove the stability of the RG flow.  
On the other hand,  proof of the main theorems are all based on the finite volume RG map,  and we use the projection property in Theorem~\ref{thm:infvolRGmap} to obtain the stability of the finite volume RG flow.

\section{Critical point for $d> d_{c,u}$}
\label{sec:cotcpfdf}

In this section,  we construct the critical initial value of the RG flow when $d > 4$ for $\eta =0$ and $d \ge 4$ for $\eta \in (0,1/2)$,
i.e.,  choice of $V_0$ such that the flow generated by $\Phi_{j+1}$ satisfies $\norm{V_j}_{\cV_j (\ell)},  \norm{K_j}_{\cW_j} \rightarrow 0$ as $j\rightarrow \infty$.  Thus for sufficiently large $N$,  the polymer expansion \eqref{eq:Zjpolyexp} can be approximated by $Z_N \approx \exp(-u_N (\Lambda))$.
References for the construction when $d= d_{c,u}$ is given in Appendix~\ref{sec:critdRGflow}.

Construction of the critical point only requires the bulk part of the RG flow,  so we take $\lambda_\o = \lambda_\x \equiv 0$ in this section. and we always have $(U,K) = (U_\bulk, K_{\bulk})$.
The discussion on the stability of the observable part is deferred to Section~\ref{sec:stabobsflow}.

We will first construct the stable manifold of the dynamical system generated by the perturbative maps $(\Phi_{j+1}^{\pt})_{j\ge 0}$ in Section~\ref{sec:cotpcp},  and then extend the construction to the full RG map by interpolating with the perturbative flow using an ODE in Section~\ref{sec:cotcpiibRm}.
Then the critical point is obtained in Section~\ref{sec:cpnt},  using elementary topological arguments.  These results are summarised in Theorem~\ref{thm:theStableManifold}.
As before,  we use $\vec{\nu} = (\nu^{(\km_1)})_{\km_1 \in \ko_2 \cup \ko_{2,\nabla}}$,  $\vec{g} = (g^{(\km_2)})_{\km_2 \in \ko_4 \cup \ko_{4,\nabla}}$ 
and $\vec{\lambda} = (\lambda^{(\km_3)})_{\km_3 \in \ko_1 \cup \ko_{1,\nabla}}$ to denote the coefficients of $V$.

\begin{definition} \label{def:RGflow}
Let  $j \le N$ on $\Lambda = \Lambda_N$,  $j< \infty$ on $\Lambda = \Z^d$ and $\alpha \in [1/2,1]$.
\begin{enumerate}
\item Sequence $( \vec{\nu}_k , \vec{g}_k ,  \vec{\lambda}_k ,  K_k)_{k \le j} \in \prod_{k =0}^j \D_j (\alpha)$ is called the \emph{RG flow of length $j$} if there exist $(\delta u_{k})_{k=1}^j$ with
\begin{align}
	(\delta u_{k+1} ,   \vec{\nu}_{k+1} , \vec{g}_{k+1},  \vec{\lambda}_{k+1} ,  K_{k+1} ) = \Phi^{\Lambda}_{k+1} ( \vec{\nu}_k , \vec{g}_k ,  \vec{\lambda}_k ,  K_k )
	\quad \text{for all} \quad k < j
	.
\end{align}
The sequence is called the \emph{infinite RG flow} if the same holds with $j = \infty$.

We use the term \emph{bulk RG flow} for a sequence $( \vec{\nu}_k , \vec{g}_k, K_{\bulk,k})_{k \le j} \in \prod_{k=0}^j \D_{\bulk,k} (\alpha)$ with the same property.

\item For $j< \infty$,  sequence $( \vec{\nu}_k , \vec{g}_k )_{k \le j} \in \prod_{k =0}^j \cD_{\bulk,k}^{(0)} (\alpha)$ is called the \emph{bulk perturbative RG flow of length $j$} if there exist $(\delta u_{k})_{k=1}^j$ with
\begin{align}
	(\delta u_{k+1} ,   \vec{\nu}_{k+1} , \vec{g}_{k+1}  ) = \mathbb{V}^{(0)} \Phi_{k+1}^{\pt} ( \vec{\nu}_k , \vec{g}_k )
	\quad \text{for all} \quad k < j .
\end{align}
\end{enumerate}
\end{definition}

\begin{theorem} \label{thm:theStableManifold}
Let $\alpha =1$,  $g$ be sufficiently small and $L$ be sufficiently large.

Then there exist convergent sequences 
\begin{align}
	( \ba^{(\emptyset)}_{c,k} ,  \nu^{(\emptyset)}_{c,k} )_{k \in \N \cup \{\infty\}} \subset  \HB_{\epsilon_p} \times \R
\end{align}
such that $\ba_{c,\infty}^{(\emptyset)} =0$ and satisfies the following stability condition:
whenever $\vec{\nu}, \vec{\ba} \in \R^{\kA_0 \cup \kA_1}$ are given by
\begin{align}
\begin{cases}
	\nu^{(\emptyset)} =  \nu_{c,k}^{(\emptyset)}  \text{ and } \nu^{(\km)} = - \ba_{c,k}^{(\km)} \text{ for each } \km \in \kA_1 \\
	\vec{\ba} = \vec{\ba}_{c,k}
\end{cases}
\end{align}
the bulk RG flow of infinite length exists with $\vec{\ba}$ and initial condition
\begin{align}
	\vec{\nu}_0 = \vec{\nu},  \quad g^{(\emptyset)}_0 = g,  \quad g^{(\km_2)} =0 \;\; (\km_2 \in \ko_{4,\nabla}),  \quad K_0 \equiv 0 .
\end{align}
Moreover,  $\nu^{(\emptyset)}_{c,\infty},  \ba^{(\km)}_{c,\infty} = O(g)$ for each $\km \in \kA_1$.   
\end{theorem}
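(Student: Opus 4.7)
The plan is to mirror the three-step structure announced at the start of the section: build a stable manifold for the purely perturbative system $\mathbb{V}^{(0)} \Phi^{\pt}_{j+1}$, lift it to the full RG system including the error coordinate $K$, and then extract the critical point as $k \to \infty$ by a compactness argument. Throughout I would work in $\Z^d$ using the infinite-volume RG map $\Phi_{j+1}^{\Z^d}$ from Theorem~\ref{thm:infvolRGmap}, so that the flow can be iterated indefinitely, and transfer back to $\Lambda_N$ at the end via its projection property.

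For the perturbative flow, the crucial dimension-counting input is that when $d > d_{c,u}$ the parameter $\scale_j = L^{-(d-4+2\eta)j}$ decays, so every coupling in $V$ is irrelevant except $\nu^{(\emptyset)}$ and $g^{(\emptyset)}$. By Lemma~\ref{lemma:VptmthV} the quadratic remainder $\Phi^{\pt}_{j+1}(V) - \E_{j+1}\theta V$ is bounded by a summable tail $O_L(\tilde\chi_{j+1} \tilde{g}_j^2 \scale_j^2)$, while the linearised evolution of $\nu^{(\emptyset)}$ has multiplier $L^{2-\eta}$ bounded away from $1$. I would therefore apply a backwards implicit-function argument, solving for $\nu^{(\emptyset)}_0$ scale by scale so that the perturbative trajectory $\vec{\nu}_j$ stays in $\cD_{j,\bulk}$ for every $j \le k$. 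The remaining irrelevant coordinates $g^{(\km_2)}$ for $\km_2 \in \ko_{4,\nabla}$ and $\nu^{(\km)}$ for $\km \in \kA_1 \cup \kA_2 \cup \kA_3$ are controlled by bootstrapping on their $\scale_j$-contraction. The self-consistency identity $\nu^{(\km)}_0 = -\ba^{(\km)}_{c,k}$ imposed by the counterterm relation then becomes a finite-dimensional fixed-point equation for $\vec{\ba}_{c,k} \in \HB_{\epsilon_p}$; since the relevant map has operator norm $O(g)$, contraction yields a unique solution for each finite $k$.

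To incorporate the nonperturbative error coordinate, I would decompose $\Phi^U_{j+1}(V,K) = \mathbb{V}^{(0)} \Phi^{\pt}_{j+1}(V) + R^U_{j+1}(V,K)$ and treat $R^U$ as a small perturbation using \eqref{eq:controlledRG22} together with the smallness of $Q_j$ from Lemma~\ref{lemma:Qjbound}. The decisive estimate is \eqref{eq:controlledRG24}: the derivative $D_K \Phi^K_{j+1}$ is bounded by $\tfrac{1}{32} L^{-\max\{1/2,(d-4+2\eta)\kae\}}$, which is a genuine contraction on $\R\cK_j$ (Lemma~\ref{lemma:RcKisBanach}). The ODE/homotopy scheme of Section~\ref{sec:cotcpiibRm} would interpolate between the perturbative stable manifold (with $K\equiv 0$) and the full flow; along this homotopy a Picard iteration on the joint coordinate $(V,K)$ produces, at each finite horizon $k$, corrected values $(\vec{\ba}_{c,k}, \nu^{(\emptyset)}_{c,k})$ such that the full bulk RG flow of length $k$ remains in $\D_{\bulk,j}$. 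Sending $k \to \infty$, the parameters live in a bounded $O(g)$-ball of $\HB_{\epsilon_p} \times \R$, so a diagonal extraction combined with scale-uniform estimates and the continuity in $\vec{\ba}$ from Proposition~\ref{prop:theFRD} and Theorem~\ref{thm:contrlldRG} yields convergence to an infinite-length flow. The equality $\ba^{(\emptyset)}_{c,\infty} = 0$ is forced, because any positive limiting mass would produce a finite mass scale $j_{\tilde{m}^2}$ that is incompatible with the criticality required by the limit of the counterterm identity.

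The main obstacle is the dangerously irrelevant character of $g^{(\emptyset)}$ flagged in Remark~\ref{remark:nuc}(iii): even though $\tilde{g}_j = g$ is constant in $j$ for $d > d_{c,u}$, the stability domain $\cD_{j,\bulk}$ requires $g^{(\emptyset)}_j/\tilde{g}_j$ to remain trapped in a bounded window at every scale, which is considerably stronger than simply staying small. To absorb the contribution from $R^U$ I would exploit the ellipticity of the leading perturbative evolution of $g^{(\emptyset)}$, which provides a coercive lower bound preventing escape. A secondary subtlety is the joint dependence on $\vec{\ba}$: the counterterms feed back into the covariance decomposition $\Gamma_j$ itself via Proposition~\ref{prop:theFRD}, so the fixed-point equation for $\vec{\ba}_{c,k}$ must be solved simultaneously with the RG flow, and the continuity of both the covariance decomposition and the RG map in $\vec{\ba}$ is what makes the joint argument well-posed.
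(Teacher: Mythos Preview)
Your three-step architecture (perturbative stable manifold, ODE interpolation to the full map, fixed-point for the counterterms) matches the paper's Section~\ref{sec:cotpcp}--\ref{sec:cpnt}, but two steps are misread in a way that would not close.

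First, you misinterpret the index $k$. In the theorem, $k$ does not label a finite-horizon flow: for every $k$ the bulk RG flow is already of \emph{infinite} length. What $k$ indexes is a sequence of mass parameters $\ba^{(\emptyset)}_{c,k}$ chosen to decrease to $0$. The paper's construction is: for each fixed $\tilde{\ba}^{(\emptyset)} \in [0,\epsilon_p/2]$, Propositions~\ref{prop:nuptcctty} and~\ref{prop:nupcctty} produce the infinite stable manifold $\vec{\nu}_{pc}(\vec{\ba})$ as a continuous function of $\vec{\ba}$; one then solves the counterterm self-consistency $\nu^{(\km)} = -\ba^{(\km)}$ at that fixed mass, and finally lets the mass tend to $0$ along a sequence. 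So $\ba^{(\emptyset)}_{c,\infty} = 0$ is not ``forced'' by any incompatibility with a mass scale --- it is imposed by the choice of sequence.

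Second, your contraction argument for the counterterm fixed-point is not supported by the available regularity. Proposition~\ref{prop:nupcctty} only gives that $\vec{\nu}_{pc}$ is \emph{continuous} in $\vec{\ba}$ (differentiability is established only in $g$), because the $\vec{\ba}$-dependence enters through the covariances $\Gamma_j$, for which Proposition~\ref{prop:theFRD} provides nothing beyond continuity. That the map lands in an $O(g)$-ball does not bound its Lipschitz constant. The paper therefore invokes Brouwer's fixed-point theorem on the convex compact ball $\bar{B}_{\epsilon_p/2}$ (Proposition~\ref{prop:cpconstr}) and explicitly refrains from claiming uniqueness: $\vec{N}_c$ is set-valued. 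The convergent sequence $(\ba^{(\emptyset)}_{c,k},\nu^{(\emptyset)}_{c,k})$ is then extracted from the compactness of the graph of $\vec{N}_c$, not from a diagonal argument on flow lengths.
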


\begin{definition} \label{defi:nuc}
The choice of $\nu_{c,\infty}^{(\emptyset)}$ is called the critical point and also denote
\begin{align}
	\nu_c = \nu^{(\emptyset)}_{c,\infty}  . 
\end{align}
\end{definition}

$\nu_c$ serves as the point of phase transition of the $|\varphi|^4$ model used in Theorem~\ref{thm:infvol2pt}--\ref{thm:NGlimit},  and $\vec{\ba}_{c,\infty}$ serves as the collection of coefficients of counterterms.

\subsection{More projections}

Recall that $\pi_2, \pi_4, \pi_{2,\nabla}$ and $\pi_{4,\nabla}$ are projections on $\cV_2,\cV_4,\cV_{2,\nabla}$ and $\cV_{4,\nabla}$,  respectively.  Also,  we use $\vec{\nu} = \{ \nu^{(\km_1)} \}_{\km_1 \in \ko_2 \cup \ko_{2,\nabla}}$ and $\vec{g} = \{ g^{(\km_2)} \}_{\km_2 \in \ko_4 \cup \ko_{4,\nabla}}$.
But we will need further decomposition of the projections.
Their existences are guaranteed by \cite[Remark~4.1]{FSmap}.

\begin{definition}
\label{def:bbphipt}
On $\cV_{\bulk}$ we define projections
\begin{align}
	p^{(\km_1)}
		& : (\vec{\nu}, \vec{g})  \mapsto \nu^{(\km_1)} ,  \qquad
	q^{(\km_2)} : (\vec{\nu}, \vec{g})  \mapsto g^{(\km_2)}  
\end{align}
for $\km_1 \in \ko_2 \cup \ko_{2,\nabla}$ and $\km_2 \in \ko_4 \cup \ko_{4,\nabla}$.
For subsets $A \subset \{ 0,1,2,3 \}$ and $B \subset \{0,1\}$,  we define projections $p^{A}$ and $q^{B}$ by 
\begin{align}
	p^{A}
		& : (\vec{\nu}, \vec{g})  \mapsto \vec{\nu}^{A} := (\nu^{(\km_1)} )_{\km_1 \in \cup_{i \in A} \kA_i }, \\
	q^{B} 
		&: (\vec{\nu}, \vec{g} ) \mapsto 
		\vec{g}^{B} :=
	 	(g^{(\km_2)} )_{\km_2 \in \cup_{i \in B} \kB_i  }  .
\end{align}
Also,  we denote for $V \in \cV_\bulk$
\begin{align}
	V^{(\km)} = \begin{cases}
		 p^{(\km_1)} V & (\km_1 \in \ko_2 \cup \ko_{2,\nabla}) \\
		 q^{(\km_2)} V & (\km_2 \in \ko_4 \cup \ko_{4,\nabla}) ,
	\end{cases}   \qquad 
	\cV^{(\km)} = \begin{cases}
		 p^{(\km_1)} \cV_\bulk & (\km_1 \in \ko_2 \cup \ko_{2,\nabla}) \\
		 q^{(\km_2)} \cV_\bulk & (\km_2 \in \ko_4 \cup \ko_{4,\nabla}) 
	\end{cases} 
\end{align}
\end{definition}

For example,  when
\begin{align}
	V_x (\varphi) = \frac{1}{2} \nu^{(\emptyset)} |\varphi_x|^2 + \frac{1}{4} g^{(\emptyset)} |\varphi_x|^4 + \frac{1}{2} \sum_{\km_1 \in \ko_{2,\nabla}} \nu^{(\km_1)} S^{(\km_1)}_{x} (\varphi) +  \frac{1}{4} \sum_{\km_2 \in \ko_{4,\nabla}} g^{(\km_2)} S^{(\km_2)}_{x} (\varphi) ,
\end{align}
we may write
\begin{align}
	(p^{0,1} V )_x (\varphi) = \frac{1}{2} \nu^{(\emptyset)} |\varphi_x|^2 + \frac{1}{2} \sum_{\km_1 \in \kA_1} \nu^{(\km_1)} S_x^{(\km_1)} (\varphi)
	.
\end{align}
They satisfy $\pi_2 = p^0$,  $\pi_{2,\nabla} = p^{1,2,3},  \pi_4 = q^0$,  $\pi_{4,\nabla} = q^{1}$.

\subsection{Stable manifold I.  perturbative map}
\label{sec:cotpcp}

Construction of the critical point only requires the infinite volume limit RG map $\Phi_{j+1}^{\Z^d}$,  so we  just write $\Phi_{j+1}$ in the rest of Section~\ref{sec:cotcpfdf}.
Just for the construction of stable manifolds,  we need new spaces of $(V_j,K_j)$.

\begin{definition} \label{eq:spaceXYZdefi}
For $V \in \cV_{\bulk}$ and $K \in \cK_{j,\bulk}$,  we define norms
\begin{align}
	\norm{V}'_{j} = \scale_j^{-1 + \kt} \norm{V}_{\cV_j (\ell)} , 
	\qquad \norm{K}''_{j} = \scale_j^{- \kae + \kt} \norm{K}_{\cW_j}
\end{align}
and Banach spaces
\begin{align}
	& X_j = \Big\{ \sum_{\km \in \kA_0 \cup \kA_1} V^{(\km)} (\vec{\ba}) : V^{(\km)} \in C \big( \HB_{\epsilon_p} ; \cV^{(\km)} \big) \Big\} , \\
	& Y_j = \Big\{ \sum_{\km \in \kA_2 \cup \kA_3 \cup \kB_0 \cup \kB_1}  V^{(\km)} (\vec{\ba}) : V^{(\km)} \in C \big( \HB_{\epsilon_p} ; \cV^{(\km)} \big) \Big\}  ,\\
	& Z_j = \Big\{ K (\vec{\ba}) : K \in C \big( \HB_{\epsilon_p} ; \R \cK_j \big)  \Big\}   ,
\end{align}
i.e.,  they are spaces of continuous function of $\vec{\ba} \in \HB_{\epsilon_p}$.
They are endowed with norms
\begin{align}
\begin{split}
	\norm{x_j}_{X_j} = \sup_{\vec{\ba} \in \HB_{\epsilon_p}} \norm{x_j (\vec{\ba})}'_j,  \qquad \norm{y_j}_{Y_j} &= w_Y \sup_{\vec{\ba} \in \HB_{\epsilon_p}} \norm{y_j (\vec{\ba})}'_j , \\ 
	\norm{z_j}_{Z_j} &= w_Z \sup_{\vec{\ba} \in \HB_{\epsilon_p}}  \norm{z_j (\vec{\ba})}''_j  
\end{split}
\end{align}
for some constants $w_Y, w_Z >0$,  that are chosen sufficiently large later in the proof of Proposition~\ref{prop:Stdefi} and \ref{prop:nupcctty}.

Projection on each space is denoted $\proj_X,\proj_Y$ and $\proj_Z$,  respectively. 
\end{definition}

We first need to construct the fixed point of the perturbative flow.  We recall the bulk perturbative RG flow from Definition~\ref{def:RGflow}(ii),
and $C_{\cD}$ was in the definition of $\cD_j$.

\begin{proposition}
\label{prop:nuptcctty}
Let $\alpha \in [1/2,1]$,  $g$ be sufficiently small,  $\vec{\ba} \in \HB_{\epsilon_p}$ and $C_{\cD}$ be sufficiently large (independently of $L$).  Then there exists $ \vec\nu_{ptc}  = (\nu^{(\km_1)}_{ptc} )_{\km_1 \in  \kA_0 \cup \kA_1 } $ such that the following hold: with initial condition \begin{align}
	\begin{cases}
		\nu^{(\km_1)}_0  = \nu_{ptc}^{(\km_1)} & (\km_1 \in \kA_0 \cup \kA_1 ) \\
		\nu^{(\km_1)}_0 = g^{(\km_2)}_0  = 0 & (\km_1 \in \kA_2 \cup \kA_3   ,  \; \km_2 \in \kB_1) \\
		g_0^{(\emptyset)} = g  , & {}
	\end{cases}
	\label{eq:nupccttyIC}
\end{align}
the bulk perturbative RG flow of infinite length exists,  and $V_j  \in \cD_{j,\bulk} (\alpha)$ for each $j$.  Moreover,  $\vec{\nu}_{ptc}$ is a continuous function of $\vec{\ba}$ and differentiable function of $g$ satisfying $\vec{\nu}_{ptc} = O(g)$.
\end{proposition}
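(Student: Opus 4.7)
The plan is to recast Proposition~\ref{prop:nuptcctty} as a stable manifold construction for the non-autonomous quadratic dynamical system on $\cV_\bulk$ given by
\[
V_{j+1} = \mathbb{V}^{(0)} \Phipt_{j+1}(V_j) = \mathbb{V}^{(0)}\bigl(\E_{j+1}\theta V_j - P_{j, V_j}\bigr).
\]
On a quadratic coefficient $\nu^{(\km_1)}$ with $\km_1 \in \ko_2 \cup \ko_{2,\nabla}$ the linear part $\E_{j+1}\theta$ acts by $L^{2-\eta-q(\km_1)}$: this factor is strictly $>1$ iff $\km_1 \in \kA_0 \cup \kA_1$, equals $L^{4-d-2\eta}\le 1$ when $\km_1 \in \kA_2$, and is $<L^{4-d-2\eta}$ when $\km_1 \in \kA_3$. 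The quartic coordinates $g^{(\km_2)}$ with $\km_2 \in \kB_1$ pick up $L^{4-d-2\eta-q(\km_2)} < 1$, and $g^{(\emptyset)}$ stays at $\tilde g_j = g$ for $d>d_{c,u}$ at the perturbative level. Thus only the coordinates $\nu^{(\km_1)}$ with $\km_1 \in \kA_0 \cup \kA_1$ need to be tuned; the others will flow forward from the prescribed initial values in \eqref{eq:nupccttyIC}.

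Next I set this up as a Banach fixed point problem on the weighted sequence space
\[
\cX = \Bigl\{\bar V = (V_j)_{j\ge 0} : V_j \in \cV_\bulk,\;\|\bar V\|_\cX := \sup_{j\ge 0}\|V_j\|'_j < \infty\Bigr\},
\]
where $\|\cdot\|'_j$ is from Definition~\ref{eq:spaceXYZdefi}, and I restrict to the closed ball $\mathcal{B}_\alpha = \{\bar V \in \cX : V_j \in \cD^{(0)}_{j,\bulk}(\alpha)\text{ for all }j\}$. For $\bar V \in \mathcal{B}_\alpha$ I define $T\bar V = (V'_j)_j$ coordinate-wise: for stable components (i.e.\ $\km \in \kA_2 \cup \kA_3 \cup \kB_1$ and $g^{(\emptyset)}$) by forward iteration with the initial data from \eqref{eq:nupccttyIC}, and for the unstable components $\km_1 \in \kA_0 \cup \kA_1$ by the backward summation
\[
(T\bar V)^{(\km_1)}_j = \sum_{k=j}^\infty L^{-(q(\km_1)+\eta-2)(k+1-j)}\,\bigl(P_{k,V_k}\bigr)^{(\km_1)}.
\]
A fixed point of $T$ is exactly a bulk perturbative RG flow whose unstable coordinates decay at $j\to\infty$ and whose stable coordinates match \eqref{eq:nupccttyIC}; the tuple $\vec\nu_{ptc} := (\nu^{(\km_1)}_{0,\star})_{\km_1 \in \kA_0 \cup \kA_1}$ will be read off as the initial values of the unstable coordinates at the fixed point $\bar V^\star$.

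The input needed to run the contraction is Lemma~\ref{lemma:FWPbounds3obs}, which is a \emph{quadratic} estimate on $P_{k,V}$; combined with Lemma~\ref{lemma:VincDsize} it yields $\|P_{k,V_k}\|'_k \lesssim \tilde g_k^2 \scale_k$ on $\mathcal{B}_\alpha$ and $\|D_V P_{k,V_k}\|'_k \lesssim \tilde g_k$. Because $\scale_k = L^{-(d-4+2\eta)k}$ and $\tilde g_k = g$ in $d > d_{c,u}$, every forward sum (against the strictly contracting factor) and every backward sum (against the strictly expanding factor $L^{q(\km_1)+\eta-2}>1$) converges geometrically with an $L$-independent rate. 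Taking $C_\cD$ large enough therefore makes $T$ map $\mathcal{B}_\alpha$ into itself, and the quadratic dependence of $P_{k,V}$ on $V$ makes $T$ an $O(g)$-contraction on $\mathcal{B}_\alpha$, so Banach's fixed point theorem produces the unique $\bar V^\star$. Continuity of $\vec\nu_{ptc}$ in $\vec\ba$ follows because $T$ depends on $\vec\ba$ only through $\Gamma_k$ (Proposition~\ref{prop:theFRD}) and the contraction is uniform; differentiability in $g$ follows similarly since each $\Phipt_{j+1}$ is polynomial in the coordinates. For the size bound, I split $\vec\nu_{ptc} = \vec\nu^{\rm lin} + \vec\nu^{\rm quad}$, where $\vec\nu^{\rm lin}$ is the contribution of the Wick reordering action of $\E_{k+1}\theta$ on $\tfrac{g}{4}|\varphi|^4$ summed backward over $k$, bounded by $Cg\sum_k \Gamma_k(0,0) = O(g)$ using \eqref{eq:Gammajbounds1}, while $\vec\nu^{\rm quad}$ is bounded by the backward sum of $\|P_{k,V_k}\|'_k \lesssim g^2 \scale_k$, giving $O(g^2)$.

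I expect the principal difficulty to be neither the fixed point nor the regularity, but the bookkeeping to check that each coordinate of $\bar V^\star$ falls inside the tight, coordinate-specific weights in \eqref{eq:cDbulk}, especially the marginal set $\kA_2$ (whose linear eigenvalue is exactly $L^{4-d-2\eta} \le 1$) and the weights $\scale_j^{-\kt}$ appearing in $\kA_3$ and $\ko_{4,\nabla}$. The weights in $\cD_{j,\bulk}$ are designed precisely to absorb the slow decay of these borderline terms and a possible logarithmic-in-$j$ accumulation in $\kA_2$, and verifying that Lemma~\ref{lemma:FWPbounds3obs} outputs coefficients matching these specific decay rates is the line-by-line technical task to carry out.
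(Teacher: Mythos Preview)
Your plan is essentially the same stable–manifold construction the paper uses: split coordinates into expanding ($\kA_0\cup\kA_1$) and contracting ($\kA_2\cup\kA_3\cup\kB_0\cup\kB_1$) directions, run the contracting ones forward and determine the expanding ones by a backward sum. The paper packages this by invoking an abstract non-autonomous stable manifold theorem \cite[Theorem~2.16]{MR2523458} on the triangular linearisation $(A_j,B_j,C_j)$, then runs a separate bootstrap (Lemma~\ref{lemma:bootstrap}) to upgrade the resulting $\scale_j^{\mu}$ decay ($\mu\in(\kt/2,\kt)$) to the sharp $\cD_{j,\bulk}(\alpha)$ bounds; you instead try to fold both steps into a single Banach fixed point directly on $\mathcal B_\alpha$. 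Either route works, but two points need attention.

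First, a minor imprecision: on bare coefficients the linear part $\E_{j+1}\theta$ acts by $1$ on each $\nu^{(\km_1)}$, not by $L^{2-\eta-q(\km_1)}$; the latter is the ratio of scale-weighted norms. More importantly, your backward sum for $(T\bar V)^{(\km_1)}_j$ includes only $P_{k,V_k}$, but the recursion for $\nu^{(\km_1)}$ also receives the Wick term $p^{(\km_1)}\E_{k+1}\theta\, y_k$ (the $B_k y_k$ coupling in the paper), which is the $O(g)$ contribution you later call $\vec\nu^{\rm lin}$. That term must appear in the definition of $T$, not just in the final size estimate; otherwise a fixed point of $T$ is not a perturbative flow.

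Second, the difficulty you flag is exactly why the paper does \emph{not} work directly in $\mathcal B_\alpha$. A single pass with the $\|\cdot\|'_j$ norm and Lemma~\ref{lemma:FWPbounds3obs} yields only $|\nu_j^{(\km_1)}|\lesssim g\,\scale_j^{1-\kt}$ for $\km_1\in\kA_0\cup\kA_1$, whereas $\cD_{j,\bulk}(\alpha)$ requires the sharper $g\,\scale_j$. The paper recovers the missing $\scale_j^{\kt}$ by first establishing $\|y_j\|_{\cV_j(\ell)}\lesssim g\,\scale_j$ via an inductive bootstrap and then re-summing the backward series for $x_j$ with \eqref{eq:cVnrmvslnrm2} (which gains a factor $\ell_0^{-2}$ instead of an $L$-dependent constant). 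If you insist on a single contraction on $\mathcal B_\alpha$, you will need to build this two-pass mechanism into the weights defining $T$; the raw $\|\cdot\|'$ output of Lemma~\ref{lemma:FWPbounds3obs} is not sharp enough in one shot.
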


\begin{proof}
Let us denote $\bar{\Phi}^\pt_{j+1} = \mathbb{V}^{(0)} \Phi^\pt_{j+1}$.
(Recall that $\mathbb{V}^{(0)}$ transfers the constant part and transfers the coefficient of $\nabla \varphi \cdot \nabla \varphi$ to the coefficient of $-\varphi \cdot \Delta \varphi$.)
For any $\bmrho >0$,  $\bar\Phi^\pt_{j+1}$ can be considered as a map
\begin{align}
\begin{split}
	\bar\Phi^\pt_{j+1} \equiv (\bar\Phi_{j+1}^{\pt, X},  \bar\Phi_{j+1}^{\pt,Y} )  : B_{\bmrho} (X_j) \times  B_{\bmrho} (Y_j ) \rightarrow X_{j+1} \times Y_{j+1}  , \\
	(x_j, y_j ) \mapsto ( \proj_X \bar\Phi^{\pt}_{j+1} (x_j,y_j ),  \proj_Y \bar\Phi^{\pt}_{j+1} (x_j,y_j ) ) .
\end{split}
\end{align}
By expanding them in linear order,  we obtain
\begin{align}
	\bar\Phi_{j+1}^{\pt, X} (x_j, y_j) &= A_j x_j + B_j  y_j  + N^X_j (x_j, y_j) \\
	\bar\Phi_{j+1}^{\pt, Y} (x_j, y_j) &=  C_j y_j  +  N^Y_j (x_j, y_j ), 
\end{align}	
where $N^{S}_j (0,0) = D N^S_j (0,0) = 0$ for each $S \in \{ X,Y \}$ and 
\begin{align}
\begin{split}
	A_j x_j = \proj_X \E_{j+1} \theta x_j =  p^{0,1} \E_{j+1} \theta x_j,  & \quad 
	B_j y_j = \proj_X \E_{j+1} \theta y_j =  p^{0,1} \E_{j+1} \theta y_j,  \\
	C_j y_j = \proj_Y \E_{j+1} \theta y_j  &=  \big( p^{2,3} + q^{0,1} \big) \E_{j+1} \theta y_j 
\end{split}
\end{align}
The crucial observation is that the linear part is triangular,  so by a standard theory,  we only need to verify a few estimates on $A_j,B_j,C_j$ to construct the stable manifold: 
(1) $N_j^X$ and $N_j^Y$ are continuously differentiable uniformly in $j$,  (2) $A_j$ is invertible,  (3) $\sup_{j,k} \norm{A_j^{-1}} \norm{C_k} <1$,  (4) $\sup_j \norm{C_j} < L^{-(d-4+\eta) \kt}$ and (5) $\sup_j \norm{B_j} < \infty$. 
Then by \cite[Theorem~2.16]{MR2523458} (just with smoothness replaced by continuous differentiability),  there exists $\vec\nu_{ptc} (\vec{\ba})$ that is continuously differentiable in $g$,  $\vec{\ba} \in \HB_{\epsilon_p}$,  satisfies for each $j\ge 0$
\begin{align}
	(x_{j+1}, y_{j+1}) = (\bar\Phi_{j+1}^{\pt, X},  \bar\Phi_{j+1}^{\pt,Y} ) (x_j, y_j) 
\end{align}
and $\norm{x_j}_{X_j}, \norm{y_j}_{Y_j} \rightarrow 0$ exponentially as $j \rightarrow \infty$ when the initial condition is given by \eqref{eq:nupccttyIC}. 
In particular,  there exist $C' ,\mu > 0$ such that $\norm{(x_j, y_j)}_{X_j \times Y_j} \le C'  L^{-(d-4+\eta) \mu j }$.  Actually,  we can deduce more from the proof: the whole sequence $(x_j, y_j)_{j\ge 1}$ is a continuous differentiable function of $g$,  and we can take the rate of exponential decay to be given by any number smaller than the decay rate given by condition (4). 
In particular,  we can take any $\mu \in (\frac{1}{2}\kt ,  \kt)$ and get $\norm{(x_j, y_j)}_{X_j \times Y_j} \lesssim g \scale_j^{\mu}$,  or equivalently
\begin{align}
	\norm{x_j + y_j}_{\cV_j (\ell)} \lesssim g \scale_j^{1 -\kt + \mu } .
\end{align}
Reflecting on the condition $(\vec{\nu}_j , \vec{g}_j ) \in \cD_{j,\bulk}^{(0)} (\alpha)$ of Definition~\ref{def:RGflow}~(ii), 
these do not directly imply that $(V_j)_j$ form a bulk perturbative RG flow,  but we can use this as an apriori estimate to improve the bounds in Lemma~\ref{lemma:bootstrap} so that $\norm{x_j}_{\cV_j (\ell)} \lesssim \ell_0^2 g \scale_j$ and $\norm{y_j - y_0}_{\cV_j (\ell)} \le O_L (1) g^2 \scale_j$ for each $j\ge 0$,  thus $x_j + y_j \in \cD^{(0)}_{j,\bulk} (\alpha)$ when $C_{\cD}$ is sufficiently large and $g$ is sufficiently small. 

Verifications of (1)--(5) follow from simple calculations.
For (1),  we can use Lemma~\ref{lemma:cVnrmvslnrm},  \ref{lemma:VptmthV} and the fact that $N_j^X$ and $N_j^Y$ are quadratic forms to see that
\begin{align}
	\norm{D N_j^X}_{X_{j+1}} + \norm{D N_j^Y}_{Y_{j+1}} &\le O_L (\bmrho)
	\label{eq:NjXYbnds}
\end{align}
whenever $\norm{x_j}_{X_j},\norm{y_j}_{Y_j} \lesssim \bmrho$.
For (2),  we just need an observation that $A_j$ does not change the coefficients of monomials when we think of $x_j$ as an element of $\cV$.
For (3) and (4),  since $A_j$ and $C_j$ do not change the coefficients of monomials, 
\begin{align}
\begin{cases}
	\norm{p^{(\km_1)} A_j x_j}_{X_{j+1}} = L^{(d-2 + \eta - q(\km_1) )} \norm{p^{(\km_1)} x_j}_{X_j}  & (\km_1 \in \kA_0 \cup \kA_1) \\
	\norm{p^{(\km_1)} C_j y_j}_{Y_{j+1}} =  L^{-(d-4 + 2\eta) \kt} \norm{p^{(\km_1)} y_j}_{Y_j} & (\km_1 \in \kA_2) \\
	\norm{p^{(\km_1)} C_j y_j}_{Y_{j+1}} = L^{-2 (d-4 + 2\eta) \kt} \norm{p^{(\km_1)} y_j}_{Y_j} & (\km_1 \in \kA_3) \\	
	\norm{q^{(\km_2)} C_j y_j}_{Y_{j+1}} = L^{-(d-4 + 2\eta) \kt} \norm{q^{(\km_2)} y_j}_{Y_j} & (\km_2 \in \ko_4) \\
	\norm{q^{(\km_2)} C_j y_j}_{Y_{j+1}} =  L^{-2(d-4 + 2\eta) \kt }  \norm{q^{(\km_2)} y_j}_{Y_j} & (\km_2 \in \ko_{4,\nabla}) .
\end{cases}
	\label{eq:Phiptlinbnds} 
\end{align}
When $\km_1 \in \kA_0 \cup \kA_1$,  we have $q(\km_1) \le \lceil d-2+\eta \rceil - 1$,  so for $z (d,\eta) := d-2+\eta - \lceil d-2+\eta \rceil +1 >0$,
\begin{align}
	\norm{p^{(\km_1)} A_j x_j}_{X_{j+1}} \ge L^{z(d,\eta) } \norm{p^{(\km_1)} x_j}_{X_j}
\end{align}
Putting together the other cases,  we have $\norm{C_j}  \le L^{-(d-4 + \eta) \kt}$ and $\norm{A_j^{-1}} \norm{C_k}  < L^{-z(d,\eta) + (d-4+\eta)\kt } < 1$ when $\kt$ is chosen $\kt < z(d,\eta) / (d-4+\eta)$.
Finally,  for (5),  since $\norm{\Eplus \theta V}_{\cV_{j} (\ell)} \lesssim \norm{V}_{\cV_j (\ell)}$ again by Lemma~\ref{lemma:cVnrmvslnrm},  we see
\begin{align}
	\norm{B_j y_j}_{X_{j+1}} \le O_L (1) w_Y^{-1} \norm{y_j}_{Y_j} . \label{eq:Phiptlinbnds3} 	
\end{align}
\end{proof}

The following lemma is a bootstraping step used in the proof of Proposition~\ref{prop:nuptcctty}.

\begin{lemma}	\label{lemma:bootstrap} 
Let $g$ be sufficiently small,  $\mu \in ( \frac{\kt}{2}, \kt)$,  and suppose $(x_j, y_j)_{j\ge 0} \in X_j \times Y_j$ are such that
\begin{align}
	\norm{x_0}_{X_0} \lesssim \ell_0^2 g ,  \quad \norm{y_0}_{Y_0} \lesssim \ell_0^4 g  ,  \qquad
	\norm{x_j}_{X_j}, \norm{y_j}_{Y_j} \le C_1 g \scale_j^{\mu} \;\;	
	 \text{for some } C_1 > 0
\end{align}
and $(x_{j+1}, y_{j+1}) = (\bar\Phi_{j+1}^{\pt, X},  \bar\Phi_{j+1}^{\pt,Y} ) (x_j, y_j)$ for each $j \ge 0$.  Then for each $j\ge 0$,
\begin{align}
	\norm{x_j}_{\cV_j (\ell)} \lesssim \ell_0^2 g \scale_j , \qquad 
	\norm{y_j - y_0 }_{\cV_j (\ell)} \le O_L (1) g^2 \scale_j .
	\label{eq:bootstrap}
\end{align}
\end{lemma}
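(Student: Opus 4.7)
The plan is to proceed by induction on $j$, splitting the argument by the dynamical character of the two variables: the estimate for $y_j - y_0$ is derived by forward iteration (since $Y_j$ is the contracting direction of $\bar{\Phi}_{j+1}^{\pt}$), while the estimate for $x_j$ is derived via a tail sum from infinity (since $X_j$ is the expanding direction). Both rely on the decomposition $x_{j+1} = A_j x_j + B_j y_j + N^X_j(x_j,y_j)$ and $y_{j+1} = C_j y_j + N^Y_j(x_j,y_j)$ from the proof of Proposition~\ref{prop:nuptcctty}. The base case $j=0$ is immediate since $\scale_0 = 1$ and the $X_0, Y_0$-norms coincide with $\cV_0(\ell)$ up to constants. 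Feeding the apriori bound $\norm{x_k}_{\cV_k(\ell)} + \norm{y_k}_{\cV_k(\ell)} \le O_L(1)\, g\, \scale_k^{1-\kt+\mu}$ into Lemma~\ref{lemma:VptmthV} yields
\begin{align*}
	\norm{N^{X}_k(x_k,y_k)}_{\cV_{k+1}(\ell)} + \norm{N^{Y}_k(x_k,y_k)}_{\cV_{k+1}(\ell)} \le O_L(1)\, g^2\, \scale_{k+1}^{1-\kt+2\mu} \le O_L(1)\, g^2\, \scale_{k+1},
\end{align*}
where the last step uses $\scale_{k+1} \le 1$ and $-\kt + 2\mu > 0$ (from $\mu > \kt/2$).

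For the $y$-estimate, the key structural fact is $C_j y_0 = y_0$: direct Gaussian contraction gives $\mathbb{E}_{j+1} \theta (g|\varphi|^4) = g|\varphi|^4 + 6g\, \Gamma_{j+1}(0,0)\, |\varphi|^2 + O(g\,\Gamma_{j+1}(0,0)^2)$, and the $|\varphi|^2$ and constant corrections land in $X_{j+1}$ and $u$ respectively, not in $Y_{j+1}$. Hence the recursion reduces to $y_{j+1} - y_0 = C_j(y_j - y_0) + N^Y_j$. Forward iteration combined with the $\cV$-norm contraction $\norm{C_j z}_{\cV_{j+1}(\ell)} \le O(1)\, (\scale_{j+1}/\scale_j)\, \norm{z}_{\cV_j(\ell)}$ (obtained by translating the $\kA_2 \cup \kB_0$ worst-case entries of \eqref{eq:Phiptlinbnds} from the primed norm to the $\cV$-norm) produces
\begin{align*}
	\norm{y_j - y_0}_{\cV_j(\ell)} \le O_L(1) \sum_{k=0}^{j-1} \frac{\scale_j}{\scale_{k+1}} \norm{N^Y_k}_{\cV_{k+1}(\ell)} \le O_L(1)\, g^2\, \scale_j \sum_{k \ge 0} \scale_{k+1}^{-\kt + 2\mu} \le O_L(1)\, g^2\, \scale_j,
\end{align*}
the geometric sum converging since $-\kt + 2\mu > 0$ and $d - 4 + 2\eta > 0$.

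For the $x$-estimate, the critical observation is that $A_j$ acts as the identity on each coefficient $\nu^{(\km_1)}$ for $\km_1 \in \kA_0 \cup \kA_1$: since $\mathbb{E}_{j+1}\theta\, S^{(\km_1)}(\varphi) = S^{(\km_1)}(\varphi) + \mathbb{E}[S^{(\km_1)}(\zeta)]$ and the constant part is absorbed into $u$, we have $p^{(\km_1)} A_j x_j = p^{(\km_1)} x_j$. The coefficient-level recursion therefore reads $\nu^{(\km_1)}_{j+1} - \nu^{(\km_1)}_j = (B_j y_j + N^X_j)^{(\km_1)}$. Telescoping from $j$ to $J$ and letting $J \to \infty$, the apriori $\scale_J^{\mu}$-decay forces $\nu^{(\km_1)}_J \to 0$ exponentially, yielding
\begin{align*}
	\nu^{(\km_1)}_j = - \sum_{k \ge j} (B_k y_k + N^X_k)^{(\km_1)}.
\end{align*}
For $\km_1 = \emptyset$, the leading source is $(B_k y_0)^{(\emptyset)} = 6g\, \Gamma_{k+1}(0,0) \asymp g\, L^{-(d-2+\eta)k}$; the subleading contributions $B_k(y_k - y_0)$ and $N^X_k$ are controlled by the $y$-estimate and the nonlinear bound above, and summing the dominant geometric tail gives $|\nu^{(\emptyset)}_j| \le O_L(1)\, g\, L^{-(d-2+\eta)j}$, which converts to $\norm{\pi_2 x_j}_{\cV_j(\ell)} \le O_L(1)\, \ell_0^2\, g\, \scale_j$.

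The main obstacle is the analogous bound for $\km_1 \in \kA_1$ with $q(\km_1) > 0$: for these directions the leading source $(B_k y_0)^{(\km_1)}$ vanishes at linear order, since $\mathbb{E}_{j+1}\theta (g|\varphi|^4)$ produces no derivative quadratic terms (there is no gradient in $|\varphi|^4$ to contract against). Thus the tail is driven entirely by the subleading contributions $(B_k(y_k - y_0))^{(\km_1)}$ and $(N^X_k)^{(\km_1)}$, and one must verify that each of these fits within the sharper geometric envelope $L^{-(d-2+\eta-q(\km_1))k}$ needed to close the induction. This requires using the gain $\norm{y_k - y_0}_{\cV_k(\ell)} \le O_L(1)\, g^2\, \scale_k$ already proved, together with a careful tracking of the coefficient-wise decomposition of the quadratic form $P_{k, V_k}$ from Definition~\ref{def:WP} that feeds into $N^X_k$.
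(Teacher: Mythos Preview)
Your $y$-estimate is correct and in fact slightly cleaner than the paper's: you exploit $C_j y_0 = y_0$ to iterate $y_j - y_0$ directly in one pass, whereas the paper first bounds $\|y_j\|_{\cV_j(\ell)}$ by an inductive product \eqref{eq:bootstrapIH}, then bootstraps a second time to get $\|y_j - y_0\|_{\cV_j(\ell)}$. Both routes give the same $O_L(1)g^2\scale_j$.

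The $x$-estimate, however, is left incomplete, and the direction you suggest for completing it is a detour. Two issues. First, even in the $\km_1 = \emptyset$ case you only claim $\|\pi_2 x_j\|_{\cV_j(\ell)} \le O_L(1)\,\ell_0^2 g\scale_j$, whereas the lemma asks for an $L$-independent constant in front of $\ell_0^2$. Second, for $\km_1 \in \kA_1$ you say one must ``carefully track the coefficient-wise decomposition of $P_{k,V_k}$''; this is not needed and would not by itself produce the $L$-independent constant anyway. The paper avoids both the case split on $\km_1$ and any inspection of $P$ by the following single observation: since $y_k \in Y_k$ one has $p^{0,1} y_k = 0$, so
\[
B_k y_k \;=\; p^{0,1}\,\E_{k+1}\theta\, y_k \;=\; p^{0,1}\bigl(\E_{k+1}\theta\, y_k - y_k\bigr),
\]
and then \eqref{eq:cVnrmvslnrm2} gives $\|B_k y_k\|_{\cV_k(\ell)} \lesssim \ell_0^{-2}\|y_k\|_{\cV_k(\ell)}$ with an $L$-independent implied constant. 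Combined with $\|y_k\|_{\cV_k(\ell)} \lesssim \ell_0^4 g\scale_k$ (which follows from your $y$-bound plus $\|y_0\|_{\cV_k(\ell)} \lesssim \ell_0^4 g\scale_k$), this yields $|(B_k y_k)^{(\km_1)}| \lesssim g\,L^{(q(\km_1)-2+\eta)k}\scale_k$ uniformly in $\km_1 \in \kA_0 \cup \kA_1$, and the geometric tail sums with an $L$-independent constant. The nonlinear term $N_k^X$ needs no structural information about $P$: the black-box bound from Lemma~\ref{lemma:VptmthV} gives an $O_L(1)g^2$ contribution that is absorbed for small $g$. So drop the split $y_k = y_0 + (y_k - y_0)$ and the case analysis on $\km_1$; the identity $B_k y_k = p^{0,1}(\E_{k+1}\theta - 1)y_k$ together with \eqref{eq:cVnrmvslnrm2} closes the argument in one stroke.
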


\begin{proof}
The assumptions imply,  for some $C_0 >0$
\begin{align}
	\norm{x_j + y_j}_{\cV_j (\ell)} \le C_1 g \scale_j^{1+ \mu - \kt}, \qquad \norm{x_0 + y_0}_{\cV_0 (\ell)} \le C_0 g  .
\end{align}
We first assume as an induction hypothesis that
\begin{align}
	 	\norm{ y_j}_{\cV_j (\ell)} \le C_0 g \scale_j \prod_{k < j} (1 + g^{1/2} \scale_k^{2 \mu - \kt} ) . \label{eq:bootstrapIH}
\end{align}
The bound trivially holds for $j =0$.  Since $y_j =  (p^{2,3} + q^{0,1}) \E_{j+1} \theta (x_j + y_j)$,  we can apply Lemma~\ref{lemma:VptmthV} to see that
\begin{align}
	\norm{y_{j+1} - y_j }_{\cV_{j+1} (\ell)} \le O_L (1) \scale^{-1 + \kt} \norm{x_j + y_j}_{\cV_j (\ell)}^2
		\le C g^2 \scale_j^{1+ 2\mu - \kt} ,  \label{eq:yjrecursive}
\end{align}
and since $\norm{y_j}_{\cV_{j+1} (\ell)} \le L^{-(d-4+2\eta)} \norm{y_j}_{\cV_{j} (\ell)}$,  \eqref{eq:yjrecursive} and \eqref{eq:bootstrapIH} imply
\begin{align}
	\norm{y_{j+1}}_{\cV_{j+1} (\ell)} &\le C_0 g \scale_{j+1} \prod_{k < j} (1 + g^{1/2} \scale_k^{2\mu-\kt}) + C g^2 \scale_j^{1+ 2\mu - \kt} \nnb
	&\le C_0 g \scale_{j+1} \prod_{k \le j} (1 + g^{1/2} \scale_k^{2\mu - \kt})
\end{align}
for sufficiently small $g$,  completing the induction.  Thus again by taking $g$ sufficiently small,  we obtain
\begin{align}
	\norm{y_j}_{\cV_j (\ell)} \le 2 C_0 g \scale_j .
	\label{eq:bootstrapy} 
\end{align}

Then we bound $x_j$'s,  which follow the recursive relation
\begin{align}
	x_{j+1} = x_j + p^{0,1} \E_{j+1} \theta y_j + N_j^X (x_j, y_j) .
\end{align}
If $(\nu_j^{(\km_1)})_{\km_1 \in \kA_0 \cup \kA_1} \in \R^{\kA_0 \cup \kA_1}$ are the coefficients of $x_j$,  the only possible solution with asymptotic condition $\nu_j^{(\km_1)} \rightarrow 0$ should satisfy
\begin{align}
	\frac{1}{2} \nu_j^{(\km_1)} S^{(\km_1)} = - \sum_{k \ge j} p^{(\km_1)}  \Big( \E_{k+1} \theta y_k + N_k^X (x_k, y_k) \Big) ,
\end{align}
and comparing the coefficients,
\begin{align}
	| \nu_j^{(\km_1)} | 
		&\le C \ell_0^2 \sum_{k\ge j} L^{(q(\km_1) - 2 + \eta) k} \norm{p^{(\km_1)} ( \E_{k+1} \theta y_k +  N_k^X (x_k, y_k) ) }_{\cL_{k}} \nnb
		&\le C' \sum_{k\ge j} L^{(q(\km_1) - 2 + \eta) k} (C_0 g \scale_k + C_3 g^2 \scale_k^{1+ 2 \mu- \kt} ) \nnb
		&\le 2 C' (2 C_0 + C_3 g) g \scale_j L^{(q(\km_1) - 2 + \eta) j}
		\label{eq:bootstrapnujkm1}
\end{align}
where in the second line,  the first term follows from Lemma~\ref{lemma:cVnrmvslnrm} and \eqref{eq:bootstrapy},
the final term follows from \eqref{eq:bootstrapy},
and $C'$ can be $L$-dependent.  
Also,  since $\norm{S^{(\km_1)}}_{\cL_j (\ell)} \lesssim \ell_0^2 L^{(2-\eta-q(\km_1)) j}$,  again by taking $g$ sufficiently small,  
\begin{align}
	\norm{x_j}_{\cV_j (\ell)} 
		\lesssim \max_{\km_1 \in \kA_1 \cup \kA_2} \big\|  \nu_j^{(\km_1)} S^{(\km_1)} \big\|_{\cL_j (\ell)}  
		\lesssim O_L (1) g \scale_j ,
	\label{eq:bootstrapx}	
\end{align}
so putting together,  we have $\norm{x_j}_{\cV_j (\ell)} \le O_L (1) g \scale_j$.

Using these bounds,  we bootstrap again to obtain \eqref{eq:bootstrap}.  
If we let $\delta_j = \norm{y_{j} - y_0}_{\cV_{j} (\ell)}$,  then
\begin{align}
	\norm{y_{j} - y_0}_{\cV_{j+1} (\ell)} \le L^{-(d-4+2\eta)} \norm{y_{j} - y_0}_{\cV_{j} (\ell)}
		=   L^{-(d-4+2\eta)} \delta_j
\end{align}
Also,  by the first inequality of \eqref{eq:yjrecursive},  but alternatively using $\norm{x_j + y_j}_{\cV_j (\ell)} \lesssim C_0 g \scale_j$ as an input,  we now obtain
\begin{align}
	\norm{y_{j+1} - y_j}_{\cV_{j+1} (\ell)} \le O_L (1) \scale^{-1+\kt}_j \norm{x_j + y_j}^2_{\cV_j (\ell)}  \le O_L (1) g^2 \scale_j^{1 + \kt} ,
\end{align}
so $\delta_j$ satisfies recursion $\delta_0 = 0$ and
\begin{align}
	\delta_{j+1} \le  L^{-(d-4+2\eta)} \delta_j + O_L (1) g^2 \scale_j^{1 + \kt} .
\end{align}
Since $\scale_{j+1} / \scale_j = L^{-(d-4+2\eta)}$ and $\kt >0$,  we deduce $\delta_j \le O_L (1) g^2 \scale_j$,  as desired. 

To bound $x_j$,  we go back to the first line of \eqref{eq:bootstrapnujkm1},  and observe that $p^{(\km_1)} \E_{k+1} \theta y_k = p^{(\km_1)} (-y_k + \E_{k+1} \theta y_k )$ for $\km_1 \in \kA_0 \cup \kA_1$. 
Since the bound on $\delta_j$ gives $\norm{y_j}_{\cV_j (\ell)} \lesssim \ell_0^4 g\scale_j$,  \eqref{eq:cVnrmvslnrm2} gives
\begin{align}
	|\nu_j^{(\km_1)}| \lesssim L^{(q(\km_1)-2+\eta)j} g \scale_j ,
\end{align}
without $L$-dependent constants.  This gives $\norm{x_j}_{\cV_j (\ell)} \lesssim \ell_0^2 g \scale_j$.
\end{proof}

\begin{remark}
Why do we not apply this argument directly to the full bulk RG map? This is because the domain $\cD^{(0)}_{j,\bulk} (\alpha)$ does not allow $V_j \equiv 0$,  while the perturbative map is defined on the whole $\cV_{\bulk}$.  
It is the aposteori estimate Lemma~\ref{lemma:bootstrap} that verifies that $V_j$ generated by the perturbative maps stays inside $\cD^{(0)}_{j,\bulk} (\alpha)$,
but the stable manifold theorem of \cite[Theorem~2.16]{MR2523458} requires $V_j \equiv 0$ to be included inside the domain.

This happened likewise in \cite{MR3317791} when $d=4$,  where the RG flow was decomposed into sum of quadratic parts and higher order terms.  The quadratic part is defined on the whole RG domain,  and the higher order terms were treated as perturbation.  However,  the polynomial $S^{(\km_2)} (\varphi)$ is marginal when $\km_2 \in \ko_4$ when $d=d_{c,u}$,  so it is necessary to determine the sign of the quadratic part.  In $d >d_{c,u}$,  treatment of the quadratic part becomes simpler,  but the Banach spaces are more complicated.
\end{remark}

\subsection{Stable manifold II.  full bulk RG map}
\label{sec:cotcpiibRm}

The stable manifold of the full bulk RG map can be constructed by adding perturbations to $\vec{\nu}_{ptc}$.
We follow the formulation of \cite{MR3317791}.  For this purpose,  we consider the flow $A_j^{\pt} = (x_j^\pt, y_j^\pt, z_j^\pt) \in X_j \times Y_j \times Z_j$ (see Definition~\ref{eq:spaceXYZdefi}) defined by initial condition \eqref{eq:nupccttyIC} together with $z_0^\pt =0$,  and
\begin{align}
	A_{j+1}^{\pt} = \tilde{\Phi}^0_{j+1} (A_j^{\pt}) := ( \bar\Phi_{j+1}^{\pt ,X} ,   \bar\Phi_{j+1}^{\pt ,Y} ,  \Phi_{j+1}^K  )  (x_j^{\pt}, y_j^{\pt}, z_{j}^\pt) 
\end{align}
with the same $(\bar\Phi_{j+1}^{\pt ,X} ,   \bar\Phi_{j+1}^{\pt ,Y})$ as in the proof of Proposition~\ref{prop:nuptcctty}---we interpret $\bar{\Phi}_{j+1}^{\pt, *} (x_j, y_j, z_j) = \bar{\Phi}_{j+1}^{\pt, *} (x_j, y_j)$ for both $* \in \{X,Y\}$---and $\Phi_{j+1}^K$ as in \eqref{eq:controlledRG23}--\eqref{eq:controlledRG24}.
By Proposition~\ref{prop:nuptcctty} and \eqref{eq:controlledRG24},  we see for sufficiently small $g$ and $\vec{\ba} \in \HB_{\epsilon_p}$,
\begin{align}
	\underline{A}^\pt  := ( A_j^\pt )_{j\ge 0} \in \prod_{j\ge 0} \D_j (\alpha) .
\end{align}
We interpolate $(A_j^\pt)_{j\ge 0}$ with the stab;e manifold of the full bulk RG map using interpolated maps
\begin{align}
	\tilde{\Phi}^t_{j+1} (A_j) = \tilde{\Phi}^0_{j+1} (A_j) + t \delta\Phi_{j+1} (A_j) .   \label{eq:tildePhidefi}
\end{align}
where we abbreviated $A_j = (x_j, y_j, z_j)$,  $\delta \Phi_{j+1} := \Phi_{j+1}  -  \tilde{\Phi}^0_{j+1}$ and $\Phi_{j+1}$ is the full bulk RG map. 
In coordinates,
\begin{align}
	\delta \Phi_{j+1}  : A_j \mapsto \left( p^{0,1} R_{j+1}^U (A_j) ,  \,  (p^{2,3} + q^{0,1} ) R_{j+1}^U (A_j) ,  \,  0 \right)
\end{align}
and $\delta \Phi_{j+1}$ is well-defined for $A_j \in \D_j (\alpha)$.
Solving for the family of stable manifolds on $t \in [0,1]$ is equivalent to solving an ordinary differential equation,  as we explain below.
Suppose there exists a sequence of $A^t_j = (x_j^t, y_j^t, z_j^t) \in \D_j (\alpha)$ that is continuously differentiable in $t$ and satisfies $A_{j+1}^t = \tilde{\Phi}_{j+1}^t (A_j^t)$ for all $j\ge 0$.  If we denote $t$-derivatives using dots,  and differentiate \eqref{eq:tildePhidefi} in $t$,  sequence $\underline{A}^t = (A_j^t)_{j\ge 0}$ satisfies
\begin{align}
	\dot{A}^t_{j+1} = D \tilde\Phi_{j+1}^t (A_j^t) \dot{A}_j^t + \delta\Phi_{j+1} (A_j^t) , 
	\label{eq:dotArelatoin}
\end{align}
so we can consider a system of linear equations 
\begin{align}
	B_{j+1} = D \tilde\Phi_{j+1}^t (A_j^t) B_j + \Psi_{j+1}  , \qquad \Psi_{j+1} \in \D_{j+1}    (\alpha) \label{eq:BDArelation}
\end{align}
with variables $\underline{B} = (B_j)_{j\ge 0}$ and parameters $\underline{A}^t = (A_j^t)_{j\ge 0}$ and $\underline\Psi = (\Psi_{j})_{j\ge 1}$.  
If \eqref{eq:BDArelation} has a (linear) solution map $\underline{S}^t = (S_j^t)_{j\ge 0}$ that satisfies the asymptotic condition $B_j \rightarrow 0$ as $j\rightarrow \infty$ with
\begin{align}
	\underline{B} = \underline{S}^t ( \underline{A}^t) \underline{\Psi} ,  
\end{align}
then \eqref{eq:dotArelatoin} is equivalent to the ODE
\begin{align}
	\underline{\dot{A}}^t = \underline{S}^t ( \underline{A}^t)  \underline{\delta \Phi} (\underline{A}^t) ,  \qquad \underline{A}^0 = \underline{A}^{\pt} .  \label{eq:dotAODE}
\end{align}
To implement this argument,  our first goal is to prove the existence of the solution map $S^t$ and next goal is to prove estimates that guarantee the existence of the solution of \eqref{eq:dotAODE} on $t \in [0,1]$.
To this end,  we define $(\mathbb{A}_0 , \mathbb{A})$,  the domain and codomain of the solution map.

\begin{definition} \label{defi:xyzspace}
We denote $\xyz$ for the sequence of $\underline{A} = (A_j)_{j\ge 0} = (x_j, y_j, z_j)_{j\ge 0} \in \prod_{j\ge 0} X_j \times Y_j \times Z_j$ such that
\begin{align}
	\norm{\underline{A}}_{\xyz} := \sup_{j \ge 0} \norm{A_j}_{X_j \times Y_j \times Z_j}  < \infty 
\end{align}
and $y_0 = z_0 = 0$. 
Let $\xyz_0 \subset \xyz$ be the subspace with $x_0 = y_0 = z_0 =0$.
Due to Lemma~\ref{lemma:RcKisBanach},  $\xyz$ and $\xyz_0$ are Banach spaces.
\end{definition}

In what follows,  norms on linear operators are operator norms,  and we do not explicitly write the base spaces as long as they are clear from the context.

\begin{proposition} \label{prop:Stdefi}
For $t \in [0,1]$,   $\underline{\Psi} = (\Psi_{j})_{j\ge 0} \in \xyz_0$ and $\underline{A} = ( A_j )_{j\ge 0} \in \prod_{j=0}^{\infty} \D_j (\alpha)$,
there exists a linear map $\underline{S}^t (\underline{A}) = (S_j^t (A_j) )_{j \ge 0} : \xyz_0 \rightarrow \xyz$ such that $\underline{B} = \underline{S}^t (\underline{A}) \underline{\Psi}$ satisfies
\begin{align}
	B_{j+1} = D \tilde{\Phi}^t_{j+1} (A_j) B_j + \Psi_{j+1} \quad \text{for each} \quad j\ge 0.  \label{eq:BDPhiBPsi}
\end{align}
Moreover,  it satisfies
\begin{itemize}
\item[(1)] $\norm{\underline{S}^t (\underline{A}) } \lesssim 1$;  and
\item[(2)] if $\norm{\Psi_j}_{X_j \times Y_j \times Z_j} \le O_L (1) g^{9/4} \scale_j^{\kt}$ for each $j$,  then $\norm{D_{\underline{A} } \underline{S}^t (\underline{A}) \underline{\Psi} }\le O_L (1)$.
\end{itemize}
\end{proposition}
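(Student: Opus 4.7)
The plan is to construct $\underline{S}^t(\underline{A})$ by a Banach fixed-point argument in $\mathbb{A}$, exploiting the block-triangular structure of $D\tilde{\Phi}^t_{j+1}(A_j)$. Relative to the decomposition $X_j \oplus Y_j \oplus Z_j$, the linearization is a dominant block-triangular operator with diagonal blocks $(A_j, C_j, D_K \Phi^K_{j+1})$ and off-diagonal entry $B_j : Y_j \to X_{j+1}$, plus a small perturbation collecting $DN^X_j, DN^Y_j$ (of size $O_L(g \scale_j^{\mu})$ by \eqref{eq:NjXYbnds} combined with the a priori bound $A_j \in \D_j(\alpha)$), $D_V R^U_{j+1}$ and $D_K R^U_{j+1}$ bounded by \eqref{eq:controlledRG22}, and $D_V \Phi^K_{j+1}$ bounded by \eqref{eq:controlledRG23}. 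The three diagonal blocks provide, respectively, uniform expansion $\norm{A_j^{-1}} \le L^{-z(d,\eta)}$ from \eqref{eq:Phiptlinbnds}, uniform contraction $\norm{C_j} \le L^{-(d-4+2\eta)\kt}$, and uniform contraction $\norm{D_K \Phi^K_{j+1}} \le \tfrac{1}{32} L^{-\max\{1/2, (d-4+2\eta)\kae\}}$ from \eqref{eq:controlledRG24}.

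Next I would define an affine map $T_{\underline{A}, \underline{\Psi}} : \mathbb{A} \to \mathbb{A}$ that inverts each diagonal block and puts the off-diagonal terms and $\underline{\Psi}$ into the source. Given $\underline{B} = (x, y, z) \in \mathbb{A}$, the image $(x', y', z')$ is built by forward iteration $y'_{j+1} = C_j y'_j + \mathrm{src}^Y_{j+1}$ from $y'_0 = 0$; forward iteration $z'_{j+1} = (D_K \Phi^K_{j+1}) z'_j + \mathrm{src}^Z_{j+1}$ from $z'_0 = 0$; and backward summation $x'_j = -\sum_{k \ge j} (A_j \cdots A_k)^{-1}\bigl(B_k y_k + \mathrm{src}^X_{k+1}\bigr)$, which converges geometrically by the uniform expansion of $A_j$. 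A fixed point automatically satisfies \eqref{eq:BDPhiBPsi}, has $y'_0 = z'_0 = 0$, and $x'_j \to 0$.

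For (1), the core step is to verify that $T_{\underline{A}, 0}$ is a contraction on $\mathbb{A}$ with ratio bounded away from $1$ uniformly in $\underline{A} \in \prod_j \D_j(\alpha)$. The diagonal inversions lose nothing (geometric forward/backward iteration with gap strictly $>0$), while each off-diagonal entry carries a small factor of $g$ (from $N^X, N^Y, R^U, \Phi^K$) or a power of $\scale_j$, and the $B_j : Y \to X$ and $D_V \Phi^K : \cV \to \cW$ couplings can be shrunk as desired by choosing the weights $w_Y, w_Z$ in Definition~\ref{eq:spaceXYZdefi} sufficiently large (cf.~\eqref{eq:Phiptlinbnds3}). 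The fixed point then satisfies $\norm{\underline{B}}_{\mathbb{A}} \lesssim \norm{\underline{\Psi}}_{\mathbb{A}}$, proving (1). For (2), I would differentiate the fixed-point equation in $\underline{A}$: the derivative $D_{\underline{A}} \underline{B}$ solves a linear recursion of the same form \eqref{eq:BDPhiBPsi}, but with source
\begin{align*}
\Psi'_{j+1}(\dot{\underline{A}}) = \bigl(D^2 \tilde\Phi^t_{j+1}(A_j) \cdot \dot A_j\bigr) B_j,
\end{align*}
where the second derivatives are controlled by \eqref{eq:controlledRG22}--\eqref{eq:controlledRG23} together with the quadratic nature of $\bar\Phi^{\pt}_{j+1}$ (so that $D^2 \bar\Phi^{\pt}$ is a constant operator handled by Lemma~\ref{lemma:VptmthV}). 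The hypothesis $\norm{\Psi_j} \le O_L(1) g^{9/4}\scale_j^{\kt}$ combined with (1) gives $\norm{B_j} \le O_L(1) g^{9/4}\scale_j^{\kt}$, so $\Psi'_{j+1}$ lies in $\mathbb{A}_0$ with norm $O_L(1) \norm{\dot{\underline{A}}}$, and applying (1) once more yields (2).

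The main obstacle is the contraction property of $T_{\underline{A}, 0}$. Although $D_V R^U_{j+1}$ may grow as $\scale_{j+1}^{-(1-\kt)}$ by \eqref{eq:controlledRG22} and $D_V \Phi^K_{j+1}$ as $\tilde{g}_{j+1}^{-1}\scale_{j+1}^{-1}$ by \eqref{eq:controlledRG23}, this growth must be absorbed on the $Z$-side by the prefactor $\tilde\chi_{j+1}^{3/2} \tilde{g}_{j+1}^3 \scale_{j+1}^{\kae}$ and on the $X$-side by the expansion gap $L^{z(d,\eta)}$. Organising the $\scale_j$-exponents so that every off-diagonal contribution is strictly smaller than the relevant diagonal gap after $w_Y, w_Z$ are fixed (and so that the contraction constant stays below $1$ uniformly in $t \in [0,1]$) is the delicate bookkeeping step.
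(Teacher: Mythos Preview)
Your approach is essentially the same as the paper's: both construct $\underline{S}^t(\underline{A})$ via a contraction fixed-point argument on the block-triangular linearization, iterating forward on the contracting directions $(y,z)$ and backward on the expanding direction $x$, with the weights $w_Y, w_Z$ chosen to shrink the off-diagonal couplings. The paper organises this slightly differently---it first proves the lower bound $\|(\underline{I}-\underline{D\tilde\Phi}^t)\underline{B}\|\ge\tfrac12\|\underline{B}\|$ (injectivity), then runs your map $T_{\underline{A},\underline{\Psi}}$ (called $\underline{F}_\Psi$ there) for surjectivity, and for (2) uses the identity $D_{\underline{A}}\underline{S}^t=\underline{S}^t(D_{\underline{A}}\underline{D\tilde\Phi}^t)\underline{S}^t$ rather than differentiating the fixed-point equation---but these are equivalent formulations.

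One small imprecision: in your step for (2) you write that ``(1) gives $\|B_j\|\le O_L(1)g^{9/4}\scale_j^{\kt}$''. As stated, (1) only yields the sup bound $\sup_j\|B_j\|\lesssim\sup_j\|\Psi_j\|$, not the per-scale decay; yet you need the per-scale factor $\scale_j^{\kt}$ to cancel the $\scale_{j+1}^{-\kt}$ growth in the second-derivative blocks $DL^{Z*}_{j+1}$. The fix is to rerun the same contraction argument in the weighted space with norm $\sup_j\scale_j^{-\kt}\|B_j\|$, which works because all block estimates are scale-local and the diagonal gaps dominate $\kt$. The paper's own proof of (2) leaves this point implicit as well.
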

\begin{proof}
Let $\underline{D \tilde{\Phi}}^{t} : \xyz \rightarrow \xyz$ be given by $\big( \underline{D \tilde{\Phi}}^{t} \underline{B} \big)_{j+1} = D \tilde{\Phi}^t_{j+1} (A_j) B_j$ and $\big( \underline{D \tilde{\Phi}}^{t} \underline{B} \big)_{0} = B_0$ so \eqref{eq:BDPhiBPsi} can be stated alternatively as
\begin{align}
	\underline{B} = \underline{D \tilde{\Phi}}^{t} \underline{B} + \underline{\Psi} .
\end{align}
If $\underline{I}$ is the identity map on $\mathbb{A}$,  we would like to define $\underline{S}^t (\underline{A})$ as the inverse of $\underline{I} - \underline{D \tilde{\Phi}}^{t} : \mathbb{A} \rightarrow \mathbb{A}_0$.  
In the proof,  we will denote $\underline{A} = (x_j, y_j, z_j)_{j\ge 0}$,  $\underline{\Psi} = (\Psi_j^X, \Psi_j^Y, \Psi_j^Z)_{j\ge 0}$ and
\begin{align}
	D \tilde{\Phi}_{j+1}^t (A_j) = \begin{pmatrix}
		L^{XX}_{j+1} & L^{XY}_{j+1} & L^{XZ}_{j+1} \\
		L^{YX}_{j+1} & L^{YY}_{j+1} & L^{YZ}_{j+1} \\
		L^{ZX}_{j+1} & L^{ZY}_{j+1} & L^{ZZ}_{j+1} 
	\end{pmatrix} .
\end{align}
We verify some properties of $\underline{I} - \underline{D \tilde{\Phi}}^{t}$.

\noindent\medskip \emph{Injectivity.}

Since $x_j + y_j \in \cD_j (\alpha)$, by Lemma~\ref{lemma:VptmthV},
\begin{align}
	\norm{ D \bar{\Phi}^{\pt}_{j+1} (x_j + y_j) - D \bar{\Phi}^{\pt}_{j+1} (0)} = \big\| D \big( \bar{\Phi}^{\pt}_{j+1} (x_j + y_j) - \Eplus \theta (x_j + y_j) \big)  \big\|  \le O_L (1) \tilde{g}_j \scale_j^{\kt} ,  
\end{align}
and by \eqref{eq:controlledRG22},
\begin{align}
	\norm{D_{x_j} ( \tilde{\Phi}^{t}_{j+1} -  \tilde{\Phi}^{0}_{j+1} ) (A_j) } &\le \norm{D_{x_j} R_{j+1}^U (A_j)} \le O_L (w_Y) \tilde{g}_{j+1}^3 \scale_{j+1}^{\kae - 1 + \kt} , \\
	\norm{D_{y_j} ( \tilde{\Phi}^{t}_{j+1} -  \tilde{\Phi}^{0}_{j+1} ) (A_j) } &\le \norm{D_{y_j} R_{j+1}^U (A_j)} \le O_L (1) \tilde{g}_{j+1}^3 \scale_{j+1}^{\kae - 1 + \kt} , \\	
	\norm{D_{z_j} ( \tilde{\Phi}^{t}_{j+1} -  \tilde{\Phi}^{0}_{j+1} ) (A_j) } &\le \norm{D_{z_j} R_{j+1}^U (A_j)} \le O_L (w_Z^{-1} w_Y) \scale_{j+1}^{\kae -1} .
\end{align}
Combined with the estimates \eqref{eq:Phiptlinbnds}--\eqref{eq:Phiptlinbnds3} on $D \bar\Phi^{\pt} (0)$,  
taking sufficiently large $L$ and small $\tilde{g}$, 
we have
\begin{alignat}{5}
	\norm{ L_{j+1}^{XX} } &\ge \frac{1}{2} L^{z(d,\eta)} , \qquad
	&\norm{ L_{j+1}^{XY} } &\le O_L (w_Y^{-1}) ,  \qquad
	& \norm{ L_{j+1}^{XZ} } &\le O_L (w_Z^{-1})    ,  \\
	\norm{ L_{j+1}^{YX} } &\le O_L (w_Y) \tilde{g}_{j+1}   , \qquad
	&\norm{ L_{j+1}^{YY} } &\le 2 L^{-(d-4 +2\eta) \kt}  , \qquad
	& \norm{ L_{j+1}^{YZ} } &\le O_L (w_Y w_Z^{-1})  .
\end{alignat}
Also,  by \eqref{eq:controlledRG23}--\eqref{eq:controlledRG24}, 
\begin{align}
	\norm{ L_{j+1}^{ZX} } \le O_L (w_Z)  \tilde{g}_{j+1}^2 , \quad \norm{L_{j+1}^{ZY}} \le O_L (w_Z w_Y^{-1})  \tilde{g}_{j+1}^2 ,    \quad \norm{L_{j+1}^{ZZ}} \le L^{-(d-4+2\eta) \kt} . \label{eq:LXYZbnds1}
\end{align}
Thus by taking $\tilde{g}_j^{-3/2} \gg w_Y \tilde{g}_j^{-1} \gg w_Z \gg w_Y \gg O_L (1)$,  we can safely say that
\begin{align}
	\norm{ L_{j+1}^{XX} } \ge \frac{1}{2} L^{z(d,\eta)} , \quad \norm{ L_{j+1}^{XY} } ,   \; \cdots \norm{L_{j+1}^{ZZ}} \le L^{-(d-4 + 2\eta) \kt}	\label{eq:LXYZbnds2}
\end{align}
and for sufficiently large $L$,
\begin{align}
	\norm{ ( \underline{I} - \underline{D \tilde{\Phi}}^{t} ) \underline{B} } \ge \frac{1}{2} \norm{\underline{B}}   \label{eq:ImLXYZbnds}
\end{align}
for any $\underline{B} \in \mathbb{A}$.
This proves the injectivity of $\underline{I} - \underline{D \tilde{\Phi}}^{t}$.

\noindent\medskip \emph{Surjectivity.} Having fixed $\underline{\Psi}$, consider a map $\underline{F}_{\Psi} : \xyz \rightarrow \xyz$ given by,  when $\underline{A}' = \underline{F}_{\Psi} \, \underline{A}$,
\begin{align}
	x'_{j} &= \big(L^{XX}_{j+1} \big)^{-1} \big( x_{j+1} - L^{XY}_{j+1} y_j - L^{XZ} z_j - \Psi_{j+1}^X \big) \\
	y'_{j+1} &= L^{YX}_{j+1} x_j + L^{YY}_{j+1} y_j + L^{YZ}_{j+1} z_j + \Psi_{j+1}^Y  \\
	z'_{j+1} &= L^{ZX}_{j+1} x_j + L^{ZY}_{j+1} y_j + L^{ZZ}_{j+1} z_j + \Psi_{j+1}^Z 
\end{align}
with $y'_0 = z'_0 = 0$.  (Lower bound on $L_{j+1}^{XX}$ in \eqref{eq:LXYZbnds2} shows that it is invertible because it is an endomorphism on the finite dimensional space $\R^{\cA_0 \cup \cA_1}$.)
By the estimates \eqref{eq:LXYZbnds2},  $\underline{F}$ is a contraction for sufficiently large $L$,  so the Contraction Mapping Theorem implies the existence of a fixed point $\underline{A}_{\Psi}$ of $\underline{F}_{\Psi}$,  and equivalently $(\underline{I} - \underline{D \tilde{\Phi}}^{t}) \underline{A}_{\Psi} = \underline{\Psi}$.
\vspace{5pt}

\noindent\medskip \emph{Bounds (1),  (2).} 
By the Open Mapping Theorem and bijectivity proved above,  $\underline{S}^t (\underline{A}) = (\underline{I} - \underline{D \tilde{\Phi}}^{t} (\underline{A}) )^{-1}$ is a bounded linear map.  Also,  by \eqref{eq:ImLXYZbnds},  it satisfies
\begin{align}
	\norm{\underline{S}^t (\underline{A}) } &\le 2 , \\
	\norm{ D_{\underline{A}} \underline{S}^t (\underline{A}) \underline{\Psi} } &= \norm{ \underline{S}^t (\underline{A}) ( D_{\underline{A}} \underline{D \tilde{\Phi}}^{t} ) \underline{S}^t (\underline{A}) \underline{\Psi} } \le 4 \norm{D_{\underline{A}} \underline{D \tilde{\Phi}}^{t} \underline{\Psi}}  , 
\end{align}
so it is enough to bound $D_{\underline{A}} \underline{D \tilde{\Phi}}^{t} \underline{\Psi}$.
(It should not be confused with the derivative of $\underline{D \tilde{\Phi}}^{t}$ as a linear map on $\underline{B} \in \mathbb{A}$.  It is the derivative in $\underline{A} = (A_j)_{j\ge 0}$.) Following the process that is used to prove \eqref{eq:LXYZbnds2},  using derivative bounds \eqref{eq:controlledRG22}--\eqref{eq:controlledRG24},  we obtain
\begin{align}
	& \norm{D L_{j+1}^{XX}} ,  \; \norm{D L_{j+1}^{XY} },  \cdots \; \norm{D L_{j+1}^{YZ}}  \le O_L (1) ,  \\
	& \norm{D L_{j+1}^{ZY}} ,  \;  \norm{D L_{j+1}^{ZX}} ,  \; \norm{D_{x_j} L_{j+1}^{ZZ}} ,  \; \norm{D_{y_j} L_{j+1}^{ZZ}} \le O_L (1) \tilde{g}_{j+1}^{-1} \scale_{j+1}^{-\kt} ,  \\
	& \norm{D_{z_j} L_{j+1}^{ZZ}} \le O_L (1) \tilde{g}_{j+1}^{-\frac{9}{4}} \scale_{j+1}^{\kae-\kbe-\kt} ,
\end{align}
(when $w_Y$ and $w_Z$ are chosen to be only $L$-dependent)
giving the bound (2) when multiplied with the assumed bound on $\underline{\Psi}$.
\end{proof}

By the argument presented above Definition~\ref{defi:xyzspace},  these bounds on the solution map $\underline{S}^t$ is enough to solve the ODE \eqref{eq:dotArelatoin}.

\begin{proposition} \label{prop:nupcctty}
Let $\alpha =1$,  $g$ be sufficiently small and $\vec{\ba} \in \HB_{\epsilon_p}$.
Then there exists $ \vec\nu_{pc}  = (\nu^{(\km)}_{pc} )_{\km \in  \kA_0 \cup \kA_1 }$ such that the following hold: whenever $(V_0 , K_0)$ is determined by coefficients
\begin{align}
	\begin{cases}
		(\nu^{(\km_1)}_0 )_{\km_1 \in  \kA_0 \cup \kA_1 } = \vec\nu_{pc} \in \R^{\kA_0 \cup \kA_1} & {} \\
		\nu^{(\km_1)}_0 = g^{(\km_2)}_0 =  0 & (\km_1 \in \kA_2 \cup \kA_3   ,  \; \km_2 \in \ko_{4,\nabla}) \\
		g^{(\emptyset)}_0 = g  ,  \quad 		K_0 = 0 ,    & {} 	\\
	\end{cases}
\end{align}
the bulk RG flow of infinite length exists.  Moreover,  $\vec{\nu}_{pc}$ is a continuous function of $\vec{\ba}$ and
a differentiable function of $g$ satisfying $| \vec{\nu}_{pc} - \vec{\nu}_{ptc} | = O(g^3)$.
\end{proposition}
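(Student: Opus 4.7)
The plan is to follow the ODE strategy already set up above Proposition~\ref{prop:nupcctty}: solve the initial value problem \eqref{eq:dotAODE} on $t\in[0,1]$ and read off $\vec{\nu}_{pc}$ from the $X_0$-coordinate of $A_0^1$. Concretely, first I would record the two estimates that drive the analysis. Using the size bounds \eqref{eq:controlledRG22}--\eqref{eq:controlledRG24} with $p=q=0$, combined with Lemma~\ref{lemma:VincDsize} and Lemma~\ref{lemma:cVnrmvslnrm} to convert $\cL_{j+1}(\ell)$- and $\cW_{j+1}$-norms into the $X_{j+1}, Y_{j+1}, Z_{j+1}$-norms, I obtain
\begin{align}
    \norm{\underline{\delta\Phi}(\underline{A})}_{\xyz_0} \le O_L(1)\, g^{3},
    \qquad \underline{A}\in\prod_{j\ge 0}\D_j(\alpha),
\end{align}
and by the $p=1$ versions of the same bounds, the map $\underline{A}\mapsto \underline{\delta\Phi}(\underline{A})$ is Lipschitz with constant $O_L(1)g^{2}$ on $\prod_j \D_j(\alpha)$. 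In particular, $\underline{\delta\Phi}(\underline{A})$ satisfies the hypothesis of Proposition~\ref{prop:Stdefi}(2), so $\underline{\dot{A}}^t := \underline{S}^t(\underline{A}^t)\underline{\delta\Phi}(\underline{A}^t)$ is a bounded and Lipschitz vector field on this domain, with size $O_L(1)g^{3}$ and Lipschitz constant $O_L(1)$.

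Next I would apply the Cauchy--Lipschitz theorem in the Banach space $\xyz$ (which is a Banach space because each $\R\cK_j$ is, by Lemma~\ref{lemma:RcKisBanach}) to obtain a unique solution $(\underline{A}^t)_{t\in[0,t_*)}$ starting from $\underline{A}^0 = \underline{A}^\pt$, where $\underline{A}^\pt\in\prod_j\D_j(1/2)$ by Proposition~\ref{prop:nuptcctty} and \eqref{eq:controlledRG24}. To extend to $t=1$, I would bootstrap using the a priori bound obtained by integrating the ODE: for any $t$ in the interval of existence,
\begin{align}
    \norm{\underline{A}^t - \underline{A}^\pt}_{\xyz} \le \int_0^t \norm{\underline{\dot{A}}^s}_{\xyz}\,ds \le O_L(1)\, g^{3}.
\end{align}
Since $\underline{A}^\pt\in\prod_j\D_j(1/2)$, for $g$ sufficiently small this forces $\underline{A}^t\in\prod_j\D_j(1)$ with uniform slack, which rules out blow-up and extends the solution to all of $[0,1]$.

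At $t=1$ the vector field coincides with the full bulk RG map, so $(A_j^1)_{j\ge 0}$ is a bulk RG flow of infinite length contained in $\prod_j\D_j(1)$. Setting $\vec{\nu}_{pc}$ equal to the coefficients of $x_0^1 \in \cV^{(\emptyset)} + \bigoplus_{\km\in\kA_1}\cV^{(\km)}$ gives the desired initial value, and the estimate $|\vec{\nu}_{pc} - \vec{\nu}_{ptc}| = O(g^{3})$ is immediate from the bound above, since $\norm{x_0^1 - x_0^\pt}_{X_0}\le \norm{\underline{A}^1 - \underline{A}^\pt}_{\xyz}$. Continuity in $\vec{\ba}\in\HB_{\epsilon_p}$ follows from the continuity of $\Gamma_j$ in $\vec{\ba}$ (Proposition~\ref{prop:theFRD}), the continuity of $\delta\Phi_{j+1}$ and $\underline{S}^t$ in $\vec{\ba}$ that these inherit, and continuous dependence of ODE solutions on parameters. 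Differentiability in $g$ comes from the derivative bounds \eqref{eq:controlledRG22}--\eqref{eq:controlledRG24} applied to $D_{V_\bulk}$ together with differentiability of $\vec{\nu}_{ptc}$ in $g$ established in Proposition~\ref{prop:nuptcctty}.

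The main obstacle I anticipate is the bookkeeping in Step~1: the $X_j,Y_j,Z_j$-norms carry scale weights $\scale_j^{-1+\kt}$ and $\scale_j^{-\kae+\kt}$, and one must verify that after converting the $\cL_{j+1}$- and $\cW_{j+1}$-bounds from \eqref{eq:controlledRG22}--\eqref{eq:controlledRG24} through Lemma~\ref{lemma:cVnrmvslnrm}, the resulting sequence is uniformly summable in the $\xyz_0$-norm with the required size $O_L(1)g^3$. This relies on the fact that $\kae-2+2\kt > 0$ and $\kt>0$, which is built into the choices fixed in Section~\ref{sec:notation}; once this is in place, the remaining topological/ODE-theoretic steps are standard.
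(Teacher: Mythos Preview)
Your proposal is correct and follows essentially the same approach as the paper's own proof: both solve the ODE \eqref{eq:dotAODE} via Picard--Lindel\"of/Cauchy--Lipschitz, use the smallness $\norm{\underline{\delta\Phi}(\underline{A})}_{\xyz_0}=O_L(g^3)$ together with Proposition~\ref{prop:Stdefi} to keep the trajectory inside $\prod_j\D_j(1)$, and read off $\vec{\nu}_{pc}$ from $x_0^1$. The only cosmetic differences are that the paper iterates over intervals of fixed length $\epsilon'$ while you use a continuation/a~priori argument, and your claim that $\underline{\delta\Phi}$ itself is Lipschitz with constant $O_L(1)g^2$ is slightly off (the $K$-derivative of $R^U_{j+1}$ in \eqref{eq:controlledRG22} carries no $g$-smallness, so the Lipschitz constant of $\underline{\delta\Phi}$ is only $O_L(1)$); this does not affect your conclusion since the composite vector field still has Lipschitz constant $O_L(1)$ via Proposition~\ref{prop:Stdefi}(1),(2), exactly as in the paper's \eqref{eq:nupcctty5}.
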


\begin{proof}
By the Picard-Lindel\"of Theorem, to show that ODE \eqref{eq:dotAODE} has a solution on $t \in [0,1]$,  it is enough to show that $\underline{A} \mapsto \underline{S}^t (\underline{A}) \underline{\delta \Phi} (\underline{A})$ is a Lipschitz continuous function.  But since 
\begin{align}
	\norm{\delta \Phi_{j+1} (A_j) }_{X_j \times Y_j \times Z_j}  \le  \norm{R_{j+1}^U (A_j)}_{X_j \times Y_j} \le O_L (1) g^3 \scale_j^{\kae -1 + \kt}  ,   \label{eq:nupcctty2} \\
	\norm{D_{\underline{A}} \underline{\delta \Phi} (\underline{A} )} \le  \sup_{j\ge 0}  \norm{D R_{j+1}^U (A_j) }_{X_j \times Y_j} \le O_L ( \scale_j^{\kae - 2+\kt} ) \label{eq:nupcctty3}
\end{align}
along with bounds Proposition~\ref{prop:Stdefi}~(1),(2),  we see 
\begin{align}
	\big\| D_{\underline{A}} \big( \underline{S}^t ( \underline{A}) \underline{\delta \Phi} (\underline{A}) \big)  \big\|  \le C_L .
	\label{eq:nupcctty5} 
\end{align}
Thus we can consider a solution $\underline{A}^t = (A^t_j)_{j\ge 0}$ of \eqref{eq:dotAODE} on $t \in [0,  \epsilon']$ for some $\epsilon' >0$ (that only depends on $C_L$).
Then by Proposition~\ref{prop:Stdefi}~(1) and the bound on $\underline{\delta \Phi}$, 
\begin{align}
	\norm{ \underline{A}^t - \underline{A}^{\pt} } 
		\le \epsilon' \sup_{t \in [0,\epsilon']} \norm{ \underline{S}^t (\underline{A}^t) \underline{\delta \Phi} (\underline{A}^t) } \le O_L (\epsilon' g^3) , \qquad t \in [0,\epsilon'] .
		\label{eq:nupcctty6} 
\end{align}
If we have chosen parameters so that $\underline{A}^{\pt} \in \prod_{j=0}^{\infty} \D_j (1/2)$,  then by choosing $g$ sufficiently small,  we see $A_j^{t} \in \D_j ( \frac{1}{2} (1 + \epsilon') \alpha )$ for all $t \in [0,\epsilon']$,  i.e.,  it stays inside the RG domain.  
This can be repeated $\lfloor 1/ \epsilon' \rfloor$ number of times to obtain $\underline{A}^t \in \prod_{j=0}^{\infty} \D_j (1)$ on the full interval $t \in [0,1]$ and
\begin{align}
	\norm{ \underline{A}^1 - \underline{A}^{\pt} } 
		\le O_L (g^3) .   \label{eq:nupcctty7} 
\end{align}

Continuity of $\underline{A}^1$ in $\vec{\ba}$ follows from the definition of the space $\xyz$.
Differentiability of $\underline{A}^t$ in $g$ follows since it can be constructed from the ODE
\begin{align}
	\frac{\rd}{\rd t} \partial_g \underline{A}^t = D_{\underline{A}} \big( \underline{S}^t ( \underline{A}^t) \underline{\delta \Phi} (\underline{A}^t) \big) \partial_g \underline{A}^t .
\end{align}
and the initial condition $\underline{A}^{\pt}$ is a differentiable function of $g$,  due to Proposition~\ref{prop:nuptcctty}.
Also,  by \eqref{eq:nupcctty7},  we obtain $| \vec{\nu}_{ptc} - \vec{\nu}_{pc} | \le O_L (g^3 )$.
\end{proof}

\subsection{Fixed point argument} \label{sec:cpnt}

Construction of the critical point is now almost direct from the stable manifold. 
In the next theorem,  $\vec{N}_c ( \ba^{(\emptyset)} )$ gives a set of critical points for the RG flow with square mass $\ba^{(\emptyset)}$.
We use the convention that $2^{X}$ is the power set of $X$,  i.e.,   $Y \in 2^X$ if and only if $Y \subset X$.

\begin{proposition} \label{prop:cpconstr}

Let $g$ be sufficiently small and denote $\bmrho_0 = C_{\cD} g$.
Let $p^1 \vec{\ba} = ( \ba^{(\km)} : \km \in \kA_1  )$ and $\bar{B}_{\epsilon_p /2} = \{ p^1 \vec{\ba} : \norm{ p^1 \vec{\ba} } \le \epsilon_p /2 \}$.

Then there exists a set-valued function
\begin{align}
	\vec{N}_c :  [0,\epsilon_p / 2] \rightarrow 2^{ [-\bmrho_0 , \bmrho_0] \times \bar{B}_{\epsilon_p /2} } ,
	\qquad \ba^{(\emptyset)} \mapsto \vec{N}_c ( \ba^{(\emptyset)} )
\end{align}
with the following property.  
For each $\tilde\ba^{(\emptyset)} \in [0, \epsilon_p /2]$ and $\vec{b} \in \vec{N}_c (\tilde\ba^{(\emptyset)})$,
if $\vec{\nu}, \vec{\ba} \in \R^{\kA_0 \cup \kA_1}$ are given by
\begin{align}
\begin{cases}
	\nu^{(\emptyset)} =  b^{(\emptyset)}  \text{ and } \nu^{(\km)} = - b^{(\km)} \text{ for each } \km \in \kA_1 \\
	\ba^{(\emptyset)} = \tilde\ba^{(\emptyset)}  \text{ and }  \ba^{(\km)} = b^{(\km)} \text{ for each } \km \in \kA_1
\end{cases}
\end{align}
then for $\vec{\nu}_{pc}$ given by Proposition~\ref{prop:nupcctty},
\begin{align}
	\vec{\nu}  = \vec{\nu}_{pc} ( \vec\ba ) ,
	\label{eq:cpconstr}
\end{align}
i.e.,  $(\nu^{(\emptyset)} ,  - p^1 \vec{\ba} )$ is critical for the RG flow with mass $(\tilde\ba^{(\emptyset)} ,   p^1 \vec{b})$. 
Moreover, the graph of $\vec{N}_c$ is compact.
\end{proposition}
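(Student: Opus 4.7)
The plan is to realise each fibre $\vec{N}_c(\tilde\ba^{(\emptyset)})$ as the fixed-point set of a continuous self-map of a compact convex domain, apply Brouwer's fixed-point theorem for existence, and then deduce compactness of the graph from joint continuity of $\vec{\nu}_{pc}$ in $\vec{\ba}$. No contractivity or uniqueness assertion is needed, so the whole proposition should follow as a topological wrap-up of Proposition~\ref{prop:nupcctty}.

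Fix $\tilde\ba^{(\emptyset)} \in [0,\epsilon_p/2]$ and let $D := [-\bmrho_0, \bmrho_0] \times \bar B_{\epsilon_p/2}$, a compact convex subset of $\R^{\kA_0 \cup \kA_1}$. Each $\vec b = (b^{(\emptyset)}, p^1 \vec b) \in D$ determines $\vec\ba(\vec b) := (\tilde\ba^{(\emptyset)}, p^1 \vec b) \in \HB_{\epsilon_p}$, and I define $F_{\tilde\ba^{(\emptyset)}} : D \to \R^{\kA_0 \cup \kA_1}$ by
\begin{align*}
F_{\tilde\ba^{(\emptyset)}}(\vec b) := \Big( \nu^{(\emptyset)}_{pc}(\vec\ba(\vec b)),\; \big(-\nu^{(\km)}_{pc}(\vec\ba(\vec b))\big)_{\km \in \kA_1} \Big).
\end{align*}
Reading \eqref{eq:cpconstr} componentwise, a point $\vec b \in D$ satisfies the critical identification exactly when $\vec b = F_{\tilde\ba^{(\emptyset)}}(\vec b)$, so I set $\vec N_c(\tilde\ba^{(\emptyset)}) := \{\vec b \in D : \vec b = F_{\tilde\ba^{(\emptyset)}}(\vec b)\}$. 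By Proposition~\ref{prop:nupcctty}, $\vec\nu_{pc}$ is continuous on $\HB_{\epsilon_p}$ and satisfies $\vec\nu_{pc} = O(g)$; enlarging $C_{\cD}$ if necessary (as already permitted in Proposition~\ref{prop:nuptcctty}) yields $|\nu^{(\emptyset)}_{pc}| \le C_{\cD} g = \bmrho_0$, and for $g$ small enough relative to $\epsilon_p$ the $\kA_1$-components obey $\|(\nu^{(\km)}_{pc})_{\km \in \kA_1}\| \le \epsilon_p/2$. Hence $F_{\tilde\ba^{(\emptyset)}}$ is a continuous self-map of $D$, and Brouwer's theorem produces a fixed point, giving $\vec N_c(\tilde\ba^{(\emptyset)}) \neq \emptyset$.

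For compactness of $\Gamma := \{(\tilde\ba^{(\emptyset)}, \vec b) : \vec b \in \vec N_c(\tilde\ba^{(\emptyset)})\}$, it suffices to show that $\Gamma$ is closed in the compact ambient $[0,\epsilon_p/2] \times D$. Given a convergent sequence $(\tilde\ba^{(\emptyset)}_n, \vec b_n) \to (\tilde\ba^{(\emptyset)}_\infty, \vec b_\infty)$ of points in $\Gamma$, joint continuity of $\vec\ba \mapsto \vec\nu_{pc}(\vec\ba)$ on $\HB_{\epsilon_p}$ lets me pass to the limit in $\vec b_n = F_{\tilde\ba^{(\emptyset)}_n}(\vec b_n)$ and conclude $\vec b_\infty = F_{\tilde\ba^{(\emptyset)}_\infty}(\vec b_\infty)$. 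The only delicate point I anticipate is that continuity must hold up to the boundary face $\{\ba^{(\emptyset)} = 0\}$, since the covariance decomposition of Proposition~\ref{prop:theFRD} is genuinely different there; however this continuity is precisely the one asserted in the last sentence of Proposition~\ref{prop:theFRD} and propagated to $\vec\nu_{pc}$ through Propositions~\ref{prop:nuptcctty}--\ref{prop:nupcctty}, so there is no additional analytic work beyond careful bookkeeping of the constants $C_{\cD}$, $\epsilon_p$, and $g$.
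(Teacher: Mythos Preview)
Your proof is correct and takes essentially the same approach as the paper: Brouwer's fixed-point theorem for existence, then closedness of the graph via continuity of $\vec{\nu}_{pc}$. The only cosmetic difference is that the paper applies Brouwer to the $\kA_1$-ball $\bar{B}_{\epsilon_p/2}$ alone (since $\vec{\nu}_{pc}(\vec{\ba}(\vec{b}))$ does not depend on $b^{(\emptyset)}$) and then sets $b^{(\emptyset)}_F := p^0\vec{\nu}_{pc}(\tilde\ba^{(\emptyset)}, p^1\vec{b}_F)$ afterward, whereas you carry the redundant first coordinate through the fixed-point argument.
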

\begin{proof}
Having fixed $\tilde\ba^{(\emptyset)} \in [0, \epsilon_p / 2]$, 
we consider the function
\begin{align}
	F ( p^1 \vec{b} ) = - p^1 \vec{\nu}_{pc} ( \tilde\ba^{(\emptyset)} ,  p^1 \vec{b} ) , \qquad p^1 \vec{b} \in \bar{B}_{\epsilon_p /2 } ,
\end{align}
where $\vec{\nu}_{pc}$ is that constructed in Proposition~\ref{prop:nupcctty}.  Since $|\vec{\nu}_{ptc} - \vec{\nu}_{pc}| \le O(g^3)$ and $\vec{\nu}_{ptc} = O(g)$ (by Proposition~\ref{prop:nuptcctty}),  we also have $|\vec{\nu}_{pc}| \le O(g)$,  so we see that $- p^1 \vec{\nu}_{pc} ( \tilde\ba^{(\emptyset)} ,  p^1 \vec{b} ) \in \bar{B}_{\epsilon_p / 2}$ for sufficiently small $g$,
i.e.,  $F$ can be considered as a function $F: \bar{B}_{\epsilon_p /2 } \rightarrow \bar{B}_{\epsilon_p /2 }$.

Since the domain $\bar{B}_{\epsilon_p /2}$ is a convex compact set and $F$ is continuous by Proposition~\ref{prop:nupcctty},
there exists a fixed point $p^1 \vec{b}_F$ to $F$ by the Brouwer's fixed point theorem.  Then we let $b^{(\emptyset)}_F = p^0 \vec{\nu}_{pc} ( \tilde\ba^{(\emptyset)},  p^1 \vec{b}_F )$,
so if we set $\vec\nu = \vec{\nu}_{pc} ( \tilde\ba^{(\emptyset)},  p^1 \vec{b}_F )$
(in particular $\nu^{(\emptyset)} = b^{(\emptyset)}_F = p^0 \vec{\nu}_{pc} ( \tilde\ba^{(\emptyset)},  p^1 \vec{b} )$),
then it satisfies
$\nu^{(\km)} = - b^{(\km)}_F$ for any $\km \in \kA_1$,  as desired.
We denote $\vec{N}_c (\tilde\ba^{(\emptyset)})$ for the set of such fixed points $\vec{b}_F = (b_F^{(\emptyset)},  p^1 b_F)$.

For the compactness of the graph of $\vec{N}_c$,  let $\ba^{(\emptyset)}_{k} \rightarrow \ba^{(\emptyset)}$ be a converging sequence in $[0, \epsilon_p / 2]$
and let $(\nu^{(\emptyset)}_k,  p^1 \vec{\ba}_k ) \in \vec{N}_c (\ba^{(\emptyset)}_{k})$ be convergent,  and denote $(\nu^{(\emptyset)}_{\infty},   p^1 \vec{\ba}_{\infty} )$ for the cluster point.
Then by continuity of $\vec{\nu}_{pc}$ and the definition of $\vec{N}_c (\ba_k^{(\emptyset)})$,
\begin{align}
	- p^1 \vec{\nu}_{pc} (\ba^{(\emptyset)}, p^1 \vec{\ba}_{\infty}) 
		= - \lim_{k \rightarrow \infty} p^1 \vec{\nu}_{pc} (\ba^{(\emptyset)}_k ,  p^1 \vec{\ba}_{k}) 
		= \lim_{k \rightarrow \infty}  p^1 \vec{\ba}_{k} = p^1 \vec{\ba}_{\infty}
\end{align}
and also
\begin{align}
	p^0 \vec{\nu}_{pc} (\ba^{(\emptyset)} ,  p^1 \vec{\ba}_{\infty} )
		= \lim_{k \rightarrow \infty} p^0 \vec{\nu}_{pc} (\ba^{(\emptyset)}_k ,  p^1 \vec{\ba}_k ) 		
		= \lim_{k \rightarrow \infty} \nu^{(\emptyset)}_k 		
		=	\nu^{(\emptyset)}_{\infty}  ,
\end{align}
thus $(\nu^{(\emptyset)}_{\infty},   p^1 \vec{\ba}_{\infty} ) \in \vec{N}_c (\ba^{(\emptyset)})$ as desired.
\end{proof}

It is expected that $\vec{N}_c$ is actually a single-valued function, 
but we cannot prove this only using abstract arguments.
Instead,  we make a choice of the critical point using the compactness of the graph of $\vec{N}_c$.

\begin{proof}[Proof of Theorem~\ref{thm:theStableManifold}]
The case $d= d_{c,u}$ is dealt by Proposition~\ref{prop:stablemanifold-dcu}.

We are only left to consider $d > d_{c,u}$,  that uses the construction above.
We fix a sequence $( \ba^{(\emptyset)}_{c,k} ,  \nu^{(\emptyset)}_{c,k},  p^1 \vec{\ba}_{c,k} ) \in \R \times \R \times \R^{\kA_1}$ such that $\ba_{c,k}^{(\emptyset)} \downarrow 0$,
\begin{align}
	( \nu^{(\emptyset)}_{c,k},  p^1 \vec{\ba}_{c,k} ) \in \vec{N}_c (\ba^{(\emptyset)}_{c,k}) \quad \text{for each } k 
\end{align}
and $( \nu^{(\emptyset)}_{c,k},  p^1 \vec{\ba}_{c,k})$ is convergent---existence of such a sequence is guaranteed by the compactness of the graph of $\vec{N}_c$.
Denote $\ba_{c,\infty}^{(\emptyset)} = 0$ and
\begin{align}
	( \nu^{(\emptyset)}_{c,\infty} ,  p^1 \vec{\ba}_{c,\infty} ) = \lim_{k \rightarrow \infty} ( \nu^{(\emptyset)}_{c,k},  p^1 \vec{\ba}_{c,k}) \in \vec{N}_c (0) .
\end{align}
These satisfy the desired properties due to Proposition~\ref{prop:cpconstr}.

For the final bound,  by the bounds on $\vec{\nu}_{pc}$ (see Proposition~\ref{prop:nuptcctty} and \ref{prop:nupcctty}),
we have both $\nu^{(\emptyset)}_{c, \infty}$ and $p^1 \vec{\ba}_{c, \infty}$ of order $O(g)$.   
\end{proof}

\subsection{Asymptotic of the quartic term}

In this final part,  we make a precise estimate on the asymptotic of the coefficient of the $|\varphi|^4$-term.  This is essential for computing the plateau and the scaling limits. 

\begin{lemma} \label{lemma:PhijUmEVj}
Let $(V_j, K_j) \in \D_j$ and $U_{j+1} = \Phi_{j+1}^{U} (V_j, K_j)$.  Then
\begin{align}
	\norm{U_{j+1} - \mathbb{V}^{(0)} \mathbb{E}_{j+1} \theta V_j }_{\cL_j (\ell)}
		\le O_L (1) \tilde{\chi}_j \norm{V_j}^2_{\cV_j (\ell)} \le O_L (1) \tilde{\chi}_j \tilde{g}_j^2 \scale_j^2 .
\end{align}
\end{lemma}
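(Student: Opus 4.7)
The plan is to expand $U_{j+1} = \mathbb{V}^{(0)}\Phipt_{j+1}(\hat V_j)$ (Definition~\ref{def:WP}) with $\hat V_j = V_j - Q_j$, and to apply the identity $\Phipt_{j+1}(V) = \mathbb{E}_{j+1}\theta V - P_{j,V}$. This produces the decomposition
\begin{align*}
  U_{j+1} - \mathbb{V}^{(0)}\mathbb{E}_{j+1}\theta V_j = -\mathbb{V}^{(0)} P_{j,\hat V_j} - \mathbb{V}^{(0)}\mathbb{E}_{j+1}\theta Q_j,
\end{align*}
so it suffices to bound each term on the right in the $\norm{\cdot}_{\cL_j(\ell)}$-norm. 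The operator $\mathbb{V}^{(0)}$ itself is $O(1)$-bounded on $\cL_j(\ell)$ because it only rearranges coefficients among monomials of the same weight class in Definition~\ref{defi:cLnorm}.

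For the quadratic correction $P_{j,\hat V_j}$, I would apply Lemma~\ref{lemma:FWPbounds3obs} with $\mathfrak{h} = \ell$, combined with the equivalence $\norm{V}_{\cL_j(\ell)} \asymp \max_{b\in\cB_j}\norm{V(b)}_{\ell_j, T_j(0)}$ of Lemma~\ref{lemma:VincDsize}, to obtain $\norm{P_{j,\hat V_j}}_{\cL_j(\ell)} \le O_L(1)\tilde{\chi}_j\norm{\hat V_j}_{\cV_j(\ell)}^2$. To reduce this to $V_j$, I would bound $\norm{Q_j}_{\cV_j(\ell)} \lesssim \scale_j^{-1+\kt}\norm{Q_j}_{\cL_j(\ell)} \lesssim \tilde{\chi}_j^{3/2}\tilde{g}_j^3\scale_j^{\kae-1+\kt}$ via Lemmas~\ref{lemma:cVnrmvslnrm} and \ref{lemma:Qjbound}, which together with $\norm{V_j}_{\cV_j(\ell)} \lesssim \ell_0^4\tilde{g}_j\scale_j$ (Lemma~\ref{lemma:VincDsize}) and $\kae > 2$ shows that $\hat V_j$ and $V_j$ are comparable in $\cV_j(\ell)$.

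For the $\mathbb{E}_{j+1}\theta Q_j$ contribution, I would apply Lemma~\ref{lemma:EthV12} with $\mathfrak{h} = \ell$ to show that $\mathbb{E}_{j+1}\theta$ is $O_L(1)$-bounded on the $\norm{\cdot}_{\ell_j, T_j(0)}$-seminorm of the bounded-degree polynomial $Q_j$, using $\kc_{j+1}/\ell_{j,\bulk} = \tilde{\chi}_j^{1/2}/\ell_0 \le 1$. Together with Lemma~\ref{lemma:Qjbound} and the local interpretation of $\norm{\cdot}_{\cL_j(\ell)}$, this yields $\norm{\mathbb{E}_{j+1}\theta Q_j}_{\cL_j(\ell)} \le O_L(1)\tilde{\chi}_j^{3/2}\tilde{g}_j^3\scale_j^{\kae}$, which is absorbed into $O_L(1)\tilde{\chi}_j\tilde{g}_j^2\scale_j^2$ because $\kae > 2$ and $\tilde{g}_j, \scale_j, \tilde{\chi}_j$ are all bounded above by one; the second inequality of the lemma then follows from $\norm{V_j}_{\cV_j(\ell)} \lesssim \ell_0^4\tilde{g}_j\scale_j$. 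The main subtlety is handling this contribution in $\cL_j(\ell)$ directly rather than through $\cV_j(\ell)$: passing via \eqref{eq:cVnrmvslnrm1} would cost a factor of $\scale_j^{-1+\kt}$ that cannot be absorbed when $d$ is close to $d_{c,u}$ from above, so Lemma~\ref{lemma:EthV12} is essential because it operates directly on the $\ell_j$-seminorms of bounded-degree polynomials.
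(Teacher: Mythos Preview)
Your proof is correct, but it takes a different decomposition from the paper's. The paper writes
\[
  U_{j+1} - \mathbb{V}^{(0)}\E_{j+1}\theta V_j
    = \mathbb{V}^{(0)}\bigl(\Phi_{j+1}^{\pt}(V_j) - \E_{j+1}\theta V_j\bigr) + R_{j+1}^U(V_j,K_j),
\]
and then invokes Lemma~\ref{lemma:VptmthV} for the first term and the black-box estimate \eqref{eq:controlledRG22} (with $p=q=0$) for the second. Your approach instead unpacks $\Phi_{j+1}^U = \mathbb{V}^{(0)}\Phi_{j+1}^{\pt}(\hat V_j)$ explicitly, obtaining $-\mathbb{V}^{(0)}P_{j,\hat V_j} - \mathbb{V}^{(0)}\E_{j+1}\theta Q_j$ and bounding each piece from the more primitive Lemmas~\ref{lemma:FWPbounds3obs} and~\ref{lemma:Qjbound}. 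In effect you are re-deriving the $(p,q)=(0,0)$ case of \eqref{eq:controlledRG22} from its ingredients. The paper's route is shorter because \eqref{eq:controlledRG22} is already granted by Definition~\ref{defi:contrlldRG}; your route is more transparent about where the remainder actually comes from and avoids appealing to the abstract controlled-RG-map bound. Both rely on the same lower bound $\norm{V_j}_{\cV_j(\ell)} \gtrsim \ell_0^4\tilde g_j\scale_j$ (implicit in $V_j \in \cD_j^{(0)}$, via the two-sided constraint on $g_j^{(\emptyset)}/\tilde g_j$) to absorb the $\tilde\chi_j^{3/2}\tilde g_j^3\scale_j^{\kae}$ contribution into $\tilde\chi_j\norm{V_j}_{\cV_j(\ell)}^2$.
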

\begin{proof}
The first inequality follows from Lemma~\ref{lemma:VptmthV} and \eqref{eq:controlledRG22},  since
\begin{align}
	\Phi_{j+1}^U (V_j ,  K_j ) = \mathbb{V}^{(0)} \big( \Phi_{j+1}^\pt (V_j) - \E_{j+1} \theta V_j \big) + \mathbb{V}^{(0)} \E_{j+1} \theta V_j + R_{j+1}^U (V_j, K_j) .
\end{align}
The second inequality follows from Lemma~\ref{lemma:VincDsize}.
\end{proof}

\begin{lemma} \label{lemma:gjasympcritd}
Let $g > 0$ be sufficiently small,  $L$ be sufficiently large and $\vec{\ba} \in \HB_{\epsilon_p}$.
Let $(V_{j,\bulk},K_{j,\bulk})$ be a bulk RG flow of infinite length with $g_0^{(\emptyset)} = g$. 
\begin{align}
	g_j^{(\emptyset)} =
		\begin{cases} 
			(\bfb j)^{-1} (1 + O(j^{-2} \log j)) & (d= d_{c,u} ,  \;  j < j_{\ba}) \\
			g_{\infty} + O_L ( 2^{- (j - j_\ba)_+ } g_{\infty}^2) & (d = d_{c,u} ,  \;  j \ge j_{\ba}) \\
			g_{\infty} + O_L ( \scale_j g^2  ) & (d > d_{c,u})
		\end{cases}	
\end{align}
for some $\bfb , g_{\infty} > 0$ such that $\bfb = \frac{n+8}{16\pi^2}$ and $g_{\infty} \sim ( \bfb |\log \ba^{(\emptyset)} | )^{-1}$ as $\ba^{(\emptyset)} \downarrow 0$ for $d = d_{c,u}$ and $g_{\infty} = g + O(g^2)$ for $d >d_{c,u}$.
\end{lemma}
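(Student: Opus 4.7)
\textbf{Plan for proving Lemma~\ref{lemma:gjasympcritd}.}

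The approach is to extract a scalar recursion for $g_j^{(\emptyset)}$ from the controlled RG map, identify its leading structure, analyse $\beta_j$ in each regime, and solve the recursion asymptotically.

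\emph{Step 1 (deriving the recursion).} By Definition~\ref{def:WP}, $\Phi_{j+1}^U(V_j,K_j) = \mathbb{V}^{(0)}\mathbb{E}_{j+1}\theta\hat V_j - \mathbb{V}^{(0)}P_{j,\hat V_j} + R_{j+1}^U$. The Gaussian convolution $\mathbb{E}_{j+1}\theta$ preserves the coefficient of $|\varphi|^4$, and $\mathbb{V}^{(0)}$ does not act on $g^{(\emptyset)}$; hence the entire evolution of $g^{(\emptyset)}$ comes from $-P_{j,\hat V_j}$ and $R_{j+1}^U$. Inspecting the definition of $P_{j,V}$ in \cite{FSmap} (the quadratic Wick-type functional with covariance $\Gamma_{j+1}$), the only quadratic-in-$V$ contribution that produces a new $|\varphi|^4$ term comes from contracting two $|\varphi|^4$ monomials with a single loop of $\Gamma_{j+1}$, which yields exactly $\beta_j(g_j^{(\emptyset)})^2$ with $\beta_j$ as in \eqref{eq:wjdefi}. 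The other quadratic terms contribute cross pieces $\nu_j \cdot g_j$ bounded by $O_L(\tilde g_j^2\scale_j)$ via Lemma~\ref{lemma:PhijUmEVj}, and $R_{j+1}^U$ is bounded in $\cL_{j+1}$-norm by \eqref{eq:controlledRG22}. Translating the $\cL_{j+1}$-norm bound $O_L(\tilde\chi_{j+1}^{3/2}\tilde g_{j+1}^3\scale_{j+1}^{\kae})$ into a bound on the coefficient $g^{(\emptyset)}$ via Definition~\ref{defi:cLnorm} (dividing by $\ell_0^4\scale_{j+1}$) gives
\begin{align}
g_{j+1}^{(\emptyset)} = g_j^{(\emptyset)} - \beta_j(g_j^{(\emptyset)})^2 + r_j, \qquad |r_j| \le O_L\!\left(\tilde\chi_{j+1}^{3/2}\tilde g_j^3\scale_j^{\kae-1}\right).
\end{align}

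\emph{Step 2 (asymptotics of $\beta_j$).} Using Proposition~\ref{prop:theFRD} and the Fourier analysis of Appendix~\ref{sec:covcomp}: for $d>d_{c,u}$, the decay bound \eqref{eq:Gammajbounds1} together with the finite-range property gives $|\beta_j|\lesssim\scale_j$. For $d=d_{c,u}$ and $j<j_{\ba}$, both $w_j$ and $w_{j+1}$ are essentially scale-truncated critical Green's functions, and the scale-invariance at the upper critical dimension makes $\sum_x(w_{j+1}^2-w_j^2)(x)$ converge to a $j$-independent constant, which after the explicit Fourier/$\Gamma$-function computation (as in Lemma~\ref{lemma:fracLapGreen}) yields $\beta_j=\bfb+O(L^{-j})$ with $\bfb=(n+8)/(16\pi^2)$. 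For $d=d_{c,u}$ and $j\ge j_{\ba}$, the mass cutoff encoded in $\tilde\chi_{j+1}\sim 2^{-(j-j_{\ba})_+}$ forces $|\beta_j|\le O_L(2^{-(j-j_{\ba})_+})$.

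\emph{Step 3 (solving the recursion).} For $d>d_{c,u}$, the increments $|g_{j+1}^{(\emptyset)}-g_j^{(\emptyset)}|\lesssim \scale_jg^2$ are summable with geometric rate $\scale_j$, so $g_\infty:=\lim_jg_j^{(\emptyset)}$ exists with $g_\infty=g+O(g^2)$ and $|g_j^{(\emptyset)}-g_\infty|=O_L(\scale_jg^2)$. For $d=d_{c,u}$ and $j<j_{\ba}$, set $a_j:=1/g_j^{(\emptyset)}$. A direct expansion of the recursion gives $a_{j+1}-a_j=\bfb+O(L^{-j})+O(g_j^{(\emptyset)})$. Summing and solving to leading order yields $a_j=\bfb j+O(\log j)$, hence $g_j^{(\emptyset)}=(\bfb j)^{-1}(1+O(\log j/j))$; bootstrapping this information back into the recursion and iterating cancels the leading log correction, upgrading the remainder to the claimed $O(j^{-2}\log j)$. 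For $d=d_{c,u}$ and $j\ge j_{\ba}$, the exponential decay $|r_j|+|\beta_j|(g_j)^2\lesssim 2^{-(j-j_{\ba})}g_{\infty}^2$ makes the tail absolutely summable, proving convergence to $g_\infty$ at the stated rate; matching with Step~3's value at $j=j_{\ba}$ and recalling $j_{\ba}\asymp|\log\ba^{(\emptyset)}|$ (with the constant absorbed into $\bfb$ by the Fourier normalisation) yields $g_\infty\sim(\bfb|\log\ba^{(\emptyset)}|)^{-1}$.

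\emph{Main obstacles.} The non-trivial steps are (a) verifying that the only $|\varphi|^4$-producing quadratic term in $\mathbb{V}^{(0)}P_{j,V}$ has coefficient exactly $\beta_j$ (this requires unpacking the definition of $P_{j,V}$ in \cite{FSmap} and checking that cross terms like $\pi_{2,\nabla}V\cdot\pi_4 V$ do not contribute to $g^{(\emptyset)}$); (b) the explicit Fourier identification of the constant $\bfb=(n+8)/(16\pi^2)$ and the precise dependence of $j_{\ba}$ on $\ba^{(\emptyset)}$ in the stated normalisation; and (c) the bootstrap sharpening from $O(\log j/j)$ to $O(\log j/j^2)$ in the critical regime, which requires controlling the error $r_j$ to higher order than is immediate from \eqref{eq:controlledRG22}, likely by exploiting that the next-order correction to the recursion has a specific structural form that cancels against the explicit $\bfb j$ leading term.
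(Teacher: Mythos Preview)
Your plan is structurally sound but differs substantially from the paper's proof, which is largely a citation. For $d=d_{c,u}$, $j<j_{\ba}$ (the only genuinely hard case), the paper simply invokes \cite[Lemma~4.8]{MR3459163}; for the existence of $g_\infty$ when $d=d_{c,u}$ and $\ba^{(\emptyset)}>0$ it invokes \cite[Lemma~4.7]{MR3459163}; and for the remaining regimes ($d>d_{c,u}$, and $d=d_{c,u}$ with $j\ge j_{\ba}$) it uses only the crude increment bound $|g_{k+1}^{(\emptyset)}-g_k^{(\emptyset)}|\le O_L(\tilde\chi_{k+1}\tilde g_k^2\scale_k)$ coming from Lemma~\ref{lemma:PhijUmEVj} and sums over $k\ge j$. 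In particular the paper never extracts the explicit $\beta_j$ coefficient from $P_{j,V}$, never computes $\bfb=(n+8)/(16\pi^2)$ by hand, and never analyses the scalar recursion directly. Your self-contained Steps~1--3 are essentially an outline of what \cite{MR3459163} does, and the obstacles you flag in (a)--(c) are precisely the technical content of that reference.

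Regarding obstacle~(c): upgrading the naive $O(j^{-1}\log j)$ relative error to the stated $O(j^{-2}\log j)$ does \emph{not} follow from a generic bootstrap of $g_{j+1}=g_j-\bfb g_j^2+O(g_j^3)$. The standard inversion $a_j=1/g_j$ yields $a_{j+1}=a_j+\bfb+\bfb^2/a_j+O(a_j^{-2})$, hence $a_j=\bfb j+c\log j+O(1)$ and only $g_j=(\bfb j)^{-1}(1+O(j^{-1}\log j))$. Achieving the sharper bound requires tracking the next-order coefficient in the recursion together with the rate $\beta_j-\bfb=O(L^{-j})$, which is not available from the estimates in the present paper and is exactly why it defers to \cite{MR3459163}. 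If you intend a fully self-contained proof at the stated precision, this is a real gap in your plan; if the weaker $O(j^{-1}\log j)$ suffices for downstream use (and it does---the applications in Section~\ref{sec:mainresults} and Lemma~\ref{lemma:gNasymp} need only $g_N^{(\emptyset)}\sim(\bfb N)^{-1}$), your argument goes through.
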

\begin{proof}
The case $d = d_{c,u}$,  $j < j_{\ba}$ follows from \cite[Lemma~4.8]{MR3459163},  
where in the reference,  we set $a,b \in \Z^4$ so that $j = j_{ab}$ and $j_{m} = j_{\ba}$.

For the case $d = d_{c,u}$,   $\ba^{(\emptyset)} > 0$,  we use \cite[Lemma~4.7]{MR3459163} to see that $\lim_{j\rightarrow \infty} g^{(\emptyset)}_j \rightarrow g_\infty$ for some $g_{\infty}$.  Also,  due to Lemma~\ref{lemma:PhijUmEVj}
\begin{align}
	g^{(\emptyset)}_{k+1} = g^{(\emptyset)}_k + O_L (1) \tilde{\chi}_{k+1} \tilde{g}_k^2 
\end{align}
while $\tilde{\chi}_{j+1} = 2^{-(j+1 - j_{\ba})_+}$ by definition,  so the desired error term follows after summing over $k \ge j$.

Finally,  for $d > d_{c,u}$,  Lemma~\ref{lemma:PhijUmEVj} again gives
\begin{align}
	g^{(\emptyset)}_{k+1} = g^{(\emptyset)}_k + O_L (1) \tilde{\chi}_{k+1} \tilde{g}_k^2 \scale_k ,
\end{align}
so the asymptotic follows after summing over $k\ge j$.
\end{proof}

\section{Scaling limits}
\label{sec:mainresults}

We are now equipped with all the tools for proving the scaling limits of Theorem~\ref{thm:WNlimit} and \ref{thm:NGlimit}. 
To outline,  we apply Lemma~\ref{lemma:Vtilde} and Proposition~\ref{prop:mgfref} to state the moment generating function in terms of integrals of $Z_{N,\bulk}$.  
If we tune the initial condition of the RG flow to the critical values,  then we can approximate $Z_{N,\bulk} \approx \exp (-u_{N,\bulk} (\Lambda))$ as in the introduction of Section~\ref{sec:cotcpfdf},  and this almost concludes the Theorem~\ref{thm:WNlimit} and \ref{thm:NGlimit}(ii).
For the argument Theorem~\ref{thm:NGlimit}(i),  we just need an additional step to preserve the quartic term in the limit.
We make these arguments precise in this section.

\subsection{Finite volume RG theorem}

To prove the main statements,  we will use the RG flow generated by the initial conditions given by Theorem~\ref{thm:theStableManifold}. 
Since the theorem only makes reference to the RG flow in the infinite volume,  we need a special treatment on the final RG step of $j+1 =N$.  The estimates should be stated on a slightly larger domain
\begin{align}
\label{eq:cDbulktilde}
	& \tilde\cD_{N,\bulk}
		=  \Big\{ (\nu_{N}^{(\km_1)},  g_{N}^{(\km_2)})  :  |\nu_{N}^{(\km_1)}| \le  L^d \cdot C_{\cD} L^{(q (\km_1) - 2 +\eta) N} \scale_N \tilde{g}_N \text{ if } \km_1 \in \kA_0 \cup \kA_1 \cup \kA_2 ,  \\
			& \qquad\qquad\qquad\;\; |\nu_{N}^{(\km_1)}| \le L \cdot \alpha  C_{\cD} \scale_N^{-\kt} \tilde{g}_N 
			\text{ if } \km_1 \in \kA_3  ,  \nnb
			& \qquad\qquad\qquad\;\;  g^{(\emptyset)}_N / \tilde{g}_N \in \big( (2 C_{\cD})^{-1},   2 C_{\cD} \big),  \;\;
			| g_N^{(\km_2)}| \le 2 C_{\cD} \scale_N^{-\kt} \tilde{g}_N^{3/2}  \text{ if } \km_2 \in \ko_{4,\nabla}
			\Big\} . \nonumber
\end{align}

\begin{proposition} \label{prop:theRGthm}
Let $\eta \in [0,1/2)$,  $d\ge d_{c,u}$ and $g >0$ be sufficiently small,  $(\vec{\ba}_{c,k})_{k \in \N \cup \{\infty\}}$ be as in Theorem~\ref{thm:theStableManifold} and $\nu_c$ be as in Definition~\ref{defi:nuc}.
There exist convergent sequences $(\epsilon_k, \epsilon'_k )_{k \in \N \cup \{\infty \}}$ such that $\epsilon_k \rightarrow 0$,  $\epsilon'_k \downarrow 0$ and satisfy the following.
On $\Lambda = \Lambda_N$,  consider $\vec{\ba}$ given by $\ba^{(\km)} = \ba_{c,k}^{(\km)}$ for each $\km \in \kA_1$,  $\ba^{(\emptyset)} = \epsilon'_k$ and
\begin{align}
	Z_{N,\bulk} (\varphi) = \E_{w_N} \theta \big[ \exp (-V_{0,\bulk} (\Lambda,\varphi)) \big] 
\end{align}
where $V_0$ is defined using $\nu_0^{(\emptyset)} = \nu_c + \epsilon_k$, 
$\nu_0^{(\km_1)} = - \ba^{(\km_1)}_{c,k}$ for $\km_1 \in \ko_{2,\nabla}$,
$g^{(\emptyset)}_0 = g$ and $g^{(\km_2)} = 0$ for $\km_2 \in \ko_{4,\nabla}$.  Then
\begin{align}
	Z_{N,\bulk} (\varphi) = e^{-u_{N,\bulk} ( \Lambda_N )} \left( I_{N,\bulk} (\varphi) + K_{N,\bulk} \right) \label{eq:theRGthm1}
\end{align}
for some $u_{N,\bulk} (\Lambda_N) \in \R$,  $V_{N,\bulk} \in \tilde\cD_N^{(0)}$ and $K_{N,\bulk} \in M \cK_N$ where $I_{N,\bulk} = \cI_N (V_{N,\bulk})$ is as in Section~\ref{sec:WIcoords} and $M >0$ is a (possibly $L$-dependent) constant.
\end{proposition}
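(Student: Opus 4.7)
The plan is to identify the initial conditions of the finite-volume bulk RG flow on $\Lambda_N$ with those of the infinite-volume flow constructed in Theorem \ref{thm:theStableManifold}, transport the latter to $\Lambda_N$ via Theorem \ref{thm:infvolRGmap}, and handle the final scale $j+1=N$ using the general controlled-map bounds where the contractive estimate \eqref{eq:controlledRG24} is no longer available.

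First, set $\epsilon'_k = \ba^{(\emptyset)}_{c,k}$ and $\epsilon_k = \nu^{(\emptyset)}_{c,k} - \nu_c$, where the sequences on the right are those provided by Theorem \ref{thm:theStableManifold}; by that theorem $\epsilon_k \to 0$ and $\epsilon'_k \downarrow 0$, and with $\vec\ba$ chosen as in the proposition, the initial data $(V_0, K_0 \equiv 0)$ coincides precisely with the critical initial condition of Theorem \ref{thm:theStableManifold} at mass $\epsilon'_k$. Consequently the bulk RG flow of infinite length on $\Z^d$, which I will denote $(V^{\Z^d}_{j,\bulk}, K^{\Z^d}_{j,\bulk})_{j\ge 0}$, exists and satisfies $(V^{\Z^d}_{j,\bulk}, K^{\Z^d}_{j,\bulk}) \in \D_{\bulk,j}$ for every $j$.

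Since $K_0 \equiv 0$ trivially projects from $\Z^d$ to $\Lambda_N$, induction on $j<N$ via Theorem \ref{thm:infvolRGmap} yields finite-volume coordinates $(V^{\Lambda_N}_{j,\bulk}, K^{\Lambda_N}_{j,\bulk}) \in \D_{\bulk,j}$ with $V^{\Lambda_N}_{j,\bulk} = V^{\Z^d}_{j,\bulk}$ and $K^{\Lambda_N}_{j,\bulk}$ the projection of $K^{\Z^d}_{j,\bulk}$. In particular $(V_{N-1,\bulk}, K_{N-1,\bulk}) \in \D_{\bulk,N-1}$. Apply now the finite-volume map $\Phi^{\Lambda_N}_N$: the contraction bound \eqref{eq:controlledRG24} is unavailable at this scale, but \eqref{eq:controlledRG22}--\eqref{eq:controlledRG23} at $p=q=0$ still give $K_{N,\bulk} \in M\cK_N$ with $M = M_{0,0}$, and combining \eqref{eq:RjUdefi} with Lemma \ref{lemma:PhijUmEVj}, \eqref{eq:cVnrmvslnrm3}, and Lemma \ref{lemma:gjasympcritd} shows that the coefficients of $V_{N,\bulk}$ lie inside the enlarged domain $\tilde\cD^{(0)}_N$ defined in \eqref{eq:cDbulktilde}; the extra factor $L^d$ appearing there absorbs the $L^2$ inflation under one Gaussian expectation \eqref{eq:cVnrmvslnrm3} together with the $O_L(\tilde\chi_N \tilde g_N^2 \scale_N^2)$ perturbation from Lemma \ref{lemma:PhijUmEVj}, while Lemma \ref{lemma:gjasympcritd} certifies that $g^{(\emptyset)}_N/\tilde g_N$ stays in the widened range $((2C_\cD)^{-1}, 2C_\cD)$. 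The polymer decomposition \eqref{eq:theRGthm1} then follows from \eqref{eq:Zjpolyexp} applied at scale $N$.

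The main obstacle, namely the construction of a stable critical initial condition, has already been surmounted in Section \ref{sec:cotcpfdf} through Theorem \ref{thm:theStableManifold}; what remains here is routine bookkeeping at scale $N$, where the loss of the $L$-independent contraction \eqref{eq:controlledRG24} is compensated by enlarging the target domain for $V_{N,\bulk}$ and permitting an $L$-dependent prefactor $M$ on $K_{N,\bulk}$.
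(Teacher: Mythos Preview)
Your proposal is correct and follows essentially the same approach as the paper: define $\epsilon_k,\epsilon'_k$ from the sequences of Theorem~\ref{thm:theStableManifold}, project the infinite-volume flow to $\Lambda_N$ via Theorem~\ref{thm:infvolRGmap} for $j<N$, and then apply the general controlled-map bounds \eqref{eq:controlledRG22}--\eqref{eq:controlledRG23} at scale $N$ where \eqref{eq:controlledRG24} fails. The only minor difference is in the bookkeeping for $V_{N,\bulk}\in\tilde\cD_N^{(0)}$: the paper invokes \eqref{eq:EthV1} and the elementary observation $(\pi_4+\pi_{4,\nabla})(\E_N\theta V_{N-1}-V_{N-1})=0$ rather than \eqref{eq:cVnrmvslnrm3} and Lemma~\ref{lemma:gjasympcritd}; note that \eqref{eq:cVnrmvslnrm3} is a norm-rescaling inequality, not a bound on $\E_N\theta$, so your phrase ``$L^2$ inflation under one Gaussian expectation'' slightly conflates two separate effects, but the conclusion is the same.
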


\begin{proof}
We drop $\bulk$ in the proof for brevity.
We set the initial conditions with reference to Theorem~\ref{thm:theStableManifold}.  With $(\vec{\ba}_{c,k},  \nu^{(\emptyset)}_{c,k} )_{k}$ as in the theorem,  let
$\epsilon'_k = \ba_{c,k}^{(\emptyset)}$ and  $\epsilon_k = \nu_{c,k}^{(\emptyset)} - \nu_c$.
Then the infinite volume RG flow exists by the theorem.  
Also,  by Theorem~\ref{thm:infvolRGmap},  the finite volume RG flow up to scale $j < N$ can be considered as a projection of the infinite volume RG flow,  so we see that $(V_{j}, K_{j}) \in \D_j$ for all $j < N$.  

We are now left to consider $(\delta u_{N}, V_{N}, K_{N}) := \Phi_N (V_{N-1}, K_{N-1})$.
By Lemma~\ref{lemma:PhijUmEVj},
\begin{align}
	\norm{\mathbb{V}^{(0)} \E_N \theta V_{N-1} - V_N}_{\cL_N (\ell)} \le O_L (1) \tilde{\chi}_N \tilde{g}_N^2 \scale_N^{2} ,
\end{align}
and also by \eqref{eq:EthV1},
\begin{align}
	\norm{\E_N \theta V_{N-1} - V_{N-1}}_{\cL_N (\ell)} \le O_L (1) \tilde{\chi}_N \tilde{g}_N \scale_N .
\end{align}
If we use the additional observation that $(\pi_4 + \pi_{4,\nabla}) (\E_N \theta V_{N-1} - V_{N-1} ) = 0$,  then we see that the two bounds above and Definition~\ref{defi:cLnorm} imply $V_N \in \tilde{\cD}_{N}^{(0)}$.
Finally,  \eqref{eq:controlledRG23} with $p=q=0$ implies
\begin{align}
	\norm{K_{N}}_{\cW_N} \le M_{0,0} \tilde{\chi}_{N}^{3/2} \tilde{g}_N^3 \scale_N^{\kae} ,
\end{align}
thus by Lemma~\ref{lemma:KNbnd},  $K_{N} \in (M_{0,0} / 2C_{\rg}) \cK_N$.
\end{proof}

\begin{lemma} \label{lemma:gNasymp}
Let $\sb$ and $g_{\infty}$ be as in Lemma~\ref{lemma:gjasympcritd}.  Then under the assumptions of Proposition~\ref{prop:theRGthm},
\begin{align}
	g_N^{(\emptyset)} =
		\begin{cases} 
			(\bfb N)^{-1} (1 + O(N^{-2} \log N)) & (d= d_{c,u} ,  \;  N < j_{\ba}) \\
			g_{\infty} + O_L ( 2^{- (N - j_\ba)_+ } g_{\infty}^2) & (d = d_{c,u} ,  \;  N \ge j_{\ba}) \\
			g_{\infty} + O_L ( \scale_N g^2  ) & (d > d_{c,u})
		\end{cases}	\label{eq:gNasymp}
\end{align} 
\end{lemma}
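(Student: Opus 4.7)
The plan is to reduce the finite-volume asymptotic to its infinite-volume counterpart Lemma~\ref{lemma:gjasympcritd} by exploiting that the two flows differ only at the very last RG step. By Theorem~\ref{thm:infvolRGmap}, the finite-volume RG flow $(V_j, K_j)_{j \le N-1}$ from Proposition~\ref{prop:theRGthm} coincides with the projection of the infinite-volume RG flow starting from the same initial data. In particular, $g_{N-1}^{(\emptyset)}$ in the finite volume equals its infinite-volume analogue, and Lemma~\ref{lemma:gjasympcritd} applied at scale $N-1$ supplies the asymptotic for $g_{N-1}^{(\emptyset)}$ in each of the three regimes.

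For the final step $N-1 \to N$, I would invoke Lemma~\ref{lemma:PhijUmEVj} to bound $\| U_N - \mathbb{V}^{(0)} \mathbb{E}_N \theta V_{N-1} \|_{\cL_{N-1} (\ell)} \le O_L (1) \tilde{\chi}_{N-1} \tilde{g}_{N-1}^2 \scale_{N-1}^2$. Since Gaussian integration preserves the leading $|\varphi|^4$ monomial, the quartic part of $\mathbb{V}^{(0)} \mathbb{E}_N \theta V_{N-1}$ still has coefficient $g_{N-1}^{(\emptyset)}/4$. Extracting the $|\varphi|^4$ coefficient from the $\cL_{N-1} (\ell)$ estimate via Definition~\ref{defi:cLnorm} (where $|\varphi|^4$-terms carry weight $\ell_0^4 \scale_{N-1}$) yields $|g_N^{(\emptyset)} - g_{N-1}^{(\emptyset)}| \le O_L ( \tilde{\chi}_{N-1} \tilde{g}_{N-1}^2 \scale_{N-1} )$. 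Combined with the first step, this covers the cases $d > d_{c,u}$ and $d = d_{c,u}$ with $N \ge j_{\ba}$: using $\scale_N \asymp \scale_{N-1}$, $\tilde{g}_N \asymp \tilde{g}_{N-1}$, and $\tilde{\chi}_N \asymp \tilde{\chi}_{N-1}$, the one-scale shift is absorbed by the claimed $O_L$ error.

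The main obstacle is the sharp first case $(\bfb N)^{-1} (1 + O(N^{-2} \log N))$, which is finer than the $O_L (N^{-2})$ supplied by the crude bound above. To recover it, I would isolate the leading perturbative correction: the $|\varphi|^4$ coefficient of $\mathbb{V}^{(0)} \Phi_N^{\pt} (V_{N-1})$ equals $g_{N-1}^{(\emptyset)} - \beta_{N-1}^{\Lambda_N} (g_{N-1}^{(\emptyset)})^2$ plus an $O_L ( \tilde{g}_{N-1}^3 )$ remainder, and the non-perturbative error $R_N^U$ contributes a further $O_L (\tilde{g}_{N-1}^3)$ to this coefficient by \eqref{eq:controlledRG22}; in the first case both remainders are $O_L (N^{-3})$. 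The finite-volume and infinite-volume $\beta_{N-1}$'s differ only through $\Gamma_N^{\Lambda_N} - \Gamma_N$, and for $N < j_{\ba}$ Proposition~\ref{prop:theFRD} shows this discrepancy is controlled by the same upper bounds used in \cite[Lemma~4.8]{MR3459163}. Iterating the resulting explicit recursion exactly as in that reference then recovers the sharp asymptotic, completing the proof.
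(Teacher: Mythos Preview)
Your approach in the first two paragraphs matches the paper's exactly: invoke the infinite-volume flow (via Theorem~\ref{thm:infvolRGmap}/Proposition~\ref{prop:theRGthm}) to get Lemma~\ref{lemma:gjasympcritd} at scale $N-1$, then control the single last step. The paper cites Lemma~\ref{lemma:VptmthV} where you cite Lemma~\ref{lemma:PhijUmEVj}, but the resulting bound $|g_N^{(\emptyset)} - g_{N-1}^{(\emptyset)}| \le O_L(\tilde{\chi}_N\tilde{g}_N^2\scale_N)$ is identical; your citation is in fact the cleaner one, since Lemma~\ref{lemma:PhijUmEVj} already packages the perturbative and non-perturbative pieces together.

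Your concern about the sharp first case is legitimate, and the paper's two-line proof shares it: the shift $(\bfb(N-1))^{-1}\mapsto(\bfb N)^{-1}$ alone costs a relative $O(N^{-1})$, and the one-step bound adds another $O_L(N^{-1})$, so the crude argument delivers only $(\bfb N)^{-1}(1+O_L(N^{-1}))$, not the stated $(1+O(N^{-2}\log N))$. The paper does not carry out your refined fix; it simply accepts the coarser conclusion. This is harmless because every downstream use of the lemma---the identification $Ng_N^{(\emptyset)}\to\bfb^{-1}$ in Section~\ref{sec:NGlimitproof} and the replacement $g_N^{(\emptyset)}=(\bfb N)^{-1}(1+O(1/\log N))$ in the proof of Theorem~\ref{thm:plateauGen}---requires only the leading asymptotic. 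Your third paragraph is therefore more scrupulous than the paper but also more than what is needed; the mechanism you sketch (explicit second-order recursion plus control of $\beta_{N-1}^{\Lambda_N}-\beta_{N-1}$ via Proposition~\ref{prop:theFRD}) would indeed recover the sharper error, though ``iterating'' is a slight misnomer since only one finite-volume step is involved.
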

\begin{proof}
By Proposition~\ref{prop:theRGthm} and Lemma~\ref{lemma:gjasympcritd},  we see that $g_{N-1}$ satisfies \eqref{eq:gNasymp}.  Also,  by Lemma~\ref{lemma:VptmthV},  we have
\begin{align}
	| g_N^{(\emptyset)} - g_{N-1}^{(\emptyset)}| \le O_L ( \tilde{\chi}_N \tilde{g}_N^2 \scale_N ) ,
\end{align}
which give \eqref{eq:gNasymp}.
\end{proof}

In the proofs of Theorem~\ref{thm:WNlimit} and \ref{thm:NGlimit} we see below,  we only consider the bulk part of the RG flow,  so we take $\vec{\lambda}_{\o,0} = \vec{\lambda}_{\x,0} = 0$.

\subsection{Proof of Theorem~\ref{thm:WNlimit}--white noise limit}

We write the rescaled field as $\f_N = f_N / \sa_N$,  where we recall $f_N$ from \eqref{eq:fN}.
We take Lemma~\ref{lemma:Vtilde} and the first equality of Proposition~\ref{prop:mgfref} as the starting point.  
We apply the initial conditions determined by Proposition~\ref{prop:theRGthm},  with $k<\infty$.  In particular,  we take $\nu = \nu_c + \epsilon_k$ and obtain
\begin{align}
	\langle e^{L^{dN/2} (\varphi,f_N)} \rangle_{g,\nu_{c} + \epsilon_k,  N}
		& = e^{\frac{1}{2} (\f_N,   \tilde\f_N ) } 
		\frac{\int_{\R^n} Z_{N,\bulk} ( y \one + \tilde\f_N) e^{- \frac{1}{2} t_N^{-1} L^{dN} |y|^2  } \rd y}
		{\int_{\R^n} Z_{N,\bulk} ( y \one ) e^{- \frac{1}{2} t_N^{-1} L^{dN} |y|^2  } \rd y}    \nnb
		& = e^{\frac{1}{2} (\f_N,   \tilde\f_N ) } 
		\frac{\int_{\R^n} Z_{N,\bulk} ( v_N) e^{- \frac{1}{2} |z|^2  } \rd z}
		{\int_{\R^n} Z_{N,\bulk} ( t_N^{1/2} L^{-Nd/2} z \one ) e^{- \frac{1}{2} |z|^2  } \rd z} \label{eq:mgfasintfracres}  
\end{align}
with change of variable $z = t_N^{-1/2} L^{Nd/2} y$
where $\tilde{\f}_N = C^{(\vec\ba)} \f_N$ and $v_N  = t_N^{1/2} L^{-Nd/2} z \one + \tilde{\f}_N$.

The next purely computational lemma is proved in Appendix~\ref{sec:covcomp}.

\begin{lemma} \label{lemma:fNtNlmt}
Following hold for $\f_N = L^{\frac{d}{2} N} f_N$.
\begin{enumerate}
\item $\lim_{N\rightarrow \infty} (\f_N,   \tilde\f_N) = ( \ba^{(\emptyset)} )^{-1} \norm{f}_{L^2 (\T^d)} $.
\item For any $r \in [0,\infty]$ and each $n \in \N$,  there is a constant $C_n >0$ such that
\begin{align}
	\norm{\nabla^n \tilde\f_N}_{\ell^{2+r}} \le C_{n} L^{-nN} L^{-\frac{d r}{2(2 +r)} N} \left( L^{(2-\eta)N} + ( \ba^{(\emptyset)} )^{-1} \right)^{\frac{r}{2+r}} ( \ba^{(\emptyset)} )^{-\frac{2}{2+r}} \norm{\nabla^n f}_{\ell^{\infty}}  
\end{align}
uniformly in $N$,  with $\frac{r}{2+r} = 1$ when $r = \infty$.
\end{enumerate}
\end{lemma}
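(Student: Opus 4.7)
The plan is to pass to Fourier space on $\Lambda_N$, where $C^{(\vec{\ba})}$ acts diagonally with symbol $1/\mu(p)$, and $\mu(p)$ denotes the Fourier symbol of $\L_\eta^{(\vec{\ba})}$. By \eqref{eq:Lapkadefi} together with $\vec{\ba} \in \HB_{\epsilon_p}$, the counterterms indexed by $\km \in \kA_1$ are of lower order in $|p|$, so $\mu(p) \asymp \lambda(p)^{1-\eta/2} + \ba^{(\emptyset)}$ uniformly in $N$. Under the change of variable $q = L^N p/(2\pi) \in \Z^d \cap [-L^N/2, L^N/2)^d$, a Riemann-sum computation exploiting the Schwartz character of $f$ gives $\hat{\f}_N(p) = L^{dN/2}(\hat{f}(q) + o_N(1))$, where $\hat{f}(q) = \int_{\T^d} f(y) e^{-2\pi i q \cdot y}\,dy$ and the error is uniform in $q$.

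For part~(i), Plancherel yields $(\f_N, \tilde{\f}_N) = |\Lambda_N|^{-1} \sum_{p \in \Lambda_N^*} |\hat{\f}_N(p)|^2 / \mu(p)$. For each fixed $q$, $p = 2\pi q/L^N \to 0$ and hence $\mu(p) \to \ba^{(\emptyset)}$, so the $q$-mode contributes $|\hat{f}(q)|^2/\ba^{(\emptyset)}$ in the limit. I would justify the interchange of limit and sum by splitting at $|q| \le Q$: the head converges mode-by-mode, while the tail is controlled using $\mu(p) \ge \ba^{(\emptyset)}$ and Parseval by a bound of the form $(\ba^{(\emptyset)})^{-1} \sum_{|q|>Q} |\hat{f}(q)|^2 + o_N(1)$, which vanishes as $Q \to \infty$ by Schwartz decay. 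Summing over $q \in \Z^d$ then yields $(\ba^{(\emptyset)})^{-1} \sum_q |\hat{f}(q)|^2 = (\ba^{(\emptyset)})^{-1} \norm{f}_{L^2(\T^d)}^2$.

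For part~(ii), I would establish the endpoint bounds at $r = 0$ and $r = \infty$ and interpolate via $\norm{g}_{\ell^{2+r}} \le \norm{g}_{\ell^2}^{2/(2+r)} \norm{g}_{\ell^\infty}^{r/(2+r)}$, which automatically produces both the exponents $\frac{2}{2+r}, \frac{r}{2+r}$ and the polynomial factor $L^{-dNr/(2(2+r))}$. The $\ell^2$ endpoint follows from Plancherel and $\mu(p) \ge \ba^{(\emptyset)}$, giving $\norm{\nabla^n \tilde{\f}_N}_{\ell^2} \le (\ba^{(\emptyset)})^{-1} \norm{\nabla^n \f_N}_{\ell^2} \lesssim L^{-nN}(\ba^{(\emptyset)})^{-1} \norm{\nabla^n f}_{\ell^\infty}$. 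For the $\ell^\infty$ endpoint, Fourier inversion gives
\[
	|\nabla^n \tilde{\f}_N(x)| \lesssim \frac{1}{|\Lambda_N|} \sum_{p \in \Lambda_N^*} \frac{(|p|^n \wedge 1) |\hat{\f}_N(p)|}{\mu(p)} .
\]
Splitting at $p = 0$: the zero mode contributes $L^{-dN/2} L^{-nN} (\ba^{(\emptyset)})^{-1} \norm{\nabla^n f}_{\ell^\infty}$, and for the nonzero modes I combine $\mu(p)^{-1} \le \min((\ba^{(\emptyset)})^{-1}, C\lambda(p)^{-(1-\eta/2)})$ with the rapid decay $|q|^n |\hat{f}(q)| \lesssim \norm{\nabla^n f}_{\ell^\infty} (1+|q|)^{-A}$ (any $A$) to arrive at $L^{-dN/2} L^{-nN} (L^{(2-\eta)N} \wedge (\ba^{(\emptyset)})^{-1}) \norm{\nabla^n f}_{\ell^\infty}$, which is bounded by the stated $L^{(2-\eta)N} + (\ba^{(\emptyset)})^{-1}$.

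The main obstacle is the uniform handling of the crossover $|q|^{2-\eta}$ versus $L^{(2-\eta)N} \ba^{(\emptyset)}$ in the nonzero-mode analysis, between the mass-dominated regime $\mu(p) \approx \ba^{(\emptyset)}$ and the kinetic-dominated regime $\mu(p) \approx \lambda(p)^{1-\eta/2}$; that crossover is precisely what forces both $L^{(2-\eta)N}$ and $(\ba^{(\emptyset)})^{-1}$ to appear additively in the final bound. The Riemann-sum approximation $\hat{\f}_N(p) \approx L^{dN/2} \hat{f}(q)$ carries enough uniformity by the Schwartz hypothesis on $f$ that the remaining estimates are routine.
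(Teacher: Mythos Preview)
Your argument for part~(i) and the $\ell^2$ endpoint plus H\"older interpolation in part~(ii) match the paper's proof essentially line for line. The genuine divergence is at the $\ell^\infty$ endpoint: the paper does \emph{not} stay in Fourier space but instead uses the real-space Young inequality
\[
|\nabla^n C^{(\vec{\ba})}\f_N(x)| \le \Big(\sum_{y}|C^{(\vec{\ba})}(x,y)|\Big)\,\|\nabla^n\f_N\|_{\ell^\infty},
\]
and bounds $\sum_y |C^{(\vec{\ba})}(x,y)| \lesssim L^{(2-\eta)N}+(\ba^{(\emptyset)})^{-1}$ via the finite-range decomposition (Corollary~\ref{cor:Coneinftynrm}). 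This produces the stated bound depending only on $\|\nabla^n f\|_{\ell^\infty}$, with no appeal to Schwartz decay of $\hat f$.

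Your Fourier-space route for $\ell^\infty$ contains a slip: the inequality $|q|^n|\hat f(q)|\lesssim \|\nabla^n f\|_{\ell^\infty}(1+|q|)^{-A}$ for arbitrary $A$ is false---$n$ derivatives buy you only $|q|^n|\hat f(q)|\lesssim \|\nabla^n f\|_{L^1}$, and each extra power of decay costs one more derivative. So your argument actually yields a bound in terms of $\|\nabla^{n+A}f\|_{\ell^\infty}$ for some $A$ depending on $d,\eta$. This is harmless for the paper's applications (which absorb $\max_{k\le d+p_\Phi}\|\nabla^k f\|_{\ell^\infty}$ into the implicit constants anyway), but it does not deliver the lemma exactly as stated. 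The paper's real-space argument avoids this entirely and is the cleaner choice here.
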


In the proof below,  we take $\max_{n \le d + p_\Phi} \norm{\nabla^n f}_{\ell^{\infty}}$ as a constant (which is finite because of the assumption $f \in \cS (\T^d ; \R^n)$) and do not write it explicitly.

\begin{proof}[Proof of Theorem~\ref{thm:WNlimit}]

With the parameters chosen as in Proposition~\ref{prop:theRGthm} with $k<\infty$,
we want to prove that
\begin{align}
	\lim_{N\rightarrow \infty} \langle e^{(\f_N,\varphi)} \rangle_{g,\nu,N} \rightarrow \exp \Big( \frac{1}{2\ba^{(\emptyset)}} \norm{f}_{L^2 (\T^d)}^2 \Big)
\end{align}
By \eqref{eq:mgfasintfracres} and Lemma~\ref{lemma:fNtNlmt}(i),  it is sufficient to show that the fraction of integrals on the right-hand side tends to $1$.

To see this,  we take \eqref{eq:theRGthm1}.
Since $e^{u_{N}}$ cancels in the denominator and the numerator, we only need to consider $I_{N} + K_{N}$.  
Each component of $V_N$ satisfies
\begin{align}
	& |\pi_2 V_{N} (\Lambda,  v_N) | 
		\lesssim |\nu_N^{(\emptyset)}| \Big( t_N |z|^2 + \norm{\tilde{\f}_N}_{\ell^{2}}^2  \Big) 
		\le O_L (1) \tilde{g}_N  L^{- (d-2+\eta) N} \Big( t_N |z|^2  + \frac{1}{(\ba^{(\emptyset)} )^2} \Big)  ,    \label{eq:WNlimitpf1} \\
	& 0\le \pi_4 V_{N} (\Lambda,  v_N) 
		\lesssim g_N^{(\emptyset)} \left( L^{-dN} t_N^2  |z|^4 + \norm{\tilde{\f}_N}_{\ell^{4}}^4  \right) \nnb
		&\qquad\qquad\qquad
		 \lesssim \tilde{g}_N L^{-dN}   \Big( t_N^2 |z|^4 + \Big( L^{(2-\eta)N} + 1 / \ba^{(\emptyset)} \Big)^2 (\ba^{(\emptyset)})^{-2} \Big)   \label{eq:WNlimitpf2}
\end{align}
where in both \eqref{eq:WNlimitpf1} and \eqref{eq:WNlimitpf2},  the final inequalities use Lemma~\ref{lemma:fNtNlmt}(ii) to bound $\tilde{\f}_N$ and the restriction $V_{N} \in \tilde\cD_{N}$ to bound $\nu_N^{(\emptyset)}$ and $g_N^{(\emptyset)}$. 
Other terms can be bounded using the same principle, but since $\nabla \one \equiv 0$, we can ignore the constant field for $\pi_{2,\nabla} V_N$ and $\pi_{4,\nabla} V_N$.
With $\km_1 \in \ko_{2,\nabla}$ and $\km_2 \in \ko_{4,\nabla}$,  by Lemma~\ref{lemma:fNtNlmt}(ii),
\begin{align}
	|\pi_{\km_1} V_{N} (\Lambda,  v_N) |  &\lesssim |\nu^{(\km_1)}_N| L^{-q(\km_1) N} \big( \ba^{(\emptyset)} \big)^{-2} \\ 
	| \pi_{\km_2} V_{N} (\Lambda,  v_N) | &\lesssim |g^{(\km_2)}_N| L^{-( q (\km_2) + d) N} (L^{(2-\eta)N} + 1 /\ba^{(\emptyset)}  )^2 (\ba^{(\emptyset)})^{-2}  .
\end{align}
From the restriction $V_{N,\bulk} \in \tilde\cD_{N}$,  we see that they all tend to 0 as $N\rightarrow \infty$. 
Also,  since $\norm{W_{N,V} (\Lambda)}_{h_N, T_N (0)} \le O_L (1) (\tilde{g}_N \scale_N)^{1/2}$ due to \eqref{eq:FWPbounds3obsapplied},
along with Lemma~\ref{lemma:polynorm},
\begin{align}
	|\pi_\bulk W_{N,V} (\Lambda,v_N)| \le O_L (1) (\tilde{g}_N \scale_N)^{1/2} \big( 1 + \norm{v_N}_{h_N, \Phi_N} \big)^6 .   \label{eq:WNbnd}
\end{align}
But by the Sobolev inequality \eqref{eq:logtildeGNbnd} and Lemma~\ref{lemma:fNtNlmt}
\begin{align}
	\norm{v_N}_{h_N, \Phi_N}^2 
		\lesssim
		L^{-jN} \max_{n \le d+ p_\Phi} L^{2nN} \norm{\nabla^n v_N}^2_{\ell^{2}}	 / h_{N,\bulk}^2
		\lesssim
		\tilde{g}_N^{1/2} L^{-\frac{d}{2} N} \big( t_N  |z|^2  + 1/ ( \ba^{(\emptyset)} )^2  \big) ,
\end{align}
but since $t_N < 1/ \ba^{(\emptyset)}$ by Proposition~\ref{prop:Gammajbounds2},
so $|\pi_{\bulk} W_N (\Lambda,v_N) | = o (1 + |z|^6)$ as $N\rightarrow \infty$.

Next,  $K_{N,\bulk} \in M \cK_{N}$ and Lemma~\ref{lemma:KNbnd} imply
\begin{align}
	| K_N (\Lambda, v_N) | 
		\le O_L (1) \chi_N^{3/2} \tilde{g}_N^{\frac{3}{4}} \scale_N^{\kpe} \tilde{G}_N (\Lambda,  v_N) e^{-\kappa L^{-dN} \norm{v_N / h_{N,\bulk} }_{\ell^2} }  .
\end{align}
Since $v_N - \tilde{\f}_N$ is constant-valued,  Lemma~\ref{lemma:logtildeGNbnd} implies $\tilde{G}_N (\Lambda, v_N) = \tilde{G}_N (\Lambda ,  \tilde{\f}_N)$ and by \eqref{eq:logtildeGNbnd} with Lemma~\ref{lemma:fNtNlmt}(ii),
\begin{align}
	\log \tilde{G}_N (\Lambda, \tilde{\f}_N) \lesssim L^{-(2-\eta)N} \sum_{n \le d + p_{\Phi}} L^{2nN} L^{-2nN} \lesssim L^{-(2-\eta)N}
\end{align}
so $|K_{N,\bulk} (\Lambda, v_N) | = o(1)$ as $N\rightarrow \infty$,  uniformly in $z$.
If we plug in $f \equiv 0$,  we see that the same bounds hold true for $Z_{N,\bulk} (t_N^{1/2} L^{-\frac{dN}{2}} z \one)$.
Therefore,  along with the the obtained bounds,  we can use the Dominated convergence theorem to see that
\begin{align}
	\int_{\R^n} e^{- \frac{1}{2} |z|^2  } \rd z 
		&= \lim_{N\rightarrow \infty}  \int_{\R^n} Z_{N,\bulk} ( v_N ) e^{- \frac{1}{2} |z|^2  } \rd z \nnb
		&=   \lim_{N\rightarrow \infty} \int_{\R^n} Z_{N,\bulk} ( t_N^{1/2} L^{-Nd/2} z \one ) e^{- \frac{1}{2} |z|^2  } \rd z ,
\end{align}
as desired. 
\end{proof}

\subsection{Proof of Theorem~\ref{thm:NGlimit}(i): non-Gaussian limit}
\label{sec:NGlimitproof}

We now take $\g_N = f_N / \sb_N$,  with $\sb_N$ from \eqref{eq:bNcN},
so that $(f_N ,\varphi) / \sb_N = (g_N ,\varphi)$.
We take Lemma~\ref{lemma:Vtilde} and the second equality of Proposition~\ref{prop:mgfref} as the starting point.  

The parameters are set as in Proposition~\ref{prop:theRGthm} with $k=\infty$,  so in particular $\nu_0^{(\emptyset)} = \nu = \nu_c$ and $\ba^{(\emptyset)}=0$.
Again,  with only bulk part of the RG flow taken into consideration,  we have
\begin{align}
	\big\langle e^{(\varphi,f_N) / \sb_N} \big\rangle_{g, \nu_c , N}
		&= e^{\frac{1}{2} (\g_N ,  \tilde\g_N)} 
		\frac{\int_{\R^n} Z_{N,\bulk} ( y \one + \tilde{\g}_N) e^{( \g_N  , y \one)}  \rd y }{\int_{\R^n} Z_{N,\bulk}  ( y \one )  \rd y } \nnb
		&= e^{\frac{1}{2} (\g_N ,  \tilde\g_N)} 
		\frac{\int_{\R^n} Z_{N,\bulk} ( v_N ) e^{ (g_N^{(\emptyset)} )^{-1/4} L^{-dN/4} ( \g_N  ,  z \one)}  \rd z }{\int_{\R^n} Z_{N,\bulk}  ( (g_N^{(\emptyset)} )^{-1/4} L^{-d N / 4}  z \one)  \rd z }
		\label{eq:mgfNGalt}
\end{align}
with change of variable $z = L^{\frac{d}{4} N} (g_N^{(\emptyset)} )^{1/4} y$
where $\tilde{\g}_N = w_N \g_N$ and $v_N = y \one + \tilde{\g}_N$.
We again need a purely computational lemma proven in Section~\ref{sec:covcomp}.

\begin{lemma} \label{lemma:gNtNlmt}
Following hold for $\g_N$ when $\ba^{(\emptyset)} =0$.
\begin{enumerate}
\item $\lim_{N\rightarrow \infty} (\g_N,   \tilde\g_N) = 0$.
\item For any $r \in [0,\infty]$,  there is a constant $C_n >0$ such that
\begin{align}
	\norm{\nabla^n \tilde\g_N}_{\ell^{2+r}} \le C_{n} \tilde{g}_N^{1/4} L^{- (n + \frac{d}{4} ) N} L^{-\frac{d r}{2(2 +r)} N} L^{(2-\eta)N} \norm{\nabla^n f}_{L^{\infty}}
\end{align}
uniformly in $N$,  with $\frac{r}{2+r} = 1$ when $r = \infty$.
\end{enumerate}
\end{lemma}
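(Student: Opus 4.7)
The plan is to prove (i) by an explicit Fourier computation that exploits the rapid decay of $\hat f$ (which Cauchy--Schwarz on (ii) alone is not strong enough to give), and to prove (ii) by Young's convolution inequality applied to $\tilde\g_N = w_N \ast \g_N$.

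For part (i), I would start from Plancherel's identity on the torus,
\begin{equation*}
(\g_N, \tilde\g_N) = L^{-dN} \sum_{p \in \Lambda_N^*} \hat w_N(p) \, |\hat\g_N(p)|^2 ,
\end{equation*}
using that $w_N$ is translation invariant. Since $f \in \cS(\T^d; \R^n)$, the discrete Fourier coefficients $\hat\g_N(p) = \sb_N^{-1} \hat f_N(p)$ Riemann-approximate $\sb_N^{-1}\hat f(p)$ and inherit rapid decay in $p$. By Proposition~\ref{prop:theFRD}, extended continuously to $\ba^{(\emptyset)} = 0$ via the stated continuity of $\Gamma_j$ in $\vec\ba$, one has $\hat w_N(p) \asymp L^{(2-\eta)N}(1+|p|)^{-2+\eta}$ for $p \neq 0$ with $|p|/L^N$ small, while the sandwich $t_N \in ((\ba^{(\emptyset)})^{-1} - C L^{(2-\eta)N}, (\ba^{(\emptyset)})^{-1})$ gives $\hat w_N(0) = O(L^{(2-\eta)N})$ uniformly. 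Inserting these bounds and summing gives $(\g_N, \tilde\g_N) \lesssim L^{-(d-2+\eta)N} \sb_N^{-2}$, which is $\asymp N^{-1/2}$ at $d = d_{c,u}$ (so $\eta = 0$, $d = 4$) and $\asymp g^{1/2} L^{(2-\eta-d/2)N}$ for $d > d_{c,u}$; both tend to $0$ since $d > 4 - 2\eta$.

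For part (ii), discrete derivatives commute with convolution on the torus, so $\nabla^n \tilde\g_N = w_N \ast (\nabla^n \g_N)$, and Young's convolution inequality gives
\begin{equation*}
\|\nabla^n \tilde\g_N\|_{\ell^{2+r}} \le \|w_N\|_{\ell^1} \, \|\nabla^n \g_N\|_{\ell^{2+r}} .
\end{equation*}
The $\ell^1$ factor is bounded by summing the finite-range decomposition: the bound $\|\Gamma_j\|_{\ell^1} \le |\{|z| \le L^j\}| \cdot \|\Gamma_j\|_{\ell^\infty} \lesssim L^{(2-\eta)j}$ from Proposition~\ref{prop:theFRD} yields $\|w_N\|_{\ell^1} \lesssim L^{(2-\eta)N}$ (dominated by $j = N$). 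For the second factor, each discrete derivative of $\g_N(x) = L^{-dN} \sb_N^{-1} f(\hat i(x))$ contributes a factor $L^{-N}$ via Taylor expansion, and a Riemann-sum comparison gives $\|\nabla^n \g_N\|_{\ell^{2+r}} \lesssim L^{dN/(2+r) - (d+n)N} \sb_N^{-1} \|\nabla^n f\|_{L^\infty}$. Inserting $\sb_N^{-1} \asymp \tilde g_N^{1/4} L^{dN/4}$ (from \eqref{eq:bNcN}, using $\tilde g_N \sim c/N$ at $d = d_{c,u}$ from Lemma~\ref{lemma:gjasympcritd} and $\tilde g_N \asymp g$ otherwise), and reducing via the algebraic identity $\tfrac{1}{2+r} + \tfrac{r}{2(2+r)} = \tfrac{1}{2}$, rearranges the combined exponent of $L^N$ to exactly $-n - \tfrac{d}{4} - \tfrac{dr}{2(2+r)} + (2-\eta)$, matching the claim.

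The main obstacle is part (i): using only (ii) and Cauchy--Schwarz gives $(\g_N, \tilde\g_N) \lesssim L^{-dN/2 + (2-\eta)N} \sb_N^{-2}$, which at $d = 4$, $\eta = 0$ is $\asymp N^{-1/2} L^{2N}$ and fails to vanish. The decisive extra $L^{-dN/2}$ comes only from the Plancherel weighting by $|\hat f(p)|^2$ with rapid decay in $p$, so the Fourier computation is indispensable. One also has to be careful at the $p = 0$ Fourier mode, where the individual pieces $t_N$ and $\hat{C^{(\vec\ba)}}(0) = (\ba^{(\emptyset)})^{-1}$ both diverge as $\ba^{(\emptyset)} \downarrow 0$, but their difference $\hat w_N(0)$ stays bounded by $O(L^{(2-\eta)N})$ thanks to the quantitative clause on $t_N$ in Proposition~\ref{prop:theFRD}.
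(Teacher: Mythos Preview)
Your proposal is correct and lands on the same Plancherel/convolution estimates as the paper, but with some over-elaboration and one arithmetic slip in the commentary.

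For part (i), the paper's argument is shorter than yours: it simply bounds $(\g_N, w_N \g_N) \le \|\hat w_N\|_{\ell^\infty} \|\g_N\|_{\ell^2}^2 \lesssim L^{(2-\eta)N} \cdot \sb_N^{-2} L^{-dN} = \sb_N^{-2} L^{-(d-2+\eta)N}$, using only $\|\hat w_N\|_{\ell^\infty} \le \|w_N\|_{\ell^1} \lesssim L^{(2-\eta)N}$ from Corollary~\ref{cor:Coneinftynrm}. No pointwise asymptotic $\hat w_N(p) \asymp L^{(2-\eta)N}(1+|p|)^{-(2-\eta)}$, no rapid decay of $\hat f$, and no separate $p=0$ handling via the $t_N$ sandwich is needed. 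Your route gets the same final bound but through more machinery than necessary.

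For part (ii), the paper proceeds as in Lemma~\ref{lemma:fNtNlmt}: establish the $\ell^2$ endpoint via Plancherel and the $\ell^\infty$ endpoint via $\|w_N\|_{\ell^1}\|\nabla^n\g_N\|_{\ell^\infty}$, then interpolate by H\"older. Your direct Young's-inequality bound $\|w_N \ast \nabla^n\g_N\|_{\ell^{2+r}} \le \|w_N\|_{\ell^1}\|\nabla^n\g_N\|_{\ell^{2+r}}$ is equivalent and arguably cleaner, since both endpoints use the same $\ell^1$ bound anyway.

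One correction to your ``obstacle'' paragraph: Cauchy--Schwarz with (ii) actually \emph{does} work. You dropped a factor $L^{-dN/2}$ from $\|\g_N\|_{\ell^2}$; with it included, the Cauchy--Schwarz bound is $\sb_N^{-2} L^{-(d-2+\eta)N}$, the same as the Plancherel bound, and it does vanish at $d=4$, $\eta=0$ (giving $N^{-1/2}$). So the Fourier computation is not indispensable here---it is simply the cleanest way to see the estimate.
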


Again,  we consider $\max_{n \le d + p_\Phi} \norm{\nabla^n f}_{\ell^{\infty}}$ as a constant and do not write it explicitly.   Next lemma shows that the scaling of \eqref{eq:mgfNGalt} is natural.

\begin{lemma} \label{lemma:Vtozfour}
For $f \in C^{\infty} (\T^d)$ and $V_{N} \in \tilde\cD_{N,\bulk}$,
\begin{align}
	\lim_{N\rightarrow \infty} I_{N}^{\Lambda} (v_N ) = \lim_{N\rightarrow \infty} e^{-V_{N}^{\stable} (\Lambda,  v_N ) } = e^{-\frac{1}{4} |z|^4}
	\label{eq:Vtozfourm1}
\end{align}
and they are uniformly integrable in $z$.
\end{lemma}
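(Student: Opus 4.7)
Since $\Lambda_N$ is itself an $N$-block, $\cB_N(\Lambda) = \{\Lambda\}$ and
\begin{equation*}
I_N^{\Lambda}(v_N) = e^{-V_N^{(\bs)}(\Lambda, v_N)}\bigl(1 + W_{N,V}(\Lambda, v_N)\bigr),
\end{equation*}
so \eqref{eq:Vtozfourm1} will follow from pointwise convergence $V_N^{(\bs)}(\Lambda, v_N) \to \tfrac{1}{4}|z|^4$ and $W_{N,V}(\Lambda, v_N) \to 0$, together with a $z$-integrable dominant uniform in $N$. I decompose $v_N = y\one + \tilde{\g}_N$ with $y = (g_N^{(\emptyset)})^{-1/4} L^{-dN/4} z$; the crucial structural point is that $y\one$ is constant, so lattice derivatives of $v_N$ coincide with those of $\tilde{\g}_N$, which are small by Lemma~\ref{lemma:gNtNlmt}(ii).

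The only part of $V_N$ contributing to the limit is $\pi_4 V_N(\Lambda, v_N) = \tfrac{g_N^{(\emptyset)}}{4}\sum_x |y\one + \tilde{\g}_N(x)|^4$. Its binomial expansion yields the leading term $\tfrac{g_N^{(\emptyset)}}{4} L^{dN}|y|^4 = \tfrac{1}{4}|z|^4$ precisely by the choice of $y$, together with five cross terms each carrying at least one factor of $\tilde{\g}_N$. These I bound by combining $|y|^k \lesssim \tilde{g}_N^{-k/4} L^{-kdN/4}|z|^k$, $g_N^{(\emptyset)} \asymp \tilde{g}_N$ (Lemma~\ref{lemma:gNasymp}), and the $\ell^k$-norms of $\tilde{\g}_N$ from Lemma~\ref{lemma:gNtNlmt}(ii) (choosing $r = k-2$); each cross term is then $O_L(1) \tilde{g}_N^{\alpha_k} L^{-\gamma_k N} P(|z|)$ with $\alpha_k, \gamma_k \ge 0$ and $\alpha_k + \gamma_k > 0$, hence $o(1)$ locally uniformly in $z$. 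The remaining components of $V_N^{(1)} = (1-\pi_{4,\nabla})V_N$ are handled by entirely analogous estimates: the leading contribution to $\pi_2 V_N(\Lambda, v_N)$ is of order $\tilde{g}_N^{1/2} L^{(2-\eta-d/2)N}|z|^2 = o(1)$ (using $|\nu_N^{(\emptyset)}| \lesssim L^{(2-\eta-d)N}\tilde{g}_N$ from $\tilde{\cD}_{N,\bulk}$), while $\pi_{2,\nabla} V_N$ and $V_N^{(2)} = \pi_{4,\nabla} V_N$ depend on $v_N$ only through lattice derivatives and so vanish directly via Lemma~\ref{lemma:gNtNlmt}(ii); in particular $\pexp(-V_N^{(2)}(b, v_N)) = 1 + o(1)$. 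The convergence $W_{N,V}(\Lambda, v_N) \to 0$ is established exactly as in the proof of Theorem~\ref{thm:WNlimit}: Lemma~\ref{lemma:FWPbounds3obs} gives $\|W_{N,V}(\Lambda)\|_{h_N, T_N(0)} \le O_L(1)(\tilde{g}_N \scale_N)^{1/2}$, Lemma~\ref{lemma:polynorm} promotes this to $|W_{N,V}(\Lambda, v_N)| \le O_L(1)(\tilde{g}_N \scale_N)^{1/2}(1 + \|v_N\|_{h_N, \Phi_N})^6$, and the Sobolev-type bound Lemma~\ref{lemma:logtildeGNbnd} (the constant $y\one$ dropping out by translation invariance of $\tilde{G}_N$) shows $\|v_N\|_{h_N, \Phi_N}$ is uniformly bounded on compact $z$-sets.

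For uniform integrability I exploit the positivity of the quartic. The elementary inequality $|a+b|^4 \ge \tfrac{1}{16}|a|^4 - |b|^4$ yields
\begin{equation*}
\pi_4 V_N(\Lambda, v_N) \,\ge\, \tfrac{g_N^{(\emptyset)}}{64} L^{dN}|y|^4 - \tfrac{g_N^{(\emptyset)}}{4}\|\tilde{\g}_N\|_{\ell^4}^4 \,\ge\, \tfrac{1}{64}|z|^4 - C,
\end{equation*}
with $C$ uniform in $N$ since $g_N^{(\emptyset)}\|\tilde{\g}_N\|_{\ell^4}^4 = O_L(\tilde{g}_N^2) \to 0$ by Lemma~\ref{lemma:gNtNlmt}(ii) with $r = 2$. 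The other components of $V_N^{(\bs)}(\Lambda, v_N)$ are $o(1)$ times a polynomial in $|z|$ of degree at most $4$ by the estimates above, hence absorbed into $\tfrac{1}{128}|z|^4 + C'$ for all large $N$. Thus $e^{-V_N^{(\bs)}(\Lambda, v_N)} \le C'' e^{-|z|^4/128}$ uniformly in $N$, which is integrable in $z$. For $I_N^{\Lambda}(v_N)$, $|1 + W_{N,V}(\Lambda, v_N)|$ is majorised by a fixed polynomial in $|z|$ via Lemma~\ref{lemma:polynorm}, again absorbed by the quartic decay.

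The main obstacle is the combinatorial bookkeeping of the cross terms, especially at $d = d_{c,u}$: several marginal $L^0$-terms in the binomial expansions of $\pi_4 V_N$ and $\pi_2 V_N$ require the logarithmic decay $\tilde{g}_N \sim 1/N$ from Lemma~\ref{lemma:gjasympcritd} for their vanishing, and the $\ell^p$-index in Lemma~\ref{lemma:gNtNlmt}(ii) must be chosen optimally for each individual term; similarly, one must verify that the residual $\pi_{2,\nabla}V_N$ and $V_N^{(2)}$ contributions remain uniformly polynomial in $|z|$ rather than exponential before being absorbed into the quartic dominant.
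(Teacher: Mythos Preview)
Your proposal is correct and follows the paper's approach closely---indeed you are more careful than the paper about the cross terms in the binomial expansion of $|y\one + \tilde\g_N|^4$ and about the explicit quartic lower bound for uniform integrability. Two small misstatements should be corrected, though neither breaks the argument.

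First, the parenthetical ``the constant $y\one$ dropping out by translation invariance of $\tilde G_N$'' is misplaced: you are bounding $\|v_N\|_{h_N,\Phi_N}$, not $\tilde G_N$, and the $n=0$ term in the Sobolev bound of Lemma~\ref{lemma:logtildeGNbnd} does see the constant $y\one$. The paper's computation gives $\|v_N\|_{h_N,\Phi_N}^2 \lesssim |z|^2 + L^{-(d-4+2\eta)N}\tilde g_N$, so the $y\one$ part contributes $\asymp |z|^2$. Your conclusion that this is bounded on compact $z$-sets---and, for uniform integrability, polynomially bounded---remains valid, but the reason is not translation invariance.

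Second, monomials in $\pi_{4,\nabla}V_N$ with $p(\km)=4$, $q(\km)=2$ (which occur for $d\ge 5$) need not have a derivative on every factor; e.g.\ $(\nabla\varphi)(\nabla\varphi)\varphi\varphi$ is allowed. Hence $V_N^{(2)}(b,v_N)$ can depend on $z$ through the undifferentiated factors. The point is rather that at least one factor carries a derivative, so every term picks up a power of $\|\nabla\tilde\g_N\|$, which by Lemma~\ref{lemma:gNtNlmt}(ii) forces the whole expression to be $o(1)$ times a polynomial in $|z|$; your conclusion $\pexp(-V_N^{(2)}(b,v_N))=1+o(1)$ and the polynomial majorant for integrability still follow.
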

\begin{proof}
For the second limit,  we have
\begin{align}
	\pi_4 V_{N} \big( \Lambda ,  y \one \big) & = \frac{1}{4} |z|^4  ,  \label{eq:Vtozfour0}
\end{align}
so we only have to check that the other terms tend to 0.  Indeed,  we have 
\begin{align}
	| \nu_N^{(\emptyset)} | \sum_{x \in \Lambda} | y |^2
		\lesssim |\nu_N^{(\emptyset)} | \tilde{g}_N^{-1/2} L^{\frac{d}{2} N} |z|^2
		\le O_L (1) \tilde{g}_N^{1/2} L^{-\frac{d-4 + 2\eta}{2} N} |z|^2 
			\label{eq:Vtozfour1}
\end{align}
and $\pi_{\km_1} V_{N,\bulk} (y \one) = \pi_{\km_2} V_{N,\bulk} (y \one) = 0$. 

Also,  by Lemma~\ref{lemma:gNtNlmt}(ii),  for $\km_1 \in \ko_{2,\nabla}$ and $\km_2 \in \ko_{4,\nabla}$,
\begin{align}
	| \nu_N^{(\emptyset)} | \sum_{x \in \Lambda} | \tilde{\g}_N (x) |^2
		& \lesssim |\nu_N^{(\emptyset)} | \,  \tilde{g}_N^{1/2} L^{-\frac{d-8+4\eta}{2} N} \lesssim \tilde{g}_N^{3/2} L^{-\frac{3(d-4+2\eta)}{2} N}  ,   \label{eq:Vtozfour2} \\
	| \nu_N^{(\km_1)} | \sum_{x \in \Lambda} | S^{(\km_1)}_{x} (\tilde{\g}_N) | 
		& \lesssim |\nu_N^{(\km_1)} | \, \tilde{g}_N^{1/2} L^{-\frac{d-8+4\eta}{2} N}	L^{-q (\km_1) N} ,  \label{eq:Vtozfour3} \\
	| g_N^{(\emptyset)} | \sum_{x \in \Lambda} | \tilde{\g}_N (x) |^4  
		& \lesssim  \tilde{g}^2_N L^{-2 (d-4+2\eta) N} ,  \label{eq:Vtozfour4} \\
	| g_N^{(\km_2)} | 	 \sum_{x \in \Lambda} | S_x^{(\km_2)} ( \tilde{\g}_N ) |
		& \lesssim |g_N^{(\km_2)} | \, \tilde{g}_N L^{- (2 (d-4+2\eta) + q (\km_2) ) N} , \label{eq:Vtozfour5}
\end{align}
which all tend to 0 by the assumption $V_{N,\bulk} \in \tilde\cD_{N,\bulk}$.

For the first limit of \eqref{eq:Vtozfourm1},  by \eqref{eq:WNbnd},
\begin{align}
	| \pi_{\bulk} W_{N,V} (\Lambda,  v_N) | \le O_L (1) (\tilde{g}_N \scale_N)^{1/2} \big( 1+ \norm{v_N}_{h_N, \Phi_N} \big)^6 ,
\end{align}
but now Lemma~\ref{lemma:gNtNlmt} and \ref{lemma:logtildeGNbnd} give
\begin{align}
	\norm{v_N}_{h_N,\Phi_N}^2 
		\lesssim L^{-dN} \sum_{n \le d + p_{\Phi}} L^{2n N} \norm{\nabla^n v_N}^2_{\ell^2} / h_{N,\bulk}^2
		\lesssim |z|^2 + L^{-(d-4+2\eta)N} \tilde{g}_N ,
\end{align}
so $W_{N,V} (\Lambda,  v_N) = o (1 + |z|^6)$ as $N\rightarrow \infty$.

Uniform integrability follows because the quartic term $\pi_4 V_N$ dominates the integral.  
\end{proof}

\begin{proof}[Proof of Theorem~\ref{thm:NGlimit}(i)]
By \eqref{eq:mgfNGalt} and Lemma~\ref{lemma:gNtNlmt}(i),  it is sufficient to prove for some $c_3 >0$
\begin{align}
	\lim_{N\rightarrow \infty} \frac{\int_{\R^n} Z_{N} ( v_N ) e^{ (g_N^{(\emptyset)} )^{-1/4} L^{-dN/4} ( \g_N  ,  z \one)}  \rd z }{\int_{\R^n} Z_{N}  ( (g_N^{(\emptyset)} )^{-1/4} L^{-\frac{d}{4} N}  z \one)  \rd z } 
		= \frac{\int_{\R^n} e^{c_3^{-1/4} (f, \one) \cdot z} e^{-\frac{1}{4}  |z|^4 } \rd z }{\int_{\R^n}  e^{-\frac{1}{4}  |z|^4 } \rd z } . 
	\label{eq:thmNGlimitpf1}
\end{align}
We already have an estimate of $e^{-V_{N}^\stable}$ in Lemma~\ref{lemma:Vtozfour},  and
\begin{align}
	(g_N^{(\emptyset)})^{-\frac{1}{4}} L^{-\frac{d}{4}N} (\g_N, y\one) = z \cdot (f_N,\one) \times \begin{cases}
		(N g_N^{(\emptyset)})^{-1/4} & (d=4) \\
		(g^{-1} g_N^{(\emptyset)})^{-1/4} & (d \ge 5) 
	\end{cases}
\end{align}
by definition,
and by Lemma~\ref{lemma:gNasymp},  we have $N g_N \sim \bfb^{-1}$ for $d=d_{c,u}$ and $g_N /g \sim g_{\infty} /g$ for $d > d_{c,u}$.
Thus if we can prove $\int_{\R^n} K_{N,\bulk}(\Lambda,  (g_N^{(\emptyset)})^{-\frac{1}{4}} L^{-\frac{d}{4}N} z \one ) \rd z$ and $\int_{\R^n} K_{N,\bulk}(\Lambda, v_N) \rd z$ tend to 0 as $N\rightarrow \infty$,  then by the Dominated convergence theorem,  we have \eqref{eq:thmNGlimitpf1} with
\begin{align}
	c_3 = \begin{cases}
			\bfb^{-1} & (d = d_{c,u}) \\
			g^{(\emptyset)}_{\infty} / g & (d > d_{c,u}) .
		\end{cases} 
		\label{eq:c3defi}
\end{align} 

To bound $K_{N,\bulk}$,  observe that $K_{N,\bulk} \in M \cK_{N}$ and Lemma~\ref{lemma:KNbnd} imply
\begin{align} \label{eq:KNLambdavN}
	| K_{N,\bulk} (\Lambda, v_N) | \le O_L (1) \tilde{g}_N^{\frac{3}{4}} \scale_N^{\kpe} \tilde{G}_N (\Lambda,  v_N) e^{-\kappa L^{-dN} \norm{v_N / h_{N,\bulk} }_{\ell^2}^2 }  .  
\end{align}

By Lemma~\ref{lemma:logtildeGNbnd},  we have $\tilde{G}_N (\Lambda, v_N) = \tilde{G}_N (\Lambda ,  \tilde\g_N)$ and by \eqref{eq:logtildeGNbnd} with Lemma~\ref{lemma:gNtNlmt}(ii),
\begin{align}
	\log \tilde{G}_N (\Lambda,  \tilde\g_N ) 
		\lesssim L^{-(2-\eta)N} \sum_{n \le d + p_{\Phi}} \tilde{g}_N^{1/2} L^{(2-\eta)N} L^{-\frac{d}{2} N}  \lesssim \tilde{g}_N^{1/2} L^{-\frac{d-4+2\eta}{2}N} ,
\end{align}
while again by Lemma~\ref{lemma:gNtNlmt}(ii),  since $|x+y|^2 \ge \frac{1}{2} |x|^2 - |y|^2$,
\begin{align}
	L^{-dN} \Big\| \frac{v_N }{ h_{N,\bulk} } \Big\|_{\ell^2}^2
		\ge  \frac{1}{2} \Big( \frac{\tilde{g}_N}{g_N^{(\emptyset)}} \Big)^{1/2} |z|^2 - C L^{- (d-4 +2\eta)N} \tilde{g}_N  \label{eq:KNLambdavN3}
\end{align}
so $\int_{\R_n} K_N (\Lambda,  v_N) \rd z \rightarrow 0$ as $N\rightarrow \infty$.
The same bounds hold true for $K_{N,\bulk}(\Lambda,  g_N^{-\frac{1}{4}} L^{-\frac{d}{4}N} z \one )$,  i.e.,  for $f \equiv 0$,  completing the proof. 

\end{proof}

\subsection{Proof of Theorem~\ref{thm:NGlimit}(ii): free field limit}

In what follows, we always assume that $\int_{\T^d} f (x) \rd x = 0$.  Proof of part (ii) requires scaling $\sh_N = ( f_N - \Phi_N (f_N) ) / \sc_N = L^{\frac{d-2+\eta}{2} N} ( f_N - \Phi_N (f))$,  so that $(\varphi, f_N - \Phi_N (f_N)) / \sc_N = (\varphi,  \sh_N)$.  Again,  we take Lemma~\ref{lemma:Vtilde} and the second equality of Proposition~\ref{prop:mgfref} as the starting point.  
Parameters are set exactly as in Section~\ref{sec:NGlimitproof}.

With only bulk part of the RG flow taken into consideration,  we have
\begin{align}
	\big\langle e^{(\varphi,  f_N - \Phi_N (f_N)) / \sc_N} \big\rangle_{g, \nu_c , N}
		&= e^{\frac{1}{2} (\sh_N ,  w_N \sh_N)} 
		\frac{\int_{\R^n} Z_{N,\bulk} ( y \one +  w_N \sh_N )  \rd y }{\int_{\R^n} Z_{N,\bulk}  ( y \one )  \rd y } \nnb
		&= e^{\frac{1}{2} (\sh_N ,  w_N \sh_N)} 
		\frac{\int_{\R^n} Z_{N,\bulk} ( v_N )  \rd z }{\int_{\R^n} Z_{N,\bulk}  ( \tilde{g}_N^{-1/4} L^{-dN/4} z \one )  \rd  z }
		\label{eq:mgfNGalt2}		
\end{align}
where $w_N \sh_N = \tilde{\sh}_N$,  $z = \tilde{g}_N^{1/4} L^{\frac{d}{4} N} y$ and $v_N = y \one + \tilde{\sh}_N = \tilde{g}_N^{-1/4} L^{-\frac{d}{4}} z \one + \tilde{\sh}_N$.
Note that the second equality of Proposition~\ref{prop:mgfref} should contain $e^{( \sh_N  , y \one)}$ in the numerator,  but by our definition,  $(\sh_N, \one) = 0$,  so it is not included above.
Proof of next lemma is deferred to Appendix~\ref{sec:covcomp}.

\begin{lemma} \label{lemma:hNtNlmt}
Following hold for $\sh_N$ when $\ba^{(\emptyset)} = 0$.

\begin{enumerate}
\item $\lim_{N\rightarrow \infty} (\sh_N,   w_N \sh_N) = (f - \Phi(f) , (-\Delta)^{-1+\eta/2} (f- \Phi(f))) \times \begin{cases}
(1 + \bar{\ba}_{\Delta})^{-1} &(\eta=0) \\
1 & (\eta >0) 
\end{cases}$.
\item For any $r \in [0,\infty]$,  there is a constant $C_n >0$ such that
\begin{align}
	\norm{\nabla^n \tilde\sh_N}_{\ell^{2+r}} \le C_{n} L^{(\frac{2-\eta}{2} - n)N} L^{-\frac{d r}{2(2 +r)} N} \norm{\nabla^n f}_{\ell^{\infty}}  
\end{align}
uniformly in $N$,  with $\frac{r}{2+r} = 1$ when $r = \infty$.
\end{enumerate}
\end{lemma}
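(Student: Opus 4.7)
The strategy for both parts is to work in Fourier space on $\Lambda_N$, using that $\sh_N = L^{\frac{d-2+\eta}{2} N}(f_N - \Phi_N(f_N))$ is orthogonal to constants by construction. With $\ba^{(\emptyset)} = 0$, the operator $w_N$ acts on the non-zero Fourier modes as the inverse of $\L_\eta^{(\vec{\ba})}$ (with $\vec{\ba} = \vec{\ba}_{c,\infty}$), whose symbol is $\hat{\L}_\eta^{(\vec\ba)}(p) = \lambda(p)^{1-\eta/2} + \sum_{\km \in \kA_1} \ba^{(\km)} \hat{T}_\km(p)$ by \eqref{eq:Lapkadefi}, where $\hat T_\km(p)$ denotes the Fourier symbol of the translation-invariant quadratic form $\varphi \mapsto \sum_x S^{(\km)}_x(\varphi)$.

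\textbf{Part (i).} By Plancherel (Appendix~\ref{sec:fa}) and the fact that $\hat\sh_N(0) = 0$, I have
\begin{align}
(\sh_N, w_N \sh_N) = \frac{1}{|\Lambda_N|} \sum_{p \in \Lambda_N^* \setminus\{0\}} |\hat\sh_N(p)|^2 \,\hat{\L}_\eta^{(\vec\ba)}(p)^{-1}.
\end{align}
I parametrise non-zero modes by $p = L^{-N} p_\infty$ with $p_\infty \in (2\pi\Z)^d \setminus\{0\}$. For $f \in \cS(\T^d)$, the rescaled coefficient $|\hat\sh_N(L^{-N}p_\infty)|^2 / |\Lambda_N|$ converges to $|\hat f(p_\infty)|^2$ as $N\to\infty$. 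On the symbol side, $\lambda(L^{-N}p_\infty)^{1-\eta/2} \sim L^{-(2-\eta)N}|p_\infty|^{2-\eta}$, while $\hat{T}_\km(L^{-N}p_\infty) \asymp L^{-q(\km)N}|p_\infty|^{q(\km)}$. For $\eta > 0$, each $\km \in \kA_1$ has $q(\km) \ge 2 > 2-\eta$, so the counterterms are negligible compared to $\lambda^{1-\eta/2}$ after multiplication by $L^{(2-\eta)N}$. For $\eta = 0$, only the $q(\km) = 2$ counterterms enter at the same order, combining via Remark~\ref{remark:kAsymmetries} into $\bar\ba_\Delta \lambda(p)$ and producing the prefactor $(1 + \bar\ba_\Delta)^{-1}$. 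The Schwartz decay of $\hat f$ provides a summable majorant in $p_\infty$, so Dominated Convergence identifies the limit with the Fourier series representation of $(f - \Phi(f), (-\Delta)^{-1+\eta/2}(f - \Phi(f)))$ on $\T^d$.

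\textbf{Part (ii).} Since convolution commutes with lattice derivatives, $\nabla^n\tilde\sh_N = w_N * \nabla^n\sh_N$, and Young's inequality gives
\begin{align}
\norm{\nabla^n\tilde\sh_N}_{\ell^{2+r}} \le \norm{w_N(0,\cdot)}_{\ell^1}\,\norm{\nabla^n\sh_N}_{\ell^{2+r}}.
\end{align}
Summing \eqref{eq:Gammajbounds1} with $k=0$ over scales $j \le N$ yields $\norm{w_N(0,\cdot)}_{\ell^1} \lesssim \sum_{j \le N} L^{-(d-2+\eta)j}\cdot L^{dj} \lesssim L^{(2-\eta)N}$. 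By smoothness of $f$ and the chain rule applied to $f_N = L^{-dN} f \circ \hat i$, I obtain $\norm{\nabla^n\sh_N}_{\ell^\infty} \lesssim L^{-(n + (d+2-\eta)/2)N}\norm{\nabla^n f}_{\ell^\infty}$, and bounding $\norm{\cdot}_{\ell^{2+r}} \le |\Lambda_N|^{1/(2+r)}\norm{\cdot}_{\ell^\infty}$ gives $\norm{\nabla^n\sh_N}_{\ell^{2+r}} \lesssim L^{(d/(2+r) - n - (d+2-\eta)/2)N}\norm{\nabla^n f}_{\ell^\infty}$. Multiplication by the $w_N$-bound yields exponent $(\frac{2-\eta}{2} - n) - \frac{dr}{2(2+r)}$, matching the claim.

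\textbf{Main obstacle.} The principal technical point is the term-by-term passage to the limit in $\hat{w}_N(L^{-N}p_\infty)$ in Part (i), and in particular the identification of the constant $(1+\bar{\ba}_\Delta)^{-1}$ at $\eta=0$. This requires a power-counting estimate $|\hat{T}_\km(p)| \lesssim \lambda(p)^{q(\km)/2}$ uniform in $p_\infty$, so that only the $q(\km) = 2$ indices survive in the rescaled symbol, together with the continuity of $\vec\ba \mapsto \Gamma_j$ in Proposition~\ref{prop:theFRD} to justify using the limit value $\vec\ba_{c,\infty}$ inside $\hat{\L}_\eta^{(\vec{\ba})}$.
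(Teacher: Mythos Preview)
Your proposal is correct and, for part~(i), follows essentially the same Plancherel--rescaling argument as the paper: write the pairing as a Fourier sum over $p\neq 0$ with symbol $1/\lambda^{(\vec\ba)}(p)$, rescale $p = L^{-N}q$, and pass to the limit term-by-term using $\lambda(L^{-N}q)\sim L^{-2N}|q|^2$ together with the fact that $\hat T_\km$ with $q(\km)\ge 4$ are subleading (and $q(\km)=2$ contributes $\bar\ba_\Delta$ when $\eta=0$). One small comment: the continuity of $\vec\ba\mapsto\Gamma_j$ that you list as an obstacle is not needed here---$\vec\ba=\vec\ba_{c,\infty}$ is fixed throughout the lemma, no limit in $\vec\ba$ is being taken.

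For part~(ii) your route differs slightly from the paper's. The paper reduces directly to Lemma~\ref{lemma:gNtNlmt}(ii) via the identity $\sh_N = \sb_N L^{\frac{d-2+\eta}{2}N}(\g_N - \Phi_N(\g_N))$, which in turn is proved (following Lemma~\ref{lemma:fNtNlmt}) by interpolating an $\ell^2$ bound from Plancherel with an $\ell^\infty$ bound from the kernel estimate. You instead use Young's inequality once with $\|w_N\|_{\ell^1}\lesssim L^{(2-\eta)N}$ (Corollary~\ref{cor:Coneinftynrm}) and then the crude $\|\cdot\|_{\ell^{2+r}}\le |\Lambda_N|^{1/(2+r)}\|\cdot\|_{\ell^\infty}$ on $\nabla^n\sh_N$. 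Both routes give the same exponent; yours is arguably more elementary since it avoids the interpolation, while the paper's has the advantage of recycling an already-proved lemma verbatim.
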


\begin{proof}[Proof of Theorem~\ref{thm:NGlimit}(ii)]
By the previous lemma,  if we set $c_4 (d,\eta) = (1+ \bar{\ba}_{\Delta})^2$ when $\eta=0$ and $c_4 (d,\eta) = 1$ when $\eta >0$,  it is sufficient to show 
\begin{align}
	\lim_{N\rightarrow \infty} \frac{\int_{\R^n} Z_{N,\bulk} ( v_N ) \rd z }{\int_{\R^n} Z_{N,\bulk}  ( \tilde{g}_N^{-1/4} L^{-\frac{d}{4}} z \one )  \rd z } \label{eq:mgfNGalt2fracpt}
\end{align}
is equal to $1$.
By Lemma~\ref{lemma:KNbnd},  we can bound $| K_{N,\bulk} (\Lambda, v_N) | \le O_L (1)  \tilde{g}_N^{\frac{3}{4}} \scale_N^{\kb_1} \tilde{G}_N (\Lambda,  v_N) e^{-\kappa L^{-dN} \norm{v_N / h_{N,\bulk} }_{\ell^2}^2 }$ (see \eqref{eq:KNLambdavN}),
and using Lemma~\ref{lemma:logtildeGNbnd} with Lemma~\ref{lemma:hNtNlmt}(ii),
\begin{align}
	\log \tilde{G}_N (\Lambda,  v_N ) 
		= \log \tilde{G}_N (\Lambda,  \sh_N) 
		\lesssim L^{-dN} \sum_{n \le d + p_{\Phi}} L^{(d-2+\eta) N} L^{(2-\eta)N}  \lesssim 1 .
\end{align}
Also,  for some $c,C >0$,   just as for \eqref{eq:KNLambdavN3},
\begin{align}
	L^{-dN} \Big\| \frac{v_N }{ h_{N,\bulk} } \Big\|_{\ell^2 (\Lambda)}^2 \ge c |z|^2 - C \tilde{g}_N^{1/2} L^{-\frac{d-4+2\eta}{2} N} .
\end{align}
Putting together, 
\begin{align}
	| K_{N} (\Lambda, v_N) | \le O_L (1) \tilde{g}_N^{\frac{3}{4}} \scale_N^{\kpe} e^{- c \kappa |z|^2 } ,
\end{align}
so $K_{N}$ has vanishing effect in the integrals of \eqref{eq:mgfNGalt2fracpt}.

Bounds \eqref{eq:Vtozfour0}--\eqref{eq:Vtozfour5} appear almost the same: 
\begin{align}
	\big| \pi_4 V_{N} (\Lambda ,  \tilde{g}_N^{-1/4} L^{-\frac{d}{4} N}  z \one ) \big| &\asymp \tilde{g}_N^{-1} g_N^{(\emptyset)}  |z|^4 \asymp |z|^4 \\
	\big| \pi_2 V_{N} (\Lambda ,  \tilde{g}_N^{-1/2} L^{-\frac{d}{4} N}  z \one ) \big| &\lesssim \tilde{g}_N^{-1/2} | \nu_N^{(\emptyset)} | L^{\frac{d}{2} N} |z|^2 \le O_L (1)  \tilde{g}_N^{1/2} L^{-\frac{d-4+2\eta}{2} N} |z|^2
\end{align}
and by Lemma~\ref{lemma:hNtNlmt},
\begin{align}
	| \pi_2 V_{N} (\Lambda ,  \tilde\sh_N ) | & \lesssim |\nu_N^{(\emptyset)} |  L^{(2-\eta)N}  ,   \label{eq:Vtozfour22} \\
	|\nu_N^{(\km_1)} |  \sum_{x \in \Lambda} | S^{(\km_1)}_x ( \tilde\sh_N ) | & \lesssim |\nu_N^{(\km_1)} |  L^{(2-\eta)N}	L^{-q (\km_1) N} ,  \label{eq:Vtozfour32} \\
	| \pi_4 V_{N} (\Lambda ,   \tilde\sh_N ) | & \lesssim | g_N^{(\emptyset)} |  L^{-(d-4+2\eta) N}  ,  \label{eq:Vtozfour42} \\
	|g_N^{(\km_2)} |  \sum_{x \in \Lambda} | S^{(\km_2)}_x ( \tilde\sh_N ) | & \lesssim |g_N^{(\km_2)} | L^{- (d - 4 +2\eta + q (\km_2) ) N} .  \label{eq:Vtozfour52}
\end{align}
Also,  by \eqref{eq:WNbnd},  Lemma~\ref{lemma:hNtNlmt} and \ref{lemma:logtildeGNbnd},
\begin{align}
	|\pi_\bulk W_{N,V} (\Lambda, \tilde{\sh}_N) | &\le O_L (1) (\tilde{g}_N \scale_N)^{1/2} (1 + \norm{\tilde{\sh}_N}_{h_N,\Phi_N})^6 \\
	\norm{\tilde{\sh}_N}_{h_N,\Phi_N}^2 &\lesssim \tilde{g}_N^{1/2} L^{-\frac{d-4+2\eta}{2} N} . 
\end{align}
With the restriction $V_{N} \in \tilde\cD_{N,\bulk}$, 
we see that only $\pi_4 V_{N,\bulk} (\Lambda ,  \tilde{g}_N^{-1/4} L^{-dN/4}  z \one )$ survives in the integrals of \eqref{eq:mgfNGalt2fracpt}, which are the same in the denominator and the numerator.  This shows that \eqref{eq:mgfNGalt2fracpt} is simply 1. 
\end{proof}

\section{Stability of the observable flow}
\label{sec:stabobsflow}

To prove Theorem~\ref{thm:infvol2pt}--\ref{thm:plateauGen},  we repeat the strategy of Section~\ref{sec:cotcpfdf}--\ref{sec:mainresults}.
In this section,  we control the full RG flow with the observable fileds included,  and a number of bounds on the observable coefficients will be proved along. 
The initial condition was already constructed in Section~\ref{sec:cotcpfdf},  repeated here.
\begin{equation} \stepcounter{equation}
	\tag{\theequation ${\rm{A}_{IC}}$} \label{asmp:obsRG}
	\begin{split}
		\parbox{\dimexpr\linewidth-4em}{
		Let $L$ be sufficiently large,  $g$ be sufficiently small,  $\vec{\ba} = \vec{\ba}_{c,\infty}$ (so that $\ba^{(\emptyset)}_{c,\infty}=0$),  $|\lambda_\hash| \le 1$ for $\hash\in \{ \o, \x \}$ and $\alpha =1$.
Let $K_0 = 0$ and $V_0$ be given by
$$
\begin{cases}
	\nu_0^{(\emptyset)} = \nu_{c},   \;\; g_0^{(\emptyset)} = g & {} \\
	\nu_0^{(\km_1)} = - p^{(\km_1)} \vec\ba_{c,\infty} & (\km_1 \in \kA_1) \\
	\nu_0^{(\km_1)} = g_0^{(\km_2)} = 0 & (\km_1 \in \kA_2 \cup \kA_3, \; \km_2 \in \ko_{4,\nabla}) \\
	\lambda^{(\emptyset)}_{\hash, 0} = \lambda_\hash  ,  \;\;  \lambda^{(\km_3)}_{\hash, 0} = 0 & (\km_3 \in \ko_{1,\nabla}) .
\end{cases}	
\vspace{-40pt}
$$}
	\end{split}
\end{equation}
\vspace{20pt}

\begin{proposition} \label{prop:obsRG}
Assume \eqref{asmp:obsRG} and $\kt$ be sufficiently small.  Then the RG flow of infinite length exists.  
\end{proposition}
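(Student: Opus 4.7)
The proof is by induction on $j$, reducing to the bulk case already handled by Theorem~\ref{thm:theStableManifold} together with a linear analysis of the $\sigma$-graded components. The base case $j=0$ is immediate from \eqref{asmp:obsRG}: the bulk coefficients lie in $\cD_{0,\bulk}^{(0)}$ by construction, and $|\lambda_{0,\hash}^{(\emptyset)}|=|\lambda_\hash|\le 1<C_{\cD}$ together with $\lambda_{0,\hash}^{(\km_3)}=0$ for $\km_3\in\ko_{1,\nabla}$ puts the observable coordinates in $\cD_{0,\sigma}$; finally $K_0\equiv 0\in\cK_0$.

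For the inductive step, I invoke the graded structure of Definition~\ref{defi:contrlldRG2}, which lets me track $(V_{\bulk,j},K_{\bulk,j})$, the pair $(V_{\hash,j},K_{\hash,j})$ for each $\hash\in\{\o,\x\}$, and $K_{\ox,j}$ separately. The bulk trajectory is stable by Theorem~\ref{thm:theStableManifold}, so $V_{\bulk,j}\in\cD_{j,\bulk}^{(0)}$ and $K_{\bulk,j}\in\cK_{j,\bulk}$ for all $j$. Since $\sigma_\hash^2=0$, the $\hash$-sector maps $(V_{\hash,j},K_{\hash,j})\mapsto(V_{\hash,j+1},K_{\hash,j+1})$ depend \emph{linearly} on the $\hash$-coefficients, with the bulk acting as a small perturbation. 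The leading linear part is simply $\E_{j+1}\theta$ on the effective potential, which preserves the monomial coefficients up to rescaling by $L^{q(\km)}$; the deviation from $\E_{j+1}\theta$ is controlled by Lemma~\ref{lemma:VptmthV} and \eqref{eq:controlledRG22}, yielding
\begin{align}
\big|\lambda_{j+1,\hash}^{(\km)}\big|\le L^{q(\km)}|\lambda_{j,\hash}^{(\km)}|+O_L(1)\,\tilde{g}_j\scale_j\max_{\km'}\big(L^{q(\km')j}|\lambda_{j,\hash}^{(\km')}|\big).
\end{align}
Because $\sum_j \tilde{g}_j\scale_j<\infty$ under the choice $d\ge d_{c,u}$ (with the logarithmic decay of $\tilde{g}_j$ in the $d=d_{c,u}$ case handled by Lemma~\ref{lemma:gjasympcritd}), iterating this recursion gives $|\lambda_{j,\hash}^{(\emptyset)}|\le|\lambda_\hash|+O(g)\le C_{\cD}$ and $|\lambda_{j,\hash}^{(\km)}|\le O(g)L^{q(\km)j}\le C_{\cD}L^{q(\km)j}$ for $\km\in\ko_{1,\nabla}$, keeping $\vec\lambda_j\in\cD_{j,\sigma}$.

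For $K_j$, the inductive bound comes directly from the RG map estimate \eqref{eq:controlledRG24}: since $K_j\in\cK_j$ is equivalent to $\norm{K_j}_{\cW_j}\le C_{\rg}\tilde\chi_j^{3/2}\tilde{g}_j^3\scale_j^{\kae}$ by Lemma~\ref{lemma:KNbnd}, the $q=0$ inequality in \eqref{eq:controlledRG24} yields exactly the same bound at scale $j+1$, so $K_{j+1}\in\cK_{j+1}$. The main obstacle is the $\sigma_\ox$ sector: for $j<j_\ox$ the finite-range property (Proposition~\ref{prop:theFRD}(ii)) keeps the observable marking points $\o,\x$ in disjoint polymers so that $K_{\ox,j}$ has a restricted support structure, whereas for $j\ge j_\ox$ products of $\sigma_\o$ and $\sigma_\x$ contributions appear and drive $K_{\ox,j}$. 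The scales $\ell_{j,\ssigma}=\ell_{j,\sigma}^2$ and $h_{j,\ssigma}$ in \eqref{eq:ellhdefi} are chosen precisely to absorb this crossover behaviour, so \eqref{eq:controlledRG23}--\eqref{eq:controlledRG24} give a uniform bound, closing the induction. I expect the delicate point in writing out the details to be verifying that the various cross-coupling terms (between $\sigma_\o$ and $\sigma_\x$ sectors, and between the observable and bulk coordinates) are compatible with the exact constants in \eqref{eq:cDbulk}--\eqref{eq:Dstsigmadefi}; small $\kt$ is used precisely to make the polynomial margins $\scale_j^{\kt}$ in these definitions absorb the $L$-dependent constants that arise along the way.
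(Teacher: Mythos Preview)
Your overall strategy---induction on $j$, with the bulk controlled by Theorem~\ref{thm:theStableManifold}, the observable coefficients $\lambda_{j,\hash}^{(\km)}$ tracked by a recursion, and $K_j\in\cK_j$ closed directly by \eqref{eq:controlledRG24}---is exactly the paper's.  But one step is wrong in a way that breaks the argument in high dimensions.

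You write that $\E_{j+1}\theta$ ``preserves the monomial coefficients up to rescaling by $L^{q(\km)}$'' and then build the recursion
\[
|\lambda_{j+1,\hash}^{(\km)}|\le L^{q(\km)}|\lambda_{j,\hash}^{(\km)}|+\text{(error)}.
\]
In fact, since $V_\hash$ is \emph{linear} in $\varphi$, one has $\E_{j+1}\theta\,\pi_{\km}V=\pi_{\km}V$ exactly; the linear part is the identity, not a dilation.  The paper exploits this (Lemma~\ref{lemma:deltalambda}) to obtain the sharp increment bound
\[
|\lambda_{\hash,j+1}^{(\km)}-\lambda_{\hash,j}^{(\km)}|\le O_L(\tilde\chi_j\tilde g_j)\,L^{q(\km)j}\times
\begin{cases}L^{-\eta j}&(d=4)\\ L^{-\frac12(d-4+5\eta)j}&(d\ge 5),\end{cases}
\]
and then sums these increments.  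Your recursion with the extra $L^{q(\km)}$ factor, while technically an upper bound, yields only $|\lambda_{j,\hash}^{(\km)}|\le O_L(g)L^{q(\km)j}$.  For $q(\km)\le 1$ this happens to land inside $\cD_{j,\sigma}$, but for $q(\km)\ge 2$ (which occurs once $d\ge 7$, or $d\ge 6$ with $\eta>0$) the domain requirement is $|\lambda^{(\km)}_{j,\hash}|<C_{\cD}\scale_j^{\kt}L^{(2-\eta)j}$, and $O_L(g)L^{q(\km)j}$ does not fit: e.g.\ for $d=7,\eta=0,q(\km)=2$ you would need $L^{2j}\lesssim L^{(2-3\kt)j}$, which fails for every $\kt>0$.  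The paper's case analysis on $q(\km)$ (using the genuine increment bound above) is what makes this case close, and is precisely where ``$\kt$ sufficiently small'' enters.

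Two smaller points: your displayed error term $\max_{\km'}L^{q(\km')j}|\lambda_{j,\hash}^{(\km')}|$ has the wrong sign in the exponent (compare Definition~\ref{defi:cLnorm}); and the discussion of the $\sigma_{\ox}$ sector is unnecessary---there is no separate $V_{\ox}$ coordinate to track, and the $K_{\ox}$ component is already absorbed in the single bound $\norm{K_{j+1}}_{\cW_{j+1}}\le C_{\rg}\tilde\chi_{j+1}^{3/2}\tilde g_{j+1}^3\scale_{j+1}^{\kae}$ from \eqref{eq:controlledRG24}.
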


\begin{lemma}  \label{lemma:deltalambda}
Suppose $d > d_{c,u}$ and $(V_j, K_j) \in \D_j$.  Let $\hash \in \{\o,\x\}$ and $\km_3 \in \ko_1 \cup \ko_{1,\nabla}$.
Then
\begin{align}
	|\lambda_{\hash, j+1}^{(\km_3)} - \lambda_{\hash, j}^{(\km_3)}| \le O_L (\tilde{\chi}_j \tilde{g_j}) L^{q(\km_3) j} \times
		\begin{cases}
			L^{-\eta j} & (d = 4, \,  \eta > 0) \\
			L^{-\frac{1}{2} (d-4 + 5\eta)} & (d \ge 5) .
		\end{cases}
	\label{eq:deltalambda}
\end{align}
\end{lemma}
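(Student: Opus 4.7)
The plan is to apply Lemma~\ref{lemma:PhijUmEVj} after projecting onto the $\sigma_\hash$ sector, exploiting the fact that each $V_{j,\hash}$ is a polynomial of degree exactly one in $\varphi$. The first step is the projection identity $\pi_\hash \mathbb{V}^{(0)} \E_{j+1}\theta V_j = \pi_\hash V_j$, which follows from two observations: (a) since $V_{j,\hash}$ is linear in $\varphi$ and $\E[\zeta_{j+1}] = 0$, we have $\E_{j+1}\theta V_{j,\hash} = V_{j,\hash}$; and (b) $\mathbb{V}^{(0)}$ acts as the identity on the observable components, because its non-trivial action (rearranging $\nu^{(\nnabla)}$ with $\nu^{(\Delta)}$) is obtained by summation by parts on a lattice sum, which does not apply to the observable monomials in $\ko_1 \cup \ko_{1,\nabla}$ supported at the single points $\o,\x$. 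Combined with Lemma~\ref{lemma:PhijUmEVj} (for the first inequality) and Lemma~\ref{lemma:VincDsize} (for the second), this gives
\[
\|\pi_\hash(V_{j+1} - V_j)\|_{\cL_j(\ell)} \le \|V_{j+1} - \mathbb{V}^{(0)}\E_{j+1}\theta V_j\|_{\cL_j(\ell)} \le O_L(1)\,\tilde\chi_j \tilde g_j^2 \scale_j^2 .
\]

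Next I would extract the coefficient using the observable part of Definition~\ref{defi:cLnorm}, which gives
\[
\ell_{j,\bulk}\ell_{j,\sigma} L^{-q(\km_3)j} |\lambda_{\hash,j+1}^{(\km_3)} - \lambda_{\hash,j}^{(\km_3)}| \le \|\pi_\hash(V_{j+1} - V_j)\|_{\cL_j(\ell)} ,
\]
so the claim reduces to computing $\tilde g_j^2 \scale_j^2 / (\ell_{j,\bulk}\ell_{j,\sigma})$. Inserting the explicit values from \eqref{eq:ellhdefi}, a direct calculation yields $\ell_{j,\bulk}\ell_{j,\sigma} \asymp \ell_0 \tilde g_j L^{-2\eta j}$ for $d=4$ and $\ell_{j,\bulk}\ell_{j,\sigma} \asymp \ell_0 \tilde g_j L^{-\frac{3}{2}(d-4+\eta)j}$ for $d\ge 5$. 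Combined with $\scale_j^2 = L^{-2(d-4+2\eta)j}$, this produces the required extra decay factor: $L^{-2\eta j}$ for $d=4$ (which is in fact stronger than the claimed $L^{-\eta j}$), and $L^{-\frac{1}{2}(d-4+5\eta)j}$ for $d\ge 5$ (matching the claim exactly, since $2(d-4+2\eta) - \frac{3}{2}(d-4+\eta) = \frac{1}{2}(d-4+5\eta)$).

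The main subtlety lies in justifying the projection identity $\pi_\hash \mathbb{V}^{(0)} = \pi_\hash$, since $\mathbb{V}^{(0)}$ was defined in Remark~\ref{remark:Vsymmetries} only on $\cV_2$ and extended ``naturally'' to $\cV$ without detailed specification for the observable sector. One must verify that no analogous summation-by-parts rearrangement applies to the linear, single-point monomials indexed by $\ko_1 \cup \ko_{1,\nabla}$, which should follow directly from their support being the single points $\o,\x$. Once this is settled, the argument above produces a bound that is at least as strong as the claim in every regime, so the remaining work is a routine weight-tracking exercise absorbed into Lemma~\ref{lemma:PhijUmEVj} and the scaling relations of~\eqref{eq:ellhdefi}.
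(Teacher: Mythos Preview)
Your approach is essentially the same as the paper's, which bounds $\pi_{\km_3}\big(R_{j+1}^U + \Phi_{j+1}^\pt(V_j) - \E_{j+1}\theta V_j\big)$ in the $\cL_{j+1}(\ell)$-norm via Lemma~\ref{lemma:VptmthV} and~\eqref{eq:controlledRG22} (your use of Lemma~\ref{lemma:PhijUmEVj} simply packages these two inputs together) and then extracts the coefficient by dividing through by $\ell_{j,\bulk}\ell_{j,\sigma} L^{-q(\km_3)j}$. The one step you leave implicit, which the paper makes explicit, is that because the value of $\lambda_{\hash,j}^{(\km_3)}$ does not depend on $j_\ox$ one may take $j_\ox = \infty$ before evaluating $\ell_{j,\sigma}$ in the $d=4$ case --- without this your formula $\ell_{j,\bulk}\ell_{j,\sigma} \asymp \ell_0 \tilde g_j L^{-2\eta j}$ holds only for $j < j_\ox$.
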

\begin{proof}
Since the flow of $\lambda_{\hash,j}^{(\km_3)}$ is independent of $j_{\ox}$,  we may take $j_{\ox} = \infty$. 
By taking $p(\km) =1$ in \eqref{eq:ko1defi}--\eqref{eq:ko1nabladefi},  $V_\o$ and $V_\x$ can only be linear polynomials of $\varphi$.  
Thus $\E_{j+1} \theta \pi_{\km_3} V = \pi_{\km_3} V$ for any $\km_3 \in \ko_1 \cup \ko_{1,\nabla}$ and if we expand out \eqref{eq:RjUdefi},
\begin{align}
	\sigma_{\x} \big( \lambda_{\hash,j+1}^{(\km_3)} - \lambda_{\hash, j}^{(\km_3)} \big) S^{(\km_3)} 
		= \pi_{\km_3} \big(  R_{j+1} (V_j,K_j) + \Phi^\pt_{j+1} (V_j) - \E_{j+1} \theta V_j \big)
\end{align}
Hence by Lemma~\ref{lemma:VptmthV} and \eqref{eq:controlledRG22},
\begin{align}
	\norm{\sigma_{\x} (\lambda_{\hash,j+1}^{(\km_3)} - \lambda_{\hash, j}^{(\km_3)}) S^{(\km_3)} }_{\cL_{j+1} (\ell)} 
		\le O_L (\tilde{\chi}_{j+1} \tilde{g}_{j+1}^2 \scale_{j+1}^2 ) .  \label{eq:deltalambda1}
\end{align}
On the other hand,  Definition~\ref{defi:cLnorm} indicates
\begin{align}
	\norm{\sigma_{\x} S^{(\km_3)} }_{\cL_{j} (\ell)} & \asymp 
		L^{-q (\km_3) j} \ell_{\sigma,j} \ell_{j} = \tilde{g}_j \ell_0 L^{-q (\km_3) j} \times
		\begin{cases}	
			L^{- \frac{d-2+\eta}{2} j} L^{(1-\frac{3}{2}\eta) j } & (d=4) \\
			L^{-( \frac{d-2+\eta}{2} + d-5+\eta) j } & (d \ge 5)
		\end{cases}
	\label{eq:deltalambda4}
\end{align}
so we have the desired bounds.
\end{proof}

\begin{proof}[Proof of Proposition~\ref{prop:obsRG}]
The case $d=d_{c,u}$ is discussed in Proposition~\ref{prop:obsstablemanifold-dcu}.

We are only left with $d > d_{c,u}$.
By Theorem~\ref{thm:theStableManifold},  the bulk RG flow of infinite length exists,  so $(V_{j,\bulk} ,K_{j,\bulk}) \in \D_{j,\bulk} (\alpha)$ for each $j\ge 0$.
We proceed by induction from here,  so assume that $K_k \in \cK_k$ for each $k \le j$,  and satisfies in addition
\begin{align}
	| \lambda_{\hash, k}^{(\km_3)} - \lambda_{\hash, 0}^{(\km_3)} |
		\le O_L (1) \sum_{m \le k} \tilde{\chi}_{m} \tilde{g}_m L^{q (\km_3) m} \times \begin{cases}
			L^{-\eta m} & (d= 4,  \; \eta >0) \\
			L^{-\frac{1}{2} (d-4+5\eta) m } & (d > d_{c,u})  
		\end{cases} \label{eq:lambdakmlambdan}
\end{align}
for any $\km_3 \in \ko_1 \cup \ko_{1,\nabla}$.
When $d=4,  \eta > 0$,  any $\km_3 \in \ko_1 \cup \ko_{1,\nabla}$ has  $q(\km_3 ) \in \{ 0,1 \}$.  When $q(\km_3) = 0$,  then the sum is bounded by an absolute constant bounded by $O_L (g)$,  so $\lambda_{\hash, k}^{(\km_3)}$ stays inside the RG domain. 
When $q(\km_3 ) = 1$,  then the sum diverges and we see that
\begin{align}
	| \lambda_{\hash, k}^{(\km_3)}  | \le | \lambda_{\hash, 0}^{(\km_3)} | + O_L ( g ) L^{(1-\eta) k} <  C_{\cD} L^{q(\km_3) k}
\end{align}
for sufficiently small $g$,  so $\lambda_{\hash, k}^{(\km_3)}$ stays inside the RG domain. 
When $d >4$ and $q (\km_3) \in \{ 0,1\}$,  then the same principles apply and $\lambda_{\hash, k}^{(\km_3)}$ stays inside the RG domain. 
When $q(\km_3) \ge [2,  \frac{1}{2} (d-4 +5\eta)] $,  then 
\begin{align}
	\sum_{m \le k} \tilde{\chi}_m \tilde{g}_m L^{q(\km_3)m- \frac{1}{2} (d-4+5\eta) m} 
		\le
		\sum_{m \le k} \tilde{\chi}_m \tilde{g}_m
		\le O_L (k g) ,
\end{align}
so $\lambda_{\hash, k}^{(\km_3)}$ stays inside the RG domain. 
When $q (\km_3) \in (  \frac{1}{2} (d-4 +5\eta) ,  \frac{d-2+\eta}{2} )$,  then
$z := \frac{d-2+\eta}{2} - \lceil \frac{d-2+\eta}{2} \rceil +1 >0$,  and
\begin{align}
	| \lambda_{\hash, k}^{(\km_3)}  | 
		\le | \lambda_{\hash, 0}^{(\km_3)} | + O_L ( g ) L^{(q (\km) - \frac{1}{2} (d-4+5\eta)) k} \lesssim 1 + O_L (g) L^{(1-2\eta - z) k} 
		< C_{\cD} \scale_k^\kt L^{(2-\eta) k}
\end{align}
for sufficiently small $\kt$ (compare with \eqref{eq:Dstsigmadefi}).
so $\lambda_{\hash, k}^{(\km_3)}$ stays inside the RG domain. 
In summary,  we have $V_{\hash, k}^{(\km_3)} \in \cD^\st_k (\alpha)$ whenever \eqref{eq:lambdakmlambdan} holds.
We can then apply Lemma~\ref{lemma:deltalambda} to see that
the bound \eqref{eq:lambdakmlambdan} also holds for $k = j+1$. 

There is nothing to say about $K_{j+1}$,  since \eqref{eq:controlledRG24} with $p=q=0$ implies
\begin{align}
	\norm{K_{j+1}}_{\cW_{j+1}} \le C_{\rg} \tilde{\chi}_{j+1}^{3/2} \tilde{g}_{j+1}^3 \scale_{j+1}^{\kae} ,
\end{align}
and the induction proceeds.  It is already proved in the induction process that $(V_j, K_j ) \in \D_j$.
\end{proof}

In the process of the proof,  we deduce convergence of $\lambda^{(\emptyset)}_{\hash, j}$.
We also use $( \hat{\lambda}_{j,\hash}^{(\km)})_{\km \in \ko_1 \cup \ko_{1,\nabla}}$ to denote the coefficients of $\pi_\hash \hat{V}_j$ (recall Definition~\ref{def:WP}).

\begin{corollary} \label{cor:lambdainfty}
Assume \eqref{asmp:obsRG}.  
Let $q(\km) = 0$ when $d=4$ and $q (\km) < \frac{1}{2} (d-4 + 5\eta)$ when $d \ge 5$.  Then there exits $\lambda^{(\km)}_{\hash, \infty} = \lambda^{(\km)}_{\hash,0} (1 + O(g))$ such that
\begin{align}
	| \lambda_{\hash, j}^{(\km)} - \lambda^{(\km)}_{\hash,\infty} | \le O_L (1) \tilde{\chi}_{j} \tilde{g}_j L^{q (\km_3) j} \times \begin{cases}
			L^{-\eta j} & (d=4) \\
			L^{-\frac{1}{2} (d-4+5\eta) j} & (d \ge 5)  .
		\end{cases}
		\label{eq:lambdainfty1}
\end{align}
If $q(\km) = 1$ when $d=4$ and $q (\km) \ge \frac{1}{2} (d-4 + 5\eta)$ when $d \ge 5$,  we have
\begin{align}
	|\lambda_{\hash, j}^{(\km)}| < O(1) + O_L (g) L^{(1-2\eta) k} .
		\label{eq:lambdainfty2}
\end{align}
Both bounds \eqref{eq:lambdainfty1} and \eqref{eq:lambdainfty2} also hold for $\hat{\lambda}_{\hash, j}^{(\km_3)}$,  with the same $\lambda_{\hash, \infty}^{(\km_3)}$.
\end{corollary}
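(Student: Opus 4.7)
The plan is to obtain both bounds by directly summing the one-step increment estimate from Lemma~\ref{lemma:deltalambda}. The case $d = d_{c,u} = 4$ is handled by Proposition~\ref{prop:obsstablemanifold-dcu} in Appendix~\ref{sec:critdRGflow} through an analogous argument, so I focus on $d \ge 5$. Under \eqref{asmp:obsRG} one has $\ba^{(\emptyset)} = 0$, hence $j_{\vec{\ba}} = \infty$ and $\tilde{\chi}_j \equiv 1$; moreover $\tilde{g}_j \equiv g$ when $d > d_{c,u}$. Writing $\rho = \tfrac{1}{2}(d-4+5\eta)$, Lemma~\ref{lemma:deltalambda} becomes
\begin{align}
    |\lambda^{(\km)}_{\hash, m+1} - \lambda^{(\km)}_{\hash, m}| \le O_L(g)\, L^{(q(\km) - \rho)m}
\end{align}
uniformly in $m \ge 0$, which is the only ingredient needed from the dynamics.

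For the convergent case $q(\km) < \rho$, the right-hand side is a summable geometric series, so $(\lambda^{(\km)}_{\hash, j})_{j \ge 0}$ is Cauchy; define $\lambda^{(\km)}_{\hash, \infty}$ as its limit. The tail estimate
\begin{align}
    |\lambda^{(\km)}_{\hash, j} - \lambda^{(\km)}_{\hash, \infty}| \le \sum_{m \ge j} |\lambda^{(\km)}_{\hash, m+1} - \lambda^{(\km)}_{\hash, m}| \le O_L(g)\, L^{(q(\km)-\rho) j}
\end{align}
gives \eqref{eq:lambdainfty1}. Setting $j=0$ with the initial value from \eqref{asmp:obsRG} gives $\lambda^{(\km)}_{\hash,\infty} = \lambda^{(\km)}_{\hash,0} + O(g)$, which I rewrite as $\lambda^{(\km)}_{\hash,0}(1+O(g))$ (noting that the only nonzero initial value is $\lambda^{(\emptyset)}_{\hash,0} = \lambda_\hash$). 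For the divergent case $q(\km) \ge \rho$, summing from $m=0$ to $k-1$ gives
\begin{align}
    |\lambda^{(\km)}_{\hash, k}| \le |\lambda^{(\km)}_{\hash, 0}| + O_L(g) \sum_{m < k} L^{(q(\km)-\rho)m} \le O(1) + O_L(g)\, L^{(q(\km)-\rho) k},
\end{align}
and since $\km \in \ko_1 \cup \ko_{1,\nabla}$ enforces $q(\km) < \tfrac{1}{2}(d-2+\eta)$, the exponent satisfies $q(\km) - \rho < 1-2\eta$, yielding \eqref{eq:lambdainfty2}.

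For the bounds on $\hat{\lambda}^{(\km)}_{\hash, j}$, recall $\hat{V}_j = V_j - Q_j$, so $\hat{\lambda}^{(\km)}_{\hash, j} - \lambda^{(\km)}_{\hash, j}$ equals the negative of the coefficient of $\sigma_\hash S^{(\km)}$ in $Q_j$. Extracting this coefficient from the $\cL_j(\ell)$-bound of Lemma~\ref{lemma:Qjbound} via Definition~\ref{defi:cLnorm} yields
\begin{align}
    |\hat\lambda^{(\km)}_{\hash, j} - \lambda^{(\km)}_{\hash, j}| \le \frac{L^{q(\km) j}}{\ell_{j,\bulk}\, \ell_{j,\sigma}}\cdot O_L\!\left(\tilde{\chi}_j^{3/2} \tilde{g}_j^3 \scale_j^{\kae}\right),
\end{align}
so the main obstacle is checking that this is indeed dominated by the respective right-hand sides of \eqref{eq:lambdainfty1}--\eqref{eq:lambdainfty2} across all allowed $q(\km)$. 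Using $\ell_{j,\bulk} \ell_{j,\sigma} \asymp \tilde{g}_j L^{-\frac{3d-12+3\eta}{2} j}$ (for $d \ge 5$) and $\kae \ge 3$, the $L$-exponent on the right is at least $-\tfrac{3d-12+9\eta}{2}+q(\km)$ times $\tilde{g}_j^2$, which is strictly smaller than the geometric rates above. Hence the same limit $\lambda^{(\km)}_{\hash,\infty}$ applies to $\hat{\lambda}^{(\km)}_{\hash,j}$, and both \eqref{eq:lambdainfty1} and \eqref{eq:lambdainfty2} carry over verbatim.
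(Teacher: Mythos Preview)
Your approach matches the paper's: sum the increments from Lemma~\ref{lemma:deltalambda} (this is precisely what the proof of Proposition~\ref{prop:obsRG} does, and the paper simply refers back to that proof), then control $|\hat\lambda^{(\km)}_{\hash,j}-\lambda^{(\km)}_{\hash,j}|$ via Lemma~\ref{lemma:Qjbound} and extract the coefficient. The paper's version of the last step compares $\|Q_j\|_{\cL_j(\ell)}$ directly to \eqref{eq:deltalambda1} using only $\kae>2$ (so $\scale_j^{\kae}\le\scale_j^2$), then divides by \eqref{eq:deltalambda4}; your route through Definition~\ref{defi:cLnorm} is equivalent.

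There is a slip in your final paragraph: you assert $\kae\ge 3$, but by \eqref{eq:kaekbekpedefi} one has $\kae\in(2,3]$ with $\kae<3$ strictly for $d\ge 5$, so your computed exponent $-\tfrac{3d-12+9\eta}{2}$ overstates the decay and ``at least'' points the wrong way for what you need. With the actual value $(d-4+2\eta)\kae=(2d-7+2\eta)(1-\epsilon)$ the exponent is $q(\km)-\tfrac{d-2+\eta}{2}+O(\epsilon)$, and since $\tfrac{d-2+\eta}{2}-\rho=1-2\eta>0$ for $\eta<1/2$, the comparison still goes through (with a spare factor of $\tilde g_j$). The repair is routine; the phrase ``the $L$-exponent \dots\ times $\tilde g_j^2$'' should also be cleaned up, as $\tilde g_j^2$ is a prefactor, not part of the exponent.
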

\begin{proof}
For $d > d_{c,u}$,  the first statement follows from the proof of Proposition~\ref{prop:obsRG},  while for $d=d_{c,u}$,  it follows from Proposition~\ref{prop:obsstablemanifold-dcu}.

For the final statement on $\hat{\lambda}_{\hash, j}^{(\km_3)}$,  we may observe from Lemma~\ref{lemma:Qjbound} that
\begin{align}
	\norm{\sigma_\hash ( \lambda_{\hash, j}^{(\km)} -   \hat{\lambda}_{\hash, j}^{(\km)}  ) S^{(\km)} }_{\cL_j (\ell)} \le \norm{Q_j}_{\cL_j (\ell)} \le O_L (1) \tilde{\chi}_j^{3/2} \tilde{g}_j^3 \scale_j^{\kae} ,
\end{align}
which is smaller than \eqref{eq:deltalambda1} multiplied by $\tilde{g}_j$,  so by \eqref{eq:deltalambda4},  we have
\begin{align}
	\big|  \lambda_{\hash, j}^{(\km)} -   \hat{\lambda}_{\hash, j}^{(\km)}  \big| \le O_L (\tilde{\chi}_j  \tilde{g}_j^2 ) L^{q(\km_3) j } \times 		
		\begin{cases}
			L^{-\eta j} & (d = 4, \,  \eta > 0) \\
			L^{-\frac{1}{2} (d-4 + 5\eta)} & (d \ge 5) .
		\end{cases}
\end{align}
We have the desired conclusion once we add this bound on \eqref{eq:lambdainfty1} and \eqref{eq:lambdainfty2}.
\end{proof}

\subsection{Final integral on the torus}

We can now prove a version of Proposition~\ref{prop:theRGthm} for the observable RG flow.

\begin{proposition} \label{prop:finalintegral}
Under the assumptions of Proposition~\ref{prop:theRGthm},  
also for $\hash\in \{\o,\x\}$,  let $|\lambda_{\hash,\o}^{(\emptyset)}| ,  |\lambda_{\hash,\x}^{(\emptyset)}| \le 1$ and $\lambda_{\hash,\o}^{(\km)} = \lambda_{\hash,\x}^{(\km)} =0$ for $\km \neq \emptyset$.  Then $Z_N = \E_{w_N} \theta \exp(-V_{0} (\Lambda))$ satisfies
\begin{align}
	Z_N = e^{- u_N  (\Lambda_N)} (I_N + K_N)
\end{align}
where $I_N = \cI_N (V_N)$,  $K_N \in M \cK_{N}$ for some (possibly $L$-dependent constant) $M >0$ and $u_N = \sum_{j=1}^{N-1} \delta u_j + \delta u_{N}^{\Lambda_N}$ for some $(\delta u_j)_{j \le N-1}$ that does not depend on $N$.   
Moreover,  if $u_{N,\bulk}$ and $V_{N,\bulk}$ are those given by Proposition~\ref{prop:theRGthm},  we have $\pi_\bulk (u_N,  V_N ) = (u_{N,\bulk} ,V_{N,\bulk})$ and if $\lambda_{N,\hash}^{(\km)}$ are the coefficients of $\pi_\hash V_N$ ($\hash \in \{\o,\x\}$),  then they satisfy the estimates of Corollary~\ref{cor:lambdainfty}.
\end{proposition}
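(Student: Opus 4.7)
The plan is to repeat the strategy of Proposition~\ref{prop:theRGthm}, now with the observable coordinates carried through every scale. First, I would apply Proposition~\ref{prop:obsRG} to obtain an infinite-volume RG flow $(V_j^{\Z^d}, K_j^{\Z^d})_{j\ge 0} \in \prod_{j\ge 0} \D_j$ with the prescribed initial condition; the statement is phrased at $\vec{\ba}=\vec{\ba}_{c,\infty}$, but its proof (a bootstrap of the observable coefficients via Lemma~\ref{lemma:deltalambda} on top of the bulk stable manifold from Theorem~\ref{thm:theStableManifold}) is uniform in $\vec{\ba} \in \HB_{\epsilon_p}$ and so extends to the off-critical choices $\vec{\ba} = \vec{\ba}_{c,k}$ required by the assumptions of Proposition~\ref{prop:theRGthm}. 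By the projection property in Theorem~\ref{thm:infvolRGmap}, restricting this infinite-volume flow to $\Lambda_N$ yields $(V_j^{\Lambda_N}, K_j^{\Lambda_N}) \in \D_j$ for every $j < N$, with renormalisation constants $\delta u_j$ that agree on both lattices and therefore do not depend on $N$.

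For the final scale I would apply the finite-volume controlled RG map $\Phi_N^{\Lambda_N}$ to $(V_{N-1}, K_{N-1}) \in \D_{N-1}$. The bulk sector is handled exactly as in Proposition~\ref{prop:theRGthm}: Lemma~\ref{lemma:PhijUmEVj} together with \eqref{eq:EthV1} confines $V_{N,\bulk}$ to $\tilde{\cD}_N^{(0)}$, and \eqref{eq:controlledRG23} with $p=q=0$ combined with Lemma~\ref{lemma:KNbnd} delivers $K_N \in M\cK_N$ for $M = M_{0,0}/C_{\rg}$. For the observable sector, Lemma~\ref{lemma:deltalambda} (together with its $d=d_{c,u}$ counterpart used inside the stable manifold construction of Section~\ref{sec:cotcpfdf} at criticality) shows that the one-step change $|\lambda_{N,\hash}^{(\km)} - \lambda_{N-1,\hash}^{(\km)}|$ is of the same order as the right-hand side of \eqref{eq:lambdainfty1}--\eqref{eq:lambdainfty2} evaluated at scale $N$, so the bounds of Corollary~\ref{cor:lambdainfty} are preserved through the final step.

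The ``moreover'' part is then essentially automatic. Since the controlled RG map respects the graded structure by Definition~\ref{defi:contrlldRG2} and Theorem~\ref{thm:contrlldRG}, $\pi_\bulk$ commutes with $\Phi_{j+1}^{\Lambda_N}$ at every scale. Because the bulk projection of the initial condition \eqref{asmp:obsRG} coincides with that of Proposition~\ref{prop:theRGthm}, an induction on $j$ identifies $\pi_\bulk(u_j, V_j, K_j) = (u_{j,\bulk}, V_{j,\bulk}, K_{j,\bulk})$ throughout the flow, in particular at $j=N$. The asserted observable coefficient estimates on $\lambda_{N,\hash}^{(\km)}$ are then precisely the content of Corollary~\ref{cor:lambdainfty}, adjusted at the last step by the single-scale bound above.

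The main obstacle I anticipate is the implicit use of Proposition~\ref{prop:obsRG} at the off-critical parameters $\vec{\ba}=\vec{\ba}_{c,k}$ with $k<\infty$: although the bulk stability of Theorem~\ref{thm:theStableManifold} and the constants appearing in \eqref{eq:controlledRG22}--\eqref{eq:controlledRG24} are uniform in $\vec{\ba}\in\HB_{\epsilon_p}$, one must track that the mass-induced decay factor $\tilde{\chi}_j$ enters the observable bootstrap with exactly the scaling required so that the summability in \eqref{eq:lambdakmlambdan} is preserved. This amounts to re-running the induction of Proposition~\ref{prop:obsRG} with $\tilde{\chi}_j$ kept explicit throughout; the argument is routine but must be written out carefully before the projection step, since the domain conditions $(V_j,K_j) \in \D_j$ are what ultimately license the use of the controlled RG bounds at the final scale.
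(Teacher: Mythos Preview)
Your proposal is correct and follows essentially the same route as the paper: use Proposition~\ref{prop:obsRG} for the infinite-volume observable flow, project to $\Lambda_N$ via Theorem~\ref{thm:infvolRGmap} up to scale $N-1$, then handle the final step with \eqref{eq:controlledRG23} for $K_N$, the graded-structure property for the bulk identification, and Lemma~\ref{lemma:deltalambda} to propagate the bounds of Corollary~\ref{cor:lambdainfty} through one more scale. The only difference is your worry about the off-critical parameters $\vec{\ba}_{c,k}$ with $k<\infty$: the paper's proof simply invokes \eqref{asmp:obsRG}, which fixes $\vec{\ba}=\vec{\ba}_{c,\infty}$, and this is all that is needed downstream (Proposition~\ref{prop:plateauInt} and the plateau analysis work at $k=\infty$), so your anticipated obstacle does not in fact arise.
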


\begin{proof}
We proceed just as in the proof of Proposition~\ref{prop:theRGthm}.  When we set initial conditions \eqref{asmp:obsRG},  by Theorem~\ref{thm:infvolRGmap},  the finite volume RG flow up to scale $j<N$ can obtained as a projection of the infinite volume observable RG flow constructed by Proposition~\ref{prop:obsRG}.  Thus we obtain $(V_j, K_j)_{j\le N-1}$,  and $(\delta u_j )_{j\le N-1}$ identical to those obtained in the infinite volume. 

We now consider $(\delta u_N, V_N, K_N) := \Phi_N (V_{N-1}, K_{N-1})$.  
By \eqref{eq:controlledRG23},  we obtain $K_N \in M \cK_N$ for $M = M_{0,0} / (2C_{\rg})$.
Since the RG map respects the graded structure (Definition~\ref{defi:contrlldRG2}),  it satisfies $\pi_\bulk (u_N,  V_N ) = (u_{N,\bulk} ,V_{N,\bulk})$.
Also,  Lemma~\ref{lemma:deltalambda} still holds in this setting,  we obtain the bounds of Corollary~\ref{cor:lambdainfty} by simply adding \eqref{eq:deltalambda} on the estimates on $\lambda_{N-1, \hash}^{(\km)}$ in Corollary~\ref{cor:lambdainfty}.
\end{proof}

\section{Plateau}
\label{sec:plateauproof}

Now,  we complete the proof of Theorem~\ref{thm:infvol2pt} and \ref{thm:plateauGen},  following the strategy of Section~\ref{sec:mainresults},  but using Proposition~\ref{prop:2ptfncrestt} in place of Proposition~\ref{prop:mgfref}.
We introduce the natural scale
\begin{align}
	h'_N = (g_N^{(\emptyset)})^{-1/4} L^{-dN/4} 
\end{align}
to state the following intermediate result for the two-point function,  proved at the end this section.

\begin{proposition} \label{prop:plateauInt}
Assume \eqref{asmp:obsRG} and $Y$ be an $\R^n$-valued random variable as in Lemma~\ref{lemma:NGcnst}.  Then
\begin{align}
	\langle \varphi_\o^{(1)} \varphi_\x^{(1)} \rangle_{g,\nu, \Lambda} &=  w_N (\x) +  \frac{(h'_N)^2 \E [ |Y|^2] }{n} + \psi_1 (\x) + \psi_2 (\x,N)
\end{align}
as $N\rightarrow \infty$,  
where $\psi_1 (\x)$ is a function independent of $N$ such that $\lim_{|\x|\rightarrow \infty} |\x|^{d-2+\eta} \psi_1 (\x) =0$ and $|\psi_2 (\x , N)| \le c_N ( (h'_N)^2 + (1 \vee |\x|)^{-(d-2+\eta)} )$ for some sequence $\lim_{N\rightarrow \infty} c_N =0$.
\end{proposition}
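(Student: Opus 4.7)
The plan is to combine Proposition~\ref{prop:2ptfncrestt} with Proposition~\ref{prop:finalintegral} and then run the one-dimensional integral analysis that parallels the proof of Theorem~\ref{thm:NGlimit}(i) in Section~\ref{sec:NGlimitproof}, but tracking the $\sigma_\ox$ part instead of the moment generating function. With $\vec{\ba}=\vec{\ba}_{c,\infty}$ (so $\ba^{(\emptyset)}=0$) and $\lambda_{\o,0}^{(\emptyset)}=\lambda_{\x,0}^{(\emptyset)}=1$, Proposition~\ref{prop:2ptfncrestt} gives
$$\langle\varphi_\o^{(1)}\varphi_\x^{(1)}\rangle_{g,\nu_c,\Lambda_N}=\frac{\int_{\R^n}Z_{N,\ox}(y\one)\,\rd y}{\int_{\R^n}Z_{N,\bulk}(y\one)\,\rd y}.$$
Proposition~\ref{prop:finalintegral} yields $Z_N=e^{-u_N(\Lambda)}(I_N+K_N)$, and since $u_N=u_{N,\bulk}|\Lambda|+\sigma_\ox(q_{N,\o}+q_{N,\x})$ and $\sigma_\ox^2=0$, the common factor $e^{-u_{N,\bulk}|\Lambda|}$ cancels in the ratio and
$$\langle\varphi_\o^{(1)}\varphi_\x^{(1)}\rangle_{g,\nu_c,\Lambda_N}= -(q_{N,\o}+q_{N,\x}) + \frac{\int \pi_\ox(I_N+K_N)(y\one)\,\rd y}{\int(I_{N,\bulk}+K_{N,\bulk})(y\one)\,\rd y}.$$
The first term is what will produce the polynomial decay $w_N(\x)+\psi_1(\x)$, the second term will produce the plateau.

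For the second term, I would argue exactly as in the proof of Theorem~\ref{thm:NGlimit}(i). Since the field is constant $y\one$ all gradient terms vanish, so $V_N(\Lambda,y\one)=\tfrac12\nu_N^{(\emptyset)}L^{dN}|y|^2+\tfrac14 g_N^{(\emptyset)}L^{dN}|y|^4+\sigma_\o\lambda_{\o,N}^{(\emptyset)}y^{(1)}+\sigma_\x\lambda_{\x,N}^{(\emptyset)}y^{(1)}$ and
$$\pi_\ox e^{-V_N(\Lambda,y\one)}=\lambda_{\o,N}^{(\emptyset)}\lambda_{\x,N}^{(\emptyset)}(y^{(1)})^2 e^{-V_{N,\bulk}(\Lambda,y\one)}.$$
After the change of variable $z=y/h'_N$ (so that $\tfrac14 g_N^{(\emptyset)}L^{dN}|y|^4=\tfrac14|z|^4$), the bounds $V_N\in\tilde\cD_N^{(0)}$ make the subleading pieces of $V_N$, the $W_{N,V}$ contribution and $\pi_{\bulk,\o,\x}K_N$ all vanish uniformly in $z$ up to an $e^{-c|z|^2}$ tail; these are exactly the estimates \eqref{eq:Vtozfour1}--\eqref{eq:Vtozfour5}, \eqref{eq:WNbnd} and \eqref{eq:KNLambdavN}--\eqref{eq:KNLambdavN3} from Section~\ref{sec:NGlimitproof}, which handle $\pi_\bulk$ already, and the same bounds together with the observable scale factors in $h_{j,\sigma}$ and $h_{j,\ssigma}$ of \eqref{eq:ellhdefi} handle $\pi_\o,\pi_\x,\pi_\ox$. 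Dominated convergence together with Corollary~\ref{cor:lambdainfty} then gives
$$\frac{\int \pi_\ox(I_N+K_N)(y\one)\,\rd y}{\int(I_{N,\bulk}+K_{N,\bulk})(y\one)\,\rd y}=\lambda_{\o,\infty}^{(\emptyset)}\lambda_{\x,\infty}^{(\emptyset)}(h'_N)^{2}\frac{\E[|Y|^2]}{n}(1+o(1)),$$
using $O(n)$-symmetry to replace $\E[(Y^{(1)})^2]$ by $\E[|Y|^2]/n$.

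For the first term, I would rerun the RG iteration but tracking only the scalar coefficients $q_{j,\o}+q_{j,\x}$ that are produced by the Gaussian contraction of the observable fields. At each step, integrating $\sigma_\o\lambda_{\o,j}^{(\emptyset)}\varphi_\o^{(1)}$ against $\sigma_\x\lambda_{\x,j}^{(\emptyset)}\varphi_\x^{(1)}$ against $\Gamma_{j+1}$ produces a contribution $\lambda_{\o,j}^{(\emptyset)}\lambda_{\x,j}^{(\emptyset)}\Gamma_{j+1}(\o,\x)$ to $q_{j+1,\o}+q_{j+1,\x}$, plus an error $r_{j+1}$ coming from $P_j$, $W_j$, $R_{j+1}^U$ and $K_j$. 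Telescoping gives
$$-(q_{N,\o}+q_{N,\x})=\sum_{j=0}^{N-1}\lambda_{\o,j}^{(\emptyset)}\lambda_{\x,j}^{(\emptyset)}\Gamma_{j+1}(\o,\x)+\sum_{j=0}^{N-1}r_{j+1}.$$
Using the finite-range property of $\Gamma_{j+1}$ from Proposition~\ref{prop:theFRD} together with Corollary~\ref{cor:lambdainfty}, the first sum differs from $\lambda_{\o,\infty}^{(\emptyset)}\lambda_{\x,\infty}^{(\emptyset)}w_N(\x)$ by an absolutely convergent, $N$-independent series that decays faster than $|\x|^{-(d-2+\eta)}$, so it contributes to $\psi_1(\x)$ in the limit and the tail $N\to\infty$ supplies a remainder in $\psi_2$; the second sum is similarly absolutely convergent thanks to the bounds \eqref{eq:controlledRG22}--\eqref{eq:controlledRG24} and Lemma~\ref{lemma:FWPbounds3obs}, and its tail beyond scale $j_\ox$ likewise fits into $\psi_1+\psi_2$.

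The main obstacle is the clean identification in the last step: proving that $-(q_{N,\o}+q_{N,\x})$ really has a decomposition $w_N(\x)+\psi_1(\x)+(\text{error into }\psi_2)$ with $\psi_1$ \emph{independent of $N$} and decaying faster than $|\x|^{-(d-2+\eta)}$. Independence requires working with the infinite-volume RG map from Theorem~\ref{thm:infvolRGmap} to first define $\psi_1$ on $\Z^d$, then using the projection property of the RG map to compare with the finite-volume flow, with the difference absorbed into $\psi_2$. The rapid decay of $\psi_1$ relative to $w_N(\x)$ comes from the fact that the renormalization corrections are always multiplied either by a higher power of $\tilde{g}_j$ (giving extra $\scale_j$ factors) or by the finite-range $\Gamma_{j+1}$ restricted to $j\ge j_\ox$; the precise accounting of these two factors, not the one-dimensional integral asymptotics, is where the bulk of the work lies.
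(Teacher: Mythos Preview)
Your decomposition and the handling of the two pieces are essentially what the paper does: Corollary~\ref{cor:crlfncreform} is precisely your splitting into $-(q_{N,\o}+q_{N,\x})$ plus the ratio, Section~\ref{sec:uNox} carries out your telescoping of $u_{N,\ox}$ via the explicit formula~\eqref{eq:ujoxform}, and Sections~\ref{sec:INox}--\ref{sec:ANox} do the integral analysis you describe (with Lemma~\ref{lemma:ANox} performing the delicate splitting on $j_\ox$ versus $N$ needed to get the $\pi_\ox K_N$ contribution into the form $c_N((h'_N)^2+(1\vee|\x|)^{-(d-2+\eta)})$, which you gloss over but correctly locate).

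However, you have a genuine gap that is not the one you flag. Both of your main terms come out multiplied by $\lambda_{\o,\infty}^{(\emptyset)}\lambda_{\x,\infty}^{(\emptyset)}$, and Corollary~\ref{cor:lambdainfty} only gives $\lambda_{\hash,\infty}^{(\emptyset)}=1+O(g)$. An $O(g)$ multiplicative error on $w_N(\x)$ is of the \emph{same} order $|\x|^{-(d-2+\eta)}$ and cannot be absorbed into $\psi_1$; likewise an $O(g)$ error on $(h'_N)^2$ is not $o((h'_N)^2)$ and cannot be absorbed into $\psi_2$. So you need $\lambda_{\o,\infty}^{(\emptyset)}\lambda_{\x,\infty}^{(\emptyset)}=1$ exactly, and nothing in your argument supplies it. The paper obtains this by summing the intermediate identity~\eqref{eq:propplateauIntint} over $\x\in\Lambda$, using translation invariance to see $\lambda_{\x,\infty}^{(\emptyset)}$ is independent of $\x$, and then matching the resulting susceptibility asymptotic against the \emph{independently proved} Corollary~\ref{cor:NGlimit} (which comes from Theorem~\ref{thm:NGlimit}(i) and does not use observables). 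The paper itself calls this a ``shocking fact'' and notes an alternative route via a global symmetry argument as in \cite[Lemma~4.6]{MR3345374}. Without one of these mechanisms your proof does not close.
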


This implies the main theorems on the two-point function  due to observations made in Appendix~\ref{sec:asympGreenfnc}.

\begin{proof}[Proof of Theorem~\ref{thm:infvol2pt} and \ref{thm:plateauGen}]

Theorem~\ref{thm:infvol2pt} is obtained almost directly. 
By Proposition~\ref{prop:plateauInt} and Lemma~\ref{lemma:GreenminuswN},  we have
\begin{align}
	\langle \varphi_\o^{(1)} \varphi_\x^{(1)} \rangle_{g,\nu, \Lambda_N}	= C^{(\vec{\ba})}_{\Z^d} (\x) + \psi_1 (\x) + \psi_2 (\x,N) + \frac{(h'_N)^2 \E[|Y|^2]}{n} + O(L^{-(d-2+\eta)N})
	\label{eq:infolv2ptpf1}
\end{align}
so we obtain the desired limit with
\begin{align}
	\mathbb{\C}_\x =  C^{(\vec{\ba})}_{\Z^d} (\x) + \psi (\x) ,
\end{align}
and it follows the desired asymptotic due to Lemma~\ref{lemma:Greenasymp},  with choices
$c_1 = (1 + \bar{\ba}_{c,\Delta})^{-1} \gamma$ when $\eta =0$ and $c_1 = \gamma$ when $\eta \neq 0$.
Also,  by the final sentence of Theorem~\ref{thm:theStableManifold},
we see $\bar{\ba}_{c,\Delta} = O(g)$,  and we have the desired estimates on $c_1$.

For the proof of Theorem~\ref{thm:plateauGen}, 
we expand on $(h'_N)^2 = (g_N^{(\emptyset)})^{-1/2} L^{-dN/2}$ inside \eqref{eq:infolv2ptpf1}.
When $d > d_{c,u}$,  we can use Lemma~\ref{lemma:gNasymp} to replace $g_N^{(\emptyset)} = g_\infty (1 + o(1))$ for some $g_{\infty} = g + O(g^2)$,  and when $d = d_{c,u}$,  we can use Lemma~\ref{lemma:gNasymp} to replace $g_N^{(\emptyset)} = (\bfb N)^{-1} ( 1 + O( 1 / \log N ) )$,  thus the desired identity follows with
\begin{align}
\begin{cases}
	c_1 = \bfb^{1/2} =\sqrt{n+8} /4\pi & (d=d_{c,u} ) , \\
	c_2 = (g/g_\infty)^{1/2} = 1+ O(g) & (d > d_{c,u}) .
\end{cases}
\end{align}
\end{proof}

\subsection{Two-point function}

Recall $h'_N = (g_N^{(\emptyset)})^{-1/4} L^{-dN/4}$ and
consider the Lebesgue measure $\sm (F) = \int_{\R^n} F(z) \rd z$.
Under \eqref{asmp:obsRG},  due to Proposition~\ref{prop:2ptfncrestt},  
\begin{align}
	\lambda^{(\emptyset)}_{0, \o} \lambda^{(\emptyset)}_{0, \x}	\frac{\E_{C^{(\vec{\ba})}} [ \varphi_\o^{(1)} \varphi_\x^{(1)} e^{-V_{0,\bulk} (\Lambda, \varphi)  }  ] } {\E_{C^{(\vec{\ba})}} [ e^{-V_{0,\bulk} (\Lambda, \varphi) }  ]}
		& = 
		\frac{\int_{\R^n} Z_{N,\ox} ( y \one ) \rd y }{\int_{\R^n} Z_{N,\bulk}  ( y \one )  \rd y} 
		=
		\frac{\sm \big[  Z_{N,\ox} ( h'_N z \one ) \big] }{\sm \big[ Z_{N,\bulk}  ( h'_N z \one ) \big]}   
\end{align}
after a change of variable $y = h'_N z$ to obtain the second equality.
By expanding in the observable field,  
\begin{align}
	\pi_\ox e^{u_{N,\bulk} |\Lambda|} Z_N 
		& =  - \sigma_\ox u_{N,\ox} (\Lambda) Z_{N,\bulk} + \pi_\ox I_N^\Lambda +  \sigma_\ox K_{N,\ox} (\Lambda)  
\end{align}
so we have the following.

\begin{corollary} \label{cor:crlfncreform}
Under \eqref{asmp:obsRG} and $\alpha \le 1$,
\begin{align}
	\lambda^{(\emptyset)}_{0, \o} \lambda^{(\emptyset)}_{0, \x}	\frac{\E_{C^{(\vec{\ba})}} [ \varphi_\o^{(1)} \varphi_\x^{(1)} e^{-V_{0,\bulk} (\Lambda, \varphi)  }  ] } {\E_{C^{(\vec{\ba})}} [ e^{-V_{0,\bulk} (\Lambda, \varphi) }  ]}
		&=  - u_{N,\ox} (\Lambda) + A_{N,\ox} + B_{N,\ox} ,  \label{eq:corcrlfncreform1} 
\end{align}
where $\delta u_{N} = u_{N} - u_{N-1}$ and
\begin{align}
	A_{N}  &= \frac{\sm [  K_{N} (\Lambda,  z h'_N \one ) ] }{e^{u_{N,\bulk} |\Lambda|}  \,  \sm [ Z_{N,\bulk} ( z h'_N \one) ]},\qquad
	B_{N} =  \frac{\sm [ I_N^{\Lambda} ( z h'_N \one ) ]  }{e^{u_{N,\bulk} |\Lambda|} \,  \sm[ Z_{N,\bulk} (z h'_N \one ) ]} .
\end{align}
\end{corollary}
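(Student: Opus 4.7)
The plan is to chain Proposition~\ref{prop:2ptfncrestt} with Proposition~\ref{prop:finalintegral} and exploit the nilpotency $\sigma_\ox^2 = 0$. First I would invoke Proposition~\ref{prop:2ptfncrestt}: since \eqref{asmp:obsRG} fixes $\vec{\ba} = \vec{\ba}_{c,\infty}$ with $\ba^{(\emptyset)}_{c,\infty} = 0$, the second assertion of that proposition applies and the Gaussian weight degenerates ($t_N^{-1} = 0$). After the change of variable $y = h'_N z$, the LHS of the corollary reduces to $\sm[Z_{N,\ox}(h'_N z\one)] / \sm[Z_{N,\bulk}(h'_N z\one)]$, which is exactly the ratio displayed just above the statement.

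Next I would apply Proposition~\ref{prop:finalintegral} to write $Z_N = e^{-u_N(\Lambda_N)}(I_N + K_N)$ with $u_N = u_{N,\bulk}|\Lambda| + \sigma_\ox u_{N,\ox}(\Lambda)$. The key algebraic observation is that $u_{N,\ox}(\Lambda) = q_\o + q_\x$ is a pure scalar (independent of $\varphi$), and $\sigma_\ox^2 = 0$ gives the Taylor truncation
\begin{align*}
e^{-u_N(\Lambda_N)} = e^{-u_{N,\bulk}|\Lambda|}\bigl(1 - \sigma_\ox u_{N,\ox}(\Lambda)\bigr).
\end{align*}
Extracting the $\sigma_\ox$-coefficient then yields the identity
\begin{align*}
e^{u_{N,\bulk}|\Lambda|} Z_{N,\ox}(\varphi) = I_{N,\ox}(\varphi) + K_{N,\ox}(\Lambda,\varphi) - u_{N,\ox}(\Lambda)\, e^{u_{N,\bulk}|\Lambda|} Z_{N,\bulk}(\varphi)
\end{align*}
already noted in the excerpt.

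Finally I would substitute this expansion into the ratio from the first step and divide through by $e^{u_{N,\bulk}|\Lambda|} \sm[Z_{N,\bulk}(h'_N z\one)]$. Because $u_{N,\ox}(\Lambda)$ is a constant, it factors out of the Lebesgue integral and contributes the term $-u_{N,\ox}(\Lambda)$; the remaining two quotients match $A_{N,\ox}$ and $B_{N,\ox}$ by direct inspection of their definitions. The argument is essentially bookkeeping in the graded observable ring, so there is no substantive obstacle; the only subtlety is the nilpotency identity $e^{-\sigma_\ox u_{N,\ox}(\Lambda)} = 1 - \sigma_\ox u_{N,\ox}(\Lambda)$ together with the $\varphi$-independence of $u_{N,\ox}(\Lambda)$, both immediate from the definitions in Section~\ref{sec:indfhot}.
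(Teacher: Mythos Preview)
Your proposal is correct and follows essentially the same approach as the paper. The paper does not give an explicit proof of this corollary; it simply displays the expansion $\pi_\ox e^{u_{N,\bulk}|\Lambda|} Z_N = -\sigma_\ox u_{N,\ox}(\Lambda) Z_{N,\bulk} + \pi_\ox I_N^\Lambda + \sigma_\ox K_{N,\ox}(\Lambda)$ and says ``so we have the following,'' which is exactly the bookkeeping you spell out via Proposition~\ref{prop:2ptfncrestt}, Proposition~\ref{prop:finalintegral}, and the nilpotency identity.
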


\subsection{Observable projection of the perturbative map}

In Definition~\ref{def:WP},  we did not specify $P_{j,V}$,  but we will need $\pi_\ox P_{j,V}$ to precisely control the flow of $u_{j,\ox}$.  
For functions $F, G$ valued in $\sum_{* \in \{\bulk,\o,\x,\ox\}} \sigma_* \R$,  let $\pi_* F = \sigma_* F_*$ and $\pi_* G = \sigma_* G_*$.  We use the following notations.
\begin{itemize}
\item Define $\Loc^{\ox} F (\varphi) = \sigma_\ox F_{\ox} (0) $.
\item For a covariance matrix $C$,  let
\begin{align}
	\F_{C} [F ; G] &= \Cov_C [ e^{-\frac{1}{2} \Delta_C } F ; e^{-\frac{1}{2} \Delta_C } G ] ,\\
	\F_{\pi,C} [F; G] &= \F_{C} [F ; \pi_\bulk G] + \F_C [ (1-\pi_\bulk)F ; G  ]  ,  \\ 
	\Cov_{\pi,C} [F;G]&= \Cov_C [ F ; \pi_\bulk G  ] + \Cov_{C} [ (1-\pi_\bulk ) F ; G  ] .
\end{align}
As for the case of the expectation,  we denote $\Cov_{\pi,+} = \Cov_{\pi,  \Gamma_+}$.
Also,  as a function of $V \in \cV$ and covariance $C$,  define the $\pi_{\ox}$-projection of 
\begin{align}
	\pi_{\ox} \mathbb{W}_{C, V} (\{x\}) = \frac{1}{2} \big( 1- \Loc^\ox \big) \mathbb{F}_{\pi, C} [V (\{ x \}) ; V(\Lambda)] .
\end{align}
\end{itemize}

The localisation seems slightly different from that of \cite{FSmap},  but it is nevertheless the same for $\sigma_\ox$-functions.
We also specialise \cite[Definition~4.10,  4.13]{FSmap} to the $\ox$-components.  (The definitions are essentially the same for the other components,  but just the localisation is more difficult to define.)

\begin{definition} \label{defi:pioxWP}
When $j < N$,  the $\ox$-projection of $W_{j,V}$ is defined as
\begin{align}
	\pi_{\ox} W_{j, V} =  \pi_{\ox}  \mathbb{W}_{w_j ,  V_j }
		\label{eq:pioxWjdefi}
\end{align}
The $\pi_\ox$-projection of $P_{j,V}$ is defined as
\begin{align}
	\pi_{\ox} P_{j,V} = \begin{cases} 
			\frac{1}{2} \Loc^{\ox} \Big( \Cov_{\pi, j+1} [\theta V ; \theta V] + \E_{j+1} \theta W_{j,V}  \Big) & (j +1 < N) \\
			0 & (j+1=N) .
		\end{cases}
		\label{eq:pioxPjV}
\end{align}
\end{definition}

\subsection{Proof of the plateau}

\subsubsection{$u_{N,\ox}$} \label{sec:uNox}

If we recall $\Phi^{\pt}_j (V) =  \mathbb{E}_{j+1} \theta V - P_{j,V}$ from Definition~\ref{def:WP},  we need to expand out \eqref{eq:pioxPjV} in order to trace the flow of $u_{j,\ox}$.  As always,  we are using the convention
\begin{align}
	\pi_{\hash} V_x (\varphi) = \sum_{\km \in \{\ko,\ko_{1,\nabla} \}} \lambda_{\hash}^{(\km)} \one_{x = \hash} S^{(\km)} (\varphi) 
		\qquad \text{when} \quad \hash \in \{\o,\x\}  .
\end{align}

\begin{lemma}
For $V_j \in \cV$ and $j < N$,
\begin{align}
	\sigma_\ox^{-1} \Cov_{\pi,j+1} [ \theta V_y  ; \theta V_z ] &= \sum_{\km_3, \km_4 \in \ko_1 \cup \ko_{1,\nabla}} \Big( \lambda^{(\km_3)}_{\o} \lambda^{(\km_4)}_{\x} \nabla^{\km_3}_{y}  \nabla^{\km_4}_{z} \Gamma_{j+1} (y-z) \one_{y=\o, z=\x} 
	\label{eq:oxCovVV} \\
		& \qquad\qquad\qquad + \lambda^{(\km_3)}_{\x} \lambda^{(\km_4)}_{\o} \nabla^{\km_4}_{x'}  \nabla^{\km_3}_{x''} \Gamma_{j+1} (y-z) \one_{y=\x, z=\o} \Big)  \nonumber
\end{align}
and
\begin{align}
	\sigma_{\ox}^{-1} W_{j} &= 0 .
		\label{eq:oxW}
\end{align}
\end{lemma}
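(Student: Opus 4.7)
The plan is to exploit the $R$-grading of the effective potential: by construction, $V \in \cV$ has no $\sigma_\ox$-component, so $V = V_\bulk + \sigma_\o \tilde V_\o + \sigma_\x \tilde V_\x$, where each $\tilde V_\hash(\varphi) = \sum_{\km} \lambda^{(\km)}_\hash S^{(\km)}_y(\varphi)\one_{y = \hash}$ is linear in $\varphi^{(1)}$ and supported at the single point $y = \hash$. Neither identity requires any analytic estimate; both reduce to tracking which pairings survive in the graded product.

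For the first identity \eqref{eq:oxCovVV}, I unpack $\Cov_{\pi,j+1}$ via its definition. In the piece $\Cov_{j+1}[\theta V_y; \pi_\bulk \theta V_z]$, the product $V_y \cdot (\pi_\bulk V_z)$ carries at most one $\sigma$-factor, so $\pi_\ox$ of it is zero. In the piece $\Cov_{j+1}[(1-\pi_\bulk)\theta V_y; \theta V_z]$, I substitute $(1-\pi_\bulk)V_y = \sigma_\o \tilde V_{y,\o} + \sigma_\x \tilde V_{y,\x}$ and expand; using $\sigma_\o^2 = \sigma_\x^2 = 0$ and $\sigma_\o\sigma_\x = \sigma_\ox$, the $\pi_\ox$-projection keeps exactly the two cross terms $\sigma_\ox \Cov_{j+1}[\theta \tilde V_{y,\o}; \theta \tilde V_{z,\x}]$ and $\sigma_\ox \Cov_{j+1}[\theta \tilde V_{y,\x}; \theta \tilde V_{z,\o}]$. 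Because $S^{(\km)}$ involves only the first component $\varphi^{(1)}$, different-component covariances drop out, and for the diagonal component the Gaussian covariance of $\nabla^{\km_3}_y \varphi^{(1)}$ and $\nabla^{\km_4}_z \varphi^{(1)}$ under $\zeta \sim \cN(0,\Gamma_{j+1})$ is $\nabla^{\km_3}_y \nabla^{\km_4}_z \Gamma_{j+1}(y - z)$. Summing over $\km_3, \km_4$ and using the indicator support of $\tilde V_\hash$ gives exactly \eqref{eq:oxCovVV}.

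For the second identity \eqref{eq:oxW}, I apply the same grading decomposition to $\mathbb{W}_{w_j, V_j}(\{x\}) = \tfrac{1}{2}(1 - \Loc^\ox)\,\mathbb{F}_{\pi, w_j}[V(\{x\}); V(\Lambda)]$. By the same reasoning, the $\pi_\ox$-part of $\mathbb{F}_{\pi, w_j}[V(\{x\}); V(\Lambda)]$ comes only from pairings $\sigma_\o \tilde V_\o$ with $\sigma_\x \tilde V_\x$. Crucially, $\tilde V_\o$ and $\tilde V_\x$ are \emph{linear} polynomials in $\varphi$ (the index sets $\ko_1 \cup \ko_{1,\nabla}$ enforce $p(\km) = 1$), so $\Delta_C \tilde V_\hash = 0$ and hence $e^{-\frac{1}{2}\Delta_C}$ acts as the identity on each. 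The $\sigma_\ox$ contribution therefore reduces to the covariance of two linear forms in $\varphi$, which is a constant in $\sigma_\ox \R$. Since $\Loc^\ox$ evaluates the $\ox$-component at $\varphi = 0$, it fixes every such constant, so $(1 - \Loc^\ox)$ annihilates it, yielding $\pi_\ox \mathbb{W}_{w_j, V_j} = 0$ and hence $\sigma_\ox^{-1} W_j = 0$.

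There is no real obstacle here — the proof is purely algebraic bookkeeping of the $\sigma$-grading, plus the observation that $V_\o, V_\x$ being linear in $\varphi$ reduces $\mathbb{F}_C$ to a plain Gaussian covariance of linear forms. The only care required is to match the symmetrized monomial $S^{(\km)}$ and its derivatives $\nabla^{(\km)}$ correctly, and to note that single-component observables ($\varphi^{(1)}$) make the $O(n)$-indices collapse, so that all cross terms assemble precisely into the two signed terms of \eqref{eq:oxCovVV}.
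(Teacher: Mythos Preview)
Your proof is correct and follows essentially the same route as the paper: both arguments use the $\sigma$-grading to isolate the $(\o,\x)$ and $(\x,\o)$ cross terms, compute the resulting Gaussian covariance of linear observables explicitly for \eqref{eq:oxCovVV}, and then use linearity of $V_\o,V_\x$ (so $e^{-\frac12\Delta_C}$ acts trivially) together with the fact that the resulting $\ox$-component is a constant annihilated by $1-\Loc^\ox$ for \eqref{eq:oxW}. Your write-up is in fact slightly more explicit than the paper's in justifying why the $\Cov[\theta V_y;\pi_\bulk\theta V_z]$ piece contributes nothing to $\pi_\ox$.
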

\begin{proof}
By definition,  $\pi_{\ox} \Cov_\pi [A;B] = \sigma_\ox ( \Cov [ A_\o ; B_\x ] + \Cov [ A_\x ; B_\o ] )$. 
Also,  since $\theta V_{\hash, y} (\varphi) = V_{\hash, y} (\varphi) + V_{\hash, y} (\zeta)$ and $V_{\hash, y} (\zeta)$ is linear in $\zeta$,  we have $\E_{j+1} V_{\hash, y} (\zeta) = 0$,
\begin{align}
	\pi_\ox \Cov_{j+1} [ \theta V_{\hash, y} ; \theta V_{\hash',   z}] = \pi_\ox \E_{j+1} [ V_{\hash, y} (\zeta) V_{\hash',  z} (\zeta)  ] 
\end{align}
for $\hash, \hash' \in \{\o,\x\}$.
This is a Gaussian integral of a quadratic function,  so it can be expressed in terms of $\Gamma_{j+1}$,  giving \eqref{eq:oxCovVV}.

For \eqref{eq:oxW},  observe that for $j<N$
\begin{align}
	\pi_\ox W_{j,V,x} &= \frac{1}{2} (1-\Loc^{\ox}) \sigma_{\ox} \F_{\pi,w_{j}} [  V_x ;  V (\Lambda)  ] \nnb
		&= \frac{1}{2} (1-\Loc^{\ox}) \sigma_{\ox} \Big( \F_{w_{j}} [  V_{\o,x} ;  V_{\x} (\Lambda)  ] + \F_{w_{j}} [  V_{\x,x} ;  V_{\o} (\Lambda)  ] \Big) .
\end{align}
Since $V_{\hash,y}$ is a linear function,  we have $e^{-\Delta_{C}} V_{\hash, y} = V_{\hash, y}$ for any covariance $C$,  and thus
\begin{align}
	\F_{C} [ V_{\hash,x} ;  V_{\hash', y}  ] =  \Cov_{C}  [ \theta V_{\hash,x} ;  \theta V_{\hash', y}  ]
	\qquad \hash,  \hash' \in \{\o,\x \}	
\end{align}
is constant-valued,
and it vanishes when we apply $1 - \Loc^\ox$.
Thus $\pi_\ox W_{j,x}$ also vanishes
\end{proof}

Due to the lemma and Definition~\ref{def:WP},  if we let $(\hat{\lambda}_{j,\hash}^{(\km)})_{\km \in \ko_1 \cup \ko_{1,\nabla}}$ to be the coefficients of $\hat{V}_j$,
\begin{align}
	\delta u_{j+1,\ox} (\Lambda) &= - \frac{1}{2} \sum_{\km_3, \km_4 \in \ko_1 \cup \ko_{1,\nabla}} \Big( \hat\lambda^{(\km_3)}_{\o,j} \hat\lambda^{(\km_4)}_{\x,j} \nabla^{\km_3} ( \nabla^{\km_4} )^T  \Gamma_{j+1} (\o - \x)   \;\;\;\; \nnb
		& \qquad\qquad\qquad\qquad +  \hat\lambda^{(\km_3)}_{\x,j} \hat\lambda^{(\km_4)}_{\o,j} \nabla^{\km_4} ( \nabla^{\km_3} )^T  \Gamma_{j+1} (\x - \o)  \Big) ,
		\label{eq:ujoxform}
\end{align}
where we denoted
\begin{align}
	\nabla^{\km_3} ( \nabla^{\km_4} )^T  \Gamma_{j+1} (a - b) 
		= \nabla^{\km_3}_{y}  \nabla^{\km_4}_{z} \Gamma_{j+1} (y-z) \big|_{y = a,  z = b}  .
\end{align}

\begin{lemma}

Assuming \eqref{asmp:obsRG} and $\lambda^{(\emptyset)}_{\hash, \infty}$ as in Corollary~\ref{lemma:deltalambda},  there exists $\tilde{\psi}_N (\x)$ and $\tilde{\psi}_\infty = \lim_{N\rightarrow \infty} \tilde{\psi}_N$ such that
\begin{align}
	u_{N,\ox} (\Lambda) = - \lambda_{\o,\infty}^{(\emptyset)} \lambda_{\x,\infty}^{(\emptyset)} w_{N} (\x) + \tilde{\psi}_N (\x)
	\label{eq:uNmoxest}
\end{align}
and satisfy $\lim_{|\x| \rightarrow \infty} |\x|^{d-2+\eta} \tilde\psi_N (\x) = 0$ uniformly in $N$ and $|\tilde{\psi}_\infty (\x) - \tilde{\psi}_N (\x) | \le O(L^{-(d-2+\eta)N})$ uniformly in $\x$.
\end{lemma}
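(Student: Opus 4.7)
The plan is to substitute the explicit formula \eqref{eq:ujoxform} for $\delta u_{j+1,\ox}(\Lambda)$ into the telescoping sum $u_{N,\ox}(\Lambda) = \sum_{j=1}^{N-1} \delta u_{j,\ox} + \delta u_{N,\ox}^{\Lambda_N}$ from Proposition~\ref{prop:finalintegral}, and to single out the diagonal term with $\km_3 = \km_4 = \emptyset$ as the leading contribution. Writing $\bar\lambda_\hash := \lambda^{(\emptyset)}_{\hash,\infty}$, the first step is the decomposition
\[
	\delta u_{j+1,\ox}(\Lambda) = -\bar\lambda_\o \bar\lambda_\x \,\Gamma_{j+1}(\o-\x) + \delta_j^{(1)}(\x) + \delta_j^{(2)}(\x),
\]
where $\delta_j^{(1)}(\x) = -\big(\hat\lambda^{(\emptyset)}_{\o,j} \hat\lambda^{(\emptyset)}_{\x,j} - \bar\lambda_\o \bar\lambda_\x\big)\,\Gamma_{j+1}(\o-\x)$ records the drift of the diagonal coupling toward its limit and $\delta_j^{(2)}$ collects every pair $(\km_3,\km_4) \neq (\emptyset,\emptyset)$ in \eqref{eq:ujoxform}. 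Summing in $j$ (and using $\Gamma_N^{\Lambda}$ at the last step), the first term reconstructs exactly $-\bar\lambda_\o \bar\lambda_\x w_N(\x)$ via the covariance decomposition \eqref{eq:LetaFRD} and the definition of $w_N$ in \eqref{eq:wjdefi}. I would then set $\tilde\psi_N(\x) := \sum_j (\delta_j^{(1)}(\x) + \delta_j^{(2)}(\x))$ and show it has the claimed behaviour.

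For the $\delta_j^{(1)}$ estimate, I would use the finite range property $\Gamma_{j+1}(\o-\x) = 0$ for $j+1 < j_\ox$ (Proposition~\ref{prop:theFRD}(2)) to restrict the sum to $j \gtrsim j_\ox$, and combine the pointwise bound $|\Gamma_{j+1}(\o-\x)| \lesssim \kc_{j+1}^2 \lesssim L^{-(d-2+\eta)j}$ with the convergence rate $|\hat\lambda^{(\emptyset)}_{\hash,j} - \bar\lambda_\hash| \lesssim \tilde\chi_j \tilde g_j L^{-\mu j}$ from Corollary~\ref{cor:lambdainfty}, where $\mu = \eta$ for $d=4$ and $\mu = (d-4+5\eta)/2$ for $d\ge 5$. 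Geometric summation from $j \ge j_\ox$ then produces a bound of order $\tilde g_{j_\ox} L^{-\mu j_\ox} |\x|^{-(d-2+\eta)}$, which is $o(|\x|^{-(d-2+\eta)})$ uniformly in $N$: the factor $L^{-\mu j_\ox}$ supplies the decay when $\mu > 0$, and the factor $\tilde g_{j_\ox} \sim 1/\log|\x|$ (by Lemma~\ref{lemma:gjasympcritd}) handles the residual $d=4,\eta=0$ case.

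The treatment of $\delta_j^{(2)}$ is the main obstacle. Each index $\km$ with $q(\km) \ge 1$ puts derivatives on $\Gamma_{j+1}$ that gain a factor $L^{-q(\km)j}$ by Proposition~\ref{prop:theFRD}(3); on the other hand the corresponding coefficient $\hat\lambda^{(\km)}_{\hash,j}$, merely constrained by membership in $\cD_j$, is allowed to be as large as $L^{q(\km)j}$ for $q(\km)\le 1$ or $\scale_j^{\kt} L^{(2-\eta)j}$ for $q(\km)\ge 2$ (see \eqref{eq:Dstsigmadefi}). A naive pairing yields only $O(\kc_{j+1}^2)$ per term, which matches the leading diagonal order; the required improvement to $o(|\x|^{-(d-2+\eta)})$ comes from two inputs: the $\scale_j^{\kt}$ factor in the RG domain when $q(\km)\ge 2$, and the observation (via Corollary~\ref{cor:lambdainfty} and the initial condition $\lambda^{(\km)}_{\hash,0} = 0$ for $\km \neq \emptyset$) that off-diagonal couplings carry an extra factor of $\tilde g_j$ or of $L^{-\mu' j}$ relative to the diagonal. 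This is where the detailed scale structure of the RG map truly matters.

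For the $N$-independence and convergence of $\tilde\psi_\infty - \tilde\psi_N$, I would invoke Theorem~\ref{thm:infvolRGmap}: for each $j\le N-2$ the summand $\delta u_{j+1,\ox}$ depends only on the infinite-volume flow coordinates and on $\Gamma_{j+1}$, hence agrees with its infinite-volume counterpart. The difference $\tilde\psi_\infty - \tilde\psi_N$ therefore reduces to (i) the tail $\sum_{j \ge N-1}$ of the $\Z^d$-sum, whose generic term is dominated by $\kc_{j+1}^2 \lesssim L^{-(d-2+\eta)j}$ and summed geometrically gives $O(L^{-(d-2+\eta)N})$; and (ii) the replacement of $\Gamma_N$ by $\Gamma_N^\Lambda$ at step $N$, which is of the same order since both satisfy identical bounds in Proposition~\ref{prop:theFRD}. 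Both contributions are $O(L^{-(d-2+\eta)N})$ uniformly in $\x$, completing the proof.
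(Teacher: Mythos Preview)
Your proposal is correct and follows essentially the same approach as the paper: decompose $\delta u_{j+1,\ox}$ via \eqref{eq:ujoxform} into the leading diagonal term (which telescopes to $-\bar\lambda_\o\bar\lambda_\x w_N(\x)$), the diagonal drift controlled by Corollary~\ref{cor:lambdainfty}, and the off-diagonal derivative terms bounded by pairing Proposition~\ref{prop:theFRD}(iii) with the sublinear growth of $\hat\lambda_{\hash,j}^{(\km)}$ from Corollary~\ref{cor:lambdainfty}. The paper's treatment of your $\delta_j^{(2)}$ is slightly more direct than you suggest---it simply invokes the coarse bounds $|\hat\lambda_{\hash,j}^{(\km')}| \lesssim 1$ or $L^{(1-2\eta)j}$ from \eqref{eq:lambdainfty1}--\eqref{eq:lambdainfty2} (rather than the $\scale_j^{\kt}$ domain bound or the $\tilde g_j$ gain from vanishing initial data), and checks that the net exponent strictly beats $-(d-2+\eta)$; but your ingredients are the same and your handling of the $\Gamma_N$ versus $\Gamma_N^{\Lambda}$ distinction at the final step is cleaner than the paper's.
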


\begin{proof} 
Since $\Gamma_j (\x - \o) = 0$ whenever $j < j_\ox$, 
\begin{align} \label{eq:uNmoxestexapnd}
	u_{N,\ox} (\Lambda) = - \sum_{j\in [ j_\ox ,  N-1]} \delta u_{j+1,\ox}
\end{align}
with $\delta u_{j,\ox}$ given by \eqref{eq:ujoxform}.

When $d > d_{c,u}$ and $(\km_3) = (\km_4) = (\emptyset)$,  then we can apply Proposition~\ref{prop:theFRD} to bound $\Gamma_j$ and apply Corollary~\ref{cor:lambdainfty} to approximate $\lambda^{(\emptyset)}_{\hash, j}$ by $\lambda^{(\emptyset)}_{\hash, \infty}$ and obtain
\begin{align}
	\Big| \sum_{j\ge j_\ox} \big( \hat{\lambda}^{(\emptyset)}_{\hash, j} - {\lambda}^{(\emptyset)}_{\hash, \infty}  \big) \Gamma_j (\x) \Big| 
		& \le O_L (1) \sum_{j \ge j_\ox} \tilde{\chi}_j \tilde{g}_j L^{-\frac{1}{2} (d-4+2\eta) j} L^{-(d-2+\eta) j } \nnb
		& \le O_L (1)  \tilde{\chi}_{j_\ox} \tilde{g}_{j_\ox} |x|^{-\frac{1}{2} (3d - 8 + 4 \eta)}	
\end{align}
where the final inequality follows from that $L^{-j_\ox} \le O_L (|\x |^{-1})$.
When $d = d_{c,u}$,  then Corollary~\ref{cor:lambdainfty} similarly gives
\begin{align}
	\Big| \sum_{j \in [j_\ox ,  N-1]} \big( \hat{\lambda}_{\hash, j}^{(\emptyset)} - \lambda_{\hash, \infty}^{(\emptyset)}  \big) \Gamma_j (\x) \Big| 
		& \le O_L \big( \sum_{j \ge j_\ox} \tilde{g}_j  L^{-2 j } \big)
		\le O_L \big( ( |x|^{2} \log |x|  )^{-1} \big) ,
\end{align}
where the final inequality follows from Lemma~\ref{lemma:gjasympcritd} that $\tilde{g}_{j_\ox} \lesssim j_\ox^{-1} \le O_L ( 1/ \log |\x| )$.

When $d > d_{c,u}$,  we also need to consider terms $\km_3 \in \ko_{1,\nabla}$ or $\km_4 \in \ko_{1,\nabla}$.  Observe that Corollary~\ref{cor:lambdainfty} also implies
\begin{align}
	|\lambda_{\hash,j}^{(\km')} | \le O_L (1) \times \begin{cases}
			1 & (q(\km_3) <  \frac{1}{2} (d-4+5\eta) ) \\
			L^{(1-2\eta) j} & (\textnormal{otherwise}) 
		\end{cases}
\end{align}
for any $\km' \in \ko_{1,\nabla}$,  thus if we again use Proposition~\ref{prop:theFRD}
\begin{align}
	& \Big| \sum_{j\in [j_\ox ,  N-1]}  \Big( \nabla^{\km_3} (\nabla^{\km_4})^T \Gamma_j (\o-\x) +  \nabla^{\km_3} (\nabla^{\km_4})^T \Gamma_j (\x-\o) \Big)  \hat{\lambda}^{(\km_3)}_{\hash, j} \hat{\lambda}^{(\km_4)}_{\hash, j} \Big|  \nnb
		& \qquad
		\le O_L (1)  \sum_{j \ge j_\ox} L^{- \big( d-2+\eta + q(\km_3) + q(\km_4) \big) j } \big( L^{q(\km_3) +q(\km_4)} \big)^{(1-c) j} \label{eq:nnablaGammajllambdabnd}
\end{align}
for some $c>0$,  and it is bounded by $|x|^{-(d-2+\eta + c) j}$.
Thus we have completed bounding
\begin{align}
	\tilde\psi_{N} (\x) &:= \sum_{j \in [ j_\ox, N-1]} ( \hat{\lambda}^{(\emptyset)}_{\hash, j} -  \lambda^{(\emptyset)}_{\hash, \infty} ) \Gamma_j (\x) \nnb
	&\qquad \qquad \qquad  
		+  \Big( \nabla^{\km_3} (\nabla^{\km_4})^T \Gamma_j (\o-\x) 
		  +  \nabla^{\km_3} (\nabla^{\km_4})^T \Gamma_j (\x-\o) \Big)  \hat{\lambda}^{(\km_3)}_{\hash, j} \hat{\lambda}^{(\km_4)}_{\hash, j}
\end{align}
and the bound on $\tilde{\psi}_\infty := \lim_{N\rightarrow \infty} \tilde{\psi}_N$ is obtained similarly.
\end{proof}

\subsubsection{$B_N$}  \label{sec:INox}

We estimate $B_N$ of Corollary~\ref{cor:crlfncreform}.
By direct expansion,
\begin{align}
	I_{N,\ox} &= I_{N,\bulk} V_{N,\o} V_{N,\x} +  e^{-V_{N,\bulk}^\stable} \big(W_{N, V,\ox}  - V_{N,\o} W_{N,V,\x} - V_{N,\x} W_{N,V,\o} \big) \label{eq:BNox} .
\end{align}

\begin{lemma} \label{lemma:IVNox}
Assuming \eqref{asmp:obsRG},  as $N\rightarrow \infty$,
\begin{align}
	\frac{ \sm \big[ ( I_{N,\bulk} V_{N,\o} V_{N,\x} ) (\Lambda ,  z h'_N \one) \big]}{e^{u_{N,\bulk}|\Lambda| } \sm [ Z_{N,\bulk} (z h'_N \one)  ]} 
		&\sim \lambda^{(\emptyset)}_{\infty,\o} \lambda^{(\emptyset)}_{\infty,\x} \frac{(h'_N)^2}{n} \frac{ \sm [ |z|^2 e^{-\frac{1}{4} |z|^4  } ]}{\sm [ e^{-\frac{1}{4} |z|^4} ] }   .
\end{align}
\end{lemma}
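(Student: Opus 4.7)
The plan is to reduce the integrand to the form already analysed in the proof of Theorem~\ref{thm:NGlimit}(i), multiplied by the observable factor $\lambda_{N,\o}^{(\emptyset)}\lambda_{N,\x}^{(\emptyset)}(z^{(1)})^2 (h'_N)^2$, and then pass to the limit by dominated convergence.

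\emph{Step 1: Reduction to the bulk analysis.} Since $V_{N,\hash,x}(\varphi)=\sum_{\km\in\ko_1\cup\ko_{1,\nabla}}\lambda_{N,\hash}^{(\km)} S_x^{(\km)}(\varphi)\one_{x=\hash}$ and $S^{(\km)}(c\one)=0$ for any $\km$ with $q(\km)\ge 1$ (as $\nabla\one\equiv 0$), evaluating at the constant field gives
\begin{equation*}
V_{N,\hash}(\Lambda, z h'_N\one) = \lambda_{N,\hash}^{(\emptyset)}\, z^{(1)} h'_N,\qquad \hash\in\{\o,\x\}.
\end{equation*}
Hence $(V_{N,\o} V_{N,\x})(\Lambda, zh'_N\one) = \lambda_{N,\o}^{(\emptyset)}\lambda_{N,\x}^{(\emptyset)} (z^{(1)})^2 (h'_N)^2$, independent of $\Lambda$. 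Factoring this out of $\sm$, the ratio becomes
\begin{equation*}
\lambda_{N,\o}^{(\emptyset)}\lambda_{N,\x}^{(\emptyset)} (h'_N)^2\,\frac{\sm\!\left[(z^{(1)})^2\, I_{N,\bulk}(\Lambda, zh'_N\one)\right]}{e^{u_{N,\bulk}|\Lambda|}\sm\!\left[Z_{N,\bulk}(zh'_N\one)\right]}.
\end{equation*}

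\emph{Step 2: Passing to the limit in the integrals.} From Proposition~\ref{prop:theRGthm} we have $e^{u_{N,\bulk}|\Lambda|}Z_{N,\bulk}=I_{N,\bulk}+K_{N,\bulk}$. Lemma~\ref{lemma:Vtozfour} gives $I_{N,\bulk}(\Lambda, zh'_N\one)\to e^{-|z|^4/4}$ with a uniformly integrable dominating function (of the form $e^{-c|z|^4}$ coming from $\pi_4 V_N$), and the argument in the proof of Theorem~\ref{thm:NGlimit}(i) shows that $\int K_{N,\bulk}(\Lambda, zh'_N\one)\,dz\to 0$. Multiplying the integrand by the polynomial factor $(z^{(1)})^2$ does not destroy uniform integrability, so dominated convergence yields
\begin{equation*}
\frac{\sm\!\left[(z^{(1)})^2 I_{N,\bulk}(\Lambda,zh'_N\one)\right]}{e^{u_{N,\bulk}|\Lambda|}\sm\!\left[Z_{N,\bulk}(zh'_N\one)\right]}\;\longrightarrow\;\frac{\sm\!\left[(z^{(1)})^2 e^{-|z|^4/4}\right]}{\sm\!\left[e^{-|z|^4/4}\right]}.
\end{equation*}

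\emph{Step 3: Symmetry and convergence of the coupling constants.} The measure $e^{-|z|^4/4}\,dz$ on $\R^n$ is $O(n)$-invariant, so
$\sm[(z^{(1)})^2 e^{-|z|^4/4}]=\tfrac{1}{n}\sm[|z|^2 e^{-|z|^4/4}]$. By Corollary~\ref{cor:lambdainfty}, $\lambda_{N,\hash}^{(\emptyset)}\to\lambda_{\hash,\infty}^{(\emptyset)}$ (the exponent $q(\emptyset)=0$ is in the convergent regime for every $d\ge d_{c,u}$). Combining Steps 1--3 gives the claim.

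\emph{Anticipated difficulty.} The only non-routine point is checking that multiplication by $(z^{(1)})^2$ preserves the uniform integrability used in the proof of Theorem~\ref{thm:NGlimit}(i); this is harmless because the relevant bounds on $I_{N,\bulk}$ and $K_{N,\bulk}$ both decay like $e^{-c|z|^4}$ (from $\pi_4 V_N$) and $e^{-c\kappa|z|^2}$ (from Lemma~\ref{lemma:KNbnd}), which absorb the polynomial prefactor. No new RG input is needed.
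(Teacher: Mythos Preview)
Your proposal is correct and follows essentially the same route as the paper: evaluate $V_{N,\hash}$ on the constant field to extract $\lambda_{N,\hash}^{(\emptyset)}h'_N z^{(1)}$, use Lemma~\ref{lemma:Vtozfour} for $I_{N,\bulk}\to e^{-|z|^4/4}$, control the $K_{N,\bulk}$ contribution in the denominator, apply dominated convergence, and finish with the $O(n)$-symmetry and Corollary~\ref{cor:lambdainfty}. The only cosmetic difference is that the paper isolates the $K_{N,\bulk}$ estimate into a dedicated Lemma~\ref{lemma:KIintegralRatio} rather than citing the proof of Theorem~\ref{thm:NGlimit}(i), and that in this lemma the polynomial factor $(z^{(1)})^2$ only multiplies $I_{N,\bulk}$ (in the numerator), not $K_{N,\bulk}$, so your ``anticipated difficulty'' is even milder than you suggest.
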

\begin{proof}
We have $I_{N,\bulk} (\Lambda,  h'_N z\one) \rightarrow e^{-\frac{1}{4} |z|^4}$ by Lemma~\ref{lemma:Vtozfour} and we can absorb the integral of $K_{N,\bulk}$ into the error term using Lemma~\ref{lemma:KIintegralRatio} stated below,
so by the Dominated convergence theorem,
\begin{align}
	\sm [ (I_{N,\bulk} + K_{N,\bulk} ) (\Lambda,  h'_N z\one) ] \sim \sm [ e^{-\frac{1}{4} |z|^4 } ] .
		\label{eq:IKhNzoneint}
\end{align}
For the numerator,  since the gradient of $h'_N z \one$ vanishes, we have $\pi_{\km} V_{N,\hash} (\Lambda,  h'_N z\one) = 0$ for $\km  \in \ko_{1,\nabla}$,  so together with the symmetry explained after \eqref{eq:ko1nabladefi},
\begin{align}
	V_{N,\hash} (\Lambda,  h'_N z\one) = \lambda^{(\emptyset)}_{\hash ,  N} h'_N z^{(1)} ,
	\label{eq:VNhashzone}
\end{align}
for both $\hash \in \{\o,\x\}$.
By the Dominated convergence theorem and Corollary~\ref{cor:lambdainfty},
\begin{align}
	\sm [ ( I_{N,\bulk} V_{N,\o} V_{N,\x} \big) \big(\Lambda,  h'_N z \one \big) ]
		&\sim  (h'_N)^2 \lambda^{(\emptyset)}_{N,\o} \lambda^{(\emptyset)}_{N,\x}  \sm \big[ |z^{(1)}|^2 e^{-\frac{1}{4} |z|^4 } \big]  \nnb
		&\sim \frac{1}{n} (h'_N)^2 \lambda^{(\emptyset)}_{\infty,\o} \lambda^{(\emptyset)}_{\infty,\x}  \sm \big[ |z|^2 e^{-\frac{1}{4} |z|^4 } \big]  
\end{align}
as $N\rightarrow \infty$. 
\end{proof}

We need the following estimate to complete the lemma.

\begin{lemma} \label{lemma:KIintegralRatio}
Assuming \eqref{asmp:obsRG},  for sufficiently large $N$,
\begin{align}
	\left| \frac{ \sm [ K_{N,\bulk} (\Lambda,   h'_N z \one)] }{\sm [ I_{N,\bulk} (\Lambda,  h'_N z \one) ]} \right|
		\le O_L (1) \tilde{g}_N^{3/4} \scale_N^{\kpe}  .
\end{align}
\end{lemma}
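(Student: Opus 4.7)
The plan is to bound the numerator from above using the $\ell^2$--decay estimate in Lemma~\ref{lemma:KNbnd} together with the fact that the test configuration $h'_N z \one$ is constant in space, and to bound the denominator from below using the convergence $I_{N,\bulk}(\Lambda, h'_N z\one) \to e^{-|z|^4/4}$ already provided by Lemma~\ref{lemma:Vtozfour}. Both the numerator and denominator integrals turn out to be integrals of functions that are essentially Gaussian or quartic in $z$, so the computation reduces to straightforward estimates on $n$-dimensional Lebesgue integrals.

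For the numerator, I would first apply Lemma~\ref{lemma:KNbnd} with $K_{N,\bulk} \in M \cK_N$ (as provided by Proposition~\ref{prop:theRGthm}) to obtain
\begin{align}
|K_{N,\bulk}(\Lambda, \varphi)| \le O_L(1)\, \tilde{g}_N^{3/4} \scale_N^{\kpe}\, \tilde{G}_N(\Lambda,\varphi)\, e^{-\kappa L^{-dN} \|\varphi/h_{N,\bulk}\|_{\ell^2}^2},
\end{align}
where the polynomial prefactors combine via $\tilde{g}_N^{-9/4}\cdot\tilde{g}_N^3 = \tilde{g}_N^{3/4}$ and $\scale_N^{-\kae+\kpe}\cdot\scale_N^{\kae} = \scale_N^{\kpe}$, and $\tilde{\chi}_N = 1$ because $\ba^{(\emptyset)} = 0$ under \eqref{asmp:obsRG}. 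Evaluating at $\varphi = h'_N z\one$, Corollary~\ref{cor:logtildeGNbndcor} gives $\tilde{G}_N(\Lambda, h'_N z\one) \le 1$, while the Gaussian weight reduces to $e^{-\kappa (h'_N/h_{N,\bulk})^2 |z|^2}$. The ratio $(h'_N/h_{N,\bulk})^2 = (\tilde{g}_N/g_N^{(\emptyset)})^{1/2}$ is bounded above and below by $L$-independent constants thanks to the defining constraint $g_N^{(\emptyset)}/\tilde{g}_N \in ((2C_{\cD})^{-1}, 2C_{\cD})$ of $\tilde{\cD}_{N,\bulk}$. Integrating the resulting bound $O_L(1)\tilde{g}_N^{3/4}\scale_N^{\kpe} e^{-c|z|^2}$ over $\R^n$ yields $\sm[K_{N,\bulk}(\Lambda, h'_N z\one)] \le O_L(1)\tilde{g}_N^{3/4}\scale_N^{\kpe}$.

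For the denominator, Lemma~\ref{lemma:Vtozfour} already establishes pointwise convergence $I_{N,\bulk}(\Lambda, h'_N z\one) \to e^{-|z|^4/4}$ together with uniform integrability of the family $\{I_{N,\bulk}(\Lambda, h'_N z\one)\}_N$ in $z$. The Dominated convergence theorem then gives $\sm[I_{N,\bulk}(\Lambda, h'_N z\one)] \to \sm[e^{-|z|^4/4}]$, which is a positive finite constant, so for sufficiently large $N$ this integral is bounded below by a strictly positive constant. Dividing the two bounds produces the claim.

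No serious obstacle should arise: all the hard work is already in the pointwise $\ell^2$--decay estimate of Lemma~\ref{lemma:KNbnd} and in Lemma~\ref{lemma:Vtozfour}. The only points requiring care are (i) confirming that the large-field Gaussian weight from $\|K_N\|_{\cW_N}$ matches the scale of $h'_N z\one$ after the change of variables $y = h'_N z$ (which works because the RG domain forces $h'_N \asymp h_{N,\bulk}$), and (ii) ensuring that the regulator $\tilde{G}_N$ does not contribute a growing factor, which follows from its translation invariance under constants.
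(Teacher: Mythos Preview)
Your proposal is correct and follows essentially the same approach as the paper: bound the numerator via Lemma~\ref{lemma:KNbnd} together with Corollary~\ref{cor:logtildeGNbndcor} and the observation that $(h'_N/h_{N,\bulk})^2 = (\tilde g_N/g_N^{(\emptyset)})^{1/2}$ is bounded, and bound the denominator from below using the convergence in Lemma~\ref{lemma:Vtozfour}. The paper's proof is terser (it simply points back to the argument of \eqref{eq:KNLambdavN}--\eqref{eq:KNLambdavN3}), but the content is identical.
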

\begin{proof}
By the argument of \eqref{eq:KNLambdavN}--\eqref{eq:KNLambdavN3},  since $\tilde{G} (\Lambda,  h'_N z\one) = 1$, 
\begin{align}
	\big|K_{N,\bulk} \big(\Lambda ,  h'_N z \one \big)\big| 
		\le C_{\rg} \tilde{g}_N^{\frac{3}{4}} \scale_N^{\kpe} e^{-c |z|^2} 
\end{align}
and by Lemma~\ref{lemma:Vtozfour},  we have $I_{N,\bulk} (\Lambda,  h'_N z \one) \rightarrow e^{-\frac{1}{4} |z|^4}$.
\end{proof}

Other terms of \eqref{eq:BNox} can be shown to be sub-dominant.

\begin{lemma} \label{lemma:EVWox}
Assuming \eqref{asmp:obsRG} and with sufficiently small $\epsilon' >0$,
\begin{align}
	& \lim_{N \rightarrow \infty} (h'_N)^{-2} \frac{ \sm \big[ \big(  e^{-V_{N,\bulk}^\stable} W_{N, V,\ox} \big) (\Lambda, z h'_N \one ) \big] }{e^{u_{N,\bulk}|\Lambda| } \sm [ Z_{N,\bulk}  (z h'_N \one)  ]} = 0 
	\label{eq:EVWox0}	 \\
	& \lim_{N\rightarrow \infty} (h'_N)^{-2} 
		\frac{\sm \big[ \big( e^{-V^\stable_{N,\bulk}} V_{N,\o} W_{N,V,\x} \big) (\Lambda,  zh'_N \one ) \big] }{e^{u_{N,\bulk}|\Lambda| } \sm [ Z_{N,\bulk}  (z h'_N \one)  ]} = 0 ,
	\label{eq:EVWox1}
\end{align}
and the same holds for $V_{N,\x} W_{N,V,\o}$.
\end{lemma}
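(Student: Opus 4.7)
The plan is to show each numerator is $o((h'_N)^2)$ while the common denominator $e^{u_{N,\bulk}|\Lambda|}\sm[Z_{N,\bulk}(zh'_N\one)]$ converges to the positive constant $\sm[e^{-|z|^4/4}]$ by the argument of Lemma~\ref{lemma:IVNox} (via Lemma~\ref{lemma:Vtozfour} and Lemma~\ref{lemma:KIintegralRatio}). In both displays I will use $|e^{-V^\stable_{N,\bulk}(\Lambda,h'_Nz\one)}|\lesssim e^{-c|z|^4}$, which follows from $\pi_4 V_{N,\bulk}(\Lambda,h'_Nz\one)=\frac{1}{4}|z|^4$ (as in Lemma~\ref{lemma:Vtozfour}), to ensure $z$-integrability, and Lemma~\ref{lemma:polynorm} combined with $\norm{h'_Nz\one}_{h_N,\Phi_N}\asymp|z|$ (from $h'_N\asymp h_{N,\bulk}$) to translate a $T_N(0)$-norm bound on $W$ into a pointwise estimate at $\varphi=h'_Nz\one$ multiplied by $(1+|z|)^6$, since $W$ is a polynomial of degree $\le 6$ in $\varphi$.

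For \eqref{eq:EVWox0}, $W_{N,V,\ox}$ is the bilinear output of $(\sigma_\o V_{N,\o},\sigma_\x V_{N,\x})$, so Lemma~\ref{lemma:FWPbounds3obs} applied to these two inputs, together with $\norm{\sigma_\hash V_{N,\hash}}_{\cV_N(\ell)}\lesssim\ell_{N,\bulk}\ell_{N,\sigma}$ (Definition~\ref{defi:cVnormdefi} and $V_{N,\hash}\in\cD_{N,\sigma}$), and the fact that the $T_N(0)$-norm of a $\sigma_\ox$-valued function carries an explicit factor $h_{N,\ssigma}$, gives
$$|W_{N,V,\ox}(\Lambda,0)|\lesssim O_L(1)\tilde\chi_N(\ell_{N,\bulk}/h_{N,\bulk})^6\ell_{N,\bulk}^2\ell_{N,\sigma}^2/h_{N,\ssigma}.$$
Extending to $\varphi=h'_Nz\one$, integrating against $e^{-c|z|^4}$, and dividing by $(h'_N)^2$, the first limit reduces to showing $\tilde\chi_N(\ell_{N,\bulk}/h_{N,\bulk})^6\ell_{N,\bulk}^2\ell_{N,\sigma}^2/(h_{N,\ssigma}(h'_N)^2)\to 0$. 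Substituting \eqref{eq:ellhdefi} together with $(h'_N)^2\asymp\tilde g_N^{-1/2}L^{-dN/2}$, I expect after simplification a bound of order $\ell_0^8\tilde g_N^{7/2}(4/L^{2(1-\epsilon')})^{N-j_\ox}$ at $d=d_{c,u}$, which vanishes for $L$ large and $\epsilon'<1-\log_L 2$ (aided by $\tilde g_N\to 0$), and of order $\ell_0^8\tilde g_N^{7/2}L^{-\frac{9}{2}(d-4+2\eta)N+(\frac{d\epsilon'}{2}+(d-4+2\eta)\kpe)(N-j_\ox)}$ at $d>d_{c,u}$, whose $L$-exponent is negative for $\epsilon',\kpe$ sufficiently small.

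For \eqref{eq:EVWox1}, \eqref{eq:VNhashzone} gives $V_{N,\o}(\Lambda,h'_Nz\one)=\lambda_{\o,N}^{(\emptyset)}h'_Nz^{(1)}$, and $W_{N,V,\x}$ arises bilinearly from $(V_{N,\bulk},\sigma_\x V_{N,\x})$ plus its transpose. Lemma~\ref{lemma:FWPbounds3obs}, together with $\norm{V_{N,\bulk}}_{\cV_N(\ell)}\lesssim\ell_0^4\tilde g_N\scale_N$ (Lemma~\ref{lemma:VincDsize}) and the $h_{N,\sigma}$ factor in the $\sigma_\x$-part of the $T_N(0)$-norm, yields $|W_{N,V,\x}(\Lambda,h'_Nz\one)|\lesssim O_L(1)\tilde\chi_N(\ell_{N,\bulk}/h_{N,\bulk})^6\ell_0^4\tilde g_N\scale_N\ell_{N,\bulk}\ell_{N,\sigma}(1+|z|)^6/h_{N,\sigma}$. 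After the extra $h'_N$ from $V_{N,\o}$, integration against $e^{-c|z|^4}$, division by $(h'_N)^2$, and the identity $h_{N,\sigma}h'_N\asymp 1$ from \eqref{eq:ellhdefi}, the ratio is bounded by $\tilde\chi_N(\ell_{N,\bulk}/h_{N,\bulk})^6\ell_0^4\tilde g_N\scale_N\ell_{N,\bulk}\ell_{N,\sigma}$, which vanishes as $N\to\infty$ by a scale computation strictly simpler than the previous one (no $\epsilon'$- or $\kpe$-corrections appear). The symmetric term $V_{N,\x}W_{N,V,\o}$ is handled identically. The principal obstacle is the delicate scale bookkeeping at $d=d_{c,u}$ in the first limit, where all bulk exponents are marginal and the decay is driven entirely by the interplay of the factor $4^{N-j_\ox}$ from $\ell_{N,\sigma}^2$ and $L^{2(1-\epsilon')(N-j_\ox)}$ from $h_{N,\ssigma}$; this is exactly what forces the smallness of $\epsilon'$ assumed in the statement of the lemma.
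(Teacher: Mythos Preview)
Your approach is correct and follows the same strategy as the paper: bound $W_{N,V}$ in the $h_N$-weighted $T_N(0)$-norm via Lemma~\ref{lemma:FWPbounds3obs}, extract the observable components by dividing out the $h_{N,\ssigma}$ or $h_{N,\sigma}$ weight, extend to $\varphi=h'_Nz\one$ via Lemma~\ref{lemma:polynorm}, and verify the result is $o((h'_N)^2)$ against the denominator limit $\sm[e^{-|z|^4/4}]$.

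The only organisational difference is that the paper applies the pre-packaged estimate \eqref{eq:FWPbounds3obsapplied}, namely $\norm{W_{N,V}(\Lambda)}_{h_N,T_N(0)}\le O_L(1)(\tilde g_N\scale_N)^{1/2}$, and then reads off the $\ox$ and $\x$ components; you instead resolve $W_{N,V}$ into its bilinear pieces and insert the individual $\cV$-norms $\norm{\sigma_\hash V_{N,\hash}}_{\cV_N(\ell)}\lesssim\ell_{N,\bulk}\ell_{N,\sigma}$ and $\norm{V_{N,\bulk}}_{\cV_N(\ell)}\lesssim\ell_0^4\tilde g_N\scale_N$. Your version makes the $d=d_{c,u}$ bookkeeping very transparent, since the $2^{N-j_\ox}$ in $\ell_{N,\sigma}$ is then visibly beaten by the $L^{2(1-\epsilon')(N-j_\ox)}$ in $h_{N,\ssigma}$ once $L$ is large and $\epsilon'<1-\log_L 2$. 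One small slip: your claimed $d>d_{c,u}$ exponent $-\tfrac{9}{2}(d-4+2\eta)N$ is not quite right (for $d\ge 5$ a direct computation gives $(-\tfrac{9}{2}(d-4+2\eta)+3\eta)N$, and for $d=4$ with $\eta>0$ the $\ell_{j,\sigma}$ formula is the $d=4$ branch of \eqref{eq:ellhdefi}, which changes the arithmetic), but in every case the leading coefficient is strictly negative for $\epsilon',\kpe$ small, so your conclusion stands.
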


\begin{proof}
Using Lemma~\ref{lemma:FWPbounds3obs} to bound $W_{N,V}$,
\begin{align}
	\norm{W_{N,V} (\Lambda) }_{h_N, T_N (0)} \le O_L (1) (\tilde{g}_N \scale_N)^{1/2} ,
\end{align}
and since $\norm{h'_N z \one}_{h_N,\Phi_{N}} \lesssim |z|$ and $W_N$ is a polynomial of degree $\le 6$,
\begin{align}
	h_{N,\ssigma} \big| W_{N,V,\ox}  (\Lambda,  h'_N z \one) \big| 
		& \le O_L (1)  (\tilde{g}_N \scale_N)^{1/2} (1+|z|)^6 ,\\
	h_{N,\sigma} \big| W_{N,V,\x}  (\Lambda,  h'_N z \one) \big| 
		& \le O_L (1)  (\tilde{g}_N \scale_N)^{1/2} (1+|z|)^6 . 
\end{align}
but since $h_{N,\sigma}^{-1} = h_{N,\bulk} \lesssim h'_N$ and $h_{N,\ssigma} = \tilde{g}_N^{1/2} L^{\frac{d}{2} j_\ox} (L^{ \frac{d}{2} (1 - \epsilon') - (d-4+2\eta) \kpe } )^{N - j_\ox}$,
\begin{align}
	\big| W_{N,V,\ox}  (\Lambda,  h'_N z \one) \big| 
		& \le O_L (\tilde{g}_N \scale_N)^{1/2} (h'_N)^2 (1+|z|)^6  , \\
	\big| W_{N,V,\x}  (\Lambda,  h'_N z \one) \big| 
		& \le O_L (\tilde{g}_N \scale_{j_\ox})^{1/2} ( L^{\frac{d}{2}\epsilon' + (d-4+2\eta) (\kpe-\frac{1}{2}) }  )^{(N-j_{\ox})_+} (h'_N)^2 (1+|z|)^6  ,
\end{align}
and together with \eqref{eq:VNhashzone},  and the condition $\kpe <1/2$ (see after \eqref{eq:kaekbekpedefi}) with a small choise of $\epsilon'$,
\begin{align}
	& \max \Big\{ \big|  W_{N,V,\ox} (\Lambda,  h'_N z \one) \big|  ,  \;
		\big| \big( V_{N,\o} W_{N,V,\x} \big) (\Lambda,  h'_N z \one) \big|  \Big\} 
		 \le   o_L (1) (h'_{N})^2 (1+|z| )^7 ,
\end{align}
We can plug this bound into \eqref{eq:EVWox1} and using Lemma~\ref{lemma:Vtozfour} to see that $e^{-V^\stable_{N,\bulk}} (\Lambda,  h'_N z \one )  \rightarrow e^{-\frac{1}{4} |z|^4}$
\begin{align}
	\max\Big\{	\Big| \sm \big( e^{-V^\stable_{N,\bulk}} W_{N,V,\ox} \big) (\Lambda,  h'_N z \one) \Big| ,  \;\; 
		\Big| \sm \big( e^{-V^\stable_{N,\bulk}} V_{N,\o} W_{N,V,\x} \big) (\Lambda,  h'_N z \one)  \Big| \Big\} \nnb
			\qquad 
			\le o_L (1) (h'_N)^2 \sm [ (1+ |z|)^7  e^{-\frac{1}{4} |z|^4} ]z .
\end{align}
On the other hand,  the denominators of \eqref{eq:EVWox0} and \eqref{eq:EVWox1} converges to $\sm [  e^{-\frac{1}{4} |z|^4} ]$ by \eqref{eq:IKhNzoneint},  giving \eqref{eq:EVWox0} and \eqref{eq:EVWox1}.
\end{proof}

\subsubsection{$A_{N}$} \label{sec:ANox}

Since $A_{N}$ comes from the remainder term $K_N$,  so we only need crude bounds on it. 

\begin{lemma} \label{lemma:ANox}
Assuming \eqref{asmp:obsRG},
\begin{align}
	& | A_{N,\ox} | \le c_N \big(  (h'_N)^2 + (1\vee  |\x| )^{-(d-2+\eta)} \big)
\end{align}
for some sequence $\lim_{N\rightarrow \infty} c_N =0$.
\end{lemma}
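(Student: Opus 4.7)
The plan is to reduce $|A_{N,\ox}|$ to a pointwise bound on $|K_{N,\ox}(\Lambda,h'_N z\one)|$ and then to compare the resulting scale with the two competing scales $(h'_N)^2$ and $|\x|^{-(d-2+\eta)}$. By the very definition of $A_N$ in Corollary~\ref{cor:crlfncreform} the denominator is purely bulk-valued, and the same argument already used in the proof of Lemma~\ref{lemma:IVNox} (Lemma~\ref{lemma:Vtozfour} combined with Lemma~\ref{lemma:KIintegralRatio}) shows that $e^{u_{N,\bulk}|\Lambda|}\sm[Z_{N,\bulk}(h'_N z\one)]\to\sm[e^{-|z|^4/4}]$, a strictly positive constant. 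Consequently it suffices to control the numerator $|\sm[K_{N,\ox}(\Lambda,h'_N z\one)]|$.

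For this I would invoke Lemma~\ref{lemma:KNbnd}. The $\ox$-component enters the $\cW_N$-norm with weight $h_{N,\ssigma}$, so \eqref{eq:KNbnd2} yields
\begin{align}
|K_{N,\ox}(\Lambda,\varphi)|\le O_L(1)\,h_{N,\ssigma}^{-1}\tilde{g}_N^{-9/4}\scale_N^{-\kae+\kpe}\tilde{G}_N(\Lambda,\varphi)\,e^{-\kappa L^{-dN}\|\varphi/h_{N,\bulk}\|_{\ell^2}^2}\,\|K_N\|_{\cW_N}.
\end{align}
Plugging in $\varphi=h'_N z\one$, Corollary~\ref{cor:logtildeGNbndcor} gives $\tilde{G}_N(\Lambda,h'_N z\one)=\tilde{G}_N(\Lambda,0)\le 1$, while Proposition~\ref{prop:theRGthm} forces $g_N^{(\emptyset)}/\tilde{g}_N\asymp 1$ and hence $L^{-dN}\|h'_N z\one/h_{N,\bulk}\|_{\ell^2}^2=(\tilde{g}_N/g_N^{(\emptyset)})^{1/2}|z|^2\asymp|z|^2$. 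Combined with the control $\|K_N\|_{\cW_N}\le O_L(\tilde{g}_N^3\scale_N^{\kae})$ from Proposition~\ref{prop:finalintegral} (noting $\tilde{\chi}_N=1$ since $\ba^{(\emptyset)}=0$), integration in $z$ collapses the Gaussian weight and yields
\begin{align}
|A_{N,\ox}|\le O_L(1)\,h_{N,\ssigma}^{-1}\tilde{g}_N^{3/4}\scale_N^{\kpe}.
\end{align}

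What remains is a calculation of exponents. Substituting \eqref{eq:bNcN} and \eqref{eq:ellhdefi} and writing $k=(N-j_\ox)_+$, the right-hand side equals $\tilde{g}_N^{1/4}L^{-(d/2)(N\wedge j_\ox)}L^{-[(d/2)(1-\epsilon')-(d-4+2\eta)\kpe]k}L^{-(d-4+2\eta)\kpe N}$, while $(h'_N)^2\asymp\tilde{g}_N^{-1/2}L^{-dN/2}$ and $|\x|^{-(d-2+\eta)}\asymp L^{-(d-2+\eta)j_\ox}$. I would split according to the relative size of $j_\ox$ and $k$: when $(d-4+2\eta)(\tfrac12-\kpe)j_\ox\le(d/2)(1-\epsilon')k$, dividing the bound by $|\x|^{-(d-2+\eta)}$ gives $\lesssim\tilde{g}_N^{1/4}L^{(d-4+2\eta)(\frac12-\kpe)j_\ox-(d/2)(1-\epsilon')k}\le\tilde{g}_N^{1/4}$; when instead $(d/2)\epsilon' k\le(d-4+2\eta)\kpe j_\ox$, dividing by $(h'_N)^2$ gives $\lesssim\tilde{g}_N^{3/4}L^{(d/2)\epsilon' k-(d-4+2\eta)\kpe j_\ox}\le\tilde{g}_N^{3/4}$. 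An elementary computation shows that the two intervals of $j_\ox/k$ overlap iff $\epsilon'\le 2\kpe$, which I can arrange by taking $\epsilon'$ sufficiently small in \eqref{eq:ellhdefi} relative to the fixed constant $\kpe\in(0,1/2)$ from \eqref{eq:kaekbekpedefi}. At $d=d_{c,u}$ the first regime alone suffices since $(d-4+2\eta)=0$ trivialises the $j_\ox$-exponent, leaving the bound $\tilde{g}_N^{1/4}L^{-(d/2)(1-\epsilon')k}\le\tilde{g}_N^{1/4}$, which tends to $0$ by Lemma~\ref{lemma:gjasympcritd}.

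The principal technical point is not the smallness of the resulting $c_N$, which follows automatically from $\tilde{g}_N^{1/4}$ or $\tilde{g}_N^{3/4}$, but the uniformity in $j_\ox\in[0,N]$: the exponent of $h_{N,\ssigma}^{-1}\tilde{g}_N^{3/4}\scale_N^{\kpe}$ is linear in $j_\ox$ with a sign that depends on $\kpe$ and $\epsilon'$, so neither comparison scale $(h'_N)^2$ nor $|\x|^{-(d-2+\eta)}$ dominates throughout. The inequality $\epsilon'\le 2\kpe$ is precisely what guarantees that the two regimes together cover the entire range of $j_\ox$, and this is the reason $\epsilon'$ was left as a free small parameter in the definition of $h_{N,\ssigma}$ in \eqref{eq:ellhdefi}.
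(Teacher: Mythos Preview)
Your overall strategy matches the paper's: bound the denominator by a positive constant, apply Lemma~\ref{lemma:KNbnd} to extract $|A_{N,\ox}|\le O_L(1)\,h_{N,\ssigma}^{-1}\tilde g_N^{3/4}\scale_N^{\kpe}$, and then compare exponents against the two scales $(h'_N)^2$ and $|\x|^{-(d-2+\eta)}$. The pointwise bound and the subsequent algebra are correct.

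The gap is in your final claim that ``smallness of the resulting $c_N$ follows automatically from $\tilde g_N^{1/4}$ or $\tilde g_N^{3/4}$''. For $d>d_{c,u}$ the paper defines $\tilde g_j\equiv g$ (Section~\ref{sec:notation}), so $\tilde g_N^{1/4}=g^{1/4}$ is a fixed constant and does \emph{not} tend to $0$. Your two-regime split only shows the ratio to each comparison scale is $\le\tilde g_N^{1/4}$ (respectively $\tilde g_N^{3/4}$); the supremum over $j_\ox$ is attained at the regime boundary, and there the exponent vanishes exactly, leaving you with $c_N=g^{1/4}$. This proves $|A_{N,\ox}|\lesssim (h'_N)^2+|\x|^{-(d-2+\eta)}$ but not the required decay $c_N\to 0$.

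The paper's remedy is to split off a uniform factor $L^{-\frac{d}{2}\epsilon'N}$ \emph{before} comparing: write $L^{-\frac{d}{2}(1-\epsilon')N}=L^{-\frac{d}{2}(1-2\epsilon')N}\cdot L^{-\frac{d}{2}\epsilon'N}$, define $E(j_\ox,N)=(L^{-\frac{d}{2}\epsilon'-(d-4+2\eta)\kpe})^{j_\ox}L^{-\frac{d}{2}(1-2\epsilon')N}$, and then show $E(j_\ox,N)\lesssim (h'_N)^2+|\x|^{-(d-2+\eta)}$ by splitting at $j_\ox=\tfrac{d}{2(d-2+\eta)}N$. This requires $(d-4+2\eta)\kpe\ge\epsilon'\tfrac{3d-8+4\eta}{2}$, i.e.\ $\epsilon'$ small relative to $\kpe$ (essentially your $\epsilon'\le 2\kpe$ but with the shifted threshold). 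The reserved factor $L^{-\frac{d}{2}\epsilon'N}$ then furnishes $c_N\to 0$ uniformly in $j_\ox$, which is exactly what your argument lacks. Your treatment of $d=d_{c,u}$ via $\tilde g_N\to 0$ is fine; it is only the case $d>d_{c,u}$ that needs this repair.
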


\begin{proof}
Recall the definition $A_N =  \sm [ K_N (\Lambda,  z h'_N \one)] / (e^{u_{N,\bulk} |\Lambda|} \sm [Z_{N,\bulk} (\Lambda,  z h'_N \one) ] )$.
As in the proof of Lemma~\ref{lemma:IVNox},  the denominator converges to $\sm [ e^{-\frac{1}{4} |z|^4} ]$.

For the numerator,  by the argument of \eqref{eq:KNLambdavN}--\eqref{eq:KNLambdavN3},  since $\tilde{G}_N (\Lambda,  h'_N z\one) \le 1$ by Corollary~\ref{cor:logtildeGNbndcor}, 
\begin{align}
	h_{N,\ssigma} | K_{N,\ox} (\Lambda ,  h'_N z\one )| 
		\le C_{\rg} \tilde{g}_N^{\frac{3}{4}} \scale_N^{\kpe} e^{-c |z|^2}  ,
\end{align}
but since $h_{N,\ssigma} = \tilde{g}_N^{1/2} L^{\frac{d}{2} j_\ox} (L^{ \frac{d}{2} (1 - \epsilon') - (d-4+2\eta) \kpe } )^{N - j_\ox}$,  we obtain
\begin{align}
	\big| \sm [ K_{N} (\Lambda,  h'_N z \one ) ]  \big|
		& \le O_L (1)
		h_{N,\ssigma}^{-1} \tilde{g}_N^{\frac{3}{4}} L^{- (d-4+2\eta) \kpe N}  L^{ - \frac{d}{2} j_\ox } \int_{\R^n} e^{-c |z|^2} \rd z \nnb
		& = O_L (1)  
		\tilde{g}_N^{1/4}
			\big( L^{-\frac{d}{2} \epsilon' - (d-4+2\eta) \kpe} \big)^{j_\ox} L^{-\frac{d}{2} (1- 2\epsilon') N} L^{-\frac{d}{2} \epsilon' N} .
\end{align}
Let us denote $E (j_\ox, N) = \big( L^{-\frac{d}{2} \epsilon' - (d-4+2\eta) \kpe} \big)^{j_\ox} L^{-\frac{d}{2} (1-2\epsilon') N}$.
When $j_\ox \le \frac{d}{2(d-2+\eta)} N$,  then for some $\epsilon''$ proportional to $\epsilon'$,
\begin{align}
	E (j_\ox, N) \le \big( L^{-(d-2+\eta) - (d-4 + 2\eta) (\kpe - \epsilon'')} \big)^{j_\ox}
		\le  (1 \vee |\x | )^{-(d-2+\eta) - (d-4+2\eta) (\kpe - \epsilon'')} 
\end{align}
where we used that $L^{-j_\ox} \le (1 \vee |\x | )^{-1}$.
When $j_\ox \ge \frac{d}{2(d-2+\eta)} N$,  then for large $N$ and sufficiently small $\epsilon'$ and some $\epsilon'''$ proportional to $\epsilon'$,
\begin{align}
	E (j_\ox, N) \le \big( L^{-\frac{d}{2} ( 1 + \frac{d-4+2\eta}{d-2+\eta} \kpe ) - \epsilon''' } \big)^N
		\le O \big( \tilde{g}_N^{-1/2} L^{-dN/2} \big)
	 .
\end{align}
Thus for any $j_\ox$ and $N$,
\begin{align}
	E (j_\ox, N) \le O( L^{-\frac{d}{2}\epsilon' N} ) \big( \tilde{g}_N^{-1/2} L^{-dN/2} + |\x|^{-(d-2+\eta)} \big) .
\end{align}
\end{proof}

We have all the building blocks to conclude this section.

\begin{proof}[Proof of Proposition~\ref{prop:plateauInt}]
We choose $\lambda_{0,\o}^{(\emptyset)} = \lambda_{0,\x}^{(\emptyset)} =1$.
By Lemma~\ref{lemma:Vtilde} and plugging in the estimates of Section~\ref{sec:uNox}--\ref{sec:ANox} into \eqref{eq:corcrlfncreform1},
\begin{align}
	\langle \varphi_\o^{(1)} \varphi_\x^{(1)} \rangle_{g,\nu, \Lambda} 
		&= 
		\lambda_{\o,\infty}^{(\emptyset)} \lambda_{\x,\infty}^{(\emptyset)} \big( w_N (\x) 
		+(h'_N)^2 \E [ |Y|^2] /n  \big)
		+ \psi_1 (\x) + \psi_2 (\x , N) 
		\label{eq:propplateauIntint}
\end{align}
for $\psi_1$ and $\psi_2$ with the desired properties.
If we sum up \eqref{eq:propplateauIntint} over $\x \in \Lambda$,  then by translation invariance and \eqref{eq:propplateauIntint},
\begin{align}
	\big\langle \big( \varphi_\x^{(1)} \big)^2 \big\rangle_{g,\nu, \Lambda}  = 
		= \lambda^{(\emptyset)}_{0,\infty} \sum_\x \lambda^{(\emptyset)}_{\x,\infty} (h'_N)^2 \E[|Y|^2] = (g_N^{(\emptyset)})^{-1/2} \lambda^{(\emptyset)}_{\o,\x}	\frac{\sum_\x \lambda^{(\emptyset)}_{\x,\infty}}{L^{dN}} L^{\frac{d}{2} N} \E[|Y|^2] .
\end{align}
as $N\rightarrow \infty$---contribution of $(h'_N)^2$ dominates over that of $w_N$ due to Corollary~\ref{cor:Coneinftynrm}.
Since $\chi_{g,\nu, \Lambda}$ should not have dependence on the specific choice $\o$,  we see that the right-hand side is also translation invariant,  thus $\lambda = \lambda^{(\emptyset)}_{\o,\infty} = \lambda^{(\emptyset)}_{\x,\infty}$ for some $\bar{\lambda}$.  
Also,  comparing this with \eqref{eq:corNGlimit2} (notice that Corollary~\ref{cor:NGlimit} follows from Theorem~\ref{thm:NGlimit},  which does \emph{not} depend on this section), 
using \eqref{eq:gNasymp} to approximate $g_N^{(\emptyset)}$ and recalling $c_3$ from \eqref{eq:c3defi},  we see that $\bar{\lambda}$ should actually be 1.
\end{proof}

\begin{remark}
Another proof of the shocking fact that $\lambda_{\x,\infty} =1$ in a similar context is given in \cite[Lemma~4.6]{MR3345374},  which makes reference to a global symmetry and requires a bound on $D_\varphi Z_N$.  This may be repeated here,   but in the proof above,  we get it almost for free by comparing it with the FSS of the susceptibility.  
\end{remark}

\section{Covariance decomposition}
\label{sec:covdcmp}

In this final section,  we prove Proposition~\ref{prop:theFRD}.  
The idea is based on the infinitesimal decomposition of the covariance matrix,  inspired by \cite{MR3129804}.  This allows to express the scale-decomposed covariances $\Gamma_j$'s as integrals.

We first discuss the covariance decomposition on $\Z^d$,  i.e.,  $(\dot\Gamma_t)_{t \ge 0}$ that satisfy
\begin{align}
	( \L_\eta^{(\vec{\ba})} )^{-1} = \int_0^{\infty} \dot{\Gamma}_t (\vec{\ba}) \rd t  
	, \qquad \vec{\ba} \in \HB_{\epsilon_p}	
	 \label{eq:covdecmpZd}
\end{align}
Proposition~\ref{prop:Gammajbounds} and Lemma~\ref{lemma:FRDcontinuity} are simply restatements of Proposition~\ref{prop:theFRD},  when we define $\Gamma_j$ using \eqref{eq:Gammajasintegrals}.

\begin{proposition} \label{prop:Gammajbounds}
Let $d \ge 3$,  $\eta \in [0,2)$ and $\vec{\ba} \in \HB_{\epsilon_p}$ for sufficiently small $\epsilon_p$. 
Then there exist covariance matrices $(\dot\Gamma_t : \Z^d \times \Z^d \rightarrow \R)_{t\ge 0}$ such that \eqref{eq:covdecmpZd} and the following hold.

\begin{enumerate}
	\item (Symmetries) $\dot\Gamma_t : \Z^d \times \Z^d \rightarrow \R$ is a covariance matrix invariant under isometries,
	i.e., $\dot\Gamma_t \ge 0$ and $\dot\Gamma_t (E (x) , E(y)) = \dot\Gamma_t (x,y)$ for any isometry $E : \Z^d \rightarrow \Z^d$.
	\item (Finite range property) $\dot\Gamma_t$ has range $< t$ in the $\ell^1$-metric,  i.e.,  $\dot\Gamma_t (x,y) =0$ whenever $\norm{x-y}_{\ell^{1}} \ge t$. 
	\item (Upper bound) For each $k, k_x,k_y \ge 0$ with $k_x + k_y = k$,  
	\begin{align}
		\big\| \nabla_x^{k_x} \nabla_y^{k_y} \dot{\Gamma}_t (x,y) \big\|_{\ell^\infty (\Z^d \times \Z^d)} \le C_k \frac{t^{-d+1-\eta-k}\wedge 1}{1 + \ba^{(\emptyset)} t^{2\beta}}	 .  \label{eq:Gammajbounds1infi}
	\end{align}
for some $j,L$-independent constant $C_k$.	
\end{enumerate}
\end{proposition}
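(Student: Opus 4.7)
The plan is to build $(\dot\Gamma_t)_{t\ge 0}$ in two stages: first for the unperturbed operator $\mathcal{L}_\eta^{(0)} := (-\Delta)^{1-\eta/2} + \ba^{(\emptyset)}$, and then incorporate the quadratic counterterm $V_{\vec\ba} := \sum_{\km \in \kA_1} \ba^{(\km)} S^{(\km)}$ perturbatively using smallness of $\epsilon_p$.

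For the unperturbed case, I would use subordination together with Bauerschmidt's finite-range decomposition of the heat kernel for $-\Delta$ \cite{MR3129804}. Writing
\begin{align*}
(\mathcal{L}_\eta^{(0)})^{-1} = \int_0^\infty e^{-\lambda \ba^{(\emptyset)}} \, e^{-\lambda (-\Delta)^{1-\eta/2}} \, d\lambda
= \int_0^\infty e^{-\lambda \ba^{(\emptyset)}} \int_0^\infty \mu_\lambda(ds)\, e^{-s(-\Delta)} \, d\lambda,
\end{align*}
where $\mu_\lambda$ is the $(1-\eta/2)$-stable subordinator measure, reduces to decomposing $e^{-s(-\Delta)}$ into finite-range pieces indexed by dyadic $s$-scales. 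Repartitioning via Fubini into scale-$t$ contributions gives $\dot\Gamma_t^{(0)}$ of range $< t$. The spatial derivative bound is produced by the standard heat-kernel scaling $\|\nabla^k e^{-s(-\Delta)}\|_{\ell^\infty} \lesssim s^{-(d+k)/2}$, integrated against $\mu_\lambda$ with $\lambda \sim t^{2-\eta}$ concentrating the mass on $s \sim t^2$; the exponential $e^{-\lambda \ba^{(\emptyset)}}$ then produces the factor $(1 + \ba^{(\emptyset)} t^{2-\eta})^{-1}$ (with $\beta = 1 - \eta/2$). Positivity of each $\dot\Gamma_t^{(0)}$ is inherited from positivity of the Bauerschmidt building blocks, and isometry invariance is automatic.

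For the perturbation, I would expand
\begin{align*}
(\mathcal{L}_\eta^{(\vec\ba)})^{-1} = \sum_{n=0}^{\infty} (\mathcal{L}_\eta^{(0)})^{-1} \bigl( -V_{\vec\ba}\, (\mathcal{L}_\eta^{(0)})^{-1} \bigr)^{n}.
\end{align*}
Since each $\km \in \kA_1$ has $q(\km) < d-2+\eta$, in Fourier the symbol of $V_{\vec\ba}$ is dominated by $\epsilon_p (1 \vee |p|^{q(\km)})$ while the symbol of $\mathcal{L}_\eta^{(0)}$ is $\asymp \lambda(p)^{1-\eta/2}$; uniform boundedness of $V_{\vec\ba}(\mathcal{L}_\eta^{(0)})^{-1}$ on the whole Brillouin zone by $O(\epsilon_p)$ follows, so the series converges for small $\epsilon_p$. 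Because $V_{\vec\ba}$ is a local differential operator, the $n$-fold convolution $\dot\Gamma^{(0)}_{t_1} * V_{\vec\ba} * \cdots * \dot\Gamma^{(0)}_{t_{n+1}}$ has range bounded by $\sum_i t_i$; assigning each such term to scale $t = \sum_i t_i$ and summing yields $\dot\Gamma_t$. The derivative bound is preserved because each additional factor contributes only $O(\epsilon_p)$ in operator norm, and the scale combinatorics is controlled by geometric sums in $L^{-(2-\eta)}$.

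The main obstacle is simultaneously maintaining positivity and the pointwise bound through the Neumann expansion, since individual terms need not be positive semidefinite. To avoid this, I would instead build the decomposition directly from the spectral representation of $\mathcal{L}_\eta^{(\vec\ba)}$: using the representation
\begin{align*}
(\mathcal{L}_\eta^{(\vec\ba)})^{-1} = \int_0^\infty e^{-\lambda \mathcal{L}_\eta^{(\vec\ba)}} d\lambda
\end{align*}
together with a Bauerschmidt-type averaging of $e^{-\lambda \mathcal{L}_\eta^{(\vec\ba)}}$, one obtains positive finite-range pieces whose bounds follow from the uniform spectral comparison $c\, \lambda(p)^{1-\eta/2} \le$ symbol of $\mathcal{L}_\eta^{(\vec\ba)} \le C\, \lambda(p)^{1-\eta/2}$ for $\vec\ba \in \HB_{\epsilon_p}$, $\epsilon_p$ small. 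Continuity in $\vec\ba$ follows from strong resolvent continuity. The torus pieces $\Gamma_j$ (for $j < N$) are obtained by lattice-periodisation of the $\Z^d$ pieces using the finite range property, and the two additional objects $\Gamma_N^\Lambda, t_N Q_N$ arise from separating the zero Fourier mode on $\Lambda_N$ at the terminal scale, with $t_N$ identified as $\int_{\cB^*_N} \lambda(p)^{-1+\eta/2}/|\Lambda| \cdots$-type leftover, yielding $t_N < (\ba^{(\emptyset)})^{-1}$ and $t_N > (\ba^{(\emptyset)})^{-1} - C L^{(2-\eta)N}$.
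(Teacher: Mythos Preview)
Your proposal has a genuine gap in the $\eta > 0$ case. The Bauerschmidt averaging mechanism (and the polynomial-based version in Lemma~\ref{lemma:Ptdef}) produces finite-range pieces precisely because the input operator is \emph{local}: a polynomial of degree $\le t$ in a range-$R$ operator has range $\le tR$. But $(-\Delta)^{1-\eta/2}$ is genuinely nonlocal for non-integer $1-\eta/2$, so neither $e^{-\lambda\mathcal{L}_\eta^{(\vec\ba)}}$ nor any polynomial approximation to it has finite range. Your final suggestion of applying Bauerschmidt-type averaging directly to the semigroup of $\mathcal{L}_\eta^{(\vec\ba)}$ therefore cannot deliver the finite range property, and this is the crux of the whole construction. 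Your subordination route for the unperturbed operator is plausible, but you never close the loop: you still need to merge it with the $\kA_1$ counterterms while keeping both positivity and finite range, and you abandon that path after noticing the positivity issue.

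The paper resolves the nonlocality by Mitter's spectral identity (Proposition~\ref{prop:spectralfracpw}),
\[
\frac{1}{w^{\beta}+z}=\frac{\sin(\pi\beta)}{\pi}\int_0^\infty\frac{ds}{s^\beta(w+s)\,\sigma_\beta(s,z)},
\]
which converts the inverse of the fractional operator into an $s$-integral of resolvents of the \emph{local} operator $-\Delta+s$ (times a polynomial factor $\sigma_\beta$). One can then apply the polynomial $P_t$ of Lemma~\ref{lemma:Ptdef} to these local objects and obtain genuine finite-range, positive pieces $\hat H_t$. The $\kA_1$ counterterms are handled by a Neumann series, but not the one you wrote: the paper first shifts to $\tilde\lambda=\lambda^\beta+\ba^{(\emptyset)}+C_{\rm F}\epsilon_p\lambda$ so that the effective perturbation $C_{\rm F}\epsilon_p\lambda-\delta\lambda$ is \emph{nonnegative} in Fourier space. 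Every term in the series $\sum_n(C_{\rm F}\epsilon_p\lambda-\delta\lambda)^n\tilde\lambda^{-(n+1)}$ is then a product of nonnegative symbols, so positivity survives the expansion automatically. For $\eta=0$ none of this is needed: $\mathcal{L}_0^{(\vec\ba)}$ is already local, and one simply sets $\dot\Gamma_t\propto tP_{t/2d}(\mathcal{L}_0^{(\vec\ba)}/4d)$ directly.
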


By (i),  $\dot\Gamma_t$ is also translation invariant,  so we just denote $\dot\Gamma_t (x-y) \equiv \dot\Gamma_t (x,y)$.

On finite volume torus $\Lambda_N$,  we alternatively require
\begin{align}
	(\L_\eta^{(\vec{\ba})})^{-1} = \int_0^{L^{N-1}} \dot\Gamma_t \rd t +  \int_{L^{N-1}}^\infty \dot\Gamma_t^{\Lambda_N} \rd t + t_N  Q_N
	\label{eq:covdecmpTd}
\end{align}
where $\dot\Gamma_N^{\Lambda_N}$ is now $\Lambda_N$-dependent and $Q_N (x,y) = L^{-dN}$.

\begin{proposition}  \label{prop:Gammajbounds2}
Under the assumptions of Proposition~\ref{prop:Gammajbounds},  
if $\ba^{(\emptyset)} > 0$ in addition,
then there exist $t_N > 0$ and covariance matrices $\dot\Gamma_t$'s and $\dot\Gamma_t^{\Lambda_N}$ on $\Lambda_N$ satisfying \eqref{eq:covdecmpTd} and the following.
\begin{enumerate}
	\item $\dot\Gamma_t$'s are the projections of those of Proposition~\ref{prop:Gammajbounds} on $\Lambda_N$.  (Note that this only makes sense due to the finite range property.)
	\item $\dot\Gamma_t^{\Lambda}$ satisfies the same symmetries and the upper bounds on $\dot\Gamma_t$.  

	\item $t_N \in (0, (\ba^{(\emptyset)})^{-1})$ and there exists $C >0$ such that $t_N >  (\ba^{(\emptyset)})^{-1} - CL^{(2-\eta) N}$.
\end{enumerate}
\end{proposition}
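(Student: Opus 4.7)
The strategy uses Proposition~\ref{prop:Gammajbounds} for the small-scale part ($t \le L^{N-1}$), captures the zero-mode discrepancy of the torus operator via $t_N Q_N$, and absorbs the remaining large-scale contribution into a single torus covariance supported on $t \in [L^{N-1}, L^N]$.

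For $t \le L^{N-1}$, I take $\dot\Gamma_t$ on $\Lambda_N$ to be the projection of the $\Z^d$ covariance from Proposition~\ref{prop:Gammajbounds}; the finite-range property ensures that at most one lattice representative contributes, so this is well-defined and inherits properties (i) and the upper bounds directly. To select $t_N$, let $\dot\gamma_t := \sum_{y \in \Z^d} \dot\Gamma_t(0,y)$ denote the zero-momentum Fourier component. Summing the pointwise bound \eqref{eq:Gammajbounds1infi} with $k=0$ over the $\ell^1$-ball of radius $t$ gives $\dot\gamma_t \lesssim t^{1-\eta}/(1 + \ba^{(\emptyset)} t^{2-\eta})$, whence $s_N := \int_0^{L^{N-1}} \dot\gamma_t \, \rd t \lesssim L^{(2-\eta)N}$. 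Evaluating \eqref{eq:covdecmpZd} at the zero Fourier mode---where $\L_\eta^{(\vec\ba)}$ acts by multiplication by $\ba^{(\emptyset)}$ on constants---yields $\int_0^\infty \dot\gamma_t \, \rd t = 1/\ba^{(\emptyset)}$, so $s_N \in (0, 1/\ba^{(\emptyset)})$. Setting $t_N = 1/\ba^{(\emptyset)} - s_N$ then verifies (iii).

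Next, define $M_N := (\L_\eta^{(\vec\ba)})^{-1}_{\Lambda_N} - \int_0^{L^{N-1}} \dot\Gamma_t \, \rd t - t_N Q_N$ and $\dot\Gamma_t^{\Lambda_N} := \phi(t) M_N$ for $t > L^{N-1}$, where $\phi$ is a probability density supported on $[L^{N-1}, L^N]$. Passing to Fourier on $\Lambda_N^*$, the choice of $t_N$ forces $\widehat{M_N}(0) = 0$, while for $p \ne 0$, $\widehat{M_N}(p) = \int_{L^{N-1}}^\infty \widehat{\dot\Gamma_t}(p) \, \rd t \ge 0$ since $\dot\Gamma_t$ is a $\Z^d$-covariance. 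Hence $M_N$ is positive semidefinite and $\dot\Gamma_t^{\Lambda_N}$ inherits both positivity and the lattice-isometry invariance of its constituents. Moreover, $\int_{L^{N-1}}^\infty \dot\Gamma_t^{\Lambda_N} \, \rd t = M_N$ by construction, so \eqref{eq:covdecmpTd} holds.

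The main obstacle is the pointwise derivative bound on $M_N$, which transfers to the required bound (ii) via $|\nabla^{k_x}_x\nabla^{k_y}_y \dot\Gamma_t^{\Lambda_N}| \le \phi(t) \, \|\nabla^{k_x}_x\nabla^{k_y}_y M_N\|_{\ell^\infty}$. The naive route of writing $M_N$ as an integral of torus-periodizations of the $\Z^d$ covariances fails for very large $t$, since the number of periodic copies grows like $(t/L^N)^d$ and overflows the pointwise bound. The resolution is to work in Fourier: from the pointwise bounds on $\dot\Gamma_t$ one extracts uniform Fourier-side estimates on $\widehat{\dot\Gamma_t}(p)$, which integrate to $\widehat{M_N}(p) \lesssim 1/\lambda^{(\vec\ba)}(p)$ for $p \in \Lambda_N^* \setminus \{0\}$. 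The removal of the zero mode cuts off the Fourier inversion at the smallest nonzero frequency $|p| \asymp L^{-N}$, regularising the infrared and producing the required scale-$L^N$ decay $\|\nabla^k M_N\|_{\ell^\infty} \lesssim L^{N(2-\eta-d-k)}/(1 + \ba^{(\emptyset)} L^{N(2-\eta)})$ matching the bound (ii), after choosing $\phi$ with $\|\phi\|_\infty \lesssim L^{-N}$.
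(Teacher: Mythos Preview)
Your construction of $t_N$ and the positivity argument for $M_N$ are correct and in fact coincide with what the paper does (both isolate the zero Fourier mode). The paper, however, does not lump the large-scale part into a single matrix: it defines $\dot\Gamma_t^{\Lambda_N}$ for every $t > L^{N-1}$ by the same Fourier formula as on $\Z^d$ with the $p=0$ term removed, and then the derivative bound (ii) follows by literally repeating the $\Z^d$ computation with the integral over $(\Z^d)^*$ replaced by the discrete sum over $\Lambda_N^* \setminus \{0\}$. This keeps the $t$-dependence explicit and makes (ii) a one-line corollary of the $\Z^d$ case.

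Your argument for the pointwise bound on $M_N$ has a genuine gap. The inequality $\widehat{M_N}(p) \le 1/\lambda^{(\vec\ba)}(p)$ together with the removal of the zero mode does \emph{not} produce the claimed scale-$L^N$ decay: Fourier inversion of $1/\lambda^{(\vec\ba)}(p)$ over $p \in \Lambda_N^* \setminus \{0\}$ is the torus Green's function minus its average, which is $O(1)$ at the origin, not $O(L^{-N(d-2+\eta)})$. The infrared cutoff at $|p| \asymp L^{-N}$ only prevents divergence; the dominant contribution to $|\Lambda_N|^{-1} \sum_{p \ne 0} |p|^k / \lambda^{(\vec\ba)}(p)$ comes from $|p| = O(1)$, not from the infrared. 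What is missing is \emph{ultraviolet} decay of $\widehat{M_N}(p)$ in $|p| L^N$, reflecting that $M_N$ is built from scales $t \ge L^{N-1}$. This decay does follow from the derivative bounds in Proposition~\ref{prop:Gammajbounds}(iii) --- summation by parts gives $|\widehat{\dot\Gamma_t}(p)| \lesssim (|p|t)^{-m} t^{1-\eta}/(1+\ba^{(\emptyset)} t^{2-\eta})$ for each $m$, and integrating over $t \ge L^{N-1}$ then localises $\widehat{M_N}$ to $|p| \lesssim L^{-N}$ --- but your sentence as written attributes the result to the wrong mechanism.
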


Given these covariance matrices,  we define
\begin{align}
	\begin{cases}
	\Gamma_1 = \int_{0}^L \dot{\Gamma}_t \rd t , \;\; 
	\Gamma_{j} = \int_{L^{j-1}}^{L^j} \dot{\Gamma}_t \rd t \quad (2 \le j \le N-1) \\
	\Gamma_{N}^{\Lambda_N} = \int_{L^{N-1}}^{\infty} \dot{\Gamma}_t^{\Lambda_N}  .
	\end{cases}
	\label{eq:Gammajasintegrals}
\end{align}
To finish the restatement of Proposition~\ref{prop:theFRD},  we also need the continuity.

\begin{lemma} \label{lemma:FRDcontinuity}
Under the assumptions of Proposition~\ref{prop:Gammajbounds} $\Gamma_j$,  $\Gamma_{N}^{\Lambda_N}$ and $t_N$ are continuous in $\vec{\ba}$.
\end{lemma}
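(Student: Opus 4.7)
The plan is to propagate continuity from the infinitesimal building blocks $\dot{\Gamma}_t$ and $\dot{\Gamma}_t^{\Lambda_N}$ to the scale-integrated covariances $\Gamma_j$ and $\Gamma_N^{\Lambda_N}$, and then to extract continuity of $t_N$ algebraically from the zero-mode of \eqref{eq:covdecmpTd}. First, I would revisit the construction of $\dot{\Gamma}_t$ (analogous to \cite{MR3129804}) that underlies Proposition~\ref{prop:Gammajbounds} and Proposition~\ref{prop:Gammajbounds2}: in such constructions $\dot{\Gamma}_t(\vec{\ba})$ is given by an explicit resolvent-type integral against a $\vec{\ba}$-independent cut-off that enforces the finite range property. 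Because $\L_\eta^{(\vec{\ba})}$ depends linearly on $\vec{\ba} \in \HB_{\epsilon_p}$ and remains uniformly positive by Lemma~\ref{lemma:Lapptv-res}, standard resolvent calculus yields continuity of $\dot{\Gamma}_t(x,y;\vec{\ba})$ in $\vec{\ba}$, pointwise in $(t,x,y)$; the same argument applies verbatim on $\Lambda_N$ for $\dot{\Gamma}_t^{\Lambda_N}$.

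For $\Gamma_j = \int_{L^{j-1}}^{L^j} \dot{\Gamma}_t \, \rd t$ with $1 \le j \le N-1$, apply the dominated convergence theorem: on any compact subset of $\HB_{\epsilon_p}$ the bound \eqref{eq:Gammajbounds1infi} supplies a $t$-integrable dominator, uniform in $\vec{\ba}$, on the compact interval $[L^{j-1}, L^j]$, so pointwise continuity of the integrand transfers to continuity of $\Gamma_j(x,y)$ in $\vec{\ba}$. Since $\Gamma_j$ has range $< L^j$ this immediately upgrades to continuity in $\ell^{\infty}$; the same reasoning with the derivative bounds in \eqref{eq:Gammajbounds1infi} gives continuity of $\nabla^k \Gamma_j$. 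For $\Gamma_N^{\Lambda_N} = \int_{L^{N-1}}^{\infty} \dot{\Gamma}_t^{\Lambda_N} \, \rd t$ the integration extends to infinity, and one must use the mass factor $(1 + \ba^{(\emptyset)} t^{2-\eta})^{-1}$ in \eqref{eq:Gammajbounds1infi} to get integrability at $\infty$; on any compact subset of $\HB_{\epsilon_p} \cap \{\ba^{(\emptyset)} > 0\}$ the quantity $\ba^{(\emptyset)}$ is bounded below, so dominated convergence applies and delivers continuity of $\Gamma_N^{\Lambda_N}$ away from the degenerate locus.

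Third, continuity of $t_N$ follows from the zero-mode projection of \eqref{eq:covdecmpTd}. Since $\L_\eta^{(\vec{\ba})}$ acts on constant functions by multiplication by $\ba^{(\emptyset)}$ (the fractional Laplacian and every derivative counterterm in \eqref{eq:Lapkadefi} annihilate constants) while $\sum_y Q_N(x,y) = 1$, summing \eqref{eq:covdecmpTd} over $y \in \Lambda_N$ yields
\begin{align}
\frac{1}{\ba^{(\emptyset)}} = \sum_{y \in \Lambda_N} \Big( \sum_{j=1}^{N-1} \Gamma_j(x,y) + \Gamma_N^{\Lambda_N}(x,y) \Big) + t_N
\end{align}
for any fixed $x$. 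This exhibits $t_N$ as the difference of quantities already known to be continuous in $\vec{\ba}$ on $\HB_{\epsilon_p} \cap \{\ba^{(\emptyset)} > 0\}$, completing the lemma.

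The main obstacle I expect is the very first step: verifying that the specific construction of $\dot{\Gamma}_t$ and $\dot{\Gamma}_t^{\Lambda_N}$ used to prove Proposition~\ref{prop:Gammajbounds} and Proposition~\ref{prop:Gammajbounds2} (rather than just the abstract decomposition) produces kernels that are jointly continuous in $(t,\vec{\ba})$ with the required uniform dominator---once that is in hand, everything reduces to dominated convergence and the zero-mode identity. A secondary subtlety is maintaining uniform control near the degenerate locus $\{\ba^{(\emptyset)} = 0\}$, which is precisely why Proposition~\ref{prop:theFRD} asserts continuity of $\Gamma_N^{\Lambda}$ and $t_N$ only off that set.
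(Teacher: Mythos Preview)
Your proposal is correct and follows essentially the same strategy as the paper: continuity of the infinitesimal kernels $\dot\Gamma_t$, $\dot\Gamma_t^{\Lambda_N}$ from their explicit construction, then dominated convergence (using the uniform bounds \eqref{eq:Gammajbounds1infi}) to pass to $\Gamma_j$ and $\Gamma_N^{\Lambda_N}$. The only minor divergence is your handling of $t_N$: the paper treats it exactly like the covariances, applying dominated convergence directly to its defining integral \eqref{eq:tNdefi}, \eqref{eq:GammaNLambdadefieta}, whereas you extract it algebraically from the zero-mode of \eqref{eq:covdecmpTd}; both routes are valid and equally short.
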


\subsection{Scale decomposition lemma}

The next lemma forms the basis of the theory,  which allows to express inverse of an operator as an integral. 

\begin{lemma} \cite[Lemma~3.8]{dgauss1}
\label{lemma:Ptdef}
For $t >0$, there exists polynomial $P_t$ of degree at most $t$ such that for $x \in (0,3]$,
\begin{align}
	\frac{1}{x} = \int_0^\infty t^2 P_t (\lambda) \frac{\rd t}{t}
	.
\end{align}
For $t < 1$, $P_t (x) = c / t$ for some constant $c>0$ and for $t \ge 1$,  the polynomials satisfy
\begin{align}
	0\le P_t (x) & \le C e^{-c(x t^2)^{1/4}} .  \label{eq:Ptbound} 
\end{align}
\end{lemma}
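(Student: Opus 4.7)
The plan is to start from the Laplace transform identity $\frac{1}{x} = \int_0^\infty e^{-sx}\, \rd s$ and produce, via a suitable polynomial approximation of $e^{-sx}$, an integrand that is polynomial in $x$ of degree $\le t$ for each $t$. The parabolic change of variable $s = t^2$ gives $\frac{1}{x} = \int_0^\infty 2t\, e^{-t^2 x}\, \rd t$, which is the right starting point because the required degree bound in $x$ is linear in $t$, matching the square-root scaling between $s$ and $t$.

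First, I would fix a smooth cutoff $\omega:[0,\infty)\to[0,\infty)$ supported in $[1,2]$ with $\int_0^\infty \omega = 1$, so that $W(r) := \int_r^\infty \omega(u)\,\rd u$ satisfies $W(0)=1$, $W(\infty)=0$, and record the identity
\begin{equation*}
    \frac{1}{x}
    = -\int_0^\infty \frac{\rd}{\rd t}\!\Big[\frac{W(t^2 x)}{x}\Big]\rd t
    = \int_0^\infty 2t\,\omega(t^2 x)\,\rd t,\qquad x\in (0,3].
\end{equation*}
Now $\omega$ is not polynomial, so the second step is to replace $\omega(u)$ by a polynomial approximation $\widetilde\omega_t(u)$ of degree $\le \lfloor t\rfloor$ in $u$, chosen non-negative (e.g.\ a Bernstein-type polynomial on the support interval, which preserves non-negativity and reproduces $\omega$ in the limit) and so that the total integral over $t$ still equals $1/x$ on $(0,3]$. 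Setting $P_t(x)=2\,\widetilde\omega_t(t^2 x)$ gives a polynomial of degree $\le \lfloor t\rfloor$ in $x$. The identity $\frac{1}{x}=\int_0^\infty t\, P_t(x)\,\rd t$ on $(0,3]$ is then checked by interchanging the polynomial expansion with the $t$-integral; since the partial sums converge uniformly in $x\in(0,3]$ and $t$ in any bounded set, this interchange is legitimate.

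Third, I would verify the stated bounds. The small-$t$ statement $P_t(x)=c/t$ for $t<1$ is imposed by hand: on this range we simply take $\widetilde\omega_t$ constant in $u$ (which is a degree-$0$ polynomial), and fix the constant so that the integral from $0$ to $1$ contributes the correct missing term in the representation. The non-negativity on $t\ge 1$ is automatic from the Bernstein construction. For the decay estimate, observe that $P_t(x)$ is a degree-$\lfloor t\rfloor$ polynomial approximating $\omega(t^2 x)$, which vanishes for $t^2 x\notin[1,2]$; since the monomials $(t^2 x)^k/k!$ in the Taylor expansion peak near $k\sim t\sqrt{x}$, a standard Stirling/saddle-point bound gives a stretched-exponential $e^{-c(xt^2)^{1/4}}$ envelope (the exponent $1/4$ is the natural rate produced by balancing $k!$ against $(t^2x)^k$ when $k\asymp t$ rather than $k\asymp t^2 x$).

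The main obstacle will be the simultaneous satisfaction of the three rigid constraints: (i) $\deg_x P_t\le \lfloor t\rfloor$; (ii) the global integral identity on all $x\in(0,3]$; and (iii) non-negativity together with the stretched-exponential decay. A naive truncation of the Taylor series of $e^{-t^2 x}$ is neither non-negative for large $t$ nor globally convergent when integrated against $t\,\rd t$, so the Bernstein-type smoothing is essential. I would implement the detailed construction following \cite{MR3129804}, whose lemma is precisely cited in the paper, and which handles exactly this trade-off by working with the antiderivative representation above before discretising to polynomials.
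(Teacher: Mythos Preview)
The paper does not prove this lemma; it is quoted verbatim from \cite[Lemma~3.8]{dgauss1}, whose proof in turn rests on the construction in \cite{MR3129804}. So there is no in-paper argument to compare against, only the cited construction.

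Your sketch has a genuine gap at the step you yourself flag as the main obstacle. The identity $1/x=\int_0^\infty 2t\,\omega(t^2x)\,\rd t$ is correct, but replacing $\omega$ by a Bernstein approximant $\widetilde\omega_t$ of degree $\lfloor t\rfloor$ does \emph{not} preserve it: $B_n\omega\to\omega$ only as $n\to\infty$, and there is no mechanism in your outline for the errors $\int 2t\,(B_{\lfloor t\rfloor}\omega-\omega)(t^2x)\,\rd t$ to vanish identically in $x$. A generic polynomial approximation scheme cannot satisfy the three rigid constraints simultaneously; one needs an algebraic identity, not an analytic approximation. The actual construction in \cite{MR3129804} goes through the discrete wave equation: writing $1-\lambda/2=\cos\theta$ on the spectrum, the Chebyshev relation $\cos(n\theta)=T_n(1-\lambda/2)$ produces \emph{exact} polynomials of degree $n$ in $\lambda$, and a compactly supported cutoff is applied in the $\theta$-variable (equivalently, to the wave amplitude $t\sqrt\lambda$) rather than to $t^2\lambda$. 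The integral identity then holds by Fourier inversion, non-negativity comes from writing the multiplier as a square, and the $e^{-c(xt^2)^{1/4}}$ rate arises because a compactly supported Gevrey-$2$ cutoff has Fourier transform decaying like $e^{-c|\xi|^{1/2}}$ with $\xi=t\sqrt{x}$. Your saddle-point heuristic (balancing $k!$ against $(t^2x)^k$ at $k\asymp t$) does not produce this exponent; it would give $e^{-ct\log(\cdot)}$ or $e^{-c t\sqrt{x}}$ depending on the regime, and in any case presupposes control of a Taylor expansion you have not constructed.
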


\subsection{Infinite lattice,  $\eta = 0$}

For any translation invariant operator,  we can substitute the Fourier symbol of the operator in Lemma~\ref{lemma:Ptdef} to invert it. 
To be specific,  we define
\begin{align}
	\dot{\Gamma}_t = \frac{t}{16d^3} P_{t / 2d} ( \L_0^{(\vec{\ba})} / 4d )
	\qquad \Rightarrow C^{(\vec{\ba})} = \int_0^\infty \dot{\Gamma}_t (t) dt
	\label{eq:Gammajdefn}
\end{align}
Some properties are immediate from the definition.

\begin{proof}[Proof of Proposition~\ref{prop:Gammajbounds}(i),(ii) for $\eta =0$]
Symmetries hold trivially by definition.  
For the finite range property,  since $P_t$ has degree at most $t$ and $\L_0^{(\vec{\ba})}$ involves at most $2d-6$ derivatives,  
for any lattice function $h : \Z^d  \rightarrow \R$ with compact support,  
$P_t (\L_0^{(\vec{\ba})} /4d ) h$ vanishes outside $\{ y : d_1 (y ,  \operatorname{supp} (f)) \le (2d-6) t \}$,  thus $t P_{t/2d} (\L_0^{(\vec{\ba})} /4d )$ has range at  most $t(2d-6) / 2d < t$. 
\end{proof}

\subsection{Torus,  $\eta = 0$}

In a finite volume torus $\Lambda = \Lambda_N$,  we define $\dot\Gamma_t$ for $t < L^{N-1}$ using the same definition \eqref{eq:Gammajdefn}.
To define $\dot\Gamma_t^{\Lambda_N}$ and $t_N$,  we work in the Fourier space---see Appendix~\ref{sec:fa} for the conventions.
In particular,  $\lambda$ and $\lambda^{(\vec{\ba})}$ are the Fourier symbols of $-\Delta$ and $\L^{(\vec{\ba})}_\eta$,  respectively.

Now,  we can take
\begin{align}
	\dot\Gamma_t^{\Lambda} (x) 
		&= \frac{1}{|\Lambda|} \sum_{p \in \Lambda^* \backslash \{ 0 \}} e^{i p \cdot x} \frac{t}{16d^3} P_{t/2d} ( \lambda^{(\vec{\ba})}   (p) / 4d ) ,
	\label{eq:GammaNLambdadefi} .  \\
	t_N 
		&=  \int_{L^{N-1}}^{\infty} \frac{t}{16d^3} P_{t/2d} ( \lambda^{(\vec{\ba})}   (p = 0)  / 4d ) \rd t  \label{eq:tNdefi} . 
\end{align}

\begin{proof}[Proof of Proposition~\ref{prop:Gammajbounds2}(i) for $\eta =0$]
By the Fourier inversion formula,
\begin{align}
	\int_{L^{N-1}}^\infty \dot\Gamma_t^{\Lambda} (x) \rd t + t_N  Q_N (x) 
		&= \frac{1}{|\Lambda|} \sum_{p \in \Lambda^*} e^{i p \cdot (y-x)} \int_{L^{N-1}}^{\infty} \frac{t}{16d^3} P_{t/2d} \Big( \frac{ \lambda^{(\vec{\ba})}   (p)  }{4d} \Big) \rd t  \nnb
	& = \int_{L^{N-1}}^{\infty} \frac{t}{16d^3} P_t \Big( \frac{ \L^{(\vec{\ba})}_0 }{4d} \Big) (x) \rd t ,
\end{align}
so we have
\begin{align}
	\int_0^{L^{N-1}} \dot{\Gamma}_t \rd t + \int_{L^{N-1}}^\infty \dot{\Gamma}_t^{\Lambda} \rd t  + t_N  Q_N 
		=  \int_{0}^{\infty}\frac{t}{16d^3} P_t ( \L^{(\vec{\ba})}_0 ) (x) \rd t 
		= ( \L^{(\vec{\ba})}_0)^{-1} . 
\end{align}
Thus \eqref{eq:covdecmpTd} is satisfied.
\end{proof}

\subsection{Decay estimates,   $\eta = 0$}

\begin{proof}[Proof of Proposition~\ref{prop:Gammajbounds}(iii) and Proposition~\ref{prop:Gammajbounds2}(ii),(iii) for $\eta =0$]

By \eqref{eq:Ptbound} and Lemma~\ref{lemma:Lapptv-res},
\begin{align}
	|\nabla^k \dot{\Gamma}_t (x)|
		\lesssim \int_{(\Z^d)^*} |p|^k t P_{t/2d} \big(\frac{ \lambda^{(\vec{\ba})} }{4d}) \rd p  
		&\lesssim \int_{\R^d}  t |p|^k e^{- c ( |p|^2 + \ba^{(\emptyset)} )^{1/4} t^{1/2} } \rd p \nnb 
		&\lesssim t^{-d+1-k} e^{-c' ( \ba^{(\emptyset)} t^2 )^{1/4} } 
\end{align}
when $t \ge 2d$,  while for $t < 2d$,  $|\nabla^k \dot{\Gamma}_t (x) |\lesssim 1$.
The same holds for $\dot\Gamma_t^{\Lambda}$,  only with a discrete sum replacing the integral.

To bound $t_N$,  if we only consider the 0-mode ($p=0$) in the Fourier space,
\begin{align}
	( \ba^{(\emptyset)} )^{-1} = C^{(\vec{\ba})} (0) = \int_{0}^{L^{N-1}} \hat{\dot{\Gamma}}_t (0) \rd t + t_N ,
	\label{eq:tNest1}
\end{align}
but for $t \ge 0$,
\begin{align}
	0\le \hat{\dot{\Gamma}}_t (0) = \frac{t}{16d^3} P_{t / 2d} ( \ba^{(\emptyset)} / 4d ) \lesssim t ,
	\label{eq:tNest2}	
\end{align}
thus
\begin{align}
	0 \le \int_{0}^{L^{N-1}} \hat{\dot\Gamma}_t (0) \rd t \lesssim L^{2 (N-1)} ,
	\label{eq:tNest3}	
\end{align}
giving $t_N - ( \ba^{(\emptyset)} )^{-1} \in ( - C L^{2N} ,  0)$ for some $L$-independent constant $C$.
Also,  $t_N > 0$ is obvious since $P_t \ge 0$.
\end{proof}

\subsection{Infinite lattice,   $\eta > 0$}

The proof for the long-range interaction uses a spectral decomposition of the fractional power originally introduced in \cite{mitter2016finite} and a series expansion.
We will use $\beta = 1- \eta/2$ for notational simplicity.

\begin{proposition} \cite[Lemma~2.2]{MR3772040} \label{prop:spectralfracpw}
Let $\beta \in (0,1)$,  $w,z\ge 0$ and $w^\beta + z >0$.  Then
\begin{align}
	\frac{1}{w^{\beta} + z} = \frac{\sin (\pi \beta )}{\pi}  \int_{0}^\infty \frac{1}{ s^\beta ( w+s ) \sigma_\beta (s,z)} \rd s 
	\label{eq:spectralfracpw}
\end{align}
where $\sigma_{\beta} (s, z) =  1 + s^{-2\beta} z^2 + 2 z s^{-\beta} \cos (\pi \beta) > 0$ whenever $s > 0$.
\end{proposition}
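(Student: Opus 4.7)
The identity is classical and I would establish it via a keyhole contour integration of the function $\zeta \mapsto 1/[(\zeta^\beta + z)(\zeta - w)]$ around the branch cut of $\zeta^\beta$. As a preliminary, viewing $\sigma_\beta(s,z)$ as a quadratic in $u = zs^{-\beta} \ge 0$, its discriminant equals $4\cos^2(\pi\beta) - 4 = -4\sin^2(\pi\beta) < 0$, which forces $\sigma_\beta(s,z) > 0$ for every $s > 0$; combined with $\beta \in (0,1)$ this gives integrability of the right-hand side at both endpoints. It suffices to treat $w, z > 0$, since the boundary cases follow by continuity of both sides in $(w,z)$ under the assumption $w^\beta + z > 0$.

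The plan is to take the principal branch $\zeta^\beta = r^\beta e^{i\beta\theta}$ for $\zeta = re^{i\theta}$ with $\theta \in (-\pi, \pi)$, and a keyhole contour of inner radius $\delta$ and outer radius $R$ wrapping counterclockwise around the cut $(-\infty, 0]$. The only enclosed singularity is the simple pole at $\zeta = w > 0$ with residue $1/(w^\beta + z)$, contributing $2\pi i/(w^\beta + z)$ by Cauchy's formula. The outer arc contributes $O(R^{-\beta}) \to 0$ as $R \to \infty$, and the inner arc contributes $O(\delta) \to 0$ as $\delta \to 0$ (using $|\zeta^\beta + z| \ge z/2$ near the origin, since $z > 0$).

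The main computation is the jump across the cut: on the upper side $\zeta = -s + i0^+$ one has $\zeta^\beta = s^\beta e^{i\pi\beta}$, and on the lower side $\zeta^\beta = s^\beta e^{-i\pi\beta}$, so
\begin{align*}
\frac{1}{s^\beta e^{i\pi\beta}+z} - \frac{1}{s^\beta e^{-i\pi\beta}+z}
= \frac{-2i\sin(\pi\beta)\,s^\beta}{s^{2\beta} + 2zs^\beta\cos(\pi\beta) + z^2}
= \frac{-2i\sin(\pi\beta)}{s^\beta \sigma_\beta(s,z)},
\end{align*}
where the second equality follows by recognising the denominator as $|s^\beta e^{i\pi\beta} + z|^2 = s^{2\beta}\sigma_\beta(s,z)$. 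Combining the two sides of the cut against the common factor $1/(\zeta - w) \to -1/(s+w)$, equating with the residue $2\pi i/(w^\beta + z)$, and dividing by $2\pi i$ produces the claimed formula.

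The only point requiring care is the uniform-in-$z$ decay on the outer arc, which I would justify by the elementary estimate $|\zeta^\beta + z| \ge |\mathrm{Im}(\zeta^\beta)| = r^\beta |\sin(\beta\theta)|$; this stays bounded below by a positive constant on any contour that remains a fixed angular distance from the cut, independently of $z \ge 0$. Once this bound is in hand, the remainder of the argument is routine complex analysis, and the identity extends to the boundary cases $w = 0$ and $z = 0$ by the continuity observation from the first paragraph.
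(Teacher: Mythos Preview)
Your keyhole-contour argument is correct in outline and is indeed the standard route to this identity; the paper itself does not prove the proposition but simply cites \cite[Lemma~2.2]{MR3772040}, so there is nothing to compare against. One small imprecision: your outer-arc bound $|\zeta^\beta+z|\ge|\mathrm{Im}(\zeta^\beta)|=r^\beta|\sin(\beta\theta)|$ vanishes at $\theta=0$, which lies on the outer arc, so the phrase ``bounded below on any contour a fixed angular distance from the cut'' does not give what you need. The fix is easy: since $\beta\theta\in(-\pi\beta,\pi\beta)$ with $\pi\beta<\pi$, minimising $|r^\beta e^{i\beta\theta}+z|^2=r^{2\beta}+2zr^\beta\cos(\beta\theta)+z^2$ over $z\ge0$ yields $|\zeta^\beta+z|\ge r^\beta\min(1,\sin(\pi\beta))$ uniformly in $\theta$ and $z$, which is enough for the $O(R^{-\beta})$ decay. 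With that correction the proof is complete.
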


Let $\delta \L =  \L^{(\vec{\lambda})}_\eta - (- \Delta)^{1-\eta/2} - \ba^{(\emptyset)}$ and $\delta \lambda$ be its Fourier transform,  so 
$\lambda^{(\vec{\ba})} = \lambda^{\beta} + \ba^{(\emptyset)} + \delta \lambda$.
We want to decompose  $1/ \lambda^{(\vec{\ba})}$ using Proposition~\ref{prop:spectralfracpw},  but unfortunately,  $\ba^{(\emptyset)} + \delta \lambda$ may be negative.  Alternatively,  we take
\begin{align}
	\tilde{\lambda} = \lambda^{\beta} + \ba^{(\emptyset)} + C_{\rm F} \epsilon_p \lambda  \qquad \Rightarrow \qquad \lambda^{(\vec{\ba})} = \tilde{\lambda} - (C_{\rm F} \epsilon_p \lambda - \delta \lambda )
\end{align}
with sufficiently large $C_{\rm F}$ so that $C_{\rm F} \epsilon_p \lambda - \delta \lambda \ge 0$---such choice is possible due to Lemma~\ref{lemma:Lapptv-res}.  Then we obtain a Neumann series expansion
\begin{align}
	\frac{1}{\lambda^{(\vec{\ba})}} = \sum_{n=0}^{\infty} \big( C_{\rm F} \epsilon_p \lambda - \delta \lambda \big)^n \tilde{\lambda}^{-(n+1)} ,
	\label{eq:lambdaNeumann}
\end{align}
and we will apply Proposition~\ref{prop:spectralfracpw} to decompose $\tilde{\lambda}$ instead.

\begin{lemma} \label{lemma:tildelambdadecomp}
Let $\beta \in (0, 1)$.  
For sufficiently small $\epsilon_p$ so that $\tilde{\lambda} \ge 0$, 
there exist covariance matrices $(H_t)_{t\ge 0}$ invariant under lattice symmetries with range $< t$ and Fourier transform $\hat{H}_t$ that satisfy
\begin{align}
	\frac{1}{\tilde{\lambda}} = \int_0^\infty \hat{H}_t \rd t , \qquad
	\hat{H}_t (p) \le C \frac{t^{2\beta-1}}{1 + \ba^{(\emptyset)}t^{2\beta} }  e^{-c( \lambda t^2 )^{1/4}}
\end{align}
for some $C,c>0$.
\end{lemma}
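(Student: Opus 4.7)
The plan is to combine the spectral identity of Proposition~\ref{prop:spectralfracpw}, which reduces fractional powers of $-\Delta$ to ordinary resolvents $(-\Delta+s)^{-1}$ parametrised by $s>0$, with Lemma~\ref{lemma:Ptdef}, which expresses each such resolvent as an integral over a scale $t$ of finite-degree polynomials of $-\Delta$, hence of finite-range operators. Merging the two scale parameters then yields the single-scale decomposition required.

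First, I would apply Proposition~\ref{prop:spectralfracpw} in Fourier space with $w=\lambda(p)$ and $z=z(p):=\ba^{(\emptyset)}+C_{\rm F}\epsilon_p\lambda(p)\ge 0$, which is legitimate because $\tilde\lambda(p)=\lambda(p)^\beta+z(p)\ge 0$:
\begin{align*}
\frac{1}{\tilde\lambda(p)} = \frac{\sin\pi\beta}{\pi}\int_0^\infty \frac{s^{-\beta}\,ds}{(\lambda(p)+s)\,\sigma_\beta(s,z(p))}, \qquad \sigma_\beta(s,z(p))\ge\sin^2\pi\beta>0.
\end{align*}
For each $s$ I would then apply Lemma~\ref{lemma:Ptdef} to $(\lambda(p)+s)^{-1}$, rescaled by $M_s\asymp 4d+s$ so that $(\lambda(p)+s)/M_s\in(0,3]$, and swap the $s$- and $t$-integrations to define
\begin{align*}
\hat H_t(p) \;:=\; \frac{t\sin\pi\beta}{\pi}\int_0^\infty \frac{s^{-\beta}}{M_s\,\sigma_\beta(s,z(p))}\,P_t\!\Big(\tfrac{\lambda(p)+s}{M_s}\Big)\,ds,
\end{align*}
which by construction satisfies $\int_0^\infty \hat H_t\,dt = \tilde\lambda^{-1}$. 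Positivity of $H_t$ is immediate from $P_t\ge 0$ and $\sigma_\beta>0$, and invariance under lattice isometries follows because $\hat H_t$ depends on $p$ only through $\lambda(p)$.

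The hard part is the finite-range property, since $\sigma_\beta(s,z(p))^{-1}$ is rational (not polynomial) in $\lambda(p)$ and \emph{a priori} destroys the range $\le t$ structure of $P_t((-\Delta+s)/M_s)$. I would resolve this by partial-fraction expanding $\sigma_\beta^{-1}$ into two conjugate simple factors $(\lambda-r_\pm(s))^{-1}$ whose complex poles $r_\pm(s)$ have real part of absolute value $\gtrsim(C_{\rm F}\epsilon_p)^{-1}$ (here the smallness of $\epsilon_p$ enters), then applying Lemma~\ref{lemma:Ptdef} to each factor after a suitable complex shift; the conjugate pair reassembles into a real, positive polynomial of $-\Delta$ whose additional degree is absorbed into the scale $t$ at a bounded cost. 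With finite range in hand, the pointwise bound follows by inserting $P_t(x)\le Ce^{-c(xt^2)^{1/4}}$ from Lemma~\ref{lemma:Ptdef} and splitting the $s$-integral at $s\sim t^{-2}$: the short-$s$ piece contributes $t^{2\beta-1}$ via $\int_0^{t^{-2}}s^{-\beta}\,ds\asymp t^{-2(1-\beta)}$; the mass factor $(1+\ba^{(\emptyset)}t^{2\beta})^{-1}$ arises from $\sigma_\beta(s,z(p))\gtrsim(s^{-\beta}\ba^{(\emptyset)})^2$ in the small-$s$ regime; the exponential $e^{-c(\lambda t^2)^{1/4}}$ carries through intact; and large-$s$ contributions decay rapidly since $P_t(x)$ at $x\sim 1$ gives $e^{-ct^{1/2}}$.
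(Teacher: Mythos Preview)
Your overall strategy---spectral identity from Proposition~\ref{prop:spectralfracpw} followed by Lemma~\ref{lemma:Ptdef}---matches the paper's, but your handling of the finite-range property has a genuine gap. You apply $P_t$ only to $(\lambda+s)^{-1}$ and leave $\sigma_\beta(s,z(p))^{-1}$ as a separate multiplicative factor; since $z(p)=\ba^{(\emptyset)}+C_{\rm F}\epsilon_p\lambda(p)$ depends on $p$, this factor is rational in $\lambda$ and destroys the finite-range structure, as you note. Your proposed fix via partial fractions in $z$ and ``complex shifts'' does not go through with the tools at hand: the roots of $\sigma_\beta(s,\cdot)$ are $z_\pm=-s^\beta e^{\mp i\pi\beta}$, so each partial-fraction piece is $(\ba^{(\emptyset)}+C_{\rm F}\epsilon_p\lambda+s^\beta e^{\mp i\pi\beta})^{-1}$ with a genuinely complex mass. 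Lemma~\ref{lemma:Ptdef} is stated only for real $x\in(0,3]$; neither the identity $1/x=\int t\,P_t(x)\,dt$ nor the bound $P_t(x)\le Ce^{-c(xt^2)^{1/4}}$ nor the positivity $P_t\ge 0$ is available for complex arguments, and your claim that the conjugate pair ``reassembles into a real, positive polynomial'' is unsubstantiated.

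The paper avoids this entirely by a simple observation you missed: because $z$ is \emph{affine} in $\lambda$, the full denominator $(\lambda+s)\sigma_\beta(s,z)$ is already a cubic polynomial in $\lambda$, hence corresponds to a finite-range operator in $-\Delta$. One can therefore apply Lemma~\ref{lemma:Ptdef} directly to this product (suitably rescaled), and finite range comes for free. The price is that $(\lambda+s)\sigma_\beta(s,z)$ is not uniformly bounded in $s$---it blows up as $s\to 0$ through $\sigma_\beta\sim s^{-2\beta}z^2$ and as $s\to\infty$ through $\lambda+s$---so Lemma~\ref{lemma:Ptdef} cannot be applied globally. The paper therefore splits the $s$-integral into dyadic shells $J_k$ over $s\in[2^{-k}z^{1/\beta},2^{-k+1}z^{1/\beta}]$ for small $s$, plus pieces $I_0,I_1$ for $s\in[z^{1/\beta},1]$ and $s\ge 1$; on each piece a rescaled version of $(\lambda+s)\sigma_\beta(s,z)$ lies in $(0,3]$, Lemma~\ref{lemma:Ptdef} applies, and the bounds are summed. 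Your qualitative discussion of the bound (splitting at $s\sim t^{-2}$, extracting $(1+\ba^{(\emptyset)}t^{2\beta})^{-1}$ from $\sigma_\beta$) is on the right track and survives in the paper's framework.
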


We prove this lemma in Section~\ref{sec:tildelambdadecompproof}.  Subsequently,  we can use expansion \eqref{eq:lambdaNeumann} to obtain the decomposition of $1 / \lambda^{(\vec{\ba})}$.

\begin{proof}[Proof of Proposition~\ref{prop:Gammajbounds}(i),(ii) for $\eta >0$]
We let
\begin{align}
	\frac{1}{\lambda^{(\vec{\ba})}} = \int_0^\infty \hat{\dot{\Gamma}}_t \rd t , \qquad
	\hat{\dot{\Gamma}}_t = \sum_{n =0}^{\infty} \big( C_{\rm F} \epsilon_p \lambda - \delta \lambda \big)^n \hat{H}^{\star (n+1)}_{t-2dn } 
\end{align}
with the convention $\hat{H}^{\star n}_{s} =0$ when $s \le 0$ and 
\begin{align}
	\hat{H}^{\star n}_s = \int_{t_1 + \cdots + t_n = s,  \; t_i \ge 0} \prod_{i=1}^n \hat{H}_{t_i} \rd t_i ,\qquad s > 0 .
	\label{eq:hatHstarndefi}
\end{align}
The finite range property holds because $(C_{\rm F} \epsilon_p \lambda - \delta \lambda )^n$ has range $< 2dn$ and $H^{\star n}_t$ has range $<t$.  The symmetries follow from those of $H_t$.
\end{proof}

We can also obtain a bound on $\hat{\dot{\Gamma}}_t$,  proved in Section~\ref{sec:hatdotGammatbndproof}.

\begin{lemma} \label{lemma:hatdotGammatbnd}
Let $\beta \in (0,  1)$.  For sufficiently small $\epsilon_p$ and some $c>0$,
\begin{align}
	0 \le \hat{\dot{\Gamma}}_t (p) \le O\Big( \frac{t^{2\beta  - 1} }{ 1 + \ba^{(\emptyset)} t^{2\beta} } \Big) e^{-c ( \lambda (p) t^2 )^{1/4}} .
\end{align}
\end{lemma}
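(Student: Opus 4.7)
}

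The starting point is the series representation constructed just before the statement,
\[
\hat{\dot\Gamma}_t(p)=\sum_{n=0}^{\lfloor t/(2d)\rfloor}\bigl(C_{\rm F}\epsilon_p\lambda(p)-\delta\lambda(p)\bigr)^{n}\,\hat H^{\star(n+1)}_{t-2dn}(p),
\]
combined with the pointwise bound $\hat H_t(p)\le C\tau(t)\,e^{-c(\lambda(p)t^{2})^{1/4}}$ of Lemma~\ref{lemma:tildelambdadecomp}, where I write $\tau(t):=t^{2\beta-1}/(1+\ba^{(\emptyset)} t^{2\beta})$. Non-negativity of $\hat{\dot\Gamma}_t$ is then immediate termwise: the choice of $C_{\rm F}$ in Lemma~\ref{lemma:Lapptv-res} forces $C_{\rm F}\epsilon_p\lambda-\delta\lambda\ge0$, and each $\hat H_s\ge 0$.

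For the upper bound I would first estimate the convolution $\hat H^{\star(n+1)}_s$. The key point is the elementary subadditivity $\sum_i t_i^{1/2}\ge (\sum_i t_i)^{1/2}$ for $t_i\ge 0$, which gives $\sum_i(\lambda t_i^{2})^{1/4}\ge (\lambda s^{2})^{1/4}$ whenever $\sum_i t_i=s$. Plugging this into the definition \eqref{eq:hatHstarndefi} yields
\[
\hat H^{\star(n+1)}_s(p)\le C^{n+1}\,e^{-c(\lambda(p)s^{2})^{1/4}}\,\tau^{\star(n+1)}(s),
\]
and a Beta-function induction using $\tau(t)\le t^{2\beta-1}$ gives the crude bound $\tau^{\star(n+1)}(s)\le \Gamma(2\beta)^{n+1}s^{2\beta(n+1)-1}/\Gamma(2\beta(n+1))$. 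To recover the $(1+\ba^{(\emptyset)} s^{2\beta})^{-1}$ denominator one extracts it from the largest time variable in the convolution ($t_{\max}\ge s/(n+1)$), the resulting polynomial factor in $n$ being harmlessly absorbed into the super-exponential denominator $\Gamma(2\beta(n+1))$.

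Summing the series, I would exploit the crucial fact that the lattice Fourier symbol is bounded, $\lambda(p)\le 4d$, so that Lemma~\ref{lemma:Lapptv-res} gives $|C_{\rm F}\epsilon_p\lambda-\delta\lambda|\le C\epsilon_p$ uniformly in $p$, turning the effective expansion parameter into the genuinely small $\epsilon_p$ rather than $\epsilon_p\lambda$. Splitting the sum at $n_{*}=\lfloor t/(4d)\rfloor$: for $n\le n_{*}$ one has $s=t-2dn\ge t/2$, so $e^{-c(\lambda s^{2})^{1/4}}\le e^{-c'(\lambda t^{2})^{1/4}}$ with $c'=c/\sqrt 2$, and the residual Mittag-Leffler-type series
\[
\sum_{n\le n_*}\frac{(C\epsilon_p t^{2\beta})^{n}}{\Gamma(2\beta(n+1))}
\]
is bounded by an absolute constant times $\exp(C'\epsilon_p^{1/(2\beta)}t)$; since $\lambda(p)\le 4d$ and $\epsilon_p$ is small, this residual exponential is dominated by $e^{c''(\lambda t^{2})^{1/4}}$ for $\lambda\ge\lambda_0>0$ and is trivially $O(1)$ for $\lambda$ in any bounded region of scale matching $\epsilon_p$. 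For $n>n_{*}$ the factor $(C\epsilon_p)^{n}$ produces a geometric decay $\exp(-c''' t)$ which for bounded $\lambda$ and $t\ge 1$ dominates the required $e^{-c(\lambda t^{2})^{1/4}}$ (and for $t\le 1$ the exponential $e^{-c(\lambda t^{2})^{1/4}}$ is $\asymp 1$, so no decay need be extracted).

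The main obstacle will be the middle regime of step 5, namely reconciling the Mittag-Leffler growth $\exp(C'\epsilon_p^{1/(2\beta)}t)$ of the low-$n$ tail with the weaker Gaussian-type decay $\exp(-c(\lambda t^{2})^{1/4})$. In the continuum one would have no uniform bound on $\lambda$ and the argument would break down; on the lattice the boundedness $\lambda(p)\le 4d$ is exactly what makes the absorption possible after a fresh choice of $\epsilon_p$ (independent of $t$ and $p$). A secondary obstacle is the propagation of the $(1+\ba^{(\emptyset)} t^{2\beta})^{-1}$ factor through the convolution, which I expect to handle, as sketched above, by the ``largest factor'' decomposition of $\tau^{\star(n+1)}$, exploiting that the function $x\mapsto 1/(1+\ba^{(\emptyset)} x^{2\beta})$ is monotone decreasing.
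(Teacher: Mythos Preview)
Your proposal has a genuine gap precisely at the step you flag as ``the main obstacle.'' The Mittag--Leffler growth $\exp\bigl(C'\epsilon_p^{1/(2\beta)}t\bigr)$ produced by your Beta-function bound on $\tau^{\star(n+1)}$ has exponent \emph{linear} in $t$, whereas the available decay $e^{-c(\lambda t^{2})^{1/4}}=e^{-c\lambda^{1/4}t^{1/2}}$ has exponent only of order $t^{1/2}$. No matter how small $\epsilon_p>0$ is chosen, and regardless of the lattice bound $\lambda(p)\le 4d$, for every fixed $p$ with $\lambda(p)>0$ one has $C'\epsilon_p^{1/(2\beta)}t\gg c\lambda^{1/4}t^{1/2}$ as $t\to\infty$. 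Your proposed dichotomy cannot close: in the regime $\lambda\ge\lambda_0$ the inequality $C'\epsilon_p^{1/(2\beta)}t\le c''\lambda^{1/4}t^{1/2}$ holds only for $t$ below an $\epsilon_p$-dependent threshold, not uniformly; and retaining the sharper estimate $(C_{\rm F}\epsilon_p\lambda-\delta\lambda)\le C\epsilon_p\lambda$ instead gives growth $\exp\bigl((C\epsilon_p\lambda)^{1/(2\beta)}t\bigr)$, which still beats $e^{-c\lambda^{1/4}t^{1/2}}$ for large $t$ since $1/(2\beta)>1/4$.

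The paper avoids accumulating powers of $t^{2\beta}$ altogether. It replaces the Beta-function estimate by a pointwise sub-multiplicativity of the full envelope (Lemma~\ref{lemma:starconvltnbnd}): for $t_1+t_2=t$ the product of envelope values at $t_1,t_2$ is controlled by the envelope at $t$, so that $\hat H^{\star(n+1)}_{s}\le C^{n+1}\,\frac{s^{2\beta-1}}{1+\ba^{(\emptyset)}s^{2\beta}}\,e^{-c(\lambda s^{2})^{1/4}}$ with no $s^{2\beta n}$ factor. The decisive second idea, absent from your sketch, is to spend half of the smallness: write $(C'\epsilon_p)^{n}=(C'\epsilon_p)^{n/2}\epsilon_p^{n/2}$ and observe that, because $\lambda\le 4d$ and $\ba^{(\emptyset)}\le\epsilon_p$,
\[
\epsilon_p^{n/2}\lesssim \frac{1}{1+\ba^{(\emptyset)}(2dn)^{2\beta}}\,e^{-c(\lambda(2dn)^{2})^{1/4}}
\]
uniformly in $n$ and $p$. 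This manufactures an envelope factor at the shift location $2dn$; one more application of Lemma~\ref{lemma:starconvltnbnd} recombines the envelopes at $t-2dn$ and $2dn$ into the envelope at $t$, leaving a clean geometric tail $\sum_{n}(C''\epsilon_p)^{n/2}$ with no residual $t$-growth.
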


\subsubsection{Application of Proposition~\ref{prop:spectralfracpw}}
\label{sec:tildelambdadecompproof}

We apply \eqref{eq:spectralfracpw} with $w = \lambda$  and $z = \ba^{(\emptyset)} + C_{\rm F} \lambda$ and obtain
\begin{align}
	\frac{1}{\tilde{\lambda}} = \frac{\sin (\pi \beta)}{\pi} \int_0^\infty \frac{s^{-\beta} \rd s}{(s+ \lambda ) \sigma_\beta( s,  z )}  .
\end{align}
We want to decompose the integrand using Lemma~\ref{lemma:Ptdef},  but the denominator is not bounded from above,
so we decompose
\begin{align}
	\frac{1}{\tilde\lambda}
		= \frac{\sin (\pi \beta)}{\pi} \Big( I_0 + I_1 +  \sum_{k=1}^{\infty}  J_k  \Big) ,  \qquad 
	\begin{cases}
		J_k = 
			\int_{2^{-k} z^{1/\beta} }^{2^{-k+1} z^{1/\beta}}  \frac{s^{-\beta}}{(\lambda+s) \sigma_\beta (s,z)} \rd s \\
		I_0 = \int_{z^{1/\beta}}^1 \frac{s^{-\beta}}{(\lambda+s) \sigma_\beta (s,z)} \rd s \\
		I_1 = \int_{1}^\infty \frac{s^{-\beta}}{(\lambda+s) \sigma_\beta (s,z)} \rd s .
	\end{cases}
	\label{eq:IIJkdecomp}
\end{align}
(With the assumption that $\vec{\ba}$ is sufficiently small,  we have $z \le 1$.)
Noting that each $J_k$, $I_0$ and $I_1$ are functions in the momentum space with variable $p \in \Lambda^*$,  we have the following decomposition. 

\begin{lemma} \label{lemma:JIkDecomp}
For any $\beta \in (0, 1)$ and each $k \ge 0$ and $l \in \{0,1\}$,
\begin{align}
	J_k = \int_0^\infty \hat{D}^{[k]}_t \rd t ,  \qquad I_l = \int_0^\infty \hat{E}_t^{[l]} \rd t
	\label{eq:JIkDecomp}
\end{align}
where $\hat{D}^{[k]}_t$ and $\hat{E}^{[k]}_t$ are the Fourier transforms of covariance matrices $D^{[k]}_t$ and $E^{[k]}_t$,  respectively,  with range $\le t$ and invariant under lattice symmetries.
\end{lemma}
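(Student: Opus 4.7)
\medskip

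\noindent\textbf{Proof proposal for Lemma~\ref{lemma:JIkDecomp}.}
The plan is to apply Lemma~\ref{lemma:Ptdef} to the factor $\frac{1}{\lambda+s}$ appearing inside each of $J_k$, $I_0$ and $I_1$. Since $\lambda(p)\in[0,4d]$ for $p$ in the dual lattice and $s>0$, the ratio $(\lambda(p)+s)/(4d+s)$ lies in $(0,1]\subset(0,3]$, so Lemma~\ref{lemma:Ptdef} yields
\begin{align*}
	\frac{1}{\lambda+s} \;=\; \frac{1}{4d+s}\int_0^\infty u\,P_u\!\left(\frac{\lambda+s}{4d+s}\right)\rd u ,
\end{align*}
with $P_u$ a polynomial of degree $\le u$ and $P_u \ge 0$ throughout (the bound \eqref{eq:Ptbound} for $u\ge 1$ and the explicit $c/u$ for $u<1$). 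After a linear change of variable $u=c_d\,t$ to turn ``degree $\le u$'' into ``range $<t$'' (the same rescaling used to go from Lemma~\ref{lemma:Ptdef} to the definition \eqref{eq:Gammajdefn} in the short-range case), one obtains a nonnegative integrand in $t$.

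Next, I would apply Fubini to swap the $s$-integral and the $t$-integral. For $J_k$ this produces
\begin{align*}
	\hat D^{[k]}_t(p) \;=\; \int_{2^{-k}z^{1/\beta}}^{2^{-k+1}z^{1/\beta}} \frac{s^{-\beta}}{\sigma_\beta(s,z)(4d+s)}\, c_d t\, P_{c_d t}\!\left(\frac{\lambda(p)+s}{4d+s}\right)\rd s,
\end{align*}
and analogously for $\hat E^{[l]}_t$ with the integration ranges $[z^{1/\beta},1]$ and $[1,\infty)$. The swap is justified by absolute integrability: for $u\ge 1$ the bound $P_u(x)\le C e^{-c(xu^2)^{1/4}}$ gives Gaussian-type decay in $u$ uniformly in $\lambda\ge 0$, while the $s$-integrand is integrable since $\sigma_\beta(s,z)\ge c_\beta>0$ on each of the three ranges. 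The three required properties of $\hat D^{[k]}_t$ and $\hat E^{[l]}_t$ then follow: \emph{positivity of the Fourier symbol} is immediate because $P_u\ge 0$ and every other factor in the integrand is positive; \emph{finite range $<t$} comes from the fact that $P_{c_d t}$ applied to an affine function of $-\Delta$ is a polynomial of degree $\le c_d t$ in $-\Delta$, hence an operator of range $\le c_d t < t$; and \emph{lattice symmetries} hold because $-\Delta$ is invariant under the lattice isometry group, so any polynomial in $-\Delta$ inherits the same invariance. Positivity of $\hat D^{[k]}_t(p)$ at every $p$ together with the symmetries gives a real, symmetric, translation-invariant kernel $D^{[k]}_t$ that is positive-definite, i.e.\ a covariance.

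The step I expect to require the most care is the calibration of the constant $c_d$ in the rescaling $u=c_d t$. To obtain the strict inequality ``range $<t$'' one must choose $c_d$ small enough that $c_d t$ is a legitimate polynomial degree bound \emph{and} the resulting operator (which is a polynomial in the finite-range operator $-\Delta$, whose own range is $1$) has range strictly less than $t$; this is straightforward for $-\Delta$ but the same argument will later have to be applied to the Neumann series \eqref{eq:lambdaNeumann}, where each factor $C_{\rm F}\epsilon_p\lambda-\delta\lambda$ contributes its own range and the convolutions \eqref{eq:hatHstarndefi} must still fit under a single scale $t$. For the present lemma, however, this is just a bookkeeping issue. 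A minor technical point is continuity of $\hat D^{[k]}_t, \hat E^{[l]}_t$ in $\vec{\ba}$ (needed for Lemma~\ref{lemma:FRDcontinuity}), which follows from continuity of the integrand in $\ba^{(\emptyset)}$ (entering through $z$) and dominated convergence using the same exponential bound on $P_u$.
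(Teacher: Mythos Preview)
Your proposal has a genuine gap stemming from the treatment of $\sigma_\beta(s,z)$. In the setup of the lemma (just above \eqref{eq:IIJkdecomp}), one takes $z=\ba^{(\emptyset)}+C_{\rm F}\epsilon_p\lambda$, which depends on the Fourier variable $p$ through $\lambda$; hence $\sigma_\beta(s,z)=1+s^{-2\beta}z^2+2zs^{-\beta}\cos(\pi\beta)$ is a quadratic polynomial in $\lambda(p)$ and its reciprocal is not. By leaving $\sigma_\beta(s,z)^{-1}$ as a prefactor outside $P_{c_d t}$, your $\hat D^{[k]}_t(p)$ is, for each fixed $s$, the product of a polynomial in $\lambda(p)$ with a non-polynomial rational function of $\lambda(p)$; the $s$-integral does not restore a polynomial, and the inverse Fourier transform has no reason to be compactly supported. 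Your remark that $z$ enters only through $\ba^{(\emptyset)}$ confirms that this $p$-dependence was overlooked.

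The paper's proof avoids this by feeding the \emph{full} product $M(\lambda+s)\sigma_\beta(s,z)$ (respectively $Ms^{-1}(\lambda+s)\sigma_\beta$ for $I_1$) into $P_{t/R}$. Since this argument is now a cubic polynomial in $\lambda$, applying $P_{t/R}$ yields a polynomial in $-\Delta$ of bounded degree and the range bound $\le t$ follows with $R=2d$. This is exactly why the paper splits into $J_k,I_0,I_1$ with the separate normalisations $2^{-2\beta k}$, $1$, $s^{-1}$: they are what keep the whole argument---including the $\sigma_\beta$ factor, which grows like $2^{2\beta k}$ on the $J_k$ range---inside $(0,3]$. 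Your uniform $(4d+s)^{-1}$ normalisation cannot accommodate this once $\sigma_\beta$ is moved inside. To repair the argument you would need to absorb $\sigma_\beta$ into $P_t$ and then adopt those scale-adapted normalisations, which brings you back to the paper's construction.
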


In the following proof,  we make a change of variables $t \mapsto t/R$ in Lemma~\ref{lemma:Ptdef} so that
\begin{align}
	\frac{1}{x} = R^{-2} \int_0^{\infty} t P_{t/R} (x) dt ,\qquad x \in (0,3] . \label{eq:Ptdefrescaled}
\end{align}
If $x$ is an operator with range $\le R$,  then $P_{t/R} (x)$ has range $\le t$.  We will take $x$ to be a multiple of $(\lambda +s) \sigma_\beta (s,z)$ in the proof,  so $R = 2d$ is sufficient.

\begin{proof}[Proof of Lemma~\ref{lemma:JIkDecomp}]
For $k \ge 1$,  we restate
\begin{align}
	J_k 
		&= \frac{M}{2^{2 \beta k} } \int_{2^{-k} z^{1/\beta}}^{2^{-k+1} z^{1/\beta}} \frac{s^{-\beta} \rd s}{M (\lambda +s) 2^{-2\beta k} \sigma_\beta (s, z) } ,\\
	I_0
		&= \frac{M}{2^{k+1}} \int_{z^{1/\beta}}^{1} \frac{s^{-\beta} \rd s}{M (\lambda +s) \sigma_\beta (s, z) }  ,\qquad 
	I_1
		= \frac{M}{2^{k+1}} \int_{1}^{\infty} \frac{s^{-\beta-1} \rd s}{M s^{-1} (\lambda +s) \sigma_\beta (s, z) }  
\end{align}
for $M = 1 / 100d$.
Then by the choice of $M$ and the domain of $s$,  the denominators of the integrand of $J_k$ and $I_k$ are bounded from above by $1$,  so we can apply \eqref{eq:Ptdefrescaled},  giving
\begin{align}
	\frac{1}{M (\lambda +s) 2^{-2\beta k} \sigma_\beta (s, z) }
		&= R^{-2} \int_{0}^\infty t P_{t/R} ( M 2^{-2\beta k} (\lambda +s)  \sigma_\beta (s, z) ) \rd t
\end{align}		
for $s \in [2^{-k}z^{1/\beta},  2^{-k+1} z^{1/\beta}]$ and		
\begin{align}
	\frac{1}{M (1 \vee s )^{-1} (\lambda +s) \sigma_\beta (s, z) }
		&= R^{-2} \int_{0}^\infty t P_{t/R} ( M (1 \vee s)^{-1} (\lambda +s) \sigma_\beta (s, z) ) \rd t 
\end{align}
for $s \ge z^{1/\beta}$,  so we obtain \eqref{eq:JIkDecomp} with
\begin{align}
	\hat{D}_t^{[k]} &= \frac{M R^{-2}}{2^{2\beta k}} \int_{2^{-k} z^{1/\beta}}^{2^{-k+1} z^{1/\beta}} \frac{t}{s^\beta}  P_{t/R}  ( M 2^{-2\beta k} (\lambda +s)  \sigma_\beta (s, z) ) \rd s \label{eq:hatDdefi} \\
	\hat{E}_t^{[0]} &= M R^{-2}  \int_{z^{1/\beta}}^{1}  \frac{t}{s^{\beta}}  P_{t/R} ( M (\lambda +s) \sigma_\beta (s, z) ) \rd s \label{eq:hatE0defi}\\
	\hat{E}_t^{[1]} &= M R^{-2}  \int_{1}^{\infty}  \frac{t}{s^{\beta+1}}  P_{t/R^2} ( M s^{-1} (\lambda +s) \sigma_\beta (s, z) ) \rd s \label{eq:hatE1defi}
	.
\end{align}
\end{proof}

We arrive at a decomposition of $(\tilde{\lambda})^{-1}$ by letting
\begin{align}
	\hat{H}_t (p) = \frac{\sin (\pi \beta)}{\pi} \Big( \hat{E}_t^{[0]} + \hat{E}_t^{[1]}  + \sum_{k=1}^\infty \hat{D}_t^{[k]} \Big) (p) . 
	 \label{eq:dotHtdefieta}
\end{align}

To bound $\hat{H}_t$,  we need the following computational lemma.

\begin{lemma} \label{lemma:expintest}
For $\beta \in (0,1)$,  $a, c, t,\mu \ge 0$,  there exists $c' > 0$ such that
\begin{align}
	\int_a^{\infty} s^{-\beta} e^{-c ( (\mu + s) t^2 )^{1/4} } \rd s \le O( t^{2\beta -2} ) e^{- c' ( ( \mu + a) t^2 )^{1/4}} .
\end{align}
\end{lemma}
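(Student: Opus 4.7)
The proof will proceed by splitting the exponential decay factor, extracting the boundary value, and reducing the remaining tail integral to a Gamma-function computation via a change of variables.

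The first step is to fix any $c_1 \in (0,c)$ and write
\begin{align*}
	e^{-c((\mu+s)t^2)^{1/4}} = e^{-(c-c_1)((\mu+s)t^2)^{1/4}}\, e^{-c_1((\mu+s)t^2)^{1/4}}.
\end{align*}
For $s \ge a$, monotonicity gives $(\mu+s)^{1/4} \ge (\mu+a)^{1/4}$, so the first factor is bounded by $e^{-(c-c_1)((\mu+a)t^2)^{1/4}}$, which is exactly the prefactor we want on the right-hand side, with $c' := c-c_1 > 0$. The second factor we bound using the crude inequality $\mu + s \ge s$, giving $e^{-c_1((\mu+s)t^2)^{1/4}} \le e^{-c_1(st^2)^{1/4}}$.

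After these two elementary manipulations, we are left with
\begin{align*}
	\int_a^\infty s^{-\beta} e^{-c((\mu+s)t^2)^{1/4}}\, ds \le e^{-c'((\mu+a)t^2)^{1/4}} \int_0^\infty s^{-\beta} e^{-c_1 (st^2)^{1/4}}\, ds,
\end{align*}
where we have harmlessly extended the lower limit to $0$. The substitution $u=(st^2)^{1/4}$, i.e.\ $s = u^4 t^{-2}$ and $ds = 4 u^3 t^{-2}\, du$, turns the remaining integral into $4 t^{2\beta-2} \int_0^\infty u^{3-4\beta} e^{-c_1 u}\, du$, and the constraint $\beta\in(0,1)$ ensures $3-4\beta > -1$, so this is a finite multiple of $t^{2\beta-2}$ (a Gamma integral).

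There is no real obstacle here: the lemma is a routine Laplace-type estimate, and the only sensitivity is the choice $c_1 \in (0,c)$, which trades part of the exponential rate for a polynomial prefactor. The edge cases $t=0$ or $a=0$ or $\mu=0$ are handled automatically by the bound (with the understanding that $t^{2\beta-2}$ blows up at $t=0$, making the claim trivially true there), and the statement is applied in the text only with strictly positive constants arising from Lemma~\ref{lemma:Ptdef}.
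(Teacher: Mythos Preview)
Your proof is correct and follows essentially the same approach as the paper's: extract the exponential prefactor at $s=a$, reduce the remaining integral to $\int_0^\infty s^{-\beta} e^{-c_1(st^2)^{1/4}}\,\rd s$, and scale out $t$ to obtain $t^{2\beta-2}$ times a finite Gamma-type integral. The only cosmetic difference is that the paper uses the equivalence $(\mu+s)^{1/4} \asymp \mu^{1/4} + s^{1/4}$ to split the exponent, whereas you split multiplicatively via $c = (c-c_1) + c_1$; your version is arguably cleaner and makes the integrability check at $s=0$ explicit.
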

\begin{proof}
Since $(\mu + s)^{1/4} \asymp ( \mu^{1/4} + s^{1/4} )$,   we actually only have to bound
\begin{align}
	e^{-c' ( (\mu+a) t^2 )^{1/4} } \int_a^{\infty} s^{-\beta} e^{- c' ( st^2 )^{1/4}  } \rd s  
\end{align}
for some $c' >0$.
After a change of variable $s \mapsto s t^{-2}$,  we see that the integral is bounded by a constant multiple of $t^{2\beta -2}$,  so we have the desired bound.
\end{proof}

\begin{proof}[Proof of Lemma~\ref{lemma:tildelambdadecomp}]
Since we defined $\hat{H}_t$ as a sum of $\hat{D}_t^{[k]}$ and $\hat{E}_t^{[l]}$ in \eqref{eq:dotHtdefieta},  
it will be sufficient to prove
\begin{align}
	2^{\beta k} \hat{D}^{[k]}_t (p)  ,  \;\;  \hat{E}^{[l]}_t (p) \le O\Big( \frac{t^{2\beta  - 1} }{ 1 + z t^{2\beta} } \Big) e^{-c ( \lambda  t^2 )^{1/4}} 
\end{align}
where $O(1)$ and $c$ are constants uniform in $k$ and $l$.

To obtain their bounds,  we use \eqref{eq:Ptbound} and \eqref{eq:hatDdefi} to see that,
for some $c', c'' >0$ uniform in $k$,
\begin{align}
	\hat{D}_t^{[k]} 
		&\lesssim \Big( \frac{t}{2^{2\beta k}} \Big) \int_{2^{-k} z^{1/\beta}}^{2^{-k+1} z^{1/\beta}} s^{-\beta} e^{- c \big( ( \lambda + s ) t^2  \big)^{1/4}  } \rd s \nnb
		& \lesssim \Big( \frac{t^{2\beta-1}}{2^{2\beta k}}  \Big) 
			e^{-c'' (  \lambda t^2 )^{1/4}} 
			e^{-c'' (  2^{-k} z^{1/\beta} t^2 )^{1/4}} 
\end{align}
where we applied Lemma~\ref{lemma:expintest} for the final inequality.
To remove $2^{-k}$ from the exponents,
we use that fact that $e^{- |x|^{1/4}} \le O ( |x|^{-\beta})$,  so
\begin{align}
	\hat{D}_t^{[k]} \lesssim \Big( \frac{t^{2\beta-1}}{2^{2\beta k}}  \Big) \frac{1}{(1 + 2^{-k} z^{1/\beta} t^2)^{\beta}} e^{-c'' (\lambda t^2)^{1/2} }
		\lesssim \Big( \frac{t^{2\beta-1}}{2^{\beta k}}  \Big) \frac{1}{1 + z t^{2\beta}} e^{-c'' (\lambda t^2)^{1/2} } 
\end{align}

Bounds on $\hat{E}_+^{[k]}$ are almost the same.  By \eqref{eq:Ptbound},  \eqref{eq:hatE0defi},  \eqref{eq:hatE1defi} and Lemma~\ref{lemma:expintest},
\begin{align}
	\hat{E}_t^{[0]} &\lesssim t \int_{z^{1/\beta}}^1 s^{-\beta} e^{-c ( (\lambda +s) t^2 )^{1/4}} \rd s \lesssim t^{2\beta -1} e^{-c' ((\lambda + z^{1/\beta} )t^2)^{1/4}} \\
	\hat{E}_t^{[1]} &\lesssim t \int_{1}^{\infty} s^{-\beta-1} e^{-c ( ( \lambda /s + 1) t^2 )^{1/4}} \rd s .
\end{align}
Using substitution $u = \lambda t^2 / s$,  we obtain
\begin{align}
	\hat{E}_t^{[1]} \lesssim t^{-2\beta + 1} \lambda^{-\beta} e^{-c' t^{1/2}} \int_0^{\lambda t^2} u^{\beta-1} e^{-c u^{1/4}} \rd u
		\lesssim t e^{-c' t^{1/2}} ,
\end{align}
where in the final inequality,  we used $\int_0^{\lambda t^2} u^{\beta-1} du \lesssim  (\lambda t^2)^{\beta}$.
Since $\lambda$ is bounded from above,  this can be made sufficiently smaller than the desired bound by choosing the right constant in the exponent.
\end{proof}

\subsubsection{Series expansion}
\label{sec:hatdotGammatbndproof}

To bound the convolution \eqref{eq:hatHstarndefi},  we use the following simple fact. 

\begin{lemma} \label{lemma:starconvltnbnd}
If $\beta > 0$,  $c\ge 0$ and $a \ge 0$,  then for $t_1 + t_2 = t$,  
\begin{align}
	\Big(\frac{t_1^{2\beta-1}}{1 + a t_1^{2\beta}} e^{-c(\lambda t_1^2)^{1/4}} \Big)
	\Big(\frac{t_2^{2\beta-1}}{1 + a t_2^{2\beta}} e^{-c(\lambda t_2^2)^{1/4}} \Big) \le 
		\frac{1}{2^{2\beta -2}} \Big(\frac{t^{2\beta-1}}{1 + a t^{2\beta}} e^{-c(\lambda t^2)^{1/4}} \Big) .
\end{align}
\end{lemma}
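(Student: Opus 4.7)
The plan is to reduce the two-variable estimate to two independent comparisons: subadditivity of $\sqrt{\cdot}$ for the exponentials, and a comparison of the rational profile $x\mapsto x^{2\beta-1}/(1+ax^{2\beta})$ between $\{t_1,t_2\}$ and $t=t_1+t_2$ for the algebraic factors. First I would assume without loss of generality that $t_1\le t_2$, which forces $t_2\in[t/2,t]$ and $t_1\in[0,t/2]$; by symmetry this costs nothing.

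For the exponential factors I would use the square-root subadditivity $\sqrt{t_1}+\sqrt{t_2}\ge\sqrt{t_1+t_2}=\sqrt{t}$, which is immediate from squaring (the cross-term $2\sqrt{t_1 t_2}$ is nonnegative). Consequently
\begin{align*}
e^{-c(\lambda t_1^2)^{1/4}}\,e^{-c(\lambda t_2^2)^{1/4}}
= e^{-c\lambda^{1/4}(\sqrt{t_1}+\sqrt{t_2})}
\le e^{-c\lambda^{1/4}\sqrt{t}}
= e^{-c(\lambda t^2)^{1/4}},
\end{align*}
absorbing the two exponentials on the LHS into the single exponential on the RHS at no cost, independently of $a$ and $\beta$.

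Next I would handle the algebraic quotient
$\frac{t_1^{2\beta-1}t_2^{2\beta-1}}{(1+at_1^{2\beta})(1+at_2^{2\beta})}$
against $\frac{2^{2-2\beta}t^{2\beta-1}}{1+at^{2\beta}}$. The strategy is to isolate the $t_2$-contribution first: since $t/2\le t_2\le t$, both the numerator $t_2^{2\beta-1}$ and denominator $1+at_2^{2\beta}$ differ from those evaluated at $t$ by explicit powers of $2$, which together produce the constant $2^{2-2\beta}$ in the target. The $t_1$-factor $t_1^{2\beta-1}/(1+at_1^{2\beta})$ is then controlled using the elementary product bound $(1+at_1^{2\beta})(1+at_2^{2\beta})\ge 1+a(t_1^{2\beta}+t_2^{2\beta})$ combined with the relation between $t_1^{2\beta}+t_2^{2\beta}$ and $t^{2\beta}$ coming from concavity (for $\beta\le 1/2$) or a direct monotonicity argument. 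A convenient reformulation is the substitution $u_i=at_i^{2\beta}$, which reduces the problem to a scale-free inequality on $[0,\infty)$ and makes the role of the constant $2^{2-2\beta}$ transparent.

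The hard part will be navigating the crossover between the regimes $at^{2\beta}\ll 1$ and $at^{2\beta}\gtrsim 1$. In the first regime the denominators contribute no effective damping and the comparison reduces to a polynomial inequality whose sharp constant depends on the sign of $2\beta-1$; in the second regime the denominators supply the dominant decay. Pinning down exactly the constant $2^{2-2\beta}$ across both regimes requires a monotonicity check on the auxiliary function $r\mapsto r^{1-2\beta}(1+ar^{2\beta})$ and a reduction to the worst-case split $t_1=t_2=t/2$, which is where the inequality is tight. Once established, the lemma feeds directly into the inductive bound on the convolutions $\hat{H}^{\star n}_s$ defining $\hat{\dot\Gamma}_t$, closing the estimate in Lemma~\ref{lemma:hatdotGammatbnd}.
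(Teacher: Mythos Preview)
Your exponential step matches the paper exactly: both use $\sqrt{t_1}+\sqrt{t_2}\ge\sqrt{t}$. For the algebraic factor, though, the paper takes a much shorter route than your $t_2$-first splitting. It bounds numerator and denominator separately in one line each: the numerator by AM--GM as $t_1^{2\beta-1}t_2^{2\beta-1}\le 4^{1-2\beta}\,t^{2\beta-1}$, and the denominator by $(1+at_1^{2\beta})(1+at_2^{2\beta})\ge 1+a(t_1^{2\beta}+t_2^{2\beta})\ge 1+2^{-2\beta}at^{2\beta}$, the last step from $t_1^{2\beta}+t_2^{2\beta}\ge 2^{-2\beta}t^{2\beta}$. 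Combining these and simplifying $2^{2\beta}/4^{2\beta-1}=2^{2-2\beta}$ produces the stated constant with no case analysis, no WLOG ordering, and no regime distinction on $at^{2\beta}$.

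Your plan has a bookkeeping problem: you first spend the $t_2$-denominator to compare $\frac{t_2^{2\beta-1}}{1+at_2^{2\beta}}$ against $\frac{t^{2\beta-1}}{1+at^{2\beta}}$, and then invoke the product inequality $(1+at_1^{2\beta})(1+at_2^{2\beta})\ge 1+a(t_1^{2\beta}+t_2^{2\beta})$ to control the $t_1$-factor, which uses the same $t_2$-denominator again. Moreover the profile $x\mapsto x^{2\beta-1}/(1+ax^{2\beta})$ is unimodal rather than monotone, so knowing only $t_2\in[t/2,t]$ does not by itself yield the constant $2^{2-2\beta}$ you claim (for instance with $\beta=1$, $t_2=t/2$ and $at^2$ large, the $t_2$-factor is about twice the $t$-factor while $2^{2-2\beta}=1$). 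Your regime analysis and worst-case reduction would have to repair this, but the paper sidesteps the issue entirely by decoupling numerator and denominator from the start.
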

\begin{proof}
Since $\sqrt{x} + \sqrt{y} \ge \sqrt{x+y}$,  we first have
\begin{align}
	e^{-c(\lambda t_1^2)^{1/4}} e^{-c(\lambda t_2^2)^{1/4}} \le e^{-c(\lambda t^2)^{1/4}} ,
\end{align}
and by the AM-GM inequality,
\begin{align}
	t_1^{2\beta-1} t_2^{2\beta-1} \le \frac{1}{4^{2\beta -1}} t^{2\beta-1} ,
\end{align}
Also,  $t_1^{2\beta} + t_2^{2\beta} \ge 2^{-2\beta} (t_1 +t_2)^{2\beta}$,  so
\begin{align}
	(1 + a t_1^{2\beta}) (1 + a t_2^{2\beta})
		\ge 1 + a ( t_1^{2\beta} + t_2^{2\beta} ) \ge 1 + 2^{-2\beta} a t^{2\beta} ,
\end{align}
so we have bounds on all the components.
\end{proof}

\begin{proof}[Proof of Lemma~\ref{lemma:hatdotGammatbnd}]
By Lemma~\ref{lemma:tildelambdadecomp} and \ref{lemma:starconvltnbnd},
\begin{align}
	\dot{H}_t^{\star n} (p) \le C^n \frac{t^{2\beta  - 1} }{ 1 + \ba^{(\emptyset) } t^{2\beta} }  e^{-c ( \lambda (p) t^2 )^{1/4}} 
\end{align}
for some $C>0$,
and since $0 \le C_{\rm F} \epsilon_p \lambda - \delta \lambda \lesssim \epsilon_p$, 
\begin{align}
	0 \le (C_{\rm F} \epsilon_p \lambda - \delta \lambda )^n \hat{H}_{t-2dn}^{\star (n+1)}
		\le  (C' \epsilon_p)^{n} \frac{(t-2dn)^{2\beta-1} }{1 + \ba^{(\emptyset)} (t-2dn)^{\beta}} e^{-c ( \lambda (t-2dn)^2 )^{1/4}}
\end{align}
for some $C' >0$ and $t \ge 2dn$.  By setting $\epsilon_p$ sufficiently small,  we have
\begin{align}
	\epsilon_p^{n/2} \lesssim \frac{1}{1 + \ba^{(\emptyset)} (2dn)^{\beta} } e^{-c (\lambda (2dn)^2)^{1/4}} ,
\end{align}
so by another application of Lemma~\ref{lemma:starconvltnbnd},
\begin{align}
	0 \le (C_{\rm F} \epsilon_p \lambda - \delta \lambda )^n \hat{H}_{t-2dn}^{\star (n+1)}
		\le (C'' \epsilon_p)^{n/2}  \frac{t^{2\beta  - 1} }{ 1 + \ba^{(\emptyset) } t^{2\beta} }  e^{-c ( \lambda (p) t^2 )^{1/4}}
\end{align}
for some $C''>0$.
The desired bound on $\hat{\dot{\Gamma}}_t$ follows after summing over $n$.
\end{proof}

\subsection{Torus,  $\eta >0$}

As for $\eta =0$,  \eqref{eq:GammaNLambdadefi} and \eqref{eq:tNdefi},  we use Fourier transform on a torus $\Lambda = \Lambda_N$.
We define
\begin{align}
	\Gamma_N^{\Lambda} (x) &= \frac{1}{|\Lambda|} \sum_{p \in \Lambda^* \backslash \{ 0 \}} e^{i p \cdot x} \int_{L^{N-1}}^{\infty} \hat{\dot{\Gamma}}_t (p)  \rd t 
	,\qquad
	t_N = \int_{L^{N-1}}^{\infty} \hat{\dot{\Gamma}}_t (0)    \rd t
	\label{eq:GammaNLambdadefieta}  
\end{align}
where $\hat{\dot{\Gamma}}$ is the function defined by \eqref{eq:dotHtdefieta} restricted on $p \in \Lambda^*$.

\begin{proof}[Proof of Proposition~\ref{prop:Gammajbounds2}~(i) for $\eta >0$]
This follows from the Fourier inversion formula,  just as for the $\eta =0$ case.
\end{proof}

\subsection{Decay estimates,  $\eta >0$}

We are now ready to prove the bounds on $\dot{\Gamma}_t$.

\begin{proof}[Proof of Proposition~\ref{prop:Gammajbounds}(iii) and Proposition~\ref{prop:Gammajbounds2}(ii),(iii) for $\eta > 0$]
For the upper bounds,  Lemma~\ref{lemma:hatdotGammatbnd} implies
\begin{align}
	\int_{[-\pi,\pi]^d} |p|^k \hat{\dot{\Gamma}_t} (p) \rd p
		&\lesssim \int_{[-\pi,\pi]^d} \frac{t^{2\beta -1}}{1 + \ba^{(\emptyset)} t^{2\beta}} e^{- c ( |p| t )^{1/2}} |p|^k \rd p \nnb
		&\lesssim \frac{t^{-d + 2\beta - 1 - k}}{1 + \ba^{(\emptyset)}  t^{2\beta}} \int_{\R^d} e^{-c' |p|^{1/2}} \rd p \nnb
		&\lesssim t^{-d + 2\beta - 1 - k}   \frac{1}{1 + \ba^{(\emptyset)} t^{2\beta}} 
\end{align}
with change of variable $pt \mapsto p$.
Exactly the same computation,  with just the discrete sum replaced by integral gives the bound on $\nabla^k \dot{\Gamma}^{\Lambda}_t$.

Finally,  for $t_N$,  we proceed exactly as in \eqref{eq:tNest1}--\eqref{eq:tNest3}: Lemma~\ref{lemma:hatdotGammatbnd} implies
\begin{align}
	0 \le \int_0^{L^{N-1}} \hat{\dot{\Gamma}}_t (0) \rd t \lesssim \int_0^{L^{N-1}} t^{2\beta -1} \rd t \lesssim L^{2\beta (N-1)} ,
\end{align}
thus by \eqref{eq:GammaNLambdadefieta},
\begin{align}
	(\ba^{\emptyset})^{-1} - t_N = \int_0^{L^{N-1}} \hat{\dot{\Gamma}}_t (0) \rd t \lesssim L^{2\beta (N-1)} .
\end{align}
That $t_N > 0$ follows from its definition.
\end{proof}

\begin{proof}[Proof of continuity of Lemma~\ref{lemma:FRDcontinuity}]

Covariance matrices $\dot\Gamma_t,  \dot\Gamma_t^{\Lambda_N}$ and $t_N$ are continuous in $\vec{\ba}$ because the integrands defining each object are continuous in $\vec{\ba}$,  due to the Dominated convergence theorem.
Also,  the bounds on covariances are uniform on each interval of $t$ defining $\Gamma_j$ and $\Gamma_N^{\Lambda_N}$,  so these covariance matrices are also continuous in $\vec{\ba}$.
\end{proof}

\appendix
\section{RG flow in $d = d_{c,u}$}
\label{sec:critdRGflow}

In this appendix,  we summarise the results on the critical point in $d= d_{c,u}$.
In this case,  $\ko_{4,\nabla}$ and $ \ko_{1,\nabla}$ are empty.

For $\ko_{2,\nabla}$,  due to the symmetries of the system,  whenever $V \in \cV_{2,\nabla}$,  there exist $\nu_{\nnabla}$ and $\nu_{\Delta}$ such that
\begin{align}
	V_{x} (\varphi) = \nu_{\nnabla} \nabla \varphi_x \cdot \nabla \varphi_x + \nu_{\Delta} \varphi_x \cdot \Delta \varphi_x ,
\end{align}
as in Remark~\ref{remark:Vsymmetries}.
Thus we denote $\vec{\nu}_j = \{ \nu_j^{(\emptyset)},  \nu_j^{(\nnabla)},  \nu_{j}^{(\Delta)} \}$ and $\vec{\ba} = \{ \ba^{(\emptyset)},  \ba^{(\nnabla)},  \ba^{(\Delta)} \}$ to denote the coefficients of the effective potential and the covariance,  respectively.

\begin{proposition} \label{prop:stablemanifold-dcu}
Let $(d,\eta) = (4,0)$,  $\delta >0$ be sufficiently small,  $g \in (0,\delta)$ and $\ba \in [0, \delta )$.  Then there exists $\vec{\nu}_{c,\ba} = ( \nu_{c,\ba}^{(\emptyset)},  \nu_{c,\ba}^{(\nnabla)} ,  \nu_{c,\ba}^{(\Delta)} )$ such that there exists an infinite bulk RG flow with 
\begin{align}
	& \ba^{(\emptyset)} = \ba, \quad \ba^{(\nnabla)} = - \nu_{c,\ba}^{(\nnabla)}, \quad \ba^{(\Delta)} = - \nu_{c,\ba}^{(\Delta)} , \label{eq:badefi_dcu} \\
	& \vec{\nu}_0 = \vec{\nu}_{c,\ba} , \quad g_0^{(\emptyset)} = g , \quad K_0 = 0 . 
	\label{eq:dcuIC}
\end{align}
Moreover,  $\vec{\nu}_{c,\ba}$ is continuous in $(g,\ba)$ and differentiable in $g \in (0,\delta)$ with uniformly bounded derivative.
\end{proposition}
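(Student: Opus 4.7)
My plan is to adapt the three-step strategy of Section~\ref{sec:cotcpfdf} (perturbative stable manifold, ODE interpolation, fixed point in $\vec\ba$) to the critical dimension, borrowing the hyperbolic stable manifold construction from \cite{MR3459163,BBSphi4,MR3317791} to handle the marginality of the quartic coupling. Since $\ko_{4,\nabla}$ and $\ko_{1,\nabla}$ are empty in $d=d_{c,u}$, the only effective potential coordinates in the bulk flow are the finite-dimensional tuple $(\nu^{(\emptyset)}_j,\nu^{(\nnabla)}_j,\nu^{(\Delta)}_j,g^{(\emptyset)}_j)$ together with $K_j$. In contrast to the elliptic case $d>d_{c,u}$, the coefficient $g^{(\emptyset)}_j$ is marginally irrelevant with the explicit logarithmic flow $g_j\sim(\bfb j)^{-1}$ recorded in Lemma~\ref{lemma:gjasympcritd}; this extra direction must not be tuned but must nevertheless be controlled via the rescaled variable $g_j/\tilde g_j$.

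First, for each fixed $\vec\ba\in\HB_{\epsilon_p}$ with $\ba^{(\emptyset)}\in[0,\delta)$, I would construct the perturbative stable manifold. Concretely, using the covariance decomposition from Proposition~\ref{prop:theFRD} with parameter $\vec\ba$ and $\tilde{m}^2=\ba^{(\emptyset)}$, repeat the argument of Proposition~\ref{prop:nuptcctty} on the spaces $X_j,Y_j,Z_j$ of Definition~\ref{eq:spaceXYZdefi}, but with the splitting adapted so that $(\nu^{(\emptyset)},\nu^{(\nnabla)},\nu^{(\Delta)})$ spans the relevant/marginal-irrelevant block $X_j$ (its linearisation around the Gaussian fixed point is triangular with expansive eigenvalue $L^{z(4,0)}$ on $\nu^{(\emptyset)}$ and unit eigenvalue on the derivative terms, both of which are nevertheless tuned), while $g^{(\emptyset)}$ is treated as a known marginal coordinate with prescribed logarithmic flow. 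The triangularity of the linearisation and the bounds \eqref{eq:Phiptlinbnds}--\eqref{eq:Phiptlinbnds3}, together with the $d=4$ non-perturbative bound on the quartic flow from \cite[Proposition~4.2]{MR3459163}, give the inputs required by \cite[Theorem~2.16]{MR2523458} to produce $\vec{\nu}_{ptc}(\vec\ba,g)$ continuous in $\vec\ba$ and differentiable in $g$ with $\vec{\nu}_{ptc}=O(g)$. A bootstrap modeled on Lemma~\ref{lemma:bootstrap}, but using $\tilde g_j\sim(\bfb j)^{-1}$ in place of constant $g$, confirms that the resulting trajectory stays inside $\cD_{j,\bulk}$.

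Second, I would interpolate to the full bulk RG map $\Phi^{\Z^d}_{j+1}$ exactly as in Section~\ref{sec:cotcpiibRm}: introduce $\tilde\Phi^t_{j+1}=\tilde\Phi^0_{j+1}+t\,\delta\Phi_{j+1}$, verify injectivity/surjectivity of $\underline{I}-\underline{D\tilde\Phi}^t$ via the block estimates (the bounds \eqref{eq:controlledRG22}--\eqref{eq:controlledRG24} of Definition~\ref{defi:contrlldRG} hold at $d=d_{c,u}$ by Theorem~\ref{thm:contrlldRG}), obtain a bounded solution operator $\underline{S}^t$ as in Proposition~\ref{prop:Stdefi}, and solve the ODE \eqref{eq:dotAODE} on $t\in[0,1]$. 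This yields $\vec{\nu}_{pc}(\vec\ba,g)=\vec{\nu}_{ptc}(\vec\ba,g)+O_L(g^3)$, continuous in $\vec\ba$ and differentiable in $g$, for which the full bulk RG flow of infinite length exists and stays in $\prod_j\D_j$. Here the verification of condition (4) of the stable manifold theorem requires only that $\tilde g_j\to 0$, which is Lemma~\ref{lemma:gjasympcritd}; consequently all the quadratic nonlinear error bounds \eqref{eq:NjXYbnds}, \eqref{eq:nupcctty2}--\eqref{eq:nupcctty5} carry over after replacing $g$ by $\tilde g_j$ where appropriate.

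Third, a Brouwer fixed-point argument on $(\ba^{(\nnabla)},\ba^{(\Delta)})$ selects $\vec\ba$ compatible with \eqref{eq:badefi_dcu}. Define $F:\bar B_{\delta/2}\to\R^2$ by $F(\ba^{(\nnabla)},\ba^{(\Delta)})=-(p^{(\nnabla)}\vec\nu_{pc},p^{(\Delta)}\vec\nu_{pc})$ with $\ba^{(\emptyset)}=\ba$ fixed. The bound $\vec\nu_{pc}=O(g)$ and the continuity of $\vec\nu_{pc}$ in $\vec\ba$ (inherited from Proposition~\ref{prop:theFRD}) show that $F$ maps $\bar B_{\delta/2}$ into itself for $\delta$ small, and Brouwer yields a fixed point $(\ba^{(\nnabla)}_\ast,\ba^{(\Delta)}_\ast)$; setting $\vec\nu_{c,\ba}=\vec\nu_{pc}(\ba,\ba^{(\nnabla)}_\ast,\ba^{(\Delta)}_\ast)$ gives \eqref{eq:badefi_dcu}. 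Compactness of the graph (as in Proposition~\ref{prop:cpconstr}) lets us pick a continuous selection in $\ba$; uniqueness in the marginal-irrelevant directions should follow directly from the contraction estimate because $\partial_{\ba^{(\nnabla)}}\vec{\nu}_{pc}$ and $\partial_{\ba^{(\Delta)}}\vec{\nu}_{pc}$ are $O(g)$, but I would state only continuity to stay agnostic. Differentiability in $g$ of $\vec{\nu}_{c,\ba}$ follows from the implicit function theorem applied to $F(\cdot)+\mathrm{id}$ using the same $O(g)$ Jacobian estimate.

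The main obstacle will be the marginal coupling $g$. Unlike the elliptic regime $d>d_{c,u}$, the quartic direction has a nontrivial flow one must trace rather than fix, and the linearisation around the Gaussian fixed point has a genuine marginal (logarithmically contracting) mode. Consequently the decay rates $\scale_j=L^{-(d-4)j}=1$ in the estimates of Lemma~\ref{lemma:cVnrmvslnrm} and Lemma~\ref{lemma:VptmthV} do not provide a geometric contraction, and one must instead use the $\beta$-function estimates of \cite{MR3459163} (which provide the $O(j^{-2}\log j)$ correction invoked in Lemma~\ref{lemma:gjasympcritd}) to close the bootstrap. This is precisely the difficulty resolved in \cite{BBSphi4,MR3317791}, and the adaptation to the present setting is tedious but entirely parallel; the only genuinely new ingredient is the continuity of the outputs in the auxiliary parameters $\vec\ba$, which is free from Proposition~\ref{prop:theFRD}.
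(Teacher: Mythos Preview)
The paper's proof is a one-paragraph citation to \cite[Theorem~3.6]{MR3269689}, together with two clean reductions you missed: (a) one may simply take $\nu_{c,\ba}^{(\nnabla)}=0$, since after summation by parts the $\nabla\varphi\cdot\nabla\varphi$ and $\varphi\cdot\Delta\varphi$ monomials are interchangeable (Remark~\ref{remark:Vsymmetries}); and (b) the modification of the covariance by $\vec\ba$ in \eqref{eq:badefi_dcu} is \emph{equivalent} to the field reparametrisation $\varphi\mapsto(1+z_0)^{1/2}\varphi$ already carried out in \cite[(2.1)--(2.5)]{MR3269689}, so no Brouwer fixed-point argument in $(\ba^{(\nnabla)},\ba^{(\Delta)})$ is required. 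The essential non-hyperbolic stable manifold construction is then exactly \cite{MR3317791}.

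Your proposal is not wrong, but it is substantially more elaborate than necessary and contains one muddled point. In Step~1 you invoke \cite[Theorem~2.16]{MR2523458}, but that theorem is for hyperbolic dynamics; in $d=d_{c,u}$ both $g^{(\emptyset)}$ and the $\kA_1$-gradient coefficients have unit eigenvalue under the linearised perturbative map (since $\scale_j\equiv 1$ and $d-2+\eta-q(\km_1)=0$ for $q(\km_1)=2$), so conditions~(3)--(4) of Proposition~\ref{prop:nuptcctty}'s proof fail outright. The correct tool is the non-hyperbolic stable manifold theorem of \cite{MR3317791}, which you do cite later but conflate with the hyperbolic one. Your Step~3 is entirely unnecessary once one sees the reparametrisation equivalence. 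What your route would buy is a self-contained treatment uniform in $d\ge d_{c,u}$; what the paper's route buys is that the $d=4$ work is already done and published, so a citation suffices.
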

\begin{proof}
The statement is \cite[Theorem~3.6]{MR3269689},  but just formulated slightly differently. 
(The RG map used in the reference is not the same as that of \cite{FSmap},  but it nevertheless satisfies Theorem~\ref{thm:contrlldRG}--\ref{thm:infvolRGmap} the same,  which is enough for the proof. )
For the notations,  we use $(\nu_{c, \ba}^{(\emptyset)},  \nu_{c,\ba}^{(\Delta)})$ instead of $(\mu_0^c ,  z_0^c)$ in the reference.  
We can just take $\nu_{c,\ba}^{(\nnabla)} = 0$.
In the reference,  $\ba^{(\nnabla)}$ and $\ba^{(\Delta)}$ are not present.
However,  the coefficient of the covariance,  $\vec{\ba}$,  in \eqref{eq:badefi_dcu} is equivalent to the reparametrisation of the RG coefficients as in \cite[(2.1)--(2.5)]{MR3269689}.  

The essential idea of the proof is contained in \cite{MR3317791}.
\end{proof}

\begin{remark}
Even though we are making reference to \cite{MR3269689} for the construction of the critical point,  estimate on each $K_j$ is improved,  so the results of the paper is not implied by \cite{MR3269689} even for $d = d_{c,u}$.
\end{remark}

Recalling Definition~\ref{def:RGflow},  this in particular implies the RG coordinates defined by \eqref{eq:dcuIC} satisfy
\begin{align}
	g^{(\emptyset)}_j \in [\tilde{g}_j / 2 ,  2 \tilde{g}_j] ,  \qquad | \nu_j^{(\emptyset)} | \le C_{\cD} L^{-2j} \tilde{g}_j, \quad \norm{K_{j,\bulk}}_{\cW_j} \le C_{\rg} \tilde{\chi}_j \tilde{g}_j^3 .
	\label{eq:couplingconstantsbounds}
\end{align}
Also,  the uniform differentiability in $g$ implies
\begin{align}
	|\vec{\nu}_{c,\ba} | \le O(g) .
\end{align}
We also have a similar result for the observable flow.

\begin{proposition}  \label{prop:obsstablemanifold-dcu}
Under the assumptions of Proposition~\ref{prop:stablemanifold-dcu},  also take $\vec{\ba}$, $\vec{\nu}_0$,  $g_0^{(\emptyset)}$ and $K_0$ as in \eqref{eq:badefi_dcu} and \eqref{eq:dcuIC}.
Also,  let $|\lambda_{\o,0}^{(\emptyset)}|$,  $|\lambda_{\x,0}^{(\emptyset)}| \le 1$.

Then there exists an infinite RG flow with these initial conditions and for both $\hash \in \{ \o, \x \}$,  
there exists $\lambda_{\hash, \infty}$ such that
\begin{align}
	\big| \lambda_{\hash, j}^{(\emptyset)} - \lambda_{\hash, \infty} \big|
		\le O_L  ( \tilde{\chi}_j \tilde{g}_j)  \qquad \text{for all} \quad j \ge 0 .
\end{align}
\end{proposition}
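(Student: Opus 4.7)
The plan is to graft the observable couplings onto the bulk flow supplied by Proposition~\ref{prop:stablemanifold-dcu}, and exploit a $\varphi\mapsto -\varphi$ parity cancellation between bulk and observable monomials to obtain an increment bound sharp enough to be summable at the critical mass. Proposition~\ref{prop:stablemanifold-dcu} provides the infinite bulk RG flow $(V_{j,\bulk},K_{j,\bulk})_{j\ge 0}\in \prod_j \D_{j,\bulk}$, which is autonomous since the RG map respects the graded structure (Definition~\ref{defi:contrlldRG2}). I would then proceed by induction on $j$: assuming $(V_j,K_j)\in \D_j$, Theorem~\ref{thm:infvolRGmap} produces $(V_{j+1},K_{j+1})$, and \eqref{eq:controlledRG24} with $p=q=0$ keeps $K_{j+1}\in \cK_{j+1}$. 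Because $\ko_{1,\nabla}$ is empty at $(d,\eta)=(4,0)$, the observable potential reduces to $V_{j,\hash}=\lambda_{j,\hash}^{(\emptyset)}\sigma_\hash \varphi^{(1)}_\hash \one_{x=\hash}$ and it suffices to control the single coupling $\lambda_{j,\hash}^{(\emptyset)}$.

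The decisive observation is that $V_{\bulk}$ is even in $\varphi$ while $V_{\hash}$ is odd, so every bulk--observable Gaussian covariance vanishes identically. In particular, for the $\pi_\hash$ projection one has $\pi_\hash\Cov_{\pi,\Gamma_{j+1}}[\theta V;\theta V]=0$ and $\pi_\hash\E_{j+1}\theta W_{j,V}=0$, and since $P_{j,V}$ and $W_{j,V}$ are assembled from such covariances composed with the formal heat operator $e^{-\Delta/2}$ and the localisation $\Loc$, both of which commute with the $\Z/2$-grading generated by $\varphi\mapsto -\varphi$, we get $\pi_\hash P_{j,V}=\pi_\hash W_{j,V}=0$. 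Combined with $\E_{j+1}\theta V_{\hash}=V_{\hash}$ (by linearity of $V_\hash$), this yields $\pi_\hash \Phi^{\pt}_{j+1}(V)=V_{\hash}$ for every $V$, and the observable flow collapses to
\begin{align*}
V_{j+1,\hash} \;=\; V_{j,\hash} + \pi_\hash R^{U}_{j+1}(V_j,K_j).
\end{align*}

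Applying the controlled estimate \eqref{eq:controlledRG22} with $p=q=0$, and noting $\scale_{j+1}\equiv 1$ at the upper critical dimension, gives $\norm{R^U_{j+1}}_{\cL_{j+1}(\ell)}\le O_L(\tilde\chi_{j+1}^{3/2}\tilde g_{j+1}^{3})$. Dividing by $\norm{\sigma_\hash S^{(\emptyset)}}_{\cL_{j+1}(\ell)} \asymp \ell_0 \tilde g_{j+1}$ produces the sharp increment bound
\begin{align*}
|\lambda_{j+1,\hash}^{(\emptyset)} - \lambda_{j,\hash}^{(\emptyset)}| \;\le\; O_L(\tilde\chi_{j+1}^{3/2}\tilde g_{j+1}^{2}).
\end{align*}
I would then check, using Lemma~\ref{lemma:gjasympcritd}, that $\sum_{k\ge j}\tilde\chi_k^{3/2}\tilde g_k^{2}\le O_L(\tilde\chi_j\tilde g_j)$: at $\ba=0$ one has $\tilde\chi_k=1$ and $\tilde g_k\asymp 1/k$, so the tail is $\asymp 1/j\asymp \tilde g_j$, while for $\ba>0$ the decay $\tilde\chi_k=2^{-(k-j_\ba)_+}$ beyond the mass scale absorbs the tail with the same bound. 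This simultaneously gives the Cauchy property of $\lambda_{j,\hash}^{(\emptyset)}$, defines $\lambda_{\hash,\infty}$ with the claimed rate, and ensures $|\lambda_{j,\hash}^{(\emptyset)} - \lambda_{\hash,0}^{(\emptyset)}|=O_L(g)<C_\cD-1$ for small $g$, closing the induction inside the RG domain.

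The delicate point---the main obstacle of the proof---is the rigorous verification of the parity cancellation for the explicit constructions of $P_{j,V}$ and $W_{j,V}$ in \cite[(4.63)]{FSmap} and Definition~\ref{defi:pioxWP}. One has to inspect each constituent of the formulae to confirm that, after projecting onto the $\sigma_\hash$ sector, the integrand is odd in $\varphi$, and that both $\Loc$ (which merely selects particular monomials) and the formal heat operator $e^{-\Delta/2}$ commute with the $\Z/2$-grading. Without this cancellation the only available estimate would be the bilinear bound of Lemma~\ref{lemma:VptmthV}, giving the weaker increment $|\lambda_{j+1,\hash}^{(\emptyset)} - \lambda_{j,\hash}^{(\emptyset)}|\le O_L(\tilde\chi_j\tilde g_j)$, which is not summable at $\ba=0$ and would not produce the rate stated in the proposition.
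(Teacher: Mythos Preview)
Your central claim---that the bulk--observable covariances vanish by parity---is false, and this breaks the entire argument. The Gaussian integral is over the fluctuation field $\zeta$, not over $\varphi$, so the relevant parity is in $\zeta$ for fixed $\varphi$. But $\theta V_\bulk(\varphi)=V_\bulk(\varphi+\zeta)$ is \emph{not} even in $\zeta$ alone: expanding $\frac{\nu}{2}|\varphi+\zeta|^2$ produces the cross term $\nu\,\varphi\cdot\zeta$, and one computes directly
\[
\Cov_{\Gamma_{j+1}}\big[\theta V_{\hash};\,\theta(\tfrac{\nu}{2}\textstyle\sum_x|\varphi_x|^2)\big]
\;=\;\lambda_\hash\,\nu\sum_x\varphi^{(1)}_x\,\Gamma_{j+1}(x,\hash),
\]
which localises to a nonzero multiple of $\varphi^{(1)}_\hash$. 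The quartic term contributes analogously through $\eta'_j g_j$. Thus $\pi_\hash P_{j,V}\neq 0$, and the perturbative flow of $\lambda_{\hash,j}$ is not the identity.

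The paper's proof faces exactly the obstacle you identify in your final paragraph: the naive increment is $O_L(\tilde\chi_j\tilde g_j)$, which is not summable at $\ba=0$. Rather than a cancellation, the paper (following \cite[(5.21)]{MR3459163}) writes the perturbative correction explicitly as $\lambda_{\hash,j+1}=\lambda_{\hash,j}(1-\delta(\nu_j w_j^{(1)}))+O_L(\tilde\chi_j^{3/2}\tilde g_j^2)$ with $\delta(\nu_j w_j^{(1)})=(\nu_j+\eta'_j g_j)\Gamma_{j+1}^{(1)}+\eta'_j g_j w_j^{(1)}$, and then observes that this increment \emph{telescopes}: $\delta(\nu_j w_j^{(1)})=\nu_{j+1,\pt}w_{j+1}^{(1)}-\nu_j w_j^{(1)}$. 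The partial sums therefore reduce to boundary terms plus a genuinely summable remainder $\sum_j(\nu_{j+1,\pt}-\nu_{j+1})w_{j+1}^{(1)}=O(\tilde\chi_m\tilde g_m)$, which is what yields the stated rate. Your approach misses this telescoping structure entirely.
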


\begin{proof}[Proof sketch]
The proof is essentially given in \cite[Section~5]{MR3459163},  but we summarise the proof here again to removed unclarity related to the difference of the observable projection of the RG map.

Since $(\pi_\o + \pi_\x)V_{j,x} (\varphi) = \varphi_x \sum_{\hash \in\{ \o,\x\}} \sigma_{\hash} \lambda_{\hash,j}^{(\emptyset)} \one_{x = \hash}$,
we only need to consider the flow of $\lambda_{\hash, j}^{(\emptyset)}$,  which we simply abbreviate by $\lambda_{\hash, j}$ when $d = d_{c,u}$. 
Likewise,  we just write $(\nu_j, g_j)$ for $(\nu_j^{(\emptyset)},g_j^{(\emptyset)})$.

Due to \cite[(5.21)]{MR3459163} (also see \cite[(3.34)]{BBS3} for a more explicit expression,  when the flow is written in the case $n=0$ for the weakly self-avoiding walk), 
\begin{align}
	\lambda_{\hash, j+1} = \lambda_{\hash, j}  ( 1 - \delta (\nu_j w_j^{(1)}) ) +  O_L ( \tilde{\chi}_j^{3/2} \tilde{g}_j^2 )  \label{eq:lambdajflow_dcu}
\end{align}
where
\begin{align}
	& \delta (\nu_j w_j^{(1)}) := (\nu_j + \eta'_j g_j) \Gamma_{j+1}^{(1)} + \eta'_j g_j w_j^{(1)} , \\
	& \Gamma_{j+1}^{(1)} = \sum_{x \in \Z^d} \Gamma_{j+1} (x) , \quad
		w_j^{(1)} = \sum_{x \in \Z^d} w_{j} (x) ,   \quad \eta'_j = (n+2) \Gamma_{j+1} (0)  .	
\end{align}
A crucial observation is that the sum of $\delta (\nu_j w_j^{(1)})$ is bounded by an absolute constant. 
Indeed,  if we denote $\nu_{j+1, \pt} = \nu_j + \eta'_j g_j$,  then we can write
\begin{align}
	\delta (\nu_j w_j^{(1)}) = \nu_{j+1, \pt} w_{j+1}^{(1)} - \nu_j w_j^{(1)} , 
\end{align}
so for any $M > m \ge 0$,
\begin{align}
	\sum_{j=0}^{M-1} \delta (\nu_j w_j^{(1)}) = \nu_{M} w_M^{(1)} - \nu_m w_m^{(1)} + \sum_{j=m}^{M-1} ( \nu_{j+1, \pt} - \nu_{j+1} ) w_{j+1}^{(1)} ,
\end{align}
and by \eqref{eq:controlledRG22},
\begin{align}
	| \nu_{j+1, \pt} - \nu_{j+1} | \le O_L (1) \tilde{\chi}_j^{3/2} \tilde{g}_j^3 L^{-2j} .
\end{align}
Due to Corollary~\ref{cor:Coneinftynrm},  we have $w_j^{(1)} \lesssim \tilde{\chi}_j L^{2j}$.
Due to Lemma~\ref{lemma:gNasymp},  $\sum_{j=m}^\infty \tilde{\chi}_j^{3/2} \tilde{g}_j^3 \lesssim \tilde{\chi}_m^{3/2} \tilde{g}_m^2$.
Also,  by the choice of the initial condition and \eqref{eq:couplingconstantsbounds},  we have $|\nu_{j,\pt} | \lesssim L^{-2j} \tilde{g}_j$,  so 
\begin{align}
	\Big| \sum_{j=m}^{M-1} \delta (\nu_j w_j^{(1)})  \Big| \lesssim \tilde{\chi}_m \tilde{g}_m
\end{align}

Now,  by the summability,  $\tilde{\lambda}_{\hash, j} = \prod_{k < j} (1 - \delta (\nu_k w_k^{(1)})) \lambda_{\hash, 0}$ has some limit $\tilde{\lambda}_{\hash, \infty}$ as $j\rightarrow \infty$ and
\begin{align}
	\tilde{\lambda}_{\hash, j} = \tilde{\lambda}_{\hash,  \infty} + O_L (\tilde{\chi}_j \tilde{g}_j) ,
\end{align}
so we see that $\tilde{\lambda}_{\hash, j}$ has some limit $\tilde{\lambda}_{\hash, \infty}$ as $j\rightarrow \infty$. 
This also gives a limit of $\lambda_{\hash, j}$,  and the system $(\lambda_{\hash, j})_{j\ge 0}$ stays uniformly bounded.  
In fact,  as one can see from the proof,  we can make $\sup_j |\lambda_{\hash, j} - \lambda_{\hash, 0}|$ arbitrarily small by taking sufficiently small $g_0$,  so the flow of $\lambda_{\hash, j}$ stays inside the RG domain.
\end{proof}

\section{Fourier transformation}
\label{sec:fa}

Since Fourier analysis is used excessively in the appendices and the main text,  we spare a section to fix notations. 
For $\Lambda = \Lambda_N$,  $g \in (\R)^{\Lambda}$,  $h \in (\R)^{\Z^d}$ and $f \in \cS (\T^d)$,  we let $\Lambda^* = 2 \pi L^{-N} \Lambda_N$,  $(\Z^d)^* = [-\pi, \pi)^d$ and $(\T^d)^* = 2 \pi \Z^d$.  Then for $p \in \Lambda^*$,  $r \in (\Z^d)^*$ and $q \in (\T^d)^*$,
\begin{align}
	\hat{g} (p) = \sum_{x \in \Lambda} e^{-i x \cdot p} g (x)	,
	\quad \hat{h} (r) = \sum_{y \in \Z^d} e^{-i y\cdot r} h(y) ,
	\quad \hat{f} (q) = \int_{\T^d} e^{-i y \cdot q} f (y) \rd y .
\end{align}
The inverse transformations are
\begin{align}
	g (x) = \frac{1}{|\Lambda|} \sum_{p \in \Lambda^*} e^{i x \cdot p} \hat{g}(p) ,
	\quad h(y) = \frac{1}{(2\pi)^d} \int_{[-\pi,\pi)^d} e^{i y \cdot r} \hat{h} (r) \rd r ,
	\quad f (y) =  \sum_{q \in (\T^d)^*} e^{i y\cdot q} \hat{f} (q) . 
\end{align}
If we make the $N$-dependence of $\Lambda_N$ explicit,  we observe that $L^N \Lambda^*_N$ converges to the lattice $(\T^d)^*$ and the same holds for the Fourier symbols.  To make this meaning precise,  suppose that $f_N \in (\R)^{\Lambda_N}$ is given by $f_N (x) = L^{-dN} f (\hat{i} (x))$ for some $f \in \cS (\T^d  ; \R)$.  Also,  suppose that there are translation invariant covariance matrices $Q_N$ on each $\Lambda_N$ with Fourier symbol $\hat{Q}_N$ and let $Q$ be a translation invariant covariance operator on $\T^d$ with Fourier symbol $\hat{Q}$.  
Then standard convergence results follow.

\begin{lemma} \label{lemma:facnv2}
For each $q \in (\T^d)^*$,  we have $\lim_{N\rightarrow \infty} \hat{f}_N (L^{-N} q) = \hat{f} (q)$.
\end{lemma}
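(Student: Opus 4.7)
The claim is a Riemann-sum convergence statement in disguise. The plan is to unfold the definitions so that $\hat{f}_N(L^{-N}q)$ appears as a manifest Riemann sum for $\hat{f}(q)$ on $\T^d$, then use smoothness of $f$ to pass to the limit.

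First I would substitute $f_N(x) = L^{-dN} f(\hat{i}(x))$ into the definition of the lattice Fourier transform. Choosing the natural representatives so that $\hat{i}(x) = L^{-N} x$, and using $q \in (\T^d)^* = 2\pi\Z^d$ to reduce $x \cdot L^{-N} q$ modulo $2\pi$, one obtains the identity $e^{-i x \cdot L^{-N} q} = e^{-i \hat{i}(x) \cdot q}$. Plugging this in yields
\begin{align}
	\hat{f}_N(L^{-N} q) = \sum_{x \in \Lambda_N} e^{-i x \cdot L^{-N} q} f_N(x) = L^{-dN} \sum_{x \in \Lambda_N} e^{-i \hat{i}(x) \cdot q} f(\hat{i}(x)).
\end{align}
This is precisely the midpoint-rule Riemann sum, associated to the uniform partition of $\T^d$ into $|\Lambda_N| = L^{dN}$ cubes of side $L^{-N}$ centred at the grid points $\hat{i}(x)$, for the integral $\int_{\T^d} e^{-iy \cdot q} f(y) \rd y = \hat{f}(q)$.

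To close the argument, since $f \in \cS(\T^d; \R)$ is smooth on the compact torus, the integrand $g_q(y) := e^{-iy \cdot q} f(y)$ has bounded gradient $\|\nabla g_q\|_{L^\infty(\T^d)} \le C_q < \infty$, and the standard quadrature estimate gives
\begin{align}
	\bigl| \hat{f}_N(L^{-N}q) - \hat{f}(q) \bigr| \le C_q\, L^{-N},
\end{align}
which tends to zero as $N \to \infty$. There is no substantial obstacle: the only mild point to verify is that the identification of $x \cdot L^{-N} q$ with $\hat{i}(x) \cdot q$ modulo $2\pi \Z$ is harmless inside the exponential, which is immediate from $q \in 2\pi\Z^d$, and no uniformity in $q$ is needed since $q$ is held fixed.
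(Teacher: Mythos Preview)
Your proof is correct and takes essentially the same approach as the paper: unfold the definition to exhibit $\hat{f}_N(L^{-N}q)$ as a Riemann sum for $\hat{f}(q)$, then invoke smoothness of $f$ to pass to the limit. You add a bit more detail (the modular-arithmetic check and an explicit $O(L^{-N})$ rate), but the argument is the same.
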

\begin{proof}
By definition, for any $q \in (\T^d)^*$,
\begin{align}
	\hat{f}_N ( L^{-N} q) = L^{-dN} \sum_{x \in \Lambda_N} e^{-i x \cdot q / L^N} f (\hat{i} (x)) .
\end{align}
This converges to the Riemann integral $\int_{\T^d} e^{-ix \cdot q} f(x) \rd x = \hat{f} (q)$.  
\end{proof}

\begin{lemma} \label{lemma:facnv1}
Assume that, for each $q \in (\T^d)^*$,  $\lim_{N\rightarrow \infty} \hat{Q}_N (L^{-N} q ) = \hat{Q} (q)$ and $\sup_{N}\norm{\hat{Q}_N}_{\ell^{\infty}} < \infty$.
Then
\begin{align}
	\lim_{N\rightarrow \infty} L^{dN} (f_N,  Q_N f_N) \rightarrow (f,Q f)	.
\end{align}
\end{lemma}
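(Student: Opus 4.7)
The plan is to move everything to Fourier space and then reduce the claim to dominated convergence for a sum indexed by $(\T^d)^\ast = 2\pi\Z^d$. First, by Plancherel on $\Lambda_N$ applied to the pair $(f_N,\, Q_N f_N)$, and using $\widehat{Q_N f_N}(p) = \hat Q_N(p)\hat f_N(p)$ from translation invariance of $Q_N$,
\begin{align}
L^{dN}(f_N, Q_N f_N)
 = L^{dN}\cdot\frac{1}{|\Lambda_N|}\sum_{p\in\Lambda_N^\ast}|\hat f_N(p)|^2\hat Q_N(p)
 = \sum_{q\in 2\pi\Lambda_N}|\hat f_N(L^{-N}q)|^2\hat Q_N(L^{-N}q),
\end{align}
where we used $\Lambda_N^\ast = 2\pi L^{-N}\Lambda_N$ and wrote $p = L^{-N}q$. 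On the continuum side, Plancherel on $\T^d$ gives $(f,Qf) = \sum_{q\in 2\pi\Z^d}|\hat f(q)|^2\hat Q(q)$.

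The second step is to show pointwise convergence of each summand: for every fixed $q\in 2\pi\Z^d$, eventually $q\in 2\pi\Lambda_N$, and by Lemma~\ref{lemma:facnv2} we have $\hat f_N(L^{-N}q)\to\hat f(q)$, while by hypothesis $\hat Q_N(L^{-N}q)\to\hat Q(q)$.

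The main obstacle, and the heart of the proof, is the uniform-in-$N$ tail bound needed to apply dominated convergence. For this I would invoke the Poisson summation formula: for $q\in 2\pi\Lambda_N$ (i.e., $L^{-N}q$ in the primitive cell $(-\pi,\pi]^d$),
\begin{align}
\hat f_N(L^{-N}q) = \sum_{m\in\Z^d}\hat f(q + 2\pi L^N m).
\end{align}
Since $f\in\cS(\T^d;\R)$, the Fourier coefficients $\hat f(q)$ decay faster than any power of $|q|$. For $q$ in the primitive cell and $m\ne 0$ we have $|q + 2\pi L^N m|\gtrsim L^N|m|$, so the contribution from $m\ne 0$ is $O(L^{-NM})$ for any $M$. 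Hence, for each $M\in\N$, there is a constant $C_M$ (independent of $N$) such that
\begin{align}
|\hat f_N(L^{-N}q)|^2 \le C_M\bigl((1+|q|)^{-2M} + L^{-2NM}\bigr),\qquad q\in 2\pi\Lambda_N.
\end{align}
Combined with $\sup_N\|\hat Q_N\|_{\ell^\infty}<\infty$, the $L^{-2NM}$ part contributes at most $O(L^{N(d-2M)})\to 0$ for $M>d/2$, while $(1+|q|)^{-2M}$ is a summable dominant on $2\pi\Z^d$ for $M>d/2$.

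The last step is routine: split the sum at radius $R$, use finiteness of $\{q:|q|\le R\}\cap 2\pi\Z^d$ together with pointwise convergence to handle the head, use the uniform tail bound above to make the tail $\le\varepsilon$ uniformly in $N$ once $R$ is large, and send $N\to\infty$ then $R\to\infty$. This gives the desired convergence.
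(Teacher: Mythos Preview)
Your proof is correct and follows essentially the same route as the paper: Plancherel on $\Lambda_N$, reparametrise $p=L^{-N}q$, use Lemma~\ref{lemma:facnv2} for pointwise convergence, and conclude by dominated convergence. The paper's proof simply asserts dominated convergence without exhibiting a dominant; your Poisson-summation argument supplying the uniform bound $|\hat f_N(L^{-N}q)|^2 \le C_M\bigl((1+|q|)^{-2M}+L^{-2NM}\bigr)$ is a legitimate (and arguably necessary) justification of that step.
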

\begin{proof}
By the Plancherel theorem,  
\begin{align}
	L^{dN} (f_N, Q_N f_N) &=  \sum_{p \in \Lambda_N^*} \hat{Q}_N (p) |\hat{f}_N (p)|^2 
		= \sum_{q \in 2\pi \Lambda_N} \hat{Q}_N (L^{-N} q) \left| \hat{f}_N (L^{-N} q) \right|^2 .
\end{align}
But since $\lim_{N\rightarrow \infty} \hat{f}_N (L^{-N} q) = \hat{f} (q)$ by Lemma~\ref{lemma:facnv2},  the Dominated convergence theorem implies
\begin{align}
	\lim_{N\rightarrow \infty}  L^{dN} (f_N, Q_N f_N) = \sum_{q \in (\T^d)^*} \hat{Q} (q) |\hat{f} (q)|^2 \rd q = (f, Qf) .
\end{align}
\end{proof}

\subsection{Positivity of modified Laplacian}

Let $\lambda$ be the Fourier symbol of $-\Delta$ and
for $\km_1 \in \kA_1$, let $\lambda_{\km_1}$ be the Fourier symbol of the quadratic form
$\varphi \mapsto \sum_{x \in \Lambda} S^{(\km_1)}_x (\varphi)$---since $S_x^{(\km_1)} (\varphi)$ is the symmetrisation of 
\begin{align}
	M_x^{(\km)} (\varphi) = \nabla^{\mu_1^1  \cdots \mu^1_{i_1} } \varphi^{(\alpha_1)}_x  \nabla^{\mu_1^2  \cdots \mu^2_{i_2} } \varphi^{(\alpha_2)}_x ,
\end{align}
(when $\mu_i^k$ and $\alpha_i$ are determined by $\km$,  recall Section~\ref{sec:modotcov}) it is indeed a quadratic form. 

Recall that $\L_\eta^{(\vec{\ba})}$ is defined by \eqref{eq:Lapkadefi}.
The Fourier symbol of $\L_\eta^{(\vec{\ba})}$ is given by
\begin{align}
	\lambda^{(\vec\ba)} := \lambda^{1-\eta} + \ba^{(\emptyset)} + \sum_{\km_1 \in \kA_1}  \ba^{(\km_1)} \lambda_{\km_1} .
	\label{eq:LapbaFrr}
\end{align}
To claim that its inverse $C^{(\vec{\ba})}$ is a covariance matrix,  we need the following lemma.

\begin{lemma}
\label{lemma:Lapptv-res}
When $\vec{\ba} \in \HB_{\epsilon_p}$ for sufficiently small $\epsilon_p >0$,  
(i) $\L^{(\vec{\ba})}_\eta \ge 0$ and
(ii) $\lambda^{(\vec\ba)} (p) - \ba^{(\emptyset)} \asymp \lambda^{1-\eta/2} (p)  \asymp |p|^{2-\eta} \wedge 1$.
\end{lemma}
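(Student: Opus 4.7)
The plan is to work entirely in Fourier space on $[-\pi,\pi]^d$ and bound each of the additional terms in \eqref{eq:LapbaFrr} by a small multiple of the leading symbol $\lambda^{1-\eta/2}$. The second asymptotic in (ii) is immediate from the well-known estimate $\lambda(p)=2\sum_i(1-\cos p_i)\asymp |p|^2\wedge 1$ on $[-\pi,\pi]^d$, so raising to the $(1-\eta/2)$-power gives $\lambda^{1-\eta/2}(p)\asymp |p|^{2-\eta}\wedge 1$. The whole content of the lemma is therefore the quantitative control of $\sum_{\km_1\in\kA_1}\ba^{(\km_1)}\lambda_{\km_1}(p)$.

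The first step is to derive an explicit bound $|\lambda_{\km_1}(p)|\lesssim (|p|^2\wedge 1)^{q(\km_1)/2}$ for each $\km_1\in\kA_1$. This follows from the Fourier representation of discrete derivatives: each factor $\nabla^\mu$ appearing in $M_x^{(\kl(\km_1))}$ contributes a multiplier $(e^{ip\cdot\mu}-1)$, whose modulus satisfies $|e^{ip\cdot\mu}-1|=2|\sin(p\cdot\mu/2)|\lesssim |p|\wedge 1$. Writing $\sum_x S^{(\km_1)}_x(\varphi)$ in Fourier form via the Plancherel identity and using that $\Sigma_{\operatorname{axes}}$-symmetrisation preserves absolute values, the quadratic form reads $\frac{1}{|\Lambda|}\sum_p\lambda_{\km_1}(p)|\hat\varphi(p)|^2$ with $\lambda_{\km_1}(p)$ a real-valued combination of $q(\km_1)$ such difference factors; this yields the claimed bound with some $\km_1$-dependent constant.

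The second step is then purely algebraic. Because $\km_1\in\kA_1$ forces $q(\km_1)\in(0,d-2+\eta)\cap 2\Z$, in particular $q(\km_1)\ge 2$, and because $\eta\in[0,2)$ gives $1-\eta/2\le 1\le q(\km_1)/2$, while $\lambda(p)$ is bounded on $[-\pi,\pi]^d$, we get
\begin{equation*}
    |\lambda_{\km_1}(p)|\le C_{\km_1}\,\lambda(p)^{q(\km_1)/2}\le C'_{\km_1}\,\lambda^{1-\eta/2}(p).
\end{equation*}
Summing over the finite set $\kA_1$ and using $|\ba^{(\km_1)}|\le\epsilon_p$,
\begin{equation*}
    \Big|\sum_{\km_1\in\kA_1}\ba^{(\km_1)}\lambda_{\km_1}(p)\Big|\le C\epsilon_p\,\lambda^{1-\eta/2}(p).
\end{equation*}
Combining with $\ba^{(\emptyset)}\ge 0$ and \eqref{eq:LapbaFrr} gives
\begin{equation*}
    (1-C\epsilon_p)\lambda^{1-\eta/2}(p)+\ba^{(\emptyset)}\;\le\;\lambda^{(\vec{\ba})}(p)\;\le\;(1+C\epsilon_p)\lambda^{1-\eta/2}(p)+\ba^{(\emptyset)},
\end{equation*}
so choosing $\epsilon_p$ small enough that $C\epsilon_p<\tfrac12$ establishes both (i) (by nonnegativity of the Fourier symbol, which implies positive semi-definiteness of $\L^{(\vec\ba)}_\eta$) and the first asymptotic in (ii). The only part requiring some care is the bookkeeping in the first step: one must verify that the symmetrisation \eqref{eq:sympolys} together with the symmetry constraint \eqref{eq:kAsymmetries} really does produce a real symbol $\lambda_{\km_1}(p)$ and that nothing forces an extra factor that would beat $\lambda^{1-\eta/2}$; the even-degree condition $q(\km_1)\in 2\Z$ is what makes this work cleanly.
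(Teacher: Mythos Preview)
Your proof is correct and follows essentially the same approach as the paper's: both use $\lambda(p)\asymp|p|^2\wedge1$, bound $|\lambda_{\km_1}(p)|$ via the fact that $q(\km_1)\ge2$ for $\km_1\in\kA_1$, and conclude by absorbing the perturbation into the leading symbol for small $\epsilon_p$. Your version is more explicit about the Fourier computation of the discrete derivative multipliers and the passage from $\lambda^{q(\km_1)/2}$ to $\lambda^{1-\eta/2}$, but the structure is identical.
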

\begin{proof}
Since $\lambda (p) = 2 \sum_{i=1}^d (1- \cos (p_i))$,  we have
$c \big( |p|^2 \wedge 1 \big) \le \lambda(p) \le |p|^2$
for some $c > 0$,  and since the number of derivatives $q(\km_1) = i_1 + i_2 \ge 2$ for $\km_1 \in \kA_1$,  there exists $C$ such that $| \lambda_{\km_1} (p) | \le C (|p|^2 \wedge 1) \le 4 \pi^2 C (|p|^2 \wedge 1)$.  These give (i) and (ii).
\end{proof}

\section{Green's function asymptotic}
\label{sec:covcomp}

On $\Lambda = \Lambda_N$ or $\Z^d$,  with the notation as above and $\bar{\ba}_{\Delta} = -\ba^{(\Delta)} + \ba^{(\nnabla)}$ as in Remark~\ref{remark:kAsymmetries},
\begin{align}
	\hat{C}^{(\vec{\ba})}_\Lambda := \frac{1}{\lambda^{(\vec{\ba}) }} = \frac{1}{\ba^{(\emptyset)} + \lambda^{1-\eta/2} + \bar{\ba}_\Delta \lambda + \sum_{\km_1 \in \kA_1}^{q(\km_1) \ge 4} \ba^{(\km_1)} \lambda_{\km_1}}
\end{align}
which is finite for $\vec{\ba} \in \HB_{\epsilon_p}$ such that $\ba^{(\emptyset)} > 0$,  due to Lemma~\ref{lemma:Lapptv-res}.

\subsection{Asymptotic of the Green's function}
\label{sec:asympGreenfnc}

We prove an estimate on $C^{(\vec{\ba})}$ based on a fairly standard estimate on the fractional Laplacian.

\begin{lemma} \cite[Proposition~A.1]{MS22asymptotic} \label{lemma:fracLapGreen}
Let $\beta \in (0, d/2)$,  $c_{d,\beta} = \frac{\Gamma (d/2-\beta)}{2^{2\beta} \pi^{d/2} \Gamma (\beta)}$ for $\Gamma (\cdot)$ the Gamma function.
Then for $x \neq 0$,
\begin{align}
	\frac{1}{(2\pi)^d} \int_{\R^d} \frac{e^{i p \cdot x}}{|p|^{2\beta}} \rd p = c_{d,\beta} |x|^{-d+ 2\beta} . \label{eq:fracLapGreen2}
\end{align}
\end{lemma}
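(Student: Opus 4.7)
\textbf{Proof proposal for Lemma~\ref{lemma:fracLapGreen}.}
The plan is to evaluate the Fourier integral via the subordination identity
\begin{align}
    \frac{1}{|p|^{2\beta}} = \frac{1}{\Gamma(\beta)} \int_0^\infty t^{\beta-1} e^{-t|p|^2} \rd t ,
\end{align}
which is valid for any $\beta > 0$ and any $p \ne 0$. The assumption $\beta < d/2$ makes $|p|^{-2\beta}$ locally integrable near the origin, which is one half of what is needed to interpret the left-hand side of \eqref{eq:fracLapGreen2}. Since the integrand decays only polynomially at infinity, the integral should first be understood in the tempered distribution sense, or equivalently as the $\epsilon \downarrow 0$ limit of $\frac{1}{(2\pi)^d}\int_{\R^d} |p|^{-2\beta} e^{-\epsilon |p|^2} e^{ip\cdot x}\rd p$; the latter is absolutely convergent for each $\epsilon>0$, and the routine step is to check that the limit can be interchanged with the manipulations below.

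Substituting the subordination identity and applying Fubini to swap the order of integration, the inner Gaussian integral is
\begin{align}
    \frac{1}{(2\pi)^d}\int_{\R^d} e^{ip\cdot x} e^{-t|p|^2}\rd p = \frac{1}{(4\pi t)^{d/2}} e^{-|x|^2/(4t)} .
\end{align}
Plugging this back yields
\begin{align}
    \frac{1}{(2\pi)^d}\int_{\R^d} \frac{e^{ip\cdot x}}{|p|^{2\beta}}\rd p = \frac{1}{\Gamma(\beta)(4\pi)^{d/2}} \int_0^\infty t^{\beta-1-d/2} e^{-|x|^2/(4t)}\rd t .
\end{align}
After the substitution $u = |x|^2/(4t)$ the $t$-integral becomes $\big(|x|^2/4\big)^{\beta-d/2}\, \Gamma(d/2-\beta)$, where we use $\beta \in (0,d/2)$ to guarantee the convergence of the Gamma integral at both ends. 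Collecting constants gives $c_{d,\beta} |x|^{2\beta - d}$, as desired.

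The only non-routine point is the justification of Fubini and of the $\epsilon\downarrow 0$ limit, which is the main (and only) obstacle. One clean way to handle it is to carry out the entire computation with $|p|^{-2\beta}$ replaced by $|p|^{-2\beta} e^{-\epsilon|p|^2}$; this regularised integrand is in $L^1(\R^d)$, so Fubini applies directly, and after the $t$-change of variables the regularisation amounts to restricting the $u$-integration to $u \ge \epsilon |x|^2/(4t)$ under a further change. Both sides are continuous in $\epsilon$ at $\epsilon=0$ for $x\neq 0$ by dominated convergence (the integrand in $t$ is controlled by $t^{\beta-1-d/2} e^{-|x|^2/(4t)}$, which is integrable over $(0,\infty)$ precisely when $\beta\in(0,d/2)$), so the $\epsilon\downarrow 0$ limit gives the stated identity.
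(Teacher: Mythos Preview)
Your argument is correct and is the standard derivation via the heat-kernel subordination identity; the constants check out and your regularisation with $e^{-\epsilon|p|^2}$ cleanly handles the Fubini step and the distributional interpretation of the integral. Note that the paper does not prove this lemma itself but simply quotes it from \cite{MS22asymptotic}, so there is no in-paper proof to compare against; your self-contained computation is a fine substitute.
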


In the proof of the next lemma,  we write $\delta \lambda$ for the Fourier transform of $\delta \L_\eta^{(\vec{\ba})}$,  where
\begin{align}
	\delta \L_\eta^{(\vec{\ba})} = \begin{cases}
		\L_0^{(\vec{\ba})} + (1+ \bar{\ba}_{\Delta}) \Delta - \ba^{(\emptyset)} & (\eta=0) ,  \\
		\L_{\eta}^{(\vec{\ba})} - (-\Delta)^{1-\eta/2} - \ba^{(\emptyset)} &  (\eta >0) .
	\end{cases}
\end{align}

\begin{lemma}
\label{lemma:Greenasymp}

Let $\eta \in [0,1)$ and $\vec{\ba} \in \HB_{\epsilon_p}$ be such that $\ba^{(\emptyset)} = 0$. 
then
\begin{align}
	C^{(\vec{\ba})}_{\Z^d} (x) = \frac{\gamma (d,\eta)}{|x|^{d-2+\eta}} \times \begin{cases}
		(1+ \bar{\ba}_{\Delta})^{-1} (1 + O(|x|^{-1})) & (\eta =0) \\
		1 + O(|x|^{-\eta}) & (\eta > 0) 
	\end{cases}
\end{align}
where
\begin{align}
	\gamma(d,\eta) = 2^{-2+\eta} \pi^{-d/2}  \frac{\Gamma( (d -2 + \eta)/2 )}{\Gamma ( (2-\eta)/2 )}	.
\end{align}
\end{lemma}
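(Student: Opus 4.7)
The plan is to work in Fourier space, using
\begin{align}
C^{(\vec{\ba})}_{\Z^d}(x) = \frac{1}{(2\pi)^d}\int_{[-\pi,\pi]^d} \frac{e^{i p\cdot x}}{\lambda^{(\vec{\ba})}(p)}\,\rd p,
\end{align}
and separating the dominant fractional symbol from the lattice corrections. Define $A(p) = |p|^{2-\eta}$ when $\eta>0$ and $A(p) = (1+\bar{\ba}_{\Delta})|p|^2$ when $\eta=0$, and set $\rho(p) = \lambda^{(\vec{\ba})}(p) - A(p)$. A Taylor expansion of $\lambda(p) = 2\sum_i(1-\cos p_i) = |p|^2 + O(|p|^4)$, combined with $(\lambda^{1-\eta/2})(p) = |p|^{2-\eta}(1 + O(|p|^2))$ and the bound $|\lambda_{\km_1}(p)| \lesssim |p|^{q(\km_1)}$ with $q(\km_1)\ge 4$ for the remaining counterterms, shows that $\rho(p) = O(|p|^2)$ when $\eta>0$ (the $\bar{\ba}_\Delta |p|^2$ piece being the slowest-decaying) and $\rho(p) = O(|p|^4)$ when $\eta=0$. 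Lemma~\ref{lemma:Lapptv-res} guarantees $\lambda^{(\vec{\ba})}(p) \asymp A(p)$ on $[-\pi,\pi]^d$, so the identity $1/\lambda^{(\vec{\ba})} = 1/A - \rho/(A\lambda^{(\vec{\ba})})$ decomposes the integral into a main term and a remainder.

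The main term $\frac{1}{(2\pi)^d}\int_{[-\pi,\pi]^d} e^{ip\cdot x}/A(p)\,\rd p$ is compared with the full-space integral $\frac{1}{(2\pi)^d}\int_{\R^d} e^{ip\cdot x}/A(p)\,\rd p$, which equals $\gamma(d,\eta)|x|^{-(d-2+\eta)}$ by Lemma~\ref{lemma:fracLapGreen} applied with $\beta = 1-\eta/2$. A direct computation of gamma factors verifies $c_{d,1-\eta/2} = \gamma(d,\eta)$, and for $\eta=0$ the prefactor $(1+\bar{\ba}_\Delta)^{-1}$ appears simply because $A(p)$ absorbs the constant. The tail $\int_{\R^d\setminus[-\pi,\pi]^d} e^{ip\cdot x}/A(p)\,\rd p$ decays faster than any polynomial by repeated integration by parts, exploiting smoothness of $1/A$ away from the origin.

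For the remainder term, I would use a scaling argument. The integrand $\rho(p)/(A(p)\lambda^{(\vec{\ba})}(p))$ behaves like $|p|^{-2+2\eta}$ for $\eta>0$ and like $O(1)$ for $\eta=0$ near $p=0$, and is smooth at the origin in the second case. A partition of unity separating the integration into a ball $|p| \le 1/|x|$ (where Taylor expansion gives the scaling), an intermediate annulus (where one rescales $p = q/|x|$), and a UV region $|p| \gtrsim 1$ (where integration by parts against $e^{ip\cdot x}$ produces arbitrary decay) yields $|\text{Err}(x)| \lesssim |x|^{-(d-2+2\eta)}$ for $\eta>0$ and $|\text{Err}(x)| \lesssim |x|^{-d}$ for $\eta=0$. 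Dividing by the leading asymptotic $\gamma(d,\eta)|x|^{-(d-2+\eta)}$ produces the claimed relative errors $O(|x|^{-\eta})$ and $O(|x|^{-2}) \subset O(|x|^{-1})$ respectively.

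The main obstacle will be making the scaling step quantitative, since neither $\rho$ nor $\lambda^{(\vec{\ba})}$ is homogeneous; the integrand is homogeneous only to leading order at $p = 0$, and the rest must be controlled by careful bookkeeping of lower-order terms. This is technically routine but requires some care because the integrand is not smooth at the origin when $\eta > 0$ (so one cannot simply integrate by parts around $p=0$), and one must instead truncate and combine the near-origin $L^1$ bound with the high-frequency integration-by-parts bound. Everything else reduces to elementary Fourier estimates and the explicit gamma-function identity implicit in the definition of $\gamma(d,\eta)$.
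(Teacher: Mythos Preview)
Your proposal is correct and follows the same Fourier-analytic strategy as the paper: write $C^{(\vec{\ba})}_{\Z^d}(x)$ as the inverse transform of $1/\lambda^{(\vec{\ba})}$, peel off the leading homogeneous symbol, and estimate the remainder. The leading-term computation and the identification $c_{d,1-\eta/2}=\gamma(d,\eta)$ match the paper exactly.

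The genuine difference is in the remainder. For $\eta>0$ you stop after one subtraction and attack $\rho/(A\lambda^{(\vec{\ba})})$ via a three-region partition (near-origin $L^1$ bound, rescaled annulus, smooth UV piece). The paper instead expands one order further: it writes $\lambda^{(\vec{\ba})}=|p|^{2\beta}+c|p|^{2}+g(p)$ with $g=O(|p|^{4})$ and obtains
\[
\frac{1}{\lambda^{(\vec{\ba})}}=\chi_r\Bigl(\frac{1}{|p|^{2\beta}}-\frac{c|p|^{2}}{|p|^{4\beta}}+\frac{h(p)}{|p|^{4\beta}}\Bigr)+\frac{1-\chi_r}{\lambda^{(\vec{\ba})}}.
\]
The subleading piece $c|p|^{2-4\beta}$ is itself homogeneous, so Lemma~\ref{lemma:fracLapGreen} applies again and yields the $O(|x|^{-(d-2+2\eta)})$ correction directly; the residual $h/|p|^{4\beta}$ then has $D^{d}(\cdot)\in L^{1}$, so $d$-fold integration by parts gives $O(|x|^{-d})$. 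This sidesteps the bookkeeping you flag as the ``main obstacle'': no annulus rescaling is needed because every non-smooth piece is exactly homogeneous. Your route works but is heavier; the paper's extra subtraction buys a cleaner endgame.

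One small correction: for $\eta=0$ the remainder $\rho/(A\lambda^{(\vec{\ba})})$ is \emph{not} smooth at the origin in general (e.g.\ $\lambda(p)-|p|^{2}$ contains $\sum_i p_i^{4}$, and $\sum_i p_i^{4}/|p|^{4}$ is not smooth at $0$). This does not damage your argument, since your scaling/partition method does not actually use smoothness there; the near-origin $L^{1}$ bound already gives $O(|x|^{-d})$. But drop the parenthetical claim of smoothness.
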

\begin{proof}
We take $\gamma (d,\eta) = c_{d, 1-\eta/2}$ for $c_{d,\beta}$ as in Lemma~\ref{lemma:fracLapGreen}.

When $\eta =0$,  we can rescale all the coefficients of $\L_\eta^{(\vec{\ba})}$,  so we can assume $\bar{\ba}_{\Delta} = 0$ without losing generality.
Since $\lambda$ and $\delta \lambda$ are both analytic functions,  there exists an analytic function $f(p)$ such that
\begin{align}
	\frac{1}{\lambda^{(\vec{\ba})}} = \frac{1}{|p|^2} + f(p) , 
\end{align}
for $p \in \R^d$.  Also,  let $\chi : \R^d \rightarrow [0,1]$ be some smooth function such that $\chi (p) = 1$ for $|p| \le 1$ and $\chi(p) = 0$ for $|p| \ge 2$. 
Then we have decomposition
\begin{align}
	& C^{(\vec{\ba})}_{\Z^d} (x) 
		= \frac{1}{(2\pi)^d} \int_{[-\pi, \pi]^d}	\frac{e^{ip\cdot x}}{\lambda^{(\vec{\ba})}} \rd p \nnb
		&\qquad
		= \frac{1}{(2\pi)^d} \Big(
			\int_{\R^d} e^{ip \cdot x} \Big(\frac{1}{|p|^2} + f \chi - \frac{(1-\chi)}{|p|^2} \Big) \rd p + \int_{[-\pi, \pi]^d} (1- \chi) \frac{e^{i p \cdot x}}{\lambda^{(\vec{\ba})}} \rd p 
			\Big) .
\end{align}
$f\chi$,  $(1-\chi)/ |p|^2$ and $(1-\chi) / \lambda^{(\vec{\ba})}$ are all smooth functions of $p$,  so it decays faster than $|x|^{d-2}$,  and Lemma~\ref{lemma:fracLapGreen} gives the main contribution $\gamma |x|^{-d+2}$.

When $\eta >0$,  let $\beta = 1-\eta/2$ and we use the fact that $\lambda^{(\vec{\ba})} = |p|^{2\beta} + c|p|^2 + g(p)$ for some $c \in \R$ where $g(p)$ is an even analytic function such that $g(p) = O(|p|^4)$.  If we take sufficiently small $r$,  we have $|c| |p|^2 + |g(p)| \le |p|^{2\beta} /2$ for $|p| < r$,  so we restrict the domain by taking $\chi_r (p) = \chi (p /r)$,  and we can expand
\begin{align}
	\frac{1}{\lambda^{(\vec{\ba})} (p) } 
		&= 
		\chi_r (p) \Big( \frac{1}{|p|^{2\beta}} - \frac{c|p|^2}{|p|^{4\beta}} + \frac{h(p)}{|p|^{4\beta}} \Big)+ \frac{1-\chi_r}{\lambda^{(\vec{\ba})}} 
\end{align}
where $h(p)$ is some function smooth away from $0$ and satisfies $|D^n h(p)| \le O_n (|p|^{4-n})$ for each $n\ge 0$.
The singular part $1/|p|^{2\beta}$ gives contribution $\gamma |x|^{-d+\eta}$ due to Lemma~\ref{lemma:fracLapGreen} and the contribution of $|p|^2 / |p|^{4\beta}$ is bounded by $O(|x|^{-(d-2+2\eta)})$.  Also,  since
\begin{align}
	\Big| D^d \Big( \frac{\chi_r (p)h(p)}{|p|^{4\beta}} \Big) \Big|
		\le O \Big( \frac{|p|^{4-d}}{|p|^{4\beta}} \Big)
\end{align}
is integrable in $p$,  we may bound
\begin{align}
	\Big| \int_{\R^d} \chi_r (p) \frac{e^{i p \cdot x}}{|p|^{4\beta}} h(p) d p  \Big| \lesssim |x|^{-d} \le O \Big( \frac{1}{|x|^{2\beta}}  \Big) \frac{1}{|x|^{d-2 \beta}}  .
\end{align}
All the remaining terms are smooth in $p$,  so only create contributions smaller than $|x|^{-d + 2 \beta - 1}$ as $|x| \rightarrow \infty$.
\end{proof}

\begin{lemma}
\label{lemma:GreenminuswN}
If we consider $x$ as an element of both $\Z^d$ and $\Lambda$,
\begin{align}
	\big| C^{(\vec{\ba})}_{\Z^d} (x) - w_N (x) \big|
		\le O_L ( L^{-(d-2+\eta)N} ) .
\end{align}
\end{lemma}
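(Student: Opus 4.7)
The plan is to exploit the infinitesimal covariance decomposition of Proposition~\ref{prop:Gammajbounds} together with its finite-volume counterpart in Proposition~\ref{prop:Gammajbounds2}. On $\Z^d$ we have $C^{(\vec{\ba})}_{\Z^d}(x) = \int_0^{\infty} \dot\Gamma_t(x)\,\rd t$, and by construction the scale-$j$ pieces $\Gamma_j = \int_{L^{j-1}}^{L^j} \dot\Gamma_t \,\rd t$ for $1 \le j \le N-1$ coincide on $\Z^d$ and on $\Lambda_N$, thanks to the finite range property (Proposition~\ref{prop:Gammajbounds}(ii)) which forces $\dot\Gamma_t$ of range $<t < L^{N-1}$ to project identically. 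These common low-scale pieces therefore cancel in the difference $C^{(\vec{\ba})}_{\Z^d}(x) - w_N(x)$, and what survives is
\begin{align*}
C^{(\vec{\ba})}_{\Z^d}(x) - w_N(x) = \int_{L^{N-1}}^{\infty} \bigl[\dot\Gamma_t(x) - \dot\Gamma_t^{\Lambda_N}(x)\bigr]\, \rd t.
\end{align*}

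Next, the triangle inequality together with the pointwise bounds of Proposition~\ref{prop:Gammajbounds}(iii) and Proposition~\ref{prop:Gammajbounds2}(ii), specialised to $\ba^{(\emptyset)} = 0$, give
$\|\dot\Gamma_t\|_{\ell^\infty}, \|\dot\Gamma_t^{\Lambda_N}\|_{\ell^\infty} \lesssim t^{-(d-1+\eta)}$ for $t \ge 1$. Since $d - 2 + \eta > 0$ under the standing hypotheses $d \ge 3$ and $\eta \in [0,2)$, the tail integral converges and
\begin{align*}
\int_{L^{N-1}}^{\infty} t^{-(d-1+\eta)}\, \rd t \asymp L^{-(d-2+\eta)(N-1)} = O_L\bigl(L^{-(d-2+\eta)N}\bigr),
\end{align*}
which is the announced bound. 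No cancellation between $\dot\Gamma_t$ and $\dot\Gamma_t^{\Lambda_N}$ is needed; the two terms are controlled separately by the same estimate.

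The main subtlety is that Proposition~\ref{prop:Gammajbounds2} literally constructs $\Gamma_N^{\Lambda_N}$ and $t_N$ only when $\ba^{(\emptyset)} > 0$, as $t_N \to \infty$ when $\ba^{(\emptyset)} \downarrow 0$, whereas Lemma~\ref{lemma:GreenminuswN} fixes $\ba^{(\emptyset)} = 0$. This is handled by inspecting the Fourier representation \eqref{eq:GammaNLambdadefi}: $\dot\Gamma_t^{\Lambda_N}$ is defined by summing only over $p \in \Lambda_N^{*} \setminus \{0\}$, so the singular zero mode is excluded and both the kernel and its uniform bound extend continuously to $\ba^{(\emptyset)} = 0$. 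Lemma~\ref{lemma:FRDcontinuity} (continuity of $\Gamma_j$ in $\vec{\ba}$) ensures that the $w_N$ appearing in the lemma is indeed the $\ba^{(\emptyset)} \downarrow 0$ limit of the finite-volume construction, so both the identity and the $t^{-(d-1+\eta)}$ bound above remain valid in that limit, completing the proof.
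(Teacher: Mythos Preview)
Your proof is correct and follows essentially the same route as the paper: cancel the common low-scale pieces and bound the tail separately using the uniform $t^{-(d-1+\eta)}$ decay. The only cosmetic difference is that you work with the infinitesimal kernels $\dot\Gamma_t$, $\dot\Gamma_t^{\Lambda_N}$ while the paper phrases the same computation in terms of the integrated blocks $\Gamma_j$, $\Gamma_N^{\Lambda_N}$; your extra paragraph on the $\ba^{(\emptyset)}=0$ limit is a valid point that the paper's proof leaves implicit.
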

\begin{proof}
Using the decomposition
\begin{align}
	C^{(\vec{\ba})}_{\Z^d} = \sum_{j=1}^\infty \Gamma_j , \qquad w_N = \sum_{j=1}^{N-1} \Gamma_j + \Gamma_{N}^{\Lambda_N} ,
\end{align}
for the `same' $\Gamma_j$ on the left and the right,  we simply have
\begin{align}
	\big| C^{(\vec{\ba})}_{\Z^d}(x) - w_N (x) \big| \le |\Gamma_{N}^{\Lambda_N} (\x)| +\sum_{j=N}^{\infty} |\Gamma_j (\x)|  .
\end{align}
By Proposition~\ref{prop:theFRD},  this is bounded by a constant multiple of $L^{-(d-2+\eta)N}$.
\end{proof}

Infinitesimal decomposition can also be used to prove crude bounds on the Green's function. 

\begin{corollary} \label{cor:Coneinftynrm}
Under the assumptions of Proposition~\ref{prop:Gammajbounds2},  
if $\Lambda = \Lambda_N$ and $j \le N$,
\begin{align}
	&\sum_{y \in \Lambda} \big| C^{(\vec{\ba})} (y) \big|  \lesssim  L^{(2-\eta) N} + ( \ba^{(\emptyset)} )^{-1} ,   \qquad
	\sum_{y \in \Lambda} |w_j (y)| \lesssim \tilde{\chi}_j L^{(2-\eta) j}
\end{align}
\end{corollary}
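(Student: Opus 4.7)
The plan is to bound each scale-$k$ covariance $\Gamma_k$ in $\ell^1$ directly from the pointwise bound of Proposition~\ref{prop:theFRD} combined with its finite range property, and then sum over scales, treating the cases $k \le j_{\ba}$ and $k > j_{\ba}$ separately (where $j_{\ba} = \min\{k : L^{(2-\eta)k}\ba^{(\emptyset)} \ge 1\}$). Concretely, the restated bound \eqref{eq:Gammajbounds2} with $k_x=k_y=0$ gives $\|\Gamma_{k+1}\|_{\ell^\infty} \lesssim \kc_{k+1}^2 = \tilde\chi_k L^{-(d-2+\eta)k}$, while the finite range property of Proposition~\ref{prop:theFRD}(ii) implies that $\Gamma_{k+1}(y)$ is supported on a set of cardinality $\lesssim L^{d(k+1)}$. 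Multiplying these two,
\begin{equation}
\sum_{y \in \Lambda} |\Gamma_{k+1}(y)| \lesssim \tilde\chi_k L^{(2-\eta)k},
\end{equation}
and the same bound (by Proposition~\ref{prop:Gammajbounds2}(i)) applies to $\Gamma_N^{\Lambda_N}$.

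For the bound on $w_j$, I would sum this estimate:
\begin{equation}
\sum_{y \in \Lambda} |w_j(y)| \;\le\; \sum_{k=0}^{j} \sum_{y} |\Gamma_k(y)| \;\lesssim\; \sum_{k=0}^{j} \tilde\chi_k L^{(2-\eta)k}.
\end{equation}
When $j \le j_\ba$ each $\tilde\chi_k = 1$ and the sum is geometric in $k$, dominated by its largest term $L^{(2-\eta)j}$; when $j > j_\ba$ the terms for $k \ge j_\ba$ acquire an extra factor $2^{-(k-j_\ba)}$, so (since $L$ is large, $L^{2-\eta} \gg 2$) the sum is still dominated by the $k=j$ term $\tilde\chi_j L^{(2-\eta)j}$. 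This yields the second bound.

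For the first bound, I would use the full decomposition $C^{(\vec\ba)} = \sum_{k=1}^{N-1}\Gamma_k + \Gamma_N^{\Lambda_N} + t_N Q_N$ from \eqref{eq:LetaFRD}. The sum of covariances is bounded exactly as above by $\tilde\chi_N L^{(2-\eta)N} \lesssim L^{(2-\eta)N} \wedge (\ba^{(\emptyset)})^{-1}$ (the second inequality from the definition of $j_\ba$ together with the geometric-sum argument), while $\sum_{y}|t_N Q_N(y)| = t_N \le (\ba^{(\emptyset)})^{-1}$ by Proposition~\ref{prop:theFRD}. Adding the two contributions gives $\sum_y|C^{(\vec\ba)}(y)| \lesssim L^{(2-\eta)N} + (\ba^{(\emptyset)})^{-1}$.

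There is no real obstacle here: the proof is essentially bookkeeping. The only mildly delicate point is ensuring that the geometric sum over $k > j_\ba$ is indeed dominated by its endpoint — that relies on the fact (explicit in \eqref{eq:ellhdefi} and the line above) that $L$ is chosen sufficiently large so that $L^{2-\eta} > 2$, which is already standing throughout the paper.
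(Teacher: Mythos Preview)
Your argument is correct and is essentially the discrete-scale version of the paper's proof, which performs the same estimate at the infinitesimal level with $\dot\Gamma_t$ (bounding $\sum_y|\dot\Gamma_t(y)|$ by the pointwise bound times the volume of the support, then integrating in $t$). The two routes are interchangeable.

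One small correction: your parenthetical claim that $\tilde\chi_N L^{(2-\eta)N} \lesssim L^{(2-\eta)N}\wedge(\ba^{(\emptyset)})^{-1}$ is not right when $N>j_\ba$, since then $\tilde\chi_N L^{(2-\eta)N}\asymp (L^{2-\eta}/2)^{N-j_\ba}(\ba^{(\emptyset)})^{-1}$, which grows with $N-j_\ba$. This does not affect the conclusion, because all you actually need is the trivial bound $\tilde\chi_N L^{(2-\eta)N}\le L^{(2-\eta)N}$ before adding the $t_N\le(\ba^{(\emptyset)})^{-1}$ contribution.
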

\begin{proof}
By \eqref{eq:Gammajbounds1infi},  and since $\dot{\Gamma}_t$ and $\dot{\Gamma}_t^{\Lambda}$ have range $\le t \wedge L^N$, 
\begin{align}
	\sum_{y \in \Lambda} |\dot{\Gamma}_t (y)|,  \; \sum_{y \in \Lambda} |\dot{\Gamma}^{\Lambda}_t (y)| \lesssim 
		\frac{t^{-d+1 - \eta} \wedge 1}{1 + \ba^{(\emptyset)} t^{2\beta}} (t \wedge L^{N})^d ,
\end{align} 
we have 
\begin{align}
	& \sum_{y \in \Lambda} |w_j (y)| \lesssim \int_0^{L^j} \frac{t^{1-\eta}}{1 + \ba^{(\emptyset)} t^{2\beta}} dt \lesssim \tilde{\chi}_j L^{(2-\eta) j} \qquad (j < N) ,  \\
	& \sum_{y \in \Lambda} |w_N (y)| \lesssim \int_{0}^{\infty} \frac{t^{-d+1-\eta}}{1 + \ba^{(\emptyset)} t^{2\beta}} (t \wedge L^{N})^d \rd t \lesssim \tilde{\chi}_N L^{(2-\eta)N} .
\end{align}
Then
\begin{align}
	\sum_{y \in \Lambda} | C^{(\vec{\ba})}  (y) |  \le \sum_{y \in \Lambda} |w_N (y)| + t_N \lesssim L^{(2-\eta) N} + ( \ba^{(\emptyset)} )^{-1}
\end{align}
\end{proof}

\subsection{Proof of Lemma~\ref{lemma:fNtNlmt}}

\begin{proof}[Proof of Lemma~\ref{lemma:fNtNlmt}]
For (i),  by Lemma~\ref{lemma:facnv1}, it is sufficient to show for each $\km_1 \in \kA_1$
\begin{align}
	\lim_{N\rightarrow \infty} \lambda_{\km_1} (L^{-N} q) = 0 .
\end{align}
Indeed, they hold because $|\lambda_{\km_1} (p)| \le C |p|^2$ for some $C >0$. 

For (ii),  by applying the Plancherel identity twice,  
\begin{align}
	\norm{\nabla^n C^{(\vec{\ba})} \f_N}_{\ell^{2} }^2 
		\lesssim  \frac{1}{|\Lambda|} \big\| |p|^{\frac{n}{2}} \widehat{C^{(\vec{\ba})} \f_N } \big\|_{\ell^2}^2 
		\lesssim  \frac{1}{ (\ba^{(\emptyset)} )^{2} |\Lambda|} \norm{|p|^{\frac{n}{2}} \hat{\f}_N}_{\ell^2}^2  
		\lesssim  \frac{1}{(\ba^{(\emptyset)} )^{2}} \norm{\nabla^n \f_N}_{\ell^2}^2  
\end{align}
with $\lesssim$ depending on $n$, so by definition of $\f_N$, 
\begin{align}
	\norm{\nabla^n C^{(\vec{\ba})} \f_N}_{\ell^{2} } \lesssim L^{ -n N} (\ba^{(\emptyset)} )^{-1} \norm{\nabla^n f}_{\ell^{\infty}} .	\label{eq:CfNell2}
\end{align}
On the other hand, 
\begin{align}
	| \nabla^n C^{(\vec{\ba})} \f_N (x) | \le \sum_{y \in \Lambda} \big| C^{(\vec{\ba})} (x,y) \big| \,   |\nabla^n \f_N (y) |  \le \sum_{y \in \Lambda} \big| C^{(\vec{\ba})} (x,y) \big| \norm{\nabla^n \f_N}_{\ell^\infty}  
\end{align}
and by Corollary~\ref{cor:Coneinftynrm},  this is bounded by 
\begin{align}
	\lesssim ( L^{(2-\eta) N} + (\ba^{(\emptyset)} )^{-1} ) \norm{\nabla^n \f_N}_{\ell^\infty}   \le L^{-nN} L^{-\frac{d}{2} N} ( L^{(2-\eta) N} + (\ba^{(\emptyset)} )^{-1} ) \norm{\nabla^n f}_{L^\infty} ,
\end{align}
so we have shown
\begin{align}
	\norm{\nabla^n C^{(\vec{\ba})} \f_N}_{\ell^{\infty} } \lesssim L^{-n N} L^{-\frac{d}{2} N} ( L^{(2-\eta) N} +(\ba^{(\emptyset)} )^{-1} ) \norm{\nabla^n f}_{L^{\infty}} . 	\label{eq:CfNellinfty}
\end{align}
Now we can use the H\"older inequality to interpolate \eqref{eq:CfNell2} and \eqref{eq:CfNellinfty} and obtain the desired bound.
\end{proof}

\subsection{Proof of Lemma~\ref{lemma:gNtNlmt}}

The proof is almost identical as above,  but we rather use that $w_N = \sum_{j \le N} \Gamma_j$ satisfies
\begin{align}
	\sum_{y \in \Lambda} | w_N (x,y) | \lesssim L^{(2-\eta)N} , \qquad
	\norm{\hat{w}_N (p)}_{\ell^{\infty}} \lesssim L^{(2-\eta)N} .  \label{eq:wNnorms}
\end{align}
The first inequality is in Corollary~\ref{cor:Coneinftynrm} and the second inequality follows from the first.

\begin{proof}[Proof of Lemma~\ref{lemma:gNtNlmt}]

For part (i),  we use \eqref{eq:wNnorms} and the Plancherel identity to obtain
\begin{align}
	|(\g_N, w_N \g_N) | \lesssim \norm{\hat{w}_N (p)}_{\ell^{\infty}} \norm{\g_N}^2_{\ell^2} \le b_N^{-2} L^{-(d-2+\eta)N} \norm{\f_N }_{\ell^2}^2 .
\end{align}
But since $\norm{\f_N (x)}_{\ell^2}^2 \rightarrow \norm{f}_{\ell^2}^2$ as $N\rightarrow \infty$,  we see $\lim_{N\rightarrow \infty} |(\g_N, w_N \g_N) | = 0$.

For part (ii),  we proceed identically as in the proof of Lemma~\ref{lemma:fNtNlmt},  but just use \eqref{eq:wNnorms} instead. 
\end{proof}

\subsection{Proof of Lemma~\ref{lemma:hNtNlmt}}

\begin{proof}[Proof of Lemma~\ref{lemma:hNtNlmt}(i)]
For part (i),  since $\sh_N = L^{\frac{d-2+\eta}{2} N} (f_N - \Phi_N (f_N))$, 
\begin{align}
	( \sh_N,  w_N \sh_N ) &= L^{(d-2+\eta)N} \left( f_N - \Phi_N (f_N), w_N \big(f_N - \Phi_N (f_N) \big) \right)  \nnb
		&= \frac{L^{(d-2+\eta)N}}{|\Lambda| } \sum_{p \in \Lambda^* \backslash \{ 0 \}} \frac{| \hat{f}_N (p) |^2 }{ \lambda^{1-\eta/2} (p) + \bar{\ba}_\Delta \lambda(p) + \sum_{\km_1} \ba_{\km_1} \lambda_{\km_1}(p)}  .
\end{align}
By reparametrising $ q= L^N p$, 
\begin{align}
	= \sum_{q \in 2\pi \Lambda \backslash \{ 0 \}} \frac{L^{-2(1-\eta/2)N} | \hat{f}_N ( L^{-N} q ) |^2}{ \lambda^{1-\eta/2} ( L^{-N} q ) + \bar{\ba}_\Delta \lambda ( L^{-N} q ) + \sum_{q ( \km_1)\ge 4}  \lambda_{\km_1} (L^{-N} q) \ba^{(\km_1)} }  .
\end{align}
Lemma~\ref{lemma:facnv2} says $\lim_{N\rightarrow \infty} \hat{f}_N ( L^{-N} q ) = \hat{f} (q)$ and we see from the explicit formula on $\lambda$ that
$\lim_{N\rightarrow \infty} L^{2N} \lambda( L^{-N} q ) \rightarrow |q|^2$.
Also,  since $|\lambda_{\km_1} (p)| \le C( |p|^4 \wedge 1 )$ whenever $q(\km_1) \ge 4$,  we have $L^{2N} \lambda_{\km_1} (L^{-N} q) \rightarrow 0$,  so 
\begin{align}
	\lim_{N\rightarrow\infty} \sum_{q \in 2\pi \Lambda \backslash \{ 0 \}} (\cdots)
		&= \sum_{q \in (\T^d)^* \backslash \{ 0 \}} \frac{|\hat{f} (q)|^2}{|q|^{2-\eta}} \times 	
		\begin{cases}
			(1+ \bar{\ba}_\Delta )^{-1} & (\eta = 0) \\
			1 & (\eta  > 0)		
		\end{cases} \nnb
		&= \big( f - \Phi(f) ,  (-\Delta)^{-1} (f  - \Phi(f) ) \big) \times 	
		\begin{cases}
			(1+ \bar{\ba}_\Delta )^{-1} & (\eta = 0) \\
			1 & (\eta  > 0)		
		\end{cases} .
\end{align}
Part (ii) follows from Lemma~\ref{lemma:gNtNlmt}(ii) because $\sh_N = b_N L^{\frac{d-2+\eta}{2} N} ( \g_N - \Phi_N (\g_N) )$. 
\end{proof}

\section*{Acknowledgements}

The author gratefully acknowledges the support and hospitality of the Department of Mathematics at Seoul National University during the completion of a part of this work. 
The author also thanks Gordon Slade,  Yucheng Liu and Taegyun Kim for their useful comments on this paper.
The author is supported by Basic Science Research Program through the National Research Foundation of Korea funded by the Ministry of Science and ICT (RS2025-00518980).


\end{document}